\documentclass[12pt]{amsart}
\usepackage{geometry}  
\usepackage{graphicx}
\usepackage{amssymb}
\usepackage{amsmath}
\usepackage{color}
\usepackage{hyperref}
\usepackage{pdfsync}
 \usepackage{tikz}
 \usepackage{tikz-cd}

\usepackage[all]{xy}
\xyoption{matrix}
\xyoption{arrow}

\usetikzlibrary{arrows,decorations.pathmorphing,backgrounds,positioning,fit,petri}

\newtheorem{Theorem}{Theorem}[section]
\newtheorem{Lemma}[Theorem]{Lemma}
\newtheorem{Corollary}[Theorem]{Corollary}
\newtheorem{Proposition}[Theorem]{Proposition}

\theoremstyle{definition}
\newtheorem{Definition}[Theorem]{Definition}
\newtheorem{Example}[Theorem]{Example}

\newtheorem{Remark}[Theorem]{Remark}
 
\numberwithin{equation}{section}

\newcommand{\op}{^{op}}

\newcommand{\mat}[1]{\ensuremath{
\left[\begin{matrix}#1
\end{matrix}\right]
}}
 
\newcommand{\vs}[1]{\vskip .#1 cm} 
\newcommand{\then}{\Rightarrow}

\newcommand{\ifff}{\Leftrightarrow}

 \newcommand{\onto}{\twoheadrightarrow}

\newcommand{\ot}{\leftarrow}

\DeclareMathOperator{\Tr}{Tr}
\DeclareMathOperator{\Gen}{Gen}%
\DeclareMathOperator{\Hom}{Hom}%
\DeclareMathOperator{\Ext}{Ext}%
\DeclareMathOperator{\End}{End}%
\DeclareMathOperator{\soc}{soc} 
\DeclareMathOperator{\topp}{top} 

\DeclareMathOperator{\undim}{\underline{dim}}
 
\newcommand{\field}[1]{\mathbb{#1}}

\newcommand{\CC}{\ensuremath{{\field{C}}}}
\newcommand{\RR}{\ensuremath{{\field{R}}}}

\newcommand{\QQ}{\ensuremath{{\field{Q}}}}

\newcommand{\symm}[2]{{\dfrac{\bg{#1}}{\color{red}#2}}}
\newcommand{\tsymm}[2]{{\frac{\bg{#1}}{\color{red}#2}}}

\newcommand{\bg}[1]{\color{black!45!green}#1}

\newcommand{\commentout}[1]{}

\newcommand{\cB}{\ensuremath{{\mathcal{B}}}}
\newcommand{\cC}{\ensuremath{{\mathcal{C}}}}
\newcommand{\cD}{\ensuremath{{\mathcal{D}}}}

\newcommand{\cG}{\ensuremath{{\mathcal{G}}}}

\newcommand{\cT}{\ensuremath{{\mathcal{T}}}}
\newcommand{\cU}{\ensuremath{{\mathcal{U}}}}

\newcommand{\cW}{\ensuremath{{\mathcal{W}}}}

\newcommand{\no}[1]{}

\title{Balanced notation for $\tau$-rigid pairs}
\author{Kiyoshi Igusa}
\address{Department of Mathematics, Brandeis University, Waltham, MA 02454}\email{igusa@brandeis.edu}
\author{Gordana Todorov}
\address{Department of Mathematics, Northeastern University, Boston, MA 02115}\email{g.todorov@northeastern.edu}

\subjclass[2020]{
16G20; 20F55}

\keywords{torsion classes, $\tau$-tilting, semi-invariants, wall-and-chamber structure, Jasso category}

\begin{document}

\begin{abstract}
{We introduce a new notation for $\tau$-rigid pairs called ``balanced pairs''. The notation $\tsymm AB$ allows for simplification of some formulas, for example the Jasso category and its dual. Using this balanced notation we show that the $g$-vector $g(\tsymm AB)$ has nice geometric and algebraic implications: It defines ``lower'' and ``upper'' chambers and we prove that this corresponds to generalized Bongartz and co-Bongartz completions of balanced pairs. We also show that the balanced notation agrees the wall labels for the semi-invariant picture in the hereditary case and, in the general case, we describe the relation between balanced notation and the wall labels.}
\end{abstract}

\maketitle



{\section*{Introduction} 

{
We introduce ``balanced'' notation for $\tau$-rigid pairs (Def. \ref{def: balanced notation A/B}) in order to have balanced use of projective and injective modules. This led to analysis of two important chambers $\cB_-$ (Def. \ref{def: chamber A}) and $\cB_+$ (Def. \ref{def: chamber B}) associated to balanced pairs. Before stating the theorems, here is an example for the hereditary path algebra of the $A_3$-quiver. Figure \ref{Fig01: A3 example} is a ``semi-invariant picture'' of the domains of semi-invariants for this hereditary algebra. For general finite dimensional algebras, these are called ``stability diagrams''. In the figure, vertices are $g$-vectors of indecomposable modules or shifted projective modules which we want to treat in the same way. 

{%
\begin{figure}[htbp]
\begin{center}
\begin{tikzpicture}[scale=.9]

\draw (8,-1) node{$\xymatrixrowsep{10pt}\xymatrixcolsep{10pt}
\xymatrix{
&& P_3\ar[dr]\ar@{--}[rr] && P_1[1]\ar[dr]\\
& P_2\ar[ur]\ar[dr] \ar@{--}[rr] && I_2\ar[dr]\ar@{--}[rr]\ar[ur]&&P_2[1]\ar[dr]\\
S_1\ar[ur] \ar@{--}[rr]&&S_2\ar[ur] \ar@{--}[rr]&&S_3\ar@{--}[rr]\ar[ur]&&P_3[1]
	}$};

\begin{scope}
\coordinate (A) at (-1.25,-2);
\coordinate (B) at (1.95,-.5);
\draw[thick] (0,0) ellipse[x radius= 2cm,y radius=1.73cm];
\draw[thick][fill,white] (-1,0) circle[radius=2cm];
\draw[thick] (-1,0) circle[radius=2cm];
\draw[thick] (1,0) circle[radius=2cm];
\draw[thick] (0,-2) ellipse[x radius=3cm,y radius=2cm];
\draw[thick] (A)..controls (1,-3.5) and (2,-2)..(B);
\coordinate (D1) at (2.6,-3.5);
\draw (D1) node{\tiny$\cD_H(S_3)$};
\draw (2.8,-3.9) node{\tiny$=\cD_H(I_3)$};
\coordinate (D2) at (-3,1.3);
\coordinate (D2r) at (3.5,.5);
\draw (D2) node{\tiny$\cD_H(S_1)$};
\draw (D2r) node{\tiny$\cD_H(S_2)$};

\coordinate (D3) at (0,-3.5);
\draw (D3) node{\tiny$\cD_H(I_2)$};

\coordinate (D4) at (1,-2.8);
\coordinate (D4u) at (1.9,1.2);
\draw (D4) node{\tiny$\cD_H(P_3)$};
\draw (D4u) node{\tiny$\cD_H(P_2)$};

\draw[black!45!green](0,-2) node{$P_3$};
\draw[black!45!green](0,2.1) node{$P_3[1]$};
\draw[black!45!green](1.25,0.05) node{$P_2$};
\draw[black!45!green](-1.25,0.08) node{$S_1$};
\draw[black!45!green](3.2,-1.1) node{$P_1[1]$};

\draw[black!45!green](-3.2,-1.1) node{$P_2[1]$};

\draw[black!45!green](-1.45,-2.2) node{$S_3$};

\draw[black!45!green](2.2,-.3) node{$S_2$};
\draw[black!45!green,<-] (2.4,-.15) -- (3,.1);

\draw[black!45!green](1.8,-2.2) node{$I_2$};


\clip (0,-2) ellipse[x radius=3cm,y radius=2cm];
\draw[thick] (.65,-1.3) circle[radius=2.04cm];

\end{scope}
\end{tikzpicture}
\caption{On the left is the semi-invariant picture for the quiver $Q:1\ot 2\ot 3$ with semi-invariant domains labeled as $\cD_H(M)$ for the six indecomposable modules $M$. The vertices are labeled in the standard way by the nine objects shown in green. The Auslander-Reiten quiver of $Q$ together with the shifted projectives is on the right. 
}
\label{Fig01: A3 example}
\end{center}
\end{figure}
}

The results of this paper summarized in the introduction are for finite dimensional algebras over any field except for Theorem \ref{thm 2} which is for hereditary algebras.

\begin{Theorem}[Corollaries \ref{cor: vertices of U+}, \ref{cor: vertices of U0}]\label{thm 1}
Let $\Lambda$ be a finite dimensional algebra. The $g$-vector of any non-complete balanced pair $\symm AB$ has a unique geometrically defined upper chamber $\cB_-$ and a unique lower chamber $\cB_+$.
\end{Theorem}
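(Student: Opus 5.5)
The plan is to realize both chambers as the chambers of the $g$-vector fan, that is, the cones of complete support $\tau$-tilting pairs, that lie immediately on either side of the vector $g(\tsymm AB)$ in a fixed direction. The direction comes from the stability structure: the chamber of $\Lambda$ (the cone of $P_1,\dots,P_n$) sits at the positive end and the chamber of $\Lambda[1]$ at the negative end. Write $\tsymm AB$ for a non-complete balanced pair with $g$-vector $v=g(\tsymm AB)$, and let $\cC$ be the cone of the pair in the $g$-vector fan. Because the pair is not complete, $\cC$ has codimension at least one. I will use the standard facts that $\cC$ is a face of the chambers of all its completions, and that these chambers are exactly the chambers of the fan meeting a small neighborhood of any relative interior point of $\cC$.

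First I set the reference direction. Let $e$ be a generic vector, for instance a perturbation of the dimension vector of $\Lambda$, chosen so that the points $v\pm \epsilon e$ avoid every wall for small $\epsilon>0$. There are only finitely many walls through a neighborhood of $v$, namely the semi-invariant domains $\cD(M)$ with $M$ in the Jasso category $J(\tsymm AB)$, so this is a finite genericity condition. I then define $\cB_+$ as the chamber containing $v+\epsilon e$ and $\cB_-$ as the one containing $v-\epsilon e$, or the reverse, to match the definitions \ref{def: chamber A} and \ref{def: chamber B}. With these definitions existence is immediate, provided $v+\epsilon e$ lies in the interior of some chamber. That holds when the fan is complete near $v$, which is the case locally around $\cC$ by Jasso reduction. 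In general I use the fact that the $g$-vector cones containing $\cC$ form a complete fan in the quotient $\RR^n/\text{span}(\cC)$, isomorphic to the $g$-vector fan of $J(\tsymm AB)$.

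Second, uniqueness and the geometric characterization. Via Jasso reduction, a chamber containing $\cC$ corresponds to a support $\tau$-tilting pair of $J(\tsymm AB)$. I want to show that the images of $v\pm \epsilon e$ land in the chambers of $0$ and of $J[1]$ in the reduced fan. In other words, $\cB_+$ is the unique chamber containing $\cC$ on the positive side of every wall $\cD(M)$ through $\cC$, and $\cB_-$ is the unique one on the negative side of every such wall. For this I use the stability condition: $v$ lies on $\cD(M)$ exactly when $M\in J(\tsymm AB)$, and the sign of $\langle v+\epsilon e,\undim M\rangle$ is $+$ for every such $M$, since $\langle e,\undim M\rangle>0$. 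This independence of the choice of $e$ is what makes the chambers "geometrically defined" and unique: any generic $e$ with positive pairing against all modules gives the same chamber.

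The main obstacle is the last step. I have to show that some chamber of the fan lies on the positive side of all walls through $\cC$ simultaneously, and that it is unique. This means identifying, in the reduced fan of $J(\tsymm AB)$, the chamber of $J$'s "projective generator" (all stability functions positive on $J$) and that of its shifted version. I would do this using the semistable torsion classes $\cT=\overline{\mathcal T}(v)$ and $\cT'=\mathcal T(v)$ attached to $v$. The interval $[\mathcal T(v),\overline{\mathcal T}(v)]$ is the Jasso interval. Its top and bottom correspond to the Bongartz and co-Bongartz completions, and these are the chambers adjacent to $\cC$ whose torsion classes are extremal. Matching these with the side conditions above gives both existence and uniqueness. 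It also anticipates the later identification of $\cB_\pm$ with the generalized (co-)Bongartz completions.
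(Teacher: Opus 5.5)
Your target is the right one, and it is the paper's: $\cT(v)=\Gen A$ and $\overline\cT(v)={}^\perp B$ are realized by the generalized co-Bongartz and Bongartz completions. The existence argument, however, has a genuine gap.

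The two facts you rely on in the first step are false in general. Infinitely many domains can pass through $v=g(\tsymm AB)$, and the $g$-vector cones containing the cone of $\tsymm AB$ need not cover a neighbourhood of $v$. Both fail whenever $J(\tsymm AB)$ is $\tau$-tilting infinite. For example, if $J(\tsymm AB)$ is equivalent to modules over the Kronecker algebra, then each of its infinitely many bricks $M$ has $v\in\cD_\Lambda(M)$ by Corollary~\ref{cor: M in J, g in D(M)}. The reduced fan is then the Kronecker $g$-vector fan, which is not complete. So a chamber containing $v+\epsilon e$ is not available for free.

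Moreover, the theorem concerns the fixed direction $\eta$. The claim that $v\pm\varepsilon\eta$ avoids every domain for all small $\varepsilon>0$ \emph{is} the existence statement, so it cannot be folded into a genericity hypothesis on $e$. Uniqueness, by contrast, is immediate, since distinct chambers are disjoint.

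The step you call the main obstacle is the whole content, and ``matching'' is not an argument. Knowing that the generalized Bongartz completion $\tsymm{A'}{B'}$ has $\cT(\cC(\tsymm{A'}{B'}))=\Gen A'={}^\perp B=\overline\cT(v)$ does not by itself place $v+\varepsilon\eta$ inside $\cC(\tsymm{A'}{B'})$. The paper bridges this with Theorem~\ref{thm: not red or green}: a boundary point $v$ of a standard chamber $\cU$ is red iff $\overline\cT(v)=\cT(\cU)$, and green iff $\cT(v)=\cT(\cU)$. Combined with Theorems~\ref{thm: Pbar of A/B}, \ref{thm: P(U) is standard}, \ref{thm: Bongartz complement} and~\ref{thm: co-Bongatz}, this gives Corollaries~\ref{cor: vertices of U+} and~\ref{cor: vertices of U0}.

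You could instead close the gap inside the single simplicial cone, with no completeness needed:
\begin{enumerate}
\item Write $\eta=\sum_j c_jg_j$ in the basis of $g$-vectors $g_j$ of the summands of $\tsymm{A'}{B'}$.
\item Fix a summand $i$ not in $\tsymm AB$. Theorem~\ref{thm: J perpendicular} gives a nonzero $M_i$ in the rank-one Jasso category of the other $n-1$ summands. Hence $g_j\ast\undim M_i=0$ for $j\neq i$, and $M_i\in J(\tsymm AB)\subseteq{}^\perp B={}^\perp B'$.
\item Corollary~\ref{cor: g(A/B) dot X} then gives $g_i\ast\undim M_i=\dim_K\Hom_\Lambda(A_i',M_i)$. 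This is nonzero because the $g_j$ form a basis, so it is positive.
\item Therefore $c_i=\dim_K M_i/(g_i\ast\undim M_i)>0$.
\item Since $v$ is the sum of the $g_j$ over the summands of $\tsymm AB$, all coefficients of $v+\varepsilon\eta$ are positive for small $\varepsilon$. So $v+\varepsilon\eta\in\cC(\tsymm{A'}{B'})$, and this cone meets no domain by Corollary~\ref{cor: empty interior} and Proposition~\ref{prop: unions of rays}.
\end{enumerate}
For $\cB_-$, use the generalized co-Bongartz completion instead. There $M_i\in A^\perp=(A')^\perp$ forces $g_i\ast\undim M_i<0$, hence $c_i<0$, and $v-\varepsilon\eta$ lies in the cone.
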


Upper and lower chambers are defined, or rather, uniquely characterized in Def. \ref{def: chamber A} and \ref{def: chamber B}. They are eventually shown to exist in Corollaries \ref{cor: vertices of U+} and \ref{cor: vertices of U0} and illustrated in this introduction in Figures \ref{Fig02: A3 example} and \ref{Fig: almost complete A/B}.

{\begin{center}
    \begin{figure}
\begin{tikzpicture}[scale=.7]
\draw[ thick] (0,0)..controls (1,1) and (3,2)..(4,2);
\draw[ thick] (0,0)..controls (-1,1) and (-3,2)..(-4,2);
\draw[ thick] (0,0)..controls (-1,-1) and (-2,-2)..(-2.5,-4);
\draw[ thick] (0,0)..controls (1,-1) and (2,-2)..(2.5,-4);
\draw[thick] (-4,1)..controls (-3,2) and (-1.4,3)..(0,3);
\draw[thick] (4,1)..controls (3,2) and (1.4,3)..(0,3);

\begin{scope}[yshift=-.5mm]
\clip (-3,1.3)rectangle(3,3.1);
\draw[ thick,red] (-4,1)..controls (-3,2) and (-1.4,3)..(0,3);
\draw[thick,red] (4,1)..controls (3,2) and (1.4,3)..(0,3);
\end{scope}
\draw[thick] (-3.5,-4)..controls (-2.5,-3.5) and (-1.5,-3).. (0,-3); 
\draw[thick] (3.5,-4)..controls (2.5,-3.5) and (1.5,-3).. (0,-3); 

\begin{scope}[yshift=.5mm]
\clip (-2.3,-4)rectangle (2.3,-2.7);
\draw[thick,black!45!green] (-3.5,-4)..controls (-2.5,-3.5) and (-1.5,-3).. (0,-3); 
\draw[thick,black!45!green] (3.5,-4)..controls (2.5,-3.5) and (1.5,-3).. (0,-3); 
\end{scope}
\draw[thick] (-4,0)..controls (-3,.5) and (-1,.3)..(0,0);
\draw[thick] (-4,-1)..controls (-3,-.5) and (-1,0)..(0,0);
\draw[thick] (-4,-1.5)..controls (-3,-1) and (-1,-.1)..(0,0);
\draw[thick] (-4,-2)..controls (-3,-1.5) and (-1,-.2)..(0,0);
\draw (0.1,-.05)node[right]{$v_0$};
\draw (0,1.5) node{$\cB_-$};
\draw (0,-1.7) node{$\cB_+$};
%
\draw[black!45!green] (0,.5) node{$A$};
\draw[red](0,2.5) node{$\cD_H(A)$};
\draw[red] (0,-.5) node{$B$};
\draw[black!45!green] (-1.2,-2.7) node[right]{$\cD_H(B)$};
\end{tikzpicture}
\caption{The vertex $v_0$ is the $g$-vector of $A$ which labels the ``covering wall'' $\cD_H(A)$. ``Covering'' means the ``upper chamber'' $\cB_-$ is on the positive (red) side of $\cD_H(A)$ and on the negative (green) side of all of its other walls. On the other side, the ``lower chamber'' $\cB_+$ is on the negative (green) side of $\cD_H(B)$ and on the positive (red) side of all of its other walls. Instead of just $A$ we would like to label the vertex $v_0$ with the pair of modules $\symm AB$ as shown. We also want the label $A$ in the region $\cB_-$ covered by $\cD_H(A)$ and the label $B$ in the region $\cB_+$ on the negative side of $\cD_H(B)$.}
\label{Fig02: A3 example}
\end{figure}
\end{center}
}

{
\begin{figure}[htbp]
\begin{center}
\begin{tikzpicture}[scale=.8]

\begin{scope}[xshift=-5mm]
\draw[very thick] (0,0)..controls (1,0) and (2,-.2)..(3,-.5);
\draw[very thick] (0,0)..controls (-1,0) and (-2,-.2)..(-3,-.5);
\end{scope}
\draw[very thick] (5,3.6)--(4.5,3)--(2,0)--(1,-1.2);
\draw[very thick] (5.8,3.6)--(4.5,3)--(-2,0)--(-4.6,-1.2);
\begin{scope}[xshift=2mm]
\draw[very thick] (-6.8,-3.1)--(-5,-2.5)--(4,.5);
\draw[very thick] (-5.6,-3.1)--(-5,-2.5)--(-2,.5);
\end{scope}
\draw (2,-.5) node[below]{$v_{n-1}$};
\draw (-2.4,-.3) node[below]{$v_1$};
\draw (4.6,2.9) node[right]{$v_0=g(\tsymm{A_0}{B_0})$};
\draw (-4.9,-2.7) node[right]{$v_0'=g(\tsymm{A_0'}{B_0'})$};
\draw (1.5,.7) node{$\cB_-$};
\draw (-1.5,-.9) node{$\cB_+$};
\draw (0,0) node{$\bullet$};
\draw (0,0)node[below]{$\sum v_i$};
\end{tikzpicture}
\caption{For an almost complete balanced pair $\symm AB=\bigsqcup \symm{A_i}{B_i}$, the upper and lower chambers $\cB_-$ and $\cB_+$ are the only two chambers which contain $g\!\left(\symm AB\right)=\sum v_i$ where $v_i=g\!\left(\symm {A_i}{B_i}\right)$ on their boundaries. This is because walls have only two sides. The remaining vertices $v_0$, $v_0'$ are the $g$-vectors of the two ways to complete the almost balanced pair.}
\label{Fig: almost complete A/B}
\end{center}
\end{figure}
}

\begin{Theorem}[Theorems \ref{thm: P(U0),P(U+)}, \ref{thm: Bongartz complement}]\label{thm 1a}
Let $\Lambda$ be a f.d. algebra. The torsion class associated to $\cB_+$ is $\,^\perp B$ and, therefore, the lower chamber $\cB_+$ of $g\!\left(\symm AB\right)$ corresponds to the generalized Bongartz completion of $\symm AB$ (Def. \ref{def: generalized Bongartz}), i.e., the other vertices of the lower chamber $\cB_+$ give the generalized Bongartz complement of $\symm AB$.
\end{Theorem}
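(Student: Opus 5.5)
We need to guess the setup. A balanced pair $\tsymm AB$: for a $\tau$-rigid pair $(M,P)$, presumably $A = M$ and $B = \tau M \oplus \nu P$, i.e. the "dual" injective side, so that the Jasso category is $A^\perp\cap{}^\perp B$. The $g$-vector satisfies $\langle g(\tsymm AB), \undim X\rangle = \dim\Hom(A,X)-\dim\Hom(X,B)$. The chamber $\cB_+$ is defined as lying on the negative side of $\cD_H(B)$ (its summands) and on the positive side of all its other walls.

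The plan is to prove the first claim, that the torsion class of $\cB_+$ is ${}^\perp B$, by the standard correspondence between chambers and functorially finite torsion classes. A chamber with vertices $g(\tsymm{A'}{B'})$ for a complete balanced pair corresponds to the torsion class $\Gen A'={}^\perp B'$; this is the balanced restatement of Adachi--Iyama--Reiten, since $\Gen M={}^\perp(\tau M)\cap P^\perp$ and $P^\perp={}^\perp \nu P$. For a point $\theta$ in the interior of a chamber, the torsion class is $\{X : \langle\theta,\undim X'\rangle>0 \text{ for all nonzero quotients } X'\}$, or equivalently the largest torsion class where $\theta$ is positive on quotients. Take $\theta$ in $\cB_+$ near $g(\tsymm AB)$.

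The first step is to show ${}^\perp B\subseteq \cT(\cB_+)$. The chamber $\cB_+$ contains $g(\tsymm AB)$ in its closure. Since $\cB_+$ has $g(\tsymm AB)$ on its boundary, the completion $\tsymm{A'}{B'}$ of $\cB_+$ contains $\tsymm AB$ as a summand, which we take from the earlier corollary on vertices. Hence $B$ is a summand of $B'$, which gives $\cT(\cB_+)={}^\perp B'\subseteq{}^\perp B$.

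The reverse inclusion uses the chamber being on the negative side of the walls of $B$ and positive on the others. For $X\in{}^\perp B$, we have $\langle g(\tsymm AB),\undim X'\rangle=\dim\Hom(A,X')\ge0$ for quotients $X'$ of $X$. We perturb into $\cB_+$ and use the defining property that each remaining wall of $\cB_+$, labeled by bricks not in the $B$-side, is crossed positively. This yields that every brick quotient of $X$ lies in $\cT(\cB_+)$, so $X\in\cT(\cB_+)$.

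I expect the main obstacle to be this step. Alternatively, one can argue via maximality: ${}^\perp B$ is the largest torsion class in the interval determined by $\tsymm AB$, namely $[\Gen A,{}^\perp B]$, and $\cB_+$ is the chamber at the bottom of the wall-crossing sequence, on the negative side of each $\cD_H(B_i)$. Crossing any other wall of $\cB_+$ upward would increase the torsion class while leaving ${}^\perp B$. The uniqueness of $\cB_+$ then forces equality.

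The second claim follows from the first. The generalized Bongartz completion is by definition the complete pair whose torsion class is the maximal element ${}^\perp B$ of the Jasso interval. Since $\cB_+$ has torsion class ${}^\perp B$, its vertex pair is that completion. Removing $\tsymm AB$ therefore leaves the generalized Bongartz complement, which is read off from the other vertices of $\cB_+$.
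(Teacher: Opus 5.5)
There is a genuine gap, and it starts with the definition you work from. In the paper, $\cB_+$ is \emph{defined} (Def.~\ref{def: chamber B}) as the chamber for which $g=g(\tsymm AB)$ is a red boundary point, i.e.\ $g+\varepsilon\eta\in\cB_+$ for all small $\varepsilon>0$. The description ``negative side of $\cD_H(B)$, positive side of all other walls'' comes from the introduction. It is a \emph{consequence}, valid only for indecomposable pairs over hereditary algebras (Theorem~\ref{thm 2}). In general the flooring wall is $\cD_\Lambda(M_+)$ with $M_+\cong C\cap C_0$ a possibly proper submodule of $B_0$ (Proposition~\ref{prop: C cap C0 is M+}), so this description cannot serve as a defining property.

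Both of your inclusions then fail as written. The inclusion $\cT(\cB_+)\subseteq{}^\perp B$ is the one you actually derive in your ``first step'', although you labelled that step as the reverse inclusion. It rests on the claim that the completion of $\cB_+$ contains $\tsymm AB$ as a summand, taken from the corollary on vertices. But Corollary~\ref{cor: vertices of U+} is deduced from the very statement you are proving, and it already presupposes that $\cB_+$ is a standard chamber, so this is circular. The inclusion ${}^\perp B\subseteq\cT(\cB_+)$ is not really argued. The claim that every brick quotient of $X$ lies in $\cT(\cB_+)$ because walls are ``crossed positively'' has no justification, and the maximality alternative is a heuristic rather than a proof.

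The missing idea is to use the actual definition together with Corollary~\ref{cor: g(A/B) dot X}. This is what the paper does through $\overline\cT(g)={}^\perp B$ (Theorem~\ref{thm: Pbar of A/B}) and Proposition~\ref{prop: P(v) for green v}(b). Since $\cT$ is constant on chambers, $\cT(\cB_+)=\cT(g+\varepsilon\eta)$ for all small $\varepsilon>0$.
\begin{enumerate}
\item If $X\in{}^\perp B$, then every nonzero quotient $X''$ lies in ${}^\perp B$. Hence $(g+\varepsilon\eta)\ast\undim X''=\dim_K\Hom_\Lambda(A,X'')+\varepsilon\dim_K X''>0$, so $X\in\cT(\cB_+)$. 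You already had the $\ge0$ half; the $+\varepsilon\eta$ term is what makes the perturbation into $\cB_+$ work.
\item Conversely, suppose $f:Y\to B$ is nonzero, with image $Y''$. Then $Y''$ is a quotient of $Y$ contained in $B$, so $\Hom_\Lambda(A,Y'')=0$ because $\Hom_\Lambda(A,B)=0$. Hence $g\ast\undim Y''=-\dim_K\Hom_\Lambda(Y'',B)<0$, so $Y\notin\cT(g+\varepsilon\eta)$ for small $\varepsilon$.
\end{enumerate}

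For the second claim your reasoning is fine once two points are fixed. First, take $\Gen A'={}^\perp B$ from Theorem~\ref{thm: Bongartz complement}; it is a theorem there, not the definition of the generalized Bongartz completion. Second, invoke injectivity of $\cU\mapsto\cT(\cU)$ (Theorem~\ref{thm: chambers are convex}) to conclude $\cB_+=\cC(\tsymm{A'}{B'})$. Existence of $\cB_+$ still needs an argument that $g$ is a red boundary point of $\cC(\tsymm{A'}{B'})$, which the paper obtains from Proposition~\ref{prop: uniqueness of B+}(b).
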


The chamber $\cB_+$ corresponds to the actual Bongartz completion of $A$ when $\,^\perp B$ is sincere (and, by Remark \ref{rem: A'' are nonzero}, this happens if and only if every component of $A$ is nonzero, i.e., $\tsymm 0{I_j}$ is not a component of $\tsymm AB$).

\begin{Theorem}[Theorem \ref{thm: P(U0),P(U+)}]\label{thm 1b}
Let $\Lambda$ be a f.d. algebra. The torsion class associated to $\cB_-$ is $\Gen A$ and, therefore, the upper chamber $\cB_-$ of $g\!\left(\symm AB\right)$ corresponds to the generalized co-Bongartz completion of $\symm AB$ (Def. \ref{def: generalized coB}), i.e., the other vertices of the upper chamber $\cB_-$ give the generalized co-Bongartz complement of $\symm AB$.
\end{Theorem}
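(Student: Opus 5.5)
The plan is to work with the standard correspondence between chambers of the stability diagram and support $\tau$-tilting pairs, equivalently functorially finite torsion classes. For a $\tau$-tilting pair $(M,P)$ the chamber is the interior of the cone spanned by the $g$-vectors of its summands. For $\theta$ in that chamber, the torsion class $\mathcal T_\theta$ of modules with all quotients $\theta$-positive equals $\Gen M$.

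The balanced pair $\tsymm AB$ has $g$-vector $g(\tsymm AB)$. We first identify the torsion class on the upper side, taking the characterization of $\cB_-$ from Def.~\ref{def: chamber A} as given: it has $g(\tsymm AB)$ in its closure, and it lies on the positive side of $\cD_H(A)$ and on the negative side of its other walls. The candidate is the chamber of the generalized co-Bongartz completion of Def.~\ref{def: generalized coB}. This completion is the support $\tau$-tilting pair whose torsion class is the minimal one containing $A$, namely $\Gen A$.

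The key steps are as follows.

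\textbf{Step 1.} The vertices of the co-Bongartz chamber include the summands of $\tsymm AB$, so $g(\tsymm AB)$ lies on its boundary. This holds because the co-Bongartz completion contains the $\tau$-rigid pair given by $A$ together with its projective part.

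\textbf{Step 2.} Every module in the torsion class $\Gen A$ is a quotient of some $A^n$. Hence for $\theta$ in that chamber, close to $g(\tsymm AB)$, the modules in $\Gen A$ are $\theta$-semistable in the limit. This gives the correct sign pattern: on the walls $\cD(M)$ bounding the chamber with $M\in\Gen A$, the chamber lies on the positive side.

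\textbf{Step 3.} The walls of the chamber of a torsion class $\mathcal T$ are labelled by the bricks in $\mathcal T$ that are minimal (top side) or by the bricks in $\mathcal T^\perp$ (bottom side). For $\mathcal T=\Gen A$ we check that the positive-side ("lower") walls of this chamber, i.e.\ those with the chamber on the positive side, are exactly those labelled by bricks covering into $A$. This makes $\cD_H(A)$ the covering wall with the remaining walls negative, as required by Def.~\ref{def: chamber A}.

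\textbf{Step 4.} Uniqueness is Corollary~\ref{cor: vertices of U+}. Therefore $\cB_-$ equals the co-Bongartz chamber, its torsion class is $\Gen A$, and its other vertices give the generalized co-Bongartz complement.

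The alternative, which is probably the paper's route, is duality. Apply $D=\Hom_K(-,K)$ to pass to $\Lambda\op$. Balanced notation is designed so that the roles of $A$ and $B$ swap, $\cB_-$ and $\cB_+$ swap, and torsion classes correspond to torsion-free classes, with $\Gen A$ dual to $\,^\perp B$ (Bongartz). Theorem~\ref{thm 1b} would then follow from Theorem~\ref{thm 1a} applied to $\Lambda\op$. I would try this first.

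The obstacle in that approach is checking that $g$-vectors negate under duality, $g^{\Lambda\op}(D\,\cdot)=-g^\Lambda(\cdot)$ up to the standard identification. Under that sign change the sides of walls flip, so "upper" and "lower" exchange as needed. One must also check that the dual of $\tsymm AB$ is $\tsymm{DB}{DA}$ in balanced notation, which uses the $\tau$/$\tau^{-1}$ symmetry built into Def.~\ref{def: balanced notation A/B}. Finally, the torsion-free class $A^\perp$-type object corresponding to the Bongartz torsion class on the dual side must dualize to $\Gen A$; this follows since the torsion pair on the $\Lambda\op$ side is $(\,^\perp DA\ldots)$ and applying $D$ gives $(\Gen A,A^\perp)$.
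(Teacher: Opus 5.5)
\textbf{Main argument.} Your Steps 1--4 have a genuine gap: they set out to verify the wrong property. Definition~\ref{def: chamber A} does not say that $\cB_-$ lies on the positive side of $\cD_H(A)$ and on the negative side of its other walls. It says that $g(\tsymm AB)$ is a \emph{green boundary point} of $\cB_-$, i.e.\ $g(\tsymm AB)-\varepsilon\eta\in\cB_-$ for small $\varepsilon>0$.

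The sign pattern you quote is the hereditary, indecomposable picture from the introduction. There, $\cD_H(A)$ being the covering wall is Theorem~\ref{thm: A_0=M_0, hereditary}, and this fails in general:
\begin{enumerate}
\item the covering wall is $\cD_\Lambda(M_-)$ with $M_-\cong A_0/(T+T_0)$ (Proposition~\ref{prop: radical trace}); in Example~\ref{eg: A3 Nakayama}, $A_0=P_2$ but $M_-=P_2/rad^3$;
\item for $A=0$ the chamber $\cB_-=\cU_{neg}$ has no red wall at all;
\item for decomposable $\tsymm AB$, the walls of $\cB_-$ not through $g(\tsymm AB)$ can have either colour.
\end{enumerate}
So Step 3 aims at a statement that is neither the definition nor true.

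Step 2 is also false. By Corollary~\ref{cor: g(A/B) dot X}, $g(\tsymm AB)\ast\undim A=\dim_K\End_\Lambda(A)>0$. In fact $\Gen A=\cT(g(\tsymm AB))$, which meets $\cW(g(\tsymm AB))$ only in $0$ (Proposition~\ref{prop: T(v), W(v), Tbar(v)}). So nothing nonzero in $\Gen A$ becomes semistable at $g(\tsymm AB)$. Finally, in Step 4, uniqueness of $\cB_-$ is Proposition~\ref{prop: uniqueness of B-}(a), not Corollary~\ref{cor: vertices of U+}, which concerns $\cB_+$.

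\textbf{The missing idea} is to compute the torsion class \emph{at} the boundary point. The paper argues as follows.
\begin{itemize}
\item Since $g(\tsymm AB)$ is a green boundary point of $\cB_-$, Proposition~\ref{prop: P(v) for green v}(a) gives $\cT(\cB_-)=\cT(g(\tsymm AB))$.
\item Theorem~\ref{thm: P(U) is standard} shows $\cT(g(\tsymm AB))=\Gen A$. It uses the formula $g(\tsymm AB)\ast\undim X=\dim_K\Hom_\Lambda(A,X)-\dim_K\Hom_\Lambda(X,B)$, a minimal-length counterexample, and closure of $\Gen A$ under extensions.
\item This proves the first claim for whatever chamber satisfies the definition.
\item For existence and the vertices: the generalized co-Bongartz completion $\tsymm{A'}{B'}$ has $\cT(\cC(\tsymm{A'}{B'}))=\Gen A'=\Gen A=\cT(g(\tsymm AB))$ (Theorem~\ref{thm: co-Bongatz}). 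Proposition~\ref{prop: uniqueness of B-}(b), via Theorem~\ref{thm: not red or green}, then makes $g(\tsymm AB)$ a green boundary point of $\cC(\tsymm{A'}{B'})$.
\end{itemize}

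You can rescue your wall-label idea, but the correct check is different. Every wall of $\cC(\tsymm{A'}{B'})$ through $g(\tsymm AB)$ is $\cD_\Lambda(M)$ with $M\in\cW(g(\tsymm AB))=A^\perp\cap{}^\perp B$ (Theorem~\ref{thm: W(g)=J}). If such a wall were red, then at its interior points $w$ we would have $M\in\overline\cT(w)=\cT(\cC(\tsymm{A'}{B'}))=\Gen A$. That is impossible since $0\neq M\in A^\perp$. So all these walls are green, and Remark~\ref{rem: red/green walls} makes $g(\tsymm AB)$ a green boundary point.

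\textbf{Duality fallback.} This is not the paper's route; the paper uses duality only in Section~\ref{sec: duality}, to get $M_+$ from $M_-$. But it does work.
\begin{itemize}
\item By Lemma~\ref{lem: Dg=-gD} and Proposition~\ref{prop: DB+=B-}, $\cB_-=-\cB_+^{op}$, where $\cB_+^{op}$ is the lower chamber of $g(\tsymm{DB}{DA})$ over $\Lambda^{op}$.
\item That chamber exists and has torsion class ${}^\perp DA=D(A^\perp)$, by Corollary~\ref{cor: vertices of U+} and Theorem~\ref{thm 1a}.
\item Submodules of $X$ dualize to quotients of $DX$, so $\cT_{\Lambda^{op}}(-v)=D\mathcal F(v)$ with $\mathcal F(v)=\{X\mid v\ast\undim X'<0\text{ for all }0\neq X'\subseteq X\}$.
\item You then need the standard fact, not stated in the paper, that $(\cT(v),\mathcal F(v))$ is a torsion pair for $v$ off all domains. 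This gives $\cT(\cB_-)^\perp=A^\perp$, hence $\cT(\cB_-)={}^\perp(A^\perp)=\Gen A$ by Proposition~\ref{prop: Gen M is torsion class}.
\item Theorem~\ref{thm: co-Bongatz} then identifies the vertices.
\end{itemize}
Compared with the paper's two-line direct argument (Theorem~\ref{thm: P(U) is standard} is the counterpart of Theorem~\ref{thm: Pbar of A/B}), this route adds torsion/torsion-free bookkeeping and depends on the $\cB_+$ half.
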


It is known that chambers are uniquely determined by their associated torsion classes (Theorem \ref{thm: chambers are convex}). Thus, the upper chamber $\cB_-$ is uniquely determined by $A$ and the lower chamber $\cB_+$ is uniquely determined by $B$. In Remark \ref{rem: too many torsion classes} we give simple examples where not all torsion classes come from chambers.

As an example of Theorems \ref{thm 1a} and \ref{thm 1b}, take $\symm AB$ an almost complete balanced pair. Then, the upper and lower chambers of $g\!\left(\symm AB\right)$ give the two completions of $\symm AB$. The last vertex of the upper chamber will be the $g$-vector of some $\symm{A_0}{B_0}$ and the last vertex of the lower chamber will be the $g$-vector of some $\symm{A_0'}{B_0'}$. See Figure \ref{Fig: almost complete A/B}. See also Remark \ref{rem: almost complete A/B}.

{
We also have the following converse of Corollaries \ref{cor: vertices of U+} and \ref{cor: vertices of U0}.

\begin{Theorem}[Theorem \ref{thm: converse of upper/lower}]\label{thm C}
Let $\Lambda$ be a f.d. algebra. Let $\symm AB$ be a complete balanced pair where $A,B$ are both nonzero. Let $\cU=\cC\left(\symm AB\right)$ be the corresponding standard open chamber. Then there exists a unique decomposition
\[
	\symm AB=\symm{A'}{B'}\sqcup \symm{A''}{B''}
\]
where $\cU$ is the upper chamber of $g\!\left(\symm{A'}{B'}\right)$ and $\cU$ is the lower chamber of $g\!\left(\symm{A''}{B''}\right)$, i.e., $\symm AB$ is the generalized Bongartz completion of $\symm{A''}{B''}$ and the generalized co-Bongartz completion of $\symm{A'}{B'}$.
\end{Theorem}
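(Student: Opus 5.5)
We will work in the setting of Theorems \ref{thm 1a} and \ref{thm 1b}: a complete balanced pair $\symm AB$ with standard open chamber $\cU=\cC\left(\symm AB\right)$ and torsion class $\cT=\Gen A={}^\perp B$. The plan is to read the decomposition off the walls of $\cU$. Each indecomposable summand $\symm{A_i}{B_i}$ gives a vertex $v_i=g\!\left(\symm{A_i}{B_i}\right)$ of $\cU$. The wall of $\cU$ opposite $v_i$ is labeled by a brick $M_i$, and $\cU$ lies either on the positive (red) side or on the negative (green) side of $\cD_H(M_i)$.

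We define $\symm{A'}{B'}$ to be the direct sum of those summands whose opposite wall has $\cU$ on the negative side, and $\symm{A''}{B''}$ to be the sum of the remaining summands. Geometrically these are the walls corresponding to the lower covers, respectively upper covers, of $\cT$ in the lattice of torsion classes. Mutating at $v_i$ either shrinks $\cT$ (the summand lies in the top $\Gen A$ and is "$A$-type") or enlarges it.

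Next we check the upper chamber condition. By the characterization in Def. \ref{def: chamber A}, $\cU$ is the upper chamber of $g\!\left(\symm{A'}{B'}\right)$ exactly when $g\!\left(\symm{A'}{B'}\right)$ lies in the closure of $\cU$ and $\cU$ is on the negative side of all walls through it except the covering wall. By Theorem \ref{thm 1b} and the uniqueness of chambers given their torsion classes (Theorem \ref{thm: chambers are convex}), it suffices to show $\Gen A'=\Gen A$. Equivalently, the generalized co-Bongartz completion of $\symm{A'}{B'}$ is $\symm AB$.

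To prove $\Gen A'=\Gen A$ we use the standard description of the minimal generators of a torsion class: the split projectives of $\Gen A$ are precisely the summands whose mutation yields a smaller torsion class, which are the negative-side walls. Hence $A'$ contains all split projective summands of $A$, and these generate $\Gen A$. Symmetrically, Theorem \ref{thm 1a} reduces the lower chamber claim to ${}^\perp B''={}^\perp B$. This holds because the summands whose mutation gives a larger torsion class are exactly the ones whose $B$-parts cogenerate the torsion-free class $B^\perp$ minimally (the dual statement).

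The hypothesis that $A$ and $B$ are nonzero ensures that $\cT$ is neither $0$ nor all of $\mathrm{mod}\,\Lambda$. Consequently both types of walls occur and both parts are nonempty, so that each is a non-complete pair to which Theorem \ref{thm 1} applies.

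For uniqueness, suppose $\symm AB=\symm{A_1'}{B_1'}\sqcup\symm{A_1''}{B_1''}$ is another decomposition with the required properties. Since $\cU$ is the upper chamber of $g\!\left(\symm{A_1'}{B_1'}\right)$, the only wall of $\cU$ through $g\!\left(\symm{A_1'}{B_1'}\right)$ with $\cU$ on its positive side is the covering wall. The walls through $g\!\left(\symm{A_1'}{B_1'}\right)$ are those opposite the summands of $\symm{A_1''}{B_1''}$, so every summand of $\symm{A_1''}{B_1''}$ has its opposite wall on the negative side, except possibly one. The symmetric condition on $\symm{A_1''}{B_1''}$ constrains the summands of $\symm{A_1'}{B_1'}$ in the same way. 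Combining the two conditions should force $\symm{A_1'}{B_1'}=\symm{A'}{B'}$.

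The main obstacle is this uniqueness step, in particular reconciling the "one covering wall" exception with the sign pattern of all walls of $\cU$. I would first attempt it using the explicit wall labels from the paper's comparison of balanced notation with semi-invariant walls. The goal would be to show that the covering wall $\cD_H(A')$ is forced to be the unique wall separating $\cU$ from $\Gen$ of the remaining summands.
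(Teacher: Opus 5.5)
There is a genuine gap, and it begins with a sign error in how you assign the summands. You put into $\tsymm{A'}{B'}$ the summands whose opposite wall has $\cU$ on its negative (green) side, and you identify these walls with lower covers of $\cT$. This is backwards. Leaving $\cU$ through a wall $\cD_\Lambda(M)$ that has $\cU$ on its negative side is a move in the positive direction, and it puts $M$ into the torsion class. So green walls are upper covers and red walls are lower covers.

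Geometrically, write $v_i=g(\tsymm{A_i}{B_i})$. Since the $v_i$ are linearly independent, the point $\sum_{j\in S}v_j$ lies on the wall opposite $v_k$ exactly when $k\notin S$. With your choice of $S$, every wall through $g(\tsymm{A'}{B'})$ is red. Hence it is a red boundary point, and $\cU$ is its \emph{lower} chamber. Your two parts are the right ones with the labels interchanged.

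A concrete example: let $\Lambda$ be hereditary of type $A_2$ with $P_1=S_1$, so $P_2=I_1$ has top $S_2$ and socle $S_1$, and $\tau S_2=S_1$. Take $\tsymm{A}{B}=\tsymm{P_2\sqcup S_2}{S_1}$. Then $v_1=g(\tsymm{P_2}{0})=e_2$, $v_2=g(\tsymm{S_2}{S_1})=e_2-e_1$, and $\cU=\{(x,y)\mid x<0,\ x+y>0\}$. The wall opposite $v_1$ is $\cD_\Lambda(P_2)$, and $\cU$ lies on its positive side. The wall opposite $v_2$ is $\cD_\Lambda(S_1)$, and $\cU$ lies on its negative side. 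Your rule gives $A'=S_2$, but $\Gen S_2\neq\Gen(P_2\sqcup S_2)$; the correct choice is $\tsymm{A'}{B'}=\tsymm{P_2}{0}$.

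Your algebraic description of $A'$ is correct: it should consist of the split projectives of $\Gen A$, i.e.\ the summands whose mutation shrinks $\cT$. Only the identification of these with negative-side walls is wrong. Separately, the torsion-free class paired with $\Gen A={}^\perp B$ is $A^\perp$, not $B^\perp$.

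The uniqueness step stalls because of a misreading of Def.~\ref{def: chamber A}. Being the upper chamber of $g(\tsymm{A'}{B'})$ means this point is a green boundary point of $\cU$, with no exceptions. The covering wall of Def.~\ref{def: covering domain} is the wall of $\cB_-$ \emph{not} containing $v_0$, so it imposes no condition at $g$. By Remark~\ref{rem: red/green walls}, a boundary point of a standard open chamber is green (resp.\ red) if and only if every wall containing it is green (resp.\ red), and each wall of $\cU$ has exactly one color.

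Combine this with the incidence observation above. For a partition of the indices into nonempty sets $S\sqcup S^c$, the point $\sum_{j\in S}v_j$ is green iff every $k\in S^c$ has a green opposite wall. Likewise $\sum_{k\in S^c}v_k$ is red iff every $j\in S$ has a red opposite wall. This forces $S$ to be exactly the set of indices with red opposite walls, which gives existence and uniqueness at once; this is the paper's proof.

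The hypothesis $A,B\neq 0$ enters, as you say, through Proposition~\ref{prop: there is one positive and one negative domain}. It guarantees that both colors occur, so both parts are nonzero and non-complete. The torsion-theoretic route to existence is unnecessary. After fixing the sign it can be made to work, but it imports the AIR left/right mutation criterion, the fact that split projectives generate $\Gen A$, and the dual statement for ${}^\perp B$. The wall-color argument is self-contained and gives uniqueness for free.
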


}

For an indecomposable balanced pair $\symm{A_0}{B_0}$, all but one of the walls of the upper chamber $\cB_-$ of $v_0=g\!\left(\symm {A_0}{B_0}\right)$ are ``green'' and contain $v_0$. The wall of $\cB_-$ not containing $v_0$ is called the ``covering wall'' (Def. \ref{def: covering domain}) if it is red (and this happens if and only if $A_0\neq0$). The wall of the lower chamber $\cB_+$ not containing $v_0$ is called the ``flooring wall'' (Def. \ref{def: flooring domain}) if it is green (and this happens if and only if $B_0\neq0$). This is illustrated in Figure \ref{Fig02: A3 example}.

\begin{Theorem}[Theorem \ref{thm: A_0=M_0, hereditary}]\label{thm 2}
For the semi-invariant picture for a hereditary algebra $H$, the covering wall and the flooring wall for the $g$ vector $g\!\left(\symm AB\right)$ of an indecomposable balanced pair are $\cD_H(A)$ and $\cD_H(B)$ respectively.
\end{Theorem}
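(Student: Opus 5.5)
Since we only know the statement, the proof will rest on Theorems \ref{thm 1a} and \ref{thm 1b} together with the standard description of semi-invariant domains for a hereditary algebra $H$. Recall that
\[
\cD_H(M)=\{x : \langle x,\undim M\rangle=0,\ \langle x,\undim M'\rangle\le 0 \text{ for all } M'\subseteq M\},
\]
and that $\cD_H(M)$ is the set of $x$ for which $M$ is $x$-semistable. Recall also the balanced notation: for an indecomposable pair, either $A$ is a module with $B=\tau A$ (or $B=0$ when $A$ is projective), or $A=0$ and $B=I_j$ corresponds to the shifted projective $P_j[1]$. The plan is to handle the covering wall and the flooring wall separately; the flooring wall statement is dual and comes from the lower chamber via Theorem \ref{thm 1a}.

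For the covering wall, take $A\ne 0$, so $v_0=g(A)$ and the upper chamber $\cB_-$ has torsion class $\Gen A$ by Theorem \ref{thm 1b}.
\begin{itemize}
\item The walls of $\cB_-$ are labeled by the modules $M$ that are minimal in $\Gen A$ (torsion simple) or minimal in $A^\perp$ (torsion-free simple). A wall is red exactly when its label lies in the torsion class, i.e. when the chamber is on its positive side.
\item The walls through $v_0$ are the walls of $\cB_-$ that meet the face opposite the covering wall. By Theorem \ref{thm 1b} they correspond to mutations at the complement summands, so they are green.
\item The one remaining red wall therefore has label the unique minimal brick of $\Gen A$ not detected by the complement. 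For $A$ indecomposable rigid over a hereditary algebra, $\Gen A$ has $A$ as a brick and $\Hom(A,-)$-minimal element, so this label is $A$. Hence the covering wall is $\cD_H(A)$.
\item As a consistency check, $\langle g(A),\undim A\rangle=\dim\End A>0$, which places $v_0$ strictly on the positive side of $\cD_H(A)$. This matches Figure \ref{Fig02: A3 example}.
\end{itemize}

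For the flooring wall, take $B\ne0$, so $B=\tau A$ or $B=I_j$. By Theorem \ref{thm 1a}, $\cB_+$ has torsion class ${}^\perp B$.
\begin{itemize}
\item The green walls of $\cB_+$ are labeled by the minimal objects of $({}^\perp B)^\perp$, which is the torsion-free class cogenerated by $B$. This class contains the brick $B$ as its socle-minimal object that is not killed by the other vertices.
\item Since the walls through $v_0$ are red, the single green wall is $\cD_H(B)$.
\item As a numerical check for $B=\tau A$, use $\langle g(A),\undim\tau A\rangle = \dim\Hom(A,\tau A)-\dim\Ext^1(A,\tau A)$. Rigidity forces $\Hom(A,\tau A)=0$, while Auslander--Reiten duality gives $\Ext^1(A,\tau A)\cong D\overline{\Hom}(\tau A,\tau A)\ne0$. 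So $v_0$ lies on the negative side of $\cD_H(\tau A)$, as required.
\item For $B=I_j$ we have $v_0=-e_j$ and $\langle -e_j,\undim I_j\rangle<0$, so the same conclusion holds.
\end{itemize}

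The main obstacle is the third step of each part: identifying the label of the unique wall not through $v_0$ exactly, rather than only some brick in $\Gen A$ or in $\mathrm{Cogen}\,B$. The approach I would try first is to use the fact that the walls through $v_0$ have labels in the wide subcategory $A^\perp\cap{}^\perp\tau A$, which is the Jasso category. A brick label outside this subcategory that is minimal in $\Gen A$ must then be $A$ itself. The dual argument, using $B=\tau A$ and ${}^\perp B$, shows the flooring label must be $B$.
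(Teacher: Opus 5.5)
\textbf{There is a genuine gap.} Your framework is sound up to the step you flag yourself, but that step is the whole content of the theorem, and the proposal does not supply it.

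Knowing $\cT(\cB_-)=\Gen A$ and that the walls through $v_0$ are green only tells you, via Proposition \ref{prop: there is one positive and one negative domain}, that the remaining wall $\cD_H(M_-)$ is red. So $M_-$ is a brick in $\Gen A$ with $\Hom(A,M_-)\neq 0$. Nothing you write forces $M_-\cong A$.

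The principle you propose to use (a brick label outside the Jasso category that is minimal in $\Gen A$ must be $A$) is not established, and it fails without the hereditary hypothesis. In Example \ref{eg: A3 Nakayama}, $A=P_2$ and the covering wall is $\cD_\Lambda(P_2/\mathrm{rad}^3)$. So the sentence ``$\Gen A$ has $A$ as a brick and $\Hom(A,-)$-minimal element, so this label is $A$'' is an assertion, not an argument.

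The flooring half has the same defect. That the walls through $v_0$ are red only shows the last wall of $\cB_+$ is green, not that it is $\cD_H(B)$. Your two numerical checks only place $v_0$ on the expected side of the hyperplanes orthogonal to $\undim A$ and $\undim B$. That is a necessary condition that does not single out these modules.

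\textbf{How the paper proves it.} The paper cites \cite[Proposition 2.3.33]{IOTW2} for $M_-=A$ and gets $M_+=B$ by duality. By Lemmas \ref{lem: dual of DL(M)}, \ref{lem: Dg=-gD} and Proposition \ref{prop: DB+=B-}, the map $v\mapsto -v$ carries the lower chamber of $g(\tsymm AB)$, with flooring wall $\cD_H(M_+)$, to the upper chamber of $g(\tsymm{DB}{DA})$, with covering wall $\cD_{H\op}(DM_+)$. Since $H\op$ is hereditary, the first statement gives $DM_+\cong DB$.

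\textbf{How to close the gap in your framework.} The missing idea is a Hom-vanishing that comes from brickness, fed into Theorem \ref{thm: formula for M0, M+}.
\begin{enumerate}
\item Over $H$, the indecomposable modules $A$ and $B$ are exceptional, hence bricks. So $T_0=0$ and $C_0=B$.
\item Each summand $A_i$ of the co-Bongartz complement is indecomposable, lies in $\Gen A$, and is not isomorphic to $A$. Suppose there were a nonzero map $A_i\to A$. Composing it with an epimorphism $A^m\to A_i$ gives a nonzero, hence invertible, endomorphism of $A$ factoring through $A_i$. That makes $A$ a direct summand of $A_i$, which is impossible.
\item Hence $\Hom(A_i,A)=0$ and $T=0$, and Proposition \ref{prop: radical trace} gives $M_-\cong A/(T+T_0)=A$.
\item Dually (Proposition \ref{prop: DB+=B-}), each $B_i'$ in the Bongartz complement is cogenerated by $B$. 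So $\Hom(B,B_i')=0$ and $C=B$, and Proposition \ref{prop: C cap C0 is M+} gives $M_+\cong C\cap C_0=B$.
\end{enumerate}
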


{
\begin{figure}[htbp]
\begin{center}
\begin{tikzpicture}

\begin{scope}
\coordinate (A) at (-1.25,-2);
\coordinate (B) at (1.95,-.5);
\draw[thick] (0,0) ellipse[x radius= 2cm,y radius=1.73cm];
\draw[thick][fill,white] (-1,0) circle[radius=2cm];
\draw[thick] (-1,0) circle[radius=2cm];
\draw[thick] (1,0) circle[radius=2cm];
\draw[thick] (0,-2) ellipse[x radius=3cm,y radius=2cm];
\draw[thick] (A)..controls (1,-3.5) and (2,-2)..(B);
\coordinate (D1) at (2.6,-3.5);
\draw (D1) node{\tiny$\cD_H(S_3)$};
\coordinate (D2) at (-3,1.3);
\coordinate (D2r) at (3,1.3);
\draw (D2) node{\tiny$\cD_H(S_1)$};
\draw (D2r) node{\tiny$\cD_H(S_2)$};

\coordinate (D3) at (0,-3.5);
\draw (D3) node{\tiny$\cD_H(I_2)$};

\coordinate (D4) at (1,-2.8);
\coordinate (D4u) at (1.9,1.2);
\draw (D4) node{\tiny$\cD_H(P_3)$};
\draw (D4u) node{\tiny$\cD_H(P_2)$};

\draw[black!45!green](0,-2) node{$P_3$};
\draw[red](0,-1.4) node{$0$};
\draw[black!45!green](0,2) node{$0$};
\draw[red](0,1.4) node{$S_3$};
\draw[black!45!green](1.25,0.05) node{$P_2$};
\draw[red](0.8,-.3) node{$0$};
\draw[black!45!green](-1.25,0.08) node{$S_1$};
\draw[red](-0.8,-.3) node{$0$};
\draw[black!45!green](2.95,-1.1) node{$0$};
\draw[red](2.3,-1.1) node{$P_3$};
\draw[black!45!green](-2.95,-1.1) node{$0$};
\draw[red](-2.3,-1.1) node{$I_2$};
\draw[black!45!green](-1.45,-2.2) node{$S_3$};
\draw[red](-1.1,-1.7) node{$S_2$};
\draw[black!45!green](2.2,-.3) node{$S_2$};
\draw[red](1.6,-.55) node{$S_1$};
\draw[black!45!green](1.8,-2.2) node{$I_2$};
\draw[red](1.4,-1.6) node{$P_2$};
\clip (0,-2) ellipse[x radius=3cm,y radius=2cm];
\draw[thick] (.65,-1.3) circle[radius=2.04cm];
\end{scope}
\end{tikzpicture}
\caption{This is Figure \ref{Fig01: A3 example} with red labels added on the opposite side from the green label as described in the caption of Figure \ref{Fig02: A3 example}. The green labels $P_i[1]$ on some of the vertices in Figure \ref{Fig01: A3 example} are replaced with the balanced pairs $\symm 0{I_i}$ as prescribed by Theorem \ref{thm 2}.
}
\label{Fig03: A3 example}
\end{center}
\end{figure}
}

For example, in Figure \ref{Fig03: A3 example}, the balanced notation for the shifted projective $P_j[1]$ is $\symm 0{I_j}$ and, by Theorem \ref{thm 2}, this corresponds to the fact that this vertex has no covering wall and flooring wall $\cD_H(I_j)$. Theorem \ref{thm 2} was one of the motivations for balanced notation. Figure \ref{Fig01: A3 example} shows the standard labeling of the semi-invariant picture for the hereditary algebra of type $A_3$. Figure \ref{Fig03: A3 example} shows the new labeling with balanced pairs.

\begin{Theorem}[Propositions \ref{prop: radical trace}, \ref{prop: C cap C0 is M+}]\label{thm 3}
In general, the covering wall $\cD_\Lambda(M_-)$ of $g\!\left(\symm {A_0}{B_0}\right)$ has $M_-$ a quotient of $A_0$ and the flooring wall is $\cD_\Lambda(M_+)$ where $M_+$ is a submodule of $B_0$.
\end{Theorem}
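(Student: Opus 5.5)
Here $\symm{A_0}{B_0}$ is an indecomposable balanced pair with $g$-vector $v_0$, and $\cB_-,\cB_+$ are its upper and lower chambers. I would work with the torsion classes attached to the two chambers, $\Gen A_0$ for $\cB_-$ (Theorem \ref{thm 1b}) and $\,^\perp B_0$ for $\cB_+$ (Theorem \ref{thm 1a}), and identify the modules labelling the walls through the brick labelling of covering relations in the lattice of torsion classes. Recall that for a covering $\cT'\subset\cT$ of functorially finite torsion classes, the wall between the two chambers is $\cD_\Lambda(M)$, where $M$ is the unique brick in $\cT\cap(\cT')^{\perp}$. The positive (red) side of $\cD_\Lambda(M)$ is the side with the larger torsion class. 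The plan is to find the covering relation that corresponds to the covering or flooring wall, and then to locate the brick inside $A_0$ or $B_0$.

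For the covering wall, $\cB_-$ lies on the red side of $\cD_\Lambda(M_-)$ and on the green side of all its other walls. So the torsion class $\Gen A_0$ of $\cB_-$ covers exactly one torsion class $\cT'$, namely the one across the wall not containing $v_0$. Since $\cB_-$ is the co-Bongartz chamber, its other walls contain $v_0$ and are green, so crossing them increases the torsion class. The brick label of the unique lower cover $\cT'\subset\Gen A_0$ is the brick $M_-\in \Gen A_0\cap(\cT')^\perp$.

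The key step is to show that $M_-$ is a quotient of $A_0$. Here I would use the standard fact that the brick labelling a lower cover of $\Gen T$ comes from a split projective summand of the support $\tau$-tilting module. Concretely, mutating at the summand $A_0$ gives $\cT'$ with $A_0\notin\cT'$. Let $t'$ be the torsion radical of $\cT'$. Then $M_-:=A_0/t'(A_0)$, or more precisely its top part $A_0/\mathrm{rad}_{\cT'}A_0$ (the radical trace of Proposition \ref{prop: radical trace}), lies in $\Gen A_0\cap(\cT')^\perp$ and is a brick. Brick-ness and uniqueness follow from $\cT'$ being a cover, via the Jasso reduction to a rank-one wide subcategory. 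This settles the covering wall.

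The flooring wall is dual. The torsion-free class of $\cB_+$ is $\operatorname{Cogen}\tau B_0$-type data, namely $B_0$-cogenerated: $(\,^\perp B_0)^\perp=\operatorname{Cogen}B_0$. The flooring wall is the unique wall of $\cB_+$ on whose green side $\cB_+$ lies. So $\,^\perp B_0$ has a unique upper cover $\cT''$, and the label $M_+\in\cT''\cap(\,^\perp B_0)^\perp\subseteq\operatorname{Cogen}B_0$ is the brick obtained as the $\cT''$-torsion part of $B_0$ (the dual trace, Proposition \ref{prop: C cap C0 is M+}). This makes $M_+$ a submodule of $B_0$. I would derive it by applying the previous argument to $\Lambda^{op}$ with duality $D$, which exchanges $A$ and $B$ in balanced notation.

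The main obstacle is matching the geometry to the lattice. One must check that the wall of $\cB_-$ not through $v_0$ is precisely the mutation at the summand $\symm{A_0}{B_0}$, and that this mutation decreases the torsion class exactly when $A_0\ne0$. The other step to verify is that the label brick is the $\cT'$-torsion-free quotient of $A_0$ itself rather than of some other summand. I would first try to deduce this from the relation $\mathrm{rad}$-trace $=t'(A_0)$ together with $\Hom(A_0,\text{other summands' quotients})$ vanishing, which follows from $\tau$-rigidity.
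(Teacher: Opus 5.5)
There is a genuine error in your key step, though it is easy to repair. Your route is different from the paper's. The paper stays inside balanced notation:
\begin{itemize}
\item it takes the trace $T=\Tr_{\sqcup_{i>0}A_i}A_0$;
\item it places $A_0/T$ in the rank-one Jasso category $J(\bigsqcup_{i>0}\tsymm{A_i}{B_i})$, whose simple object is $M_-$;
\item it shows $A_0/T\neq 0$ via a green path;
\item it lifts nilpotent endomorphisms using $\Ext^1_\Lambda(A_0,T)=0$.
\end{itemize}
You instead use brick labels of covers in the lattice of torsion classes. That framework is fine, but your key step is false as written.

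Since $\cT'=\Gen(\bigsqcup_{i>0}A_i)$, your $t'(A_0)$ is exactly the paper's $T$. In general $A_0/T$ is not a brick, only an iterated self-extension of $M_-$. In Example \ref{eg: A3 Nakayama}, $A_0/T$ has composition factors $231231$ and a nonzero nilpotent endomorphism with image $231$. Being a cover makes only the unique \emph{simple} object of $\Gen A_0\cap(\cT')^\perp=\operatorname{Filt}(M_-)$ a brick. That category is equivalent to modules over a local algebra and contains many non-bricks.

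Your fallback ``$A_0/\mathrm{rad}_{\cT'}A_0$'' is undefined. If you mean $A_0/(T+T_0)$, its membership in $(\cT')^\perp$ does not follow, because torsion-free classes are not closed under quotients. Supplying exactly this is the job of the lifting argument in Proposition \ref{prop: radical trace}. The same error occurs dually: $t''(B_0)$ is the paper's $C$, which is only an iterated self-extension of $M_+$.

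The plan in your last paragraph also rests on two false relations:
\begin{itemize}
\item $t'(A_0)$ is not the radical trace. In the example $T=S_2$, while $T_0$ has length $4$.
\item $\Hom_\Lambda(A_0,-)$ does not vanish on quotients of the other summands. The co-Bongartz complement lies in $\Gen A_0$; for instance $S_2$ is a quotient of $P_2$.
\end{itemize}

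The repair is short, because the statement only asks for a quotient and a submodule, not the exact brick.
\begin{itemize}
\item $A_0/t'(A_0)$ is nonzero, since $A_0\notin\cT'$. It lies in $\Gen A_0\cap(\cT')^\perp=\operatorname{Filt}(M_-)$. The top factor of an $M_-$-filtration then gives $A_0\twoheadrightarrow A_0/t'(A_0)\twoheadrightarrow M_-$.
\item Dually, $t''(B_0)\neq 0$ because $\cT''\not\subseteq{}^\perp B_0$. Also $t''(B_0)\in\cT''\cap({}^\perp B_0)^\perp=\operatorname{Filt}(M_+)$, since $({}^\perp B_0)^\perp$ is closed under submodules. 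Its bottom filtration factor gives $M_+\hookrightarrow t''(B_0)\subseteq B_0$.
\end{itemize}

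Repaired this way, your proof of the qualitative statement is shorter than the paper's. Your nonvanishing argument via $A_0\notin\cT'$ also replaces the green-path argument. However, it imports several external results:
\begin{itemize}
\item walls are labelled by the bricks of the corresponding covers;
\item for a cover, $\cT\cap(\cT')^\perp$ is $\operatorname{Filt}$ of a brick \cite{DIRRT};
\item adjacent chambers differ by one mutation.
\end{itemize}
The paper's argument is self-contained and yields the canonical formulas $M_-\cong A_0/(T+T_0)$ and $M_+\cong C\cap C_0$. On your route these would additionally need Asai's semibrick description of the label as $A_0$ modulo the images of all radical maps into $A_0$ from the support $\tau$-tilting module.
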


} 

}


{
\section{Basic Concepts}\label{sec1: basic concepts}

Let $\Lambda$ be a finite dimensional algebra over a field $K$ and let $mod\text-\Lambda$ be the category of finitely generated right $\Lambda$-modules. Let $S_1,\cdots,S_n$ be the simple modules with projective covers $P_1,\cdots,P_n$ and injective envelopes $I_1,\cdots,I_n$. 

The \textbf{dimension vector} of a $\Lambda$-modules $M$ is the integer vector $\undim M=(d_1,\cdots,d_n)$ where $d_i$ is the number of times that $S_i$ occurs in the composition series of $M$. Thus, the dimension of $M$ over the ground field $K$ is given by the dot product:
\[
    \dim_KM=(\dim_KS_1,\dim_KS_2,\cdots,\dim_KS_n)\cdot \undim M.
\]

The \textbf{$g$-vector} $g(M)$ of $M$ is defined by the equation
\[
	g(M):=\undim (P_0/rP_0)-\undim(P_1/rP_1)
\]
where $P_1\to P_0\to M$ is the minimal projective presentation of $M$. We note that the \textbf{top} of $M$, given by $\topp M:=M/rM$ is isomorphic to $\topp P_0=P_0/rP_0$. 

To account for the possibility of a valued quiver such as type $B_n,C_n,F_4,G_2$, where simple modules have different dimension, we use the \textbf{modified dot product}:
\begin{equation}\label{eq: modified dot product}
        v\ast w:=\sum_i v_iw_i\dim_KS_i \quad \text{ for } v,w\in \RR^n.
\end{equation}
\begin{Remark}\label{rem: ast notation}
\begin{enumerate}
    \item[(a)] $\dim_KM=\eta\ast \undim M$ where $\eta=(1,1,\cdots,1)$.
\item[(b)] The standard dot product counts each simple as $1$. So, $\eta\cdot\undim M$ is the \textbf{length} of $M$ (the number of simple modules in its composition series).
\item[(c)] For any projective module $P$ and any $M$ we have
\[
\dim_K\Hom_\Lambda(P,M)=\undim\topp P\ast\undim M=g(P)\ast\undim M.
\]
\end{enumerate}
\end{Remark}
If $M$ is not projective, the \textbf{Auslander-Reiten translate} $\tau M$ of $M$ is defined to be the kernel of the map of injective modules
\[
	\nu P_1\to \nu P_0
\]
induced by the projective presentation $P_1\to P_0\to M$ of $M$. Here $\nu$ is the \textbf{Nakayama functor} defined by
\[
	\nu M:=D\Hom_\Lambda(M,\Lambda)
\]
where $D$ denotes the $K$-dual: $DV:=\Hom_K(V,K)$ which transform left $\Lambda$-modules to right $\Lambda$-modules. The Nakayama functor has the property that it takes projective modules to injective modules and
\[
    \soc \nu P\cong\topp P.
\]

\begin{Proposition}\label{prop: g(M)} A balanced formula for the $g$-vector of $M$ is given by
\begin{equation}\label{eq: balance formula for g-vector}
	g(M)=\undim \topp M-\undim \soc \tau M.
\end{equation}
This holds even when $M$ is projective since, in that case, $\tau M=0$.
\end{Proposition}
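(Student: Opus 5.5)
We compare the definition $g(M)=\undim(P_0/rP_0)-\undim(P_1/rP_1)$, where $P_1\to P_0\to M$ is the minimal projective presentation, with the right-hand side of (\ref{eq: balance formula for g-vector}). The first terms agree because $\topp M\cong\topp P_0$, as noted above. So it remains to prove
\[
\soc\tau M\cong \topp P_1,
\]
that is, $\undim\soc\tau M=\undim(P_1/rP_1)$.

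\emph{The projective case.} If $M$ is projective, the minimal presentation has $P_1=0$ and $\tau M=0$, so both sides vanish.

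\emph{Setting up the socle computation.} Assume $M$ is not projective. By definition
\[
0\to\tau M\to\nu P_1\xrightarrow{\nu p}\nu P_0
\]
is exact, where $p:P_1\to P_0$ is the presentation map. Since $\nu$ sends projectives to injectives, $\nu P_1$ is injective, and $\soc\nu P_1\cong\topp P_1$. Hence it suffices to show that $\tau M\subseteq \nu P_1$ is an \emph{essential} submodule. Equivalently, $\ker(\nu p)$ must contain the whole socle of $\nu P_1$, so that $\nu P_1$ is an injective envelope of $\tau M$. Then $\soc\tau M=\soc\nu P_1\cong\topp P_1$.

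\emph{Key step: $\nu p$ kills the socle.} The socle of $\nu P_1$ is the sum of the images of all maps $S_i\to\nu P_1$, so we need $\nu p\circ f=0$ for every map $f:S_i\to\nu P_1$. We would establish this as follows.

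\begin{enumerate}
\item[(1)] Use the adjunction-type isomorphism $\Hom_\Lambda(X,\nu P)\cong D\Hom_\Lambda(P,X)$, natural in the projective $P$ and in $X$. Under it, composition with $\nu p$ corresponds to the dual of precomposition with $p$, that is, to $D\Hom(p,S_i)$.
\item[(2)] Minimality of the presentation means $p(P_1)\subseteq rP_0$. Any map $P_0\to S_i$ kills $rP_0$.
\item[(3)] Therefore $\Hom(p,S_i):\Hom(P_0,S_i)\to\Hom(P_1,S_i)$ is zero, so its dual is zero, and $\nu p$ vanishes on every simple submodule of $\nu P_1$.
\end{enumerate}

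This naturality check in (1) is the main obstacle; the remaining steps are routine. The alternative would be to transport the question via $\nu\cong D\Hom(-,\Lambda)$ to the statement that $\Hom(p,\Lambda)$ has image in the radical. That is the same argument in dual form.

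\emph{Conclusion.} From the key step, $\soc\nu P_1\subseteq\tau M$, so $\tau M$ is essential in $\nu P_1$ and
\[
\undim\soc\tau M=\undim\soc\nu P_1=\undim\topp P_1,
\]
which gives (\ref{eq: balance formula for g-vector}).
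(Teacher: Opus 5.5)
Your proof is correct and follows the paper's approach. Both arguments reduce the statement to $\nu P_1$ being the injective envelope of $\tau M$, so that $\soc\tau M=\soc\nu P_1\cong\topp P_1$. The paper takes this essentiality as a standard fact. You prove it, using minimality ($p(P_1)\subseteq rP_0$) together with the natural isomorphism $\Hom_\Lambda(X,\nu P)\cong D\Hom_\Lambda(P,X)$, which is exactly the paper's Lemma \ref{lem: duality of nu P}.
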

\begin{proof}
   The standard $g$-vector of $M$ is $\undim\topp P_0-\undim\topp P_1$. But $\topp P_0\cong \topp M$ and $\soc\tau M$ is isomorphic to the socle of the injective envelope of $\tau M$ which is $\nu P_1$. This, in turn, is the top of $P_1$. 
\end{proof}

\section{Balanced notation for $\tau$-rigid pairs}\label{sec2:balanced notation}

In $\tau$-tilting theory we consider \textbf{$\tau$-rigid pairs} which are defined to be pairs of finitely generated modules $(M,P)$ so that
\begin{enumerate}
    \item $M$ is \textbf{$\tau$-rigid}, meaning $\Hom_\Lambda(M,\tau M)=0$ and
    \item $P$ is projective so that $\Hom_\Lambda(P,M)=0$.
\end{enumerate}
Note that condition (1) holds vacuously when $M$ is projective.
We assume both $M$ and $P$ do not have repeated summands. We often write the $\tau$-rigid pair $(M,P)$ as a single object in the derived category of $mod\text-\Lambda$, namely $M\sqcup P[1]$ where $M,P$ satisfy conditions (1) and (2) above. We note that
\[
    \tau\left(M\,{\textstyle\sqcup}\, P[1]\right)=\tau M\,{\textstyle\sqcup}\, \nu P.
\]
If the number of summands of $M\sqcup P[1]$ is $n$, the rank of $\Lambda$, then it is maximal since the $g$-vectors of the direct summands of $M\sqcup P[1]$ are linearly independent \cite[Theorem 5.1]{AIR}. (The $g$-vector of $P[1]$ is defined to be $g(P[1])=-g(P)$.)

\begin{Definition}\label{def: balanced notation A/B}
    Given $A,B\in mod\text-\Lambda$, we define $\symm AB$ to be a \textbf{balanced pair} if 
    \begin{enumerate}
        \item $\Hom_\Lambda(A,B)=0$
        \item $B\cong\tau A\sqcup  I$ where $I$ is injective and
        \item $A\cong\tau^{-1}B\sqcup P$ where $P$ is projective
    \end{enumerate}
    The isomorphisms in (2) and (3) exist but are not specified. 

    \end{Definition}
    
    \begin{Lemma}\label{lem: balanced of (M,P)} 
There is a bijection between the sets of $\tau$-rigid pairs $(M,P)$ and balanced pairs $\symm AB$ given as follows.
    \begin{enumerate}
    \item The $\tau$-rigid pair $(M,P)$ corresponds to the balanced pair $\symm M{\tau M\sqcup \nu P}$.
    \item The balanced pair $\symm AB$ corresponds to the $\tau$-rigid pair $(A,\nu^{-1}I)$ where $I$ is the sum of the injective components of $B$.
    \end{enumerate}
   \end{Lemma}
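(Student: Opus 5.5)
We check that the two assignments are well defined and then that they are mutually inverse. Throughout we use the standard facts that $\tau$ and $\tau^{-1}$ give mutually inverse bijections between indecomposable non-projective and indecomposable non-injective modules, that $\tau M$ has no nonzero injective summands, that $\tau^{-1}N$ has no nonzero projective summands, and that $\nu$ is an equivalence from projectives to injectives with inverse $\nu^{-1}$.

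For (1), start with a $\tau$-rigid pair $(M,P)$ and set $A=M$, $B=\tau M\sqcup\nu P$. Condition (1) of Def.~\ref{def: balanced notation A/B} asks for $\Hom_\Lambda(M,\tau M\sqcup \nu P)=0$. The first term vanishes by $\tau$-rigidity. For the second we use the Nakayama isomorphism $\Hom_\Lambda(M,\nu P)\cong D\Hom_\Lambda(P,M)$, which vanishes by condition (2) of a $\tau$-rigid pair. Condition (2) holds with $I=\nu P$ injective. For condition (3), write $M=M'\sqcup P'$, where $P'$ collects the projective summands of $M$. Then $\tau M=\tau M'$, and since $\tau^{-1}\nu P=0$ we get $\tau^{-1}B=\tau^{-1}\tau M'\cong M'$. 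Hence $A\cong \tau^{-1}B\sqcup P'$, as required.

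For (2), start with a balanced pair $\symm AB$. The decomposition $B\cong \tau A\sqcup I$ determines $I$: it is precisely the sum of the injective summands of $B$, because $\tau A$ has no injective summands. So $P:=\nu^{-1}I$ is a well-defined projective module. We have $\Hom_\Lambda(A,\tau A)=0$ and $\Hom_\Lambda(A,I)=0$, since both are summands of $\Hom_\Lambda(A,B)=0$. The first gives $\tau$-rigidity of $A$. The second gives $D\Hom_\Lambda(P,A)\cong\Hom_\Lambda(A,\nu P)=\Hom_\Lambda(A,I)=0$. Thus $(A,P)$ is a $\tau$-rigid pair. The absence of repeated summands transfers in both directions because $\tau$ and $\nu$ are bijective on the relevant indecomposables.

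Finally, the two compositions are the identity. Starting from $(M,P)$, the injective summands of $\tau M\sqcup\nu P$ are exactly $\nu P$, so we recover $(M,\nu^{-1}\nu P)=(M,P)$. Starting from $\symm AB$, we obtain $\symm{A}{\tau A\sqcup \nu\nu^{-1}I}=\symm{A}{\tau A\sqcup I}\cong\symm AB$ by condition (2). Condition (3) of a balanced pair is therefore automatic here; it merely makes the definition symmetric.

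The main point needing care is the Nakayama/Auslander--Reiten duality $\Hom(M,\nu P)\cong D\Hom(P,M)$, together with the fact that $\tau M$ has no injective summands. The latter is what makes the injective part $I$ of $B$ well defined and ensures the bijection is not just a correspondence.
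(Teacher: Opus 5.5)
Your proof is correct and follows essentially the same route as the paper. In both directions you use the Nakayama duality $\Hom_\Lambda(M,\nu P)\cong D\Hom_\Lambda(P,M)$ to show the assignments are well defined, and then check directly that the two compositions are the identity. You are more thorough than the paper, which leaves two points implicit: the verification of condition (3) of Def.~\ref{def: balanced notation A/B}, and the fact that $I$ is well defined because $\tau A$ has no injective summands.
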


\begin{proof}
Given a $\tau$-rigid pair $(M,P)$, $\symm M{\tau M\sqcup \nu P}$ is a balanced pair since $B=\tau M\sqcup \nu P$ is the sum of $\tau M$ with an injective module and $\Hom_\Lambda(M,\tau M\sqcup \nu P)=\Hom_\Lambda(M,\tau M)\sqcup D\Hom_\Lambda(P,M)=0$.

Given a balanced pair $\symm AB$, $B=\tau A\sqcup I$ for a unique injective module $I$ with $\Hom_\Lambda(A,I)=0$. In that case $\Hom_\Lambda(\nu^{-1}I,A)=0$ and $(A,\nu^{-1}I)$ is a $\tau$-rigid pair.

To show that this is a bijection, start with $(M,P)$. (1) gives us $\symm AB$ where $A=M$ and $B=\tau M\sqcup \nu P$ with injective component $\nu P$. Apply (2) we get $(A,\nu^{-1}\nu P)=(M,P)$.

If we start with a balanced pair $\symm AB$, (2) gives us $(A,\nu^{-1}I)$ where $B=\tau A\sqcup I$. Then (1) gives us $\symm A{\tau A\sqcup \nu\nu^{-1}I}=\symm A{\tau A\sqcup I}\cong \symm AB$. So, we have a bijection.
\end{proof}

We define the \textbf{$g$-vector} of a balanced pair to be:
\[
g\!\left(\symm AB
\right):= \undim \topp A-\undim \soc B.
\]
\begin{Proposition}\label{prop: balanced g-vector}
    If $\symm AB$ is the balanced notation for $M\sqcup P[1]$, the formula for the $g$-vector agrees with the standard formula, i.e.,
    \[
    g\!\left(\symm AB
\right)=g(M)-g(P).
    \]
\end{Proposition}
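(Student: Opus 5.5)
We use the correspondence of Lemma \ref{lem: balanced of (M,P)}: the balanced pair is $\symm AB$ with $A=M$ and $B=\tau M\sqcup \nu P$. The plan is to compute $\undim \topp A$ and $\undim \soc B$ separately, and then compare their difference with Proposition \ref{prop: g(M)} and with the definition of $g(P)$.

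First, since $A=M$, we have $\undim\topp A=\undim \topp M$.

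Second, socle is additive on direct sums, so
\[
\undim\soc B=\undim\soc\tau M+\undim\soc\nu P.
\]
By the property of the Nakayama functor recalled before Proposition \ref{prop: g(M)}, $\soc\nu P\cong\topp P$.

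Third, $P$ is projective, so its minimal projective presentation is $0\to P\to P$. Hence $g(P)=\undim\topp P$.

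Putting these together gives
\[
g\!\left(\symm AB\right)=\bigl(\undim\topp M-\undim\soc\tau M\bigr)-\undim\topp P.
\]
By the balanced formula (\ref{eq: balance formula for g-vector}) of Proposition \ref{prop: g(M)}, the bracketed term is $g(M)$, and the last term is $g(P)$. So $g\!\left(\symm AB\right)=g(M)-g(P)$. This is also the standard $g$-vector of $M\sqcup P[1]$, since $g(P[1])=-g(P)$.

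The only step that is not immediate is the identification $\soc\nu P\cong\topp P$, including multiplicities. Since $\nu$ is additive, it suffices to check this on each indecomposable summand $P_i$. There $\nu P_i=I_i$, the injective envelope of $S_i$, and its socle is $S_i=\topp P_i$. Everything else is bookkeeping with dimension vectors.
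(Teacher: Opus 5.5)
Your proof is correct and follows the paper's argument. The paper likewise identifies $A=M$ and $B=\tau M\sqcup\nu P$ via Lemma \ref{lem: balanced of (M,P)}, splits $\undim\soc B$ additively, and uses $g(P)=\undim\topp P=\undim\soc\nu P$. The only difference is that the paper re-derives $g(M)=\undim\topp M-\undim\soc\tau M$ inline from the minimal injective copresentation $\tau M\to\nu P_1\to\nu P_0$, whereas you simply cite Proposition \ref{prop: g(M)}.
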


\begin{proof}
    By Lemma \ref{lem: balanced of (M,P)}, $A=M$ and $B=\tau M\sqcup \nu P$. Given a minimal projective presentation $P_1\to P_0\to M$, we get a minimal injective copresentation $\tau M\to \nu P_1\to \nu P_0$ of $\tau M$. And, $\undim \topp P_1=\undim \soc \nu P_1=\undim \soc \tau M$. So,
    \[
        g(M)=\undim \topp P_0-\undim \topp P_1=\undim \topp M-\undim \soc \tau M,
    \]
    \[
        g(P)=\undim \topp P=\undim \soc \nu P.
    \]
    Thus
    \[
    g(M\sqcup P[1])=g(M)-g(P)=\undim \topp M-\undim \soc \tau M-\undim \soc\nu P
    \]
    
    \qquad\qquad\qquad\qquad$=\undim \topp A-\undim \soc B=g\!\left(\symm AB
\right).$
\end{proof}


\section{Jasso category and stability diagrams}\label{sec3: Jasso cat}

The \textbf{Jasso category} of a $\tau$-rigid pair $(M,P)$ is $J(M,P)=M^\perp\cap P^\perp\cap\,^\perp\tau M$. We give another formula for the same category using the balanced notation.

\begin{Proposition}\label{thm: Jasso category}
    The Jasso category in balanced notation is
    \[
    J\!\left(\symm AB\right)=A^\perp\cap\,^\perp B.
    \]
\end{Proposition}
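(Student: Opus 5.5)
By Lemma \ref{lem: balanced of (M,P)}, the balanced pair $\symm AB$ corresponds to the $\tau$-rigid pair $(M,P)$ with $A=M$ and $B=\tau M\sqcup \nu P$. So we must show
\[
M^\perp\cap P^\perp\cap\,^\perp\tau M \;=\; M^\perp\cap\,^\perp(\tau M\sqcup\nu P).
\]
Since $^\perp(\tau M\sqcup \nu P)=\,^\perp\tau M\cap\,^\perp\nu P$, and $M^\perp$ and $^\perp\tau M$ appear on both sides, it suffices to show $P^\perp=\,^\perp \nu P$. It is enough to show this for all modules $X$, not only those in $M^\perp\cap\,^\perp\tau M$.

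The key step is the Nakayama duality for a projective $P$:
\[
\Hom_\Lambda(X,\nu P)\cong D\Hom_\Lambda(P,X)
\]
for every $X\in mod\text-\Lambda$. This is the same isomorphism already used in the proof of Lemma \ref{lem: balanced of (M,P)}, where $\Hom_\Lambda(M,\nu P)=D\Hom_\Lambda(P,M)$.

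To verify it, reduce to $P=\Lambda$ by additivity, since both sides are additive in $P$ and commute with direct summands. For $P=\Lambda$ we have $\nu\Lambda=D\Lambda$. Then
\[
\Hom_\Lambda(X,D\Lambda)\cong D(X\otimes_\Lambda\Lambda)\cong DX\cong D\Hom_\Lambda(\Lambda,X)
\]
by tensor-hom adjunction. Because $D$ is a duality of finite dimensional vector spaces, $\Hom(X,\nu P)=0$ if and only if $\Hom(P,X)=0$. Hence $X\in P^\perp \iff X\in\,^\perp\nu P$.

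Substituting this equivalence gives the claimed equality of categories. There is no real obstacle here. The only point needing care is that the balanced pair determines $(M,P)$ only up to isomorphism. However, the perpendicular categories depend only on the isomorphism classes of $A$ and $B$, so the identification is well defined.
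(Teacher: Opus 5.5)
Your proof is correct and follows the paper's argument. Like the paper, you use Lemma \ref{lem: balanced of (M,P)} to write $A=M$, $B=\tau M\sqcup\nu P$, and then apply $P^\perp=\,^\perp\nu P$; the only addition is that you justify $P^\perp=\,^\perp\nu P$ via the Nakayama duality $\Hom_\Lambda(X,\nu P)\cong D\Hom_\Lambda(P,X)$, which the paper takes as known here and proves later as Lemma \ref{lem: duality of nu P}.
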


\begin{proof}
    By Lemma \ref{lem: balanced of (M,P)}, $A=M$ and $B=\tau M\sqcup \nu P$. Then, since $P^\perp=\,^\perp\nu P$,\vs2

\qquad\qquad\qquad{$A^\perp\cap\,^\perp B=M^\perp\cap \,^\perp(\nu P\textstyle\sqcup \tau M)=M^\perp\cap P^\perp \cap \,^\perp \tau M.$}
\end{proof}

{\begin{Definition}\label{def: DL(M)}
    For any module $M$, we define the \textbf{semi-invariant domain} 
    \[
    \cD_\Lambda(M):=\{v\in\RR^n\mid v\ast\undim M=0 \text{ and }v\ast\undim M'\le0 \ \forall M'\subseteq M\}
    \]
    where the modified dot product ($\ast$) was defined in \eqref{eq: modified dot product}. This terminology comes from \cite{IOTW2} where, for $\Lambda$ hereditary, the semi-invariant $\det p^\ast$ is defined only when, for $p:P_1\to P_0$ a morphism of projective modules, the induced map
    \[
    	p^\ast:\Hom_\Lambda(P_0,M)\to \Hom_\Lambda(P_1,M)
    \]	
    is an isomorphism allowing us to take its determinant with respect to some basis and this is only possible if $\undim \topp P_0-\undim \topp P_1\in \cD_\Lambda(M)$. (Proposition \ref{prop: exact sequence for A/B} and Lemma \ref{lem: theta iso on W(A/B)} show that this makes sense even when $\Lambda$ is not hereditary.)
    
      We also define
    \[
    \cW(v):=\{M\in mod\text-\Lambda\mid v\in \cD_\Lambda(M)\}.
    \]
\end{Definition}
}

\begin{Definition}\cite{IngTh}\label{def: wide subcategorhy}
    A \textbf{wide subcategory} of $mod\text-\Lambda$ is an abelian category closed under extensions in $mod\text-\Lambda$ and \textbf{exactly embedded} in $mod\text-\Lambda$. This is equivalent to the statement that it contains the kernel, image and cokernel of any morphism between two of its objects.
\end{Definition}

\begin{Remark}\label{rem: W(v) is wide} \begin{enumerate}
\item It is a well-known fact that $\cW(v)$ is a {wide subcategory}. Call $\cW(v)$ the \textbf{wide subcategory associated to $v$}.
\item $M\in \cW(v)\Leftrightarrow v\in \cD_\Lambda(M).$
\end{enumerate}
\end{Remark}

\begin{Remark}\label{rem: rational points are dense}
    It is clear from Def. \ref{def: DL(M)} that $\cD_\Lambda(M)$ is a closed convex subset of $\RR^n$. Also, $\cD_\Lambda(M)$ is given by a finite set of linear equations and inequalities with rational coefficients. By linear algebra over the rational numbers we see that
    \begin{enumerate}
        \item The set of rational points in $\cD_\Lambda(M)$, i.e., $\cD_\Lambda(M)\cap \QQ^n$, is dense in $\cD_\Lambda(M)$. 
        \item This also holds for any finite intersection, i.e., the rational points in $\bigcap \cD_\Lambda(M_i)$ form a dense subset of $\bigcap \cD_\Lambda(M_i)$.
        \item In particular, each corner of $\cD_\Lambda(M)$ is the ray generated by one rational vector.
        \item So, $\cD_\Lambda(M)$ and each of its faces is spanned by a dense set of rational points.
    \end{enumerate}
\end{Remark}


To prove Theorem \ref{thm: W(g)=J}, we need the following well-known fact.
 \begin{Lemma}\label{lem: duality of nu P}
     For every module $X$ and projective module $P$ we have a natural isomorphism $\theta:\Hom_\Lambda(P,X)\cong D\Hom_\Lambda(X,\nu P)$.
 \end{Lemma}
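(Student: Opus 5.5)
We prove the natural isomorphism $\theta:\Hom_\Lambda(P,X)\cong D\Hom_\Lambda(X,\nu P)$ by first reducing to $P=\Lambda$ and then extending by additivity. Both sides are additive in $P$ (contravariantly on the left, and the composite $P\mapsto D\Hom_\Lambda(-,\nu P)$ is contravariant as well, since $\nu$ is covariant and $\Hom(-,\cdot)$ followed by $D$ flips variance twice). So it suffices to build, for $P=\Lambda_\Lambda$, an isomorphism natural in $X$ and compatible with the endomorphisms $\End(\Lambda_\Lambda)=\Lambda$ acting by left multiplication. A natural isomorphism between additive functors on $\mathrm{add}\,\Lambda$ that agrees on $\Lambda$ and commutes with $\End(\Lambda)$ then extends uniquely to all projectives $P$.

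For $P=\Lambda$ we have $\nu\Lambda=D\Hom_\Lambda(\Lambda,\Lambda)=D(\Lambda)$, the injective cogenerator. The computation uses two standard isomorphisms.

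\begin{enumerate}
\item[(1)] Tensor--Hom adjunction gives
\[
\Hom_\Lambda(X,D\Lambda)=\Hom_\Lambda(X,\Hom_K({}_\Lambda\Lambda_\Lambda,K))\cong \Hom_K(X\otimes_\Lambda\Lambda,K)=DX .
\]
Hence $D\Hom_\Lambda(X,\nu\Lambda)\cong DDX\cong X$, using that $X$ is finite dimensional.
\item[(2)] Evaluation at $1$ gives $\Hom_\Lambda(\Lambda,X)\cong X$.
\end{enumerate}

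Composing (2) with the inverse of (1) yields $\theta_\Lambda$. Naturality in $X$ is clear, since each step is natural.

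The substantive check is compatibility with the left $\Lambda$-action on $\Lambda$. Under (2), precomposition with left multiplication by $a$ corresponds to $x\mapsto xa$. Under (1), applying $\nu$ to left multiplication by $a$ gives a map on $D\Lambda$, and the induced map on $D\Hom(X,D\Lambda)\cong X$ should also be $x\mapsto xa$. We verify this with the explicit formulas: the element $x$ corresponds to $f\mapsto f(x)(1)$ for $f\in\Hom(X,D\Lambda)$, and we track the bimodule structure on $\Lambda$.

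For a general projective $P$, we may alternatively define $\theta$ directly. For $\phi\in\Hom(P,X)$, let $\theta(\phi)$ be the functional $g\mapsto \mathrm{tr}(g\circ\phi)$ on $\Hom(X,\nu P)$, where $\mathrm{tr}:\Hom(P,\nu P)=D\Hom(P,P)\to K$ is evaluation at $1_P$. This pairing is visibly natural in $X$ and in $P$. To see it is an isomorphism, it suffices to check nondegeneracy for $P=\Lambda$, where it reduces to the computation in (1)--(2) above, and additivity then gives all $P$.

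The main obstacle is keeping the left/right module structures and the variance straight in the compatibility check. The trace-pairing definition avoids most of this bookkeeping, so that is the route to take.
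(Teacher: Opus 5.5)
Your proposal is correct, and your first argument takes a genuinely different route from the paper. The paper never reduces to $P=\Lambda$. For all projective $P$ at once it factors $\theta=\theta_2\circ\theta_1^{-1}$ through $X\otimes_\Lambda\Hom_\Lambda(P,\Lambda)$. Here $\theta_1(x\otimes q)=(p\mapsto x\,q(p))$, and $\theta_2$ is dual to the tensor--hom adjunction $D(X\otimes_\Lambda\Hom_\Lambda(P,\Lambda))\cong\Hom_\Lambda(X,\nu P)$. Naturality in both $X$ and $P$ is then inherited from these two maps, and bijectivity of $\theta_1$ comes from surjectivity plus a dimension count.

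Your route 1 uses only $\Hom_\Lambda(\Lambda,X)\cong X$ and $\Hom_\Lambda(X,D\Lambda)\cong DX$. It adds the fact that a transformation between additive functors on $\mathrm{add}\,\Lambda$ is the same as an $\End_\Lambda(\Lambda)$-equivariant map at $\Lambda$. This is more elementary and gives naturality in $P$ by construction. The cost is the equivariance check, which you announce but do not carry out.

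The check does work. Write $\lambda_a$ for left multiplication by $a$ and identify $\Hom_\Lambda(\Lambda,\Lambda)\cong\Lambda$ by $u\mapsto u(1)$. Then $(\nu(\lambda_a)\xi)(b)=\xi(ba)$ on $\nu\Lambda=D\Lambda$. So for $f\in\Hom_\Lambda(X,D\Lambda)$ you get $(\nu(\lambda_a)\circ f)(x)(1)=f(x)(a)=f(xa)(1)$. Hence postcomposition with $\nu(\lambda_a)$ becomes precomposition with $x\mapsto xa$ on $DX$. Dualizing gives $x\mapsto xa$ on $DDX\cong X$, which is exactly what $\lambda_a$ does under your (2).

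Your route 2 is the paper's map in disguise. If $\sum_i p_i\otimes q_i$ corresponds to $1_P$ under $P\otimes_\Lambda\Hom_\Lambda(P,\Lambda)\cong\End_\Lambda(P)$, then $\mathrm{tr}(g\circ\phi)=\sum_i g(\phi(p_i))(q_i)=\theta_2\theta_1^{-1}(\phi)(g)$. As written, though, it is the weaker of your two arguments, not the one to prefer.

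First, the identification $\Hom_\Lambda(P,\nu P)=D\End_\Lambda(P)$ used to define $\mathrm{tr}$ is the lemma itself at $X=P$. Second, naturality in $P$ is not visible. It is the cyclicity identity $\mathrm{tr}_P(h\circ\alpha)=\mathrm{tr}_{P'}(\nu(\alpha)\circ h)$ for $\alpha:P\to P'$ and $h:P'\to\nu P$. Proving it needs the isomorphism $P'\otimes_\Lambda\Hom_\Lambda(P,\Lambda)\cong\Hom_\Lambda(P,P')$, i.e.\ the paper's $\theta_1$, or else your route 1. Third, your reduction of nondegeneracy to $P=\Lambda$ silently uses this same naturality, to pass from $\Lambda$ to $\Lambda^m$ and then to its direct summands.

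This is not cosmetic. The paper needs naturality in $P$, not just in $X$, in Proposition \ref{prop: exact sequence for A/B}, where $\theta$ must intertwine $p^\ast$ with $p'=D\Hom_\Lambda(X,\nu p)$. So keep route 1 and insert the check above. A minor point: $\Hom_\Lambda(X,-)$ is covariant, so only $D$ flips variance, though your conclusion that $P\mapsto D\Hom_\Lambda(X,\nu P)$ is contravariant is right.
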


 \begin{proof}      It is clear that $\Hom_\Lambda(P,X)\cong D\Hom_\Lambda(X,\nu P)$ since both sides are additive in $P$ and, for indecomposable $P$, they both count the number of times $\topp P=\soc \nu P$ appears in the composition series of $X$. We need a formula for this isomorphism so that we can check that it is natural.

{
There is a natural isomorphism $X\otimes_\Lambda \Hom_\Lambda(P,\Lambda)\xrightarrow{\theta_1} \Hom_\Lambda(P,X)$ obtained as a composition
    \[
    X\otimes_\Lambda \Hom_\Lambda(P,\Lambda)\cong \Hom_\Lambda(\Lambda,X)\otimes_\Lambda \Hom_\Lambda(P,\Lambda)\xrightarrow{\circ} \Hom_\Lambda(P,X)
    \]
    which is surjective since $P$ is projective and is an isomorphism since domain and range have the same size. Also, there is a natural isomorphism
    \[
        X\otimes_\Lambda \Hom_\Lambda(P,\Lambda)\xrightarrow{\theta_2}D\Hom_\Lambda(X,\nu P)
    \]
    which is dual to the adjunction isomorphism:    \[
    D(X\otimes_\Lambda \Hom_\Lambda(P,\Lambda))
    \cong \Hom_\Lambda(X, D\Hom_\Lambda(P,\Lambda))=\Hom_\Lambda(X,\nu P).
    \]
So, we have
    \xymatrix{
\quad\theta: \Hom_\Lambda(P,X)\ar[r]^{\theta_1^{-1}}_\approx& 
	X\otimes_\Lambda\Hom_\Lambda(P,\Lambda) \ar[r]^{\theta_2}_\approx&
	D\Hom_\Lambda(X,\nu P).
	}
}
 \end{proof}
 
 We also need an exact sequence which is a balanced version of \cite[Proposition 2.4]{AIR}.
\begin{Proposition}\label{prop: exact sequence for A/B}
    Given a balanced pair $\symm AB$, let $p:Q\to A$ be the projective cover of $A$ and let $j:B\to E$ be the injective envelope of $B$. Then, for any module $X$ we have an exact sequence of the following form where $\delta$ is described in the proof:
    \[
          0\to \Hom_\Lambda(A,X)\xrightarrow{p^\ast} \Hom_\Lambda(Q,X)\xrightarrow{\delta} D\Hom_\Lambda(X,E)\xrightarrow{j_\ast} D\Hom_\Lambda(X,B)\to 0.
    \]
\end{Proposition}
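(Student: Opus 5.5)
We reduce to the exact sequence of \cite[Proposition 2.4]{AIR} and then dualize it with Lemma \ref{lem: duality of nu P}. By Lemma \ref{lem: balanced of (M,P)} we may write $A=M$ and $B=\tau M\sqcup I$ with $I=\nu P$ injective. Take the minimal projective presentation $P_1\xrightarrow{f}Q\xrightarrow{p}M\to0$, so $Q=P_0$ is the projective cover of $A$. Applying $\nu$ gives the minimal injective copresentation $0\to\tau M\to\nu P_1\xrightarrow{\nu f}\nu Q$. The injective envelope of $B$ is therefore $E=\nu P_1\sqcup I$, and $j=j_0\sqcup 1_I$, where $j_0:\tau M\to\nu P_1$ is the inclusion.

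\textbf{Step 1 (left half).} Since $\Hom_\Lambda(-,X)$ is left exact, the presentation gives an exact sequence
\[
0\to\Hom_\Lambda(A,X)\xrightarrow{p^\ast}\Hom_\Lambda(Q,X)\xrightarrow{f^\ast}\Hom_\Lambda(P_1,X).
\]

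\textbf{Step 2 (identifying the target).} Using the natural isomorphism $\theta$ of Lemma \ref{lem: duality of nu P}, we get $\Hom_\Lambda(P_1,X)\cong D\Hom_\Lambda(X,\nu P_1)$. Naturality of $\theta$ in $P$ turns $f^\ast$ into the dual of $(\nu f)_\ast:\Hom_\Lambda(X,\nu P_1)\to\Hom_\Lambda(X,\nu Q)$. Applying the left exact functor $\Hom_\Lambda(X,-)$ to the injective copresentation gives an exact sequence $0\to\Hom(X,\tau M)\to\Hom(X,\nu P_1)\to\Hom(X,\nu Q)$. Dualizing, we obtain the exact sequence
\[
D\Hom(X,\nu Q)\to D\Hom(X,\nu P_1)\to D\Hom(X,\tau M)\to0.
\]
Transport it back along $\theta$: the image of $f^\ast$ corresponds to the image of $D(\nu f)_\ast$, which is exactly the kernel of $D\Hom(X,\nu P_1)\to D\Hom(X,\tau M)$. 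This splices the two sequences into the four-term exact sequence of \cite[Prop.~2.4]{AIR}:
\[
0\to\Hom(M,X)\to\Hom(Q,X)\to D\Hom(X,\nu P_1)\to D\Hom(X,\tau M)\to0.
\]
Here $\delta_0=\theta\circ f^\ast$.

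\textbf{Step 3 (adding the injective summand).} Set $\delta=(\delta_0,0)$, mapping into $D\Hom(X,\nu P_1)\oplus D\Hom(X,I)=D\Hom(X,E)$. The map $j_\ast$ is $D(j_0)_\ast\oplus 1$ from $D\Hom(X,\nu P_1)\oplus D\Hom(X,I)$ to $D\Hom(X,\tau M)\oplus D\Hom(X,I)$. Adding the identity summand $D\Hom(X,I)$ in the last two places preserves exactness, and $\ker j_\ast=\ker D(j_0)_\ast\oplus0=\operatorname{im}\delta$.

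\textbf{Main obstacle.} The essential point is the naturality bookkeeping in Step 2. Specifically, we must check that $\theta$ intertwines $f^\ast$ with $D((\nu f)_\ast)$, so that the identified images coincide. This is what the explicit construction of $\theta$ in Lemma \ref{lem: duality of nu P} provides, and it is where I would spend the care. The other steps are formal.

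Two degenerate cases need a remark. If $M=0$, then $Q=0$, $E=I$, and the sequence reads $0\to0\to0\to D\Hom(X,I)\xrightarrow{=}D\Hom(X,I)\to0$. If $M$ is projective, then $P_1=0$ and $\tau M=0$.

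A final point: the choice of isomorphism $B\cong\tau A\sqcup I$ is unspecified, but it does not matter. Exactness is invariant under composing with the isomorphism of injective envelopes that it induces.
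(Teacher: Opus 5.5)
Your proposal is correct and is essentially the paper's argument. The paper also uses the natural isomorphism $\theta$ of Lemma~\ref{lem: duality of nu P} to match $p^\ast$ with $D\Hom_\Lambda(X,\nu p)$, which gives the four-term sequence, and then adds the trivial sequence for the injective summand $I$. The only difference is cosmetic: the paper also splits off the projective summands of $A$, writing $A=A_0\sqcup P$ with its own trivial sequence, whereas you keep them inside the minimal presentation of $M$. Your version is equally valid, since those summands contribute nothing to $P_1$.
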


\begin{proof} This is the proof from \cite{AIR} in balanced notation. It uses only the fact that $A=A_0\sqcup P$ and $B=B_0\sqcup I$ where $P$ is projective, $I$ is injective and $B_0=\tau A_0$. 

Let $P_1\xrightarrow{p}P_0\to A_0$ be the minimal projective presentation of $A_0$. Then $B_0\to \nu P_1\xrightarrow{\nu p}\nu P_0$ is the minimal injective copresentation of $B_0$. For any $\Lambda$-module $X$, we get the following commuting diagram with exact rows where the vertical maps form the natural isomorphism given in Lemma \ref{lem: duality of nu P}.
    \[
\xymatrixrowsep{15pt}\xymatrixcolsep{10pt}
\xymatrix{
0\ar[r]& \Hom_\Lambda(A_0,X)\ar[r] &
	\Hom_\Lambda(P_0,X)\ar[d]_\theta^\approx\ar[r]^{p^\ast}\ar[dr]^\delta &
	\Hom_\Lambda(P_1,X)\ar[d]_\theta^\approx
	\\
 &   & 
	D\Hom_\Lambda(X,\nu P_0) \ar[r]^{p'}&
	D\Hom_\Lambda(X,\nu P_1)\ar[r]& D\Hom_\Lambda(X,B_0)\to 0
	} 
    \]
     where $p'=D\Hom_\Lambda(X,\nu p)$. This gives the natural exact sequence
  \[
    0\to \Hom_\Lambda(A_0,X)\to \Hom_\Lambda(P_0,X)\xrightarrow{\delta}D\Hom_\Lambda(X,\nu P_1)\to D\Hom_\Lambda(X,B_0)\to 0.
    \]
    Since the projective cover of $P$ is $P$ and the injective envelope of $I$ is $I$, we also have the following exact sequence
    \[
    0\to \Hom_\Lambda(P,X)\xrightarrow= \Hom_\Lambda(P,X)\xrightarrow{\delta=0} D\Hom_\Lambda(X,I)\xrightarrow= D\Hom_\Lambda(X,I)\to 0.
    \]
    Since $Q=P_0\sqcup P$ is the projective cover of $A=A_0\sqcup P$ and $E=\nu P_1\sqcup I$ is the injective envelope of $B=B_0\sqcup I$, the sum of the last two 4-term exact sequences gives the desired exact sequence.
\end{proof}

{\begin{Corollary}\label{cor: (X,B)=0 then E(A,X)=0} \label{cor: (A,Z)=0 then E(Z,B)=0}
Let $\symm AB$ be a balanced pair and $X$ a $\Lambda$-module. 

\emph{(a)} If $\Hom_\Lambda(X,B)=0$ then $\Ext_\Lambda^1(A,X)=0$.

\emph{(b)} If $\Hom_\Lambda(A,X)=0$ then $\Ext_\Lambda^1(X,B)=0$.
\end{Corollary}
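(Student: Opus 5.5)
The plan is to read off both statements from the four-term exact sequence of Proposition \ref{prop: exact sequence for A/B}, combined with the standard description of $\Ext^1$ via a projective presentation (for (a)) or an injective copresentation (for (b)). Throughout, write $A=A_0\sqcup P$ and $B=B_0\sqcup I$ with $B_0=\tau A_0$, $P$ projective and $I$ injective, as in the proof of that proposition.

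For (a), assume $\Hom_\Lambda(X,B)=0$. The exact sequence shows that $\delta:\Hom_\Lambda(Q,X)\to D\Hom_\Lambda(X,E)$ is surjective. Restricting to the $A_0$-summand, $\delta$ is the composite of $p^\ast:\Hom_\Lambda(P_0,X)\to\Hom_\Lambda(P_1,X)$ with the isomorphism $\theta$ of Lemma \ref{lem: duality of nu P}, where $P_1\xrightarrow{p}P_0\to A_0\to0$ is the minimal projective presentation. Hence $p^\ast$ is surjective. I would then compare this with $\Ext^1_\Lambda(A_0,X)$ computed from a projective resolution $\cdots\to P_2\to P_1\to P_0\to A_0$. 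We have $\Ext^1(A_0,X)=\ker(\Hom(P_1,X)\to\Hom(P_2,X))/\operatorname{im}p^\ast$, which vanishes when $p^\ast$ is onto all of $\Hom(P_1,X)$. The $P$-summand contributes nothing, since $\Ext^1(P,X)=0$. Therefore $\Ext^1_\Lambda(A,X)=0$.

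For (b), assume $\Hom_\Lambda(A,X)=0$. The exact sequence now shows that $p^\ast$ restricted to $A_0$ is zero, so $\delta$ is injective. Equivalently, $p'=D\Hom_\Lambda(X,\nu p)$ is injective, so $\Hom_\Lambda(X,\nu p):\Hom(X,\nu P_1)\to\Hom(X,\nu P_0)$ is surjective. The sequence $0\to B_0\to\nu P_1\xrightarrow{\nu p}\nu P_0$ is the beginning of a minimal injective coresolution of $B_0$. Dually to (a), $\Ext^1(X,B_0)$ is a subquotient of $\Hom(X,\nu P_0)$ modulo the image of $\Hom(X,\nu P_1)$, and this quotient is zero. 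The injective summand $I$ contributes $\Ext^1(X,I)=0$, so $\Ext^1_\Lambda(X,B)=0$.

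The only point needing care is identifying the pieces of the four-term sequence with the presentation maps. For (a), I must check that $\delta$ is $\theta\circ p^\ast$ rather than some other map, which is exactly how $\delta$ was defined in the proof of Proposition \ref{prop: exact sequence for A/B}. For (b), I must check that $\nu P_1\to\nu P_0$ extends to an injective coresolution, which holds because it is part of the minimal injective copresentation. Alternatively, (b) follows from (a) applied to the dual balanced pair over $\Lambda^{op}$ via $D$, since $D$ interchanges $\tau$ and $\tau^{-1}$ and the definition of a balanced pair is symmetric.
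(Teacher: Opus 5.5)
Your proposal is correct and follows the paper's proof. For (a), the paper also reads off from the commutative diagram that $\Hom_\Lambda(P_0,X)\to\Hom_\Lambda(P_1,X)$ is onto when $\Hom_\Lambda(X,B)=0$, which gives $\Ext^1_\Lambda(A_0,X)=0$ and hence $\Ext^1_\Lambda(A,X)=0$; for (b) the paper only says ``similar,'' and your dual argument through the injective copresentation $0\to B_0\to\nu P_1\to\nu P_0$ (or, alternatively, via $D$ and the dual balanced pair $\tsymm{DB}{DA}$ over $\Lambda\op$) is what is intended.
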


\begin{proof}
   (a) If $\Hom_\Lambda(X,B)=0$, then, using the commutative diagram in the proof above, we obtain a short exact sequence
    \[
        0\to \Hom_\Lambda(A_0,X)\to \Hom_\Lambda(P_0,X)\to \Hom_\Lambda(P_1,X)\to 0.
    \]
    So, $\Ext_\Lambda^1(A_0,X)=0$ and, therefore, $\Ext_\Lambda^1(A,X)=\Ext_\Lambda^1(A_0\sqcup P,X)=0$. 
    
    (b) is similar.
\end{proof}
}

\begin{Corollary}\label{cor: g(A/B) dot X}
    Let $\symm AB$ be a balanced pair. For any $\Lambda$-module $X$ we have
    \[
    g\!\left(\symm AB\right)\ast\undim X=\dim_K \Hom_\Lambda(A,X)-\dim_K \Hom_\Lambda(X,B).
    \]
\end{Corollary}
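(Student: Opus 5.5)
The plan is to take alternating dimensions in the four-term exact sequence of Proposition \ref{prop: exact sequence for A/B}. Fix a balanced pair $\symm AB$ and a module $X$. Let $p:Q\to A$ be the projective cover of $A$ and $j:B\to E$ the injective envelope of $B$. Exactness of
\[
0\to \Hom_\Lambda(A,X)\to \Hom_\Lambda(Q,X)\to D\Hom_\Lambda(X,E)\to D\Hom_\Lambda(X,B)\to 0
\]
gives
\[
\dim_K\Hom_\Lambda(A,X)-\dim_K\Hom_\Lambda(X,B)=\dim_K\Hom_\Lambda(Q,X)-\dim_K\Hom_\Lambda(X,E),
\]
using $\dim_K DV=\dim_K V$.

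Next I will evaluate the two terms on the right via Remark \ref{rem: ast notation}(c). For the projective term,
\[
\dim_K\Hom_\Lambda(Q,X)=\undim\topp Q\ast\undim X=\undim\topp A\ast\undim X,
\]
since $\topp Q\cong\topp A$ for a projective cover. For the injective term, write $E=\nu R$ with $R$ projective, so that $\soc E\cong\topp R$. Lemma \ref{lem: duality of nu P} then gives
\[
\dim_K\Hom_\Lambda(X,E)=\dim_K\Hom_\Lambda(R,X)=\undim\topp R\ast\undim X=\undim\soc E\ast\undim X.
\]
Since $E$ is the injective envelope of $B$, we have $\soc E\cong\soc B$.

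Subtracting and using bilinearity of $\ast$, the right-hand side equals $(\undim\topp A-\undim\soc B)\ast\undim X$. By definition this is $g\!\left(\symm AB\right)\ast\undim X$, which is the claimed formula.

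The only point needing care is the identification $E=\nu R$ with $\topp R\cong\soc E$. It holds because every injective is $\nu$ of a projective, and $\soc\nu P\cong\topp P$, as noted in Section \ref{sec1: basic concepts}. Everything else is bookkeeping.
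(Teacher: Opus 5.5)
Your proposal is correct and follows the paper's proof. Both take the alternating sum of dimensions in the four-term sequence of Proposition \ref{prop: exact sequence for A/B}, then identify $\dim_K\Hom_\Lambda(Q,X)$ and $\dim_K\Hom_\Lambda(X,E)$ with $\undim\topp A\ast\undim X$ and $\undim\soc B\ast\undim X$. Your handling of the injective term via $E=\nu R$ and Lemma \ref{lem: duality of nu P} just spells out the dual of Remark \ref{rem: ast notation}(c), which the paper uses without comment.
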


\begin{proof} Let $Q$ be the projective cover of $A$ and $E$ the injective envelope of $B$. Then, by Remark \ref{rem: ast notation}(c), it follows that
   \begin{eqnarray*}
g\!\left(\symm AB\right)\ast\undim X&=\undim\topp A\ast \undim X-\undim \soc B\ast \undim X\\
	&= \undim \topp Q\ast \undim X-\undim \soc E\ast \undim X\\
	&=\dim_K \Hom_\Lambda(Q,X)-\dim_K\Hom_\Lambda(X,E)\\
	&=\dim_K \Hom_\Lambda(A,X)-\dim_K \Hom_\Lambda(X,B)
\end{eqnarray*}
where the last equation follows from exact sequence in Proposition \ref{prop: exact sequence for A/B}. 
\end{proof}

\begin{Lemma}\label{lem: theta iso on W(A/B)} 
Let $\symm AB$ be a balanced pair, $X$ a $\Lambda$-module. The following are equivalent.
\begin{enumerate}
    \item The $\Lambda$-module $X$ is in the Jasso category $A^\perp\cap\,^\perp B$.
    \item The morphism $\delta: \Hom_\Lambda(Q,X)\to \Hom_\Lambda(X,E)$ from Proposition \ref{prop: exact sequence for A/B} is an isomorphism.
    \item 
    \begin{enumerate}
            \item $\dim_K \Hom_\Lambda(Q,X)=\dim_K \Hom_\Lambda(X,E)$ and
    \item $\dim_K \Hom_\Lambda(Q,X')\le \dim_K \Hom_\Lambda(X',E)$ for any submodule $X'$ of $X$.
    \end{enumerate}
\end{enumerate}
\end{Lemma}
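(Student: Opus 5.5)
The whole argument rests on the four-term exact sequence of Proposition \ref{prop: exact sequence for A/B},
\[
0\to \Hom_\Lambda(A,X)\to \Hom_\Lambda(Q,X)\xrightarrow{\delta} D\Hom_\Lambda(X,E)\to D\Hom_\Lambda(X,B)\to 0,
\]
which is natural in $X$. From it, $\ker\delta\cong\Hom_\Lambda(A,X)$ and $\operatorname{coker}\delta\cong D\Hom_\Lambda(X,B)$. (I read condition (2) as asking that $\delta$ into $D\Hom_\Lambda(X,E)$ be an isomorphism; dimensions are unaffected by $D$.) I will prove (1)$\ifff$(2), then (2)$\then$(3), then (3)$\then$(1).

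\textbf{(1)$\ifff$(2).} This is immediate from exactness. $\delta$ is injective exactly when $\Hom_\Lambda(A,X)=0$, and surjective exactly when $\Hom_\Lambda(X,B)=0$. Together these say $X\in A^\perp\cap{}^\perp B$, which is the Jasso category by Proposition \ref{thm: Jasso category}.

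\textbf{(2)$\then$(3).} Part (a) is clear, since an isomorphism preserves dimension. For (b), let $X'\subseteq X$. Since $X\in A^\perp$, left exactness of $\Hom_\Lambda(A,-)$ gives $\Hom_\Lambda(A,X')\subseteq \Hom_\Lambda(A,X)=0$. Now apply Corollary \ref{cor: g(A/B) dot X} and Remark \ref{rem: ast notation}(c) (equivalently, the alternating sum of dimensions in the exact sequence for $X'$):
\[
\dim\Hom_\Lambda(Q,X')-\dim\Hom_\Lambda(X',E)=\dim\Hom_\Lambda(A,X')-\dim\Hom_\Lambda(X',B)=-\dim\Hom_\Lambda(X',B)\le 0 .
\]

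\textbf{(3)$\then$(1).} This is the main step. The same identity applied to $X$ turns (a) into
\[
\dim\Hom_\Lambda(A,X)=\dim\Hom_\Lambda(X,B).
\]
So it suffices to show $\Hom_\Lambda(A,X)=0$. Suppose instead there is a nonzero $f:A\to X$, and let $X'=\operatorname{im}f\subseteq X$, a nonzero quotient of $A$. Applying (b) to $X'$ and rewriting with the identity gives
\[
\dim\Hom_\Lambda(A,X')\le\dim\Hom_\Lambda(X',B).
\]
Because $X'$ is a quotient of $A$ and $\Hom_\Lambda(A,B)=0$, any map $X'\to B$ composed with $A\onto X'$ is zero. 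Since $A\to X'$ is surjective, the map $X'\to B$ itself is zero, so $\Hom_\Lambda(X',B)=0$. Hence $\Hom_\Lambda(A,X')=0$, contradicting $0\ne f\in\Hom_\Lambda(A,X')$. Therefore $\Hom_\Lambda(A,X)=0$, and then $\Hom_\Lambda(X,B)=0$ by the displayed equality, giving (1).

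The key idea is to test (b) on the image of a map out of $A$, where the condition $\Hom_\Lambda(A,B)=0$ kills the $B$-side. Everything else follows directly from exactness.
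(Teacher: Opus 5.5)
Your proof is correct and follows essentially the same route as the paper: (1)$\ifff$(2) and (2)$\then$(3) come straight from the four-term exact sequence, and your alternating-sum identity is just the paper's short exact sequence for $X'$ written in dimension form. For (3)$\then$(1) the paper likewise reduces to showing $\Hom_\Lambda(A,X)=0$, then tests (3b) on the image $X'$ of a nonzero map $A\to X$, where $\Hom_\Lambda(A,B)=0$ forces $\Hom_\Lambda(X',B)=0$; your reading of (2) as $\delta$ landing in $D\Hom_\Lambda(X,E)$ is also the intended one.
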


\begin{proof}
 By Proposition \ref{prop: exact sequence for A/B} we have the exact sequence:
\[
        0\to \Hom_\Lambda(A,X)\xrightarrow{p^\ast} \Hom_\Lambda(Q,X)\xrightarrow{\delta} D\Hom_\Lambda(X,E)\xrightarrow{j_\ast} D\Hom_\Lambda(X,B)\to 0.
    \]
    
    $(1)\ifff (2)$: $X\in A^\perp\cap\,^\perp B$ if and only if $\Hom_\Lambda(A,X)=0=\Hom_\Lambda(X,B)$ if and only if $\delta$ is an isomorphism.

$(2)\then(3)$  (3a) holds since $\delta$ is an isomorphism. To show (3b), note that $X\in A^\perp$ implies that $X'\in A^\perp$ for all submodules $X'\subset X$. So, we have:    \[
        0\to \Hom_\Lambda(Q,X')\to D\Hom_\Lambda(X',E)\to D\Hom_\Lambda (X',B)\to 0.
    \]
    This exact sequence implies $(3b)$.
    
   $(3)\then(1)$ Suppose (3a) and (3b) hold but $X\notin A^\perp\cap\,^\perp B$. Then $X\notin A^\perp$ since, otherwise, $\delta$ would by a monomorphism and, by (3a), this would imply that $\delta$ is an isomorphism. Since $X\notin A^\perp$, there is a nonzero morphism $f:A\to X$. Let $X'$ be the image of $f$. Then $A\onto X'$ implies $\Hom_\Lambda(X',B)=0$ since $\Hom_\Lambda(A,B)=0$. The exact sequence in Proposition \ref{prop: exact sequence for A/B} for $X'$ becomes the following short exact sequence:
    \[
    0\to \Hom_\Lambda(A,X')\to \Hom_\Lambda(Q,X')\to  D\Hom_\Lambda(X',E)\to 0.
    \]
    So, $\dim_K \Hom_\Lambda(Q,X')>\dim_K \Hom_\Lambda(X',E)$ contradicting (3b). Thus, (3a) and (3b) imply (1) $X\in A^\perp\cap\,^\perp B$. So, (1), (2), (3) are equivalent.
 \end{proof}

We now state a well known theorem about the relation between Jasso categories and wide subcategories and give a short balanced proof.

\begin{Theorem}\label{thm: W(g)=J}
    Let $\symm AB$ be a balanced pair. Then the Jasso category $J\!\left(\symm AB\right)$ is equal to the wide subcategory associated to the $g$-vector $g\!\left(\symm AB\right)$, i.e., $$J\!\left(\symm AB\right)=\cW\!\left(g\!\left(\symm AB\right)\right).$$
\end{Theorem}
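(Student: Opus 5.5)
The plan is to reduce the statement directly to Lemma \ref{lem: theta iso on W(A/B)}, by translating the defining conditions of $\cD_\Lambda(X)$ for the vector $v=g\!\left(\symm AB\right)$ into dimension counts of Hom spaces. Let $Q$ be the projective cover of $A$ and $E$ the injective envelope of $B$. By Remark \ref{rem: ast notation}(c) and its dual (using $\soc E=\soc B$ and $\dim_K\Hom_\Lambda(Y,E)=\undim\soc E\ast\undim Y$), for every module $Y$ we have
\[
v\ast\undim Y=\dim_K\Hom_\Lambda(Q,Y)-\dim_K\Hom_\Lambda(Y,E).
\]
This is the first half of the computation in Corollary \ref{cor: g(A/B) dot X}, before the exact sequence is applied.

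With this identity, membership $X\in\cW(v)$ becomes a statement about these dimensions. By Definition \ref{def: DL(M)}, $X\in\cW(v)$ means $v\in\cD_\Lambda(X)$, that is, $v\ast\undim X=0$ and $v\ast\undim X'\le 0$ for all submodules $X'\subseteq X$. By the identity above these are exactly conditions (3a) and (3b) of Lemma \ref{lem: theta iso on W(A/B)}. That lemma then says they are equivalent to $X\in A^\perp\cap\,^\perp B$, which is $J\!\left(\symm AB\right)$ by Proposition \ref{thm: Jasso category}. Since this holds for every $X$, the two subcategories coincide.

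The only point needing care is the dual identity $\dim_K\Hom_\Lambda(Y,E)=\undim\soc E\ast\undim Y$ for injective $E$. I will verify it additively on indecomposable injectives $I_i$. Here $\dim_K\Hom_\Lambda(Y,I_i)$ is the multiplicity of $S_i$ in $Y$ times $\dim_K S_i$. This also follows from Lemma \ref{lem: duality of nu P} with $E=\nu P$, since $\Hom_\Lambda(P,Y)\cong D\Hom_\Lambda(Y,\nu P)$ and $\topp P\cong\soc\nu P$. Nothing else should be an obstacle: all the substantive work was already done in Lemma \ref{lem: theta iso on W(A/B)}.
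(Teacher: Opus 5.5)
Your proposal is correct and is essentially the paper's proof: you translate the two defining conditions of $\cD_\Lambda(X)$ at $v=g\!\left(\tsymm AB\right)$ into conditions (3a) and (3b) of Lemma \ref{lem: theta iso on W(A/B)}, and then conclude from (3)$\iff$(1). Your use of the intermediate identity $v\ast\undim Y=\dim_K\Hom_\Lambda(Q,Y)-\dim_K\Hom_\Lambda(Y,E)$ is in fact the exact form needed to match (3a) and (3b), and is slightly more precise than the paper's citation of the final formula of Corollary \ref{cor: g(A/B) dot X}.
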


\begin{proof}
The statement we want to prove is that $\cW(g(\tsymm AB))=A^\perp\cap\,^\perp B$. So, let $X\in \cW(g(\tsymm AB))$ which, by definition of $\cW(g(\tsymm AB))$ is equivalent to $g(\tsymm AB)\in\cD_\Lambda(X)$. Then, by definition of $\cD_\Lambda(X)$, we have:
\vs2

 (1) $g(\tsymm AB)\ast \undim X=0$ and
 (2) $g(\tsymm AB)\ast \undim X'\le 0$ for all submodules $X'\subseteq X$.
 \vs2
 
\noindent By Corollary \ref{cor: g(A/B) dot X}, (1) is equivalent to Lemma \ref{lem: theta iso on W(A/B)}(3a) and (2) is equivalent to Lemma \ref{lem: theta iso on W(A/B)}(3b). Thus by Lemma \ref{lem: theta iso on W(A/B)}(1), we have $X\in A^\perp\cap\,^\perp B$.
 
 To prove the converse, suppose $X\in A^\perp\cap\,^\perp B$. Then, we get conditions (3a) and (3b) in Lemma \ref{lem: theta iso on W(A/B)}. These are equivalent to (1) and (2) above. So, $X\in\cW(g(\tsymm AB))$. 
 \end{proof}

 \begin{Corollary}\label{cor: M in J, g in D(M)}
     Let $\symm AB$ be a balanced pair and $M$ any module. Then 
     \[M\in J\!\left(\symm AB\right)\iff g\!\left(\symm AB\right)\in \cD_\Lambda(M).\]
 \end{Corollary}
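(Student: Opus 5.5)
This corollary is a direct unwinding of Theorem \ref{thm: W(g)=J} together with the definition of $\cW(v)$ in Definition \ref{def: DL(M)}. Write $v=g\!\left(\symm AB\right)$. Theorem \ref{thm: W(g)=J} gives the equality of subcategories $J\!\left(\symm AB\right)=\cW(v)$. By definition, $\cW(v)$ consists of exactly those modules $M$ with $v\in\cD_\Lambda(M)$; this is also recorded in Remark \ref{rem: W(v) is wide}(2). So, for any module $M$,
\[
M\in J\!\left(\symm AB\right)\iff M\in \cW(v)\iff v\in\cD_\Lambda(M),
\]
which is the claimed equivalence.

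The argument has two steps: first invoke Theorem \ref{thm: W(g)=J} to replace the Jasso category by $\cW(v)$, then apply the definition of $\cW(v)$. There is no real obstacle, since all the content lies in Theorem \ref{thm: W(g)=J}. That content is the equivalence, via Corollary \ref{cor: g(A/B) dot X}, between the two inequality conditions defining $\cD_\Lambda(M)$ and conditions (3a), (3b) of Lemma \ref{lem: theta iso on W(A/B)}.

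One could instead argue directly from Lemma \ref{lem: theta iso on W(A/B)}. By Proposition \ref{thm: Jasso category}, $M\in J\!\left(\symm AB\right)$ means $M\in A^\perp\cap\,^\perp B$. Lemma \ref{lem: theta iso on W(A/B)} shows this is equivalent to conditions (3a) and (3b). By Corollary \ref{cor: g(A/B) dot X} together with Remark \ref{rem: ast notation}(c), these translate into $v\ast\undim M=0$ and $v\ast\undim M'\le 0$ for all submodules $M'\subseteq M$. That is precisely $v\in\cD_\Lambda(M)$. This route repeats the proof of the theorem, so citing Theorem \ref{thm: W(g)=J} is the natural choice.
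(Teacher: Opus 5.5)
Your proposal is correct and matches the paper. The paper gives no separate proof because the corollary follows at once from Theorem \ref{thm: W(g)=J} together with the defining property $M\in\cW(v)\iff v\in\cD_\Lambda(M)$ (Remark \ref{rem: W(v) is wide}(2)), exactly as in your main argument; your alternative route through Lemma \ref{lem: theta iso on W(A/B)} just reproduces the proof of that theorem.
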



\section{Decomposition of balanced pairs}\label{sums of pairs}

\begin{Definition}\label{def: direct sum of balanced pairs}
    By a \textbf{direct sum decomposition} of a balanced pair $\symm AB$ we mean a pair of direct sum decompositions $A=\bigsqcup A_i$, $B=\bigsqcup B_i$ with summands allowed to be zero, so that each $\symm {A_i}{B_i}$ is a nonzero balanced pair.
\end{Definition}

{
\begin{Proposition}\label{prop: J(sum)}
\emph{(a)} There are three kinds of indecomposable balanced pairs:
\[
    \symm A{\tau A},\quad \symm P0,\quad \symm 0I
\]
where $A$ is an indecomposable nonprojective $\tau$-rigid module, $P$ is an indecomposable projective module and $I$ is an indecomposable injective module.

\emph{(b)} 
\[
    J\!\left(\bigsqcup\symm {A_i}{B_i}\right)=\bigcap  J\!\left(\symm {A_i}{B_i}\right),\qquad
    J\!\left(\bigsqcup\symm {A_i}{B_i}\right)=J\!\left(\bigsqcup\symm {A_i^{c_i}}{B_i^{c_i}}\right)
    \]
    for any sequence of positive integers $\{c_i\}$.

\emph{(c)} 
\[
    g\!\left(\bigsqcup\symm {A_i^{c_i}}{B_i^{c_i}}\right)=\sum c_ig\!\left(\symm{A_i}{B_i}
    \right).
\]
\end{Proposition}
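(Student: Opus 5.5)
For (a), I would use the bijection of Lemma \ref{lem: balanced of (M,P)}. Under this bijection a direct sum decomposition of $(M,P)$ gives a direct sum decomposition of $\symm AB$: if $M=M_1\sqcup M_2$ and $P=P_1\sqcup P_2$, then $\tau M\sqcup\nu P=(\tau M_1\sqcup\nu P_1)\sqcup(\tau M_2\sqcup\nu P_2)$, and each piece is again a balanced pair. Conversely, suppose $\symm AB=\symm{A_1}{B_1}\sqcup\symm{A_2}{B_2}$ with each piece balanced. Then $B_i=\tau A_i\sqcup I_i$, so $I=I_1\sqcup I_2$ by Krull--Schmidt, and the corresponding $\tau$-rigid pair decomposes as well. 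So a balanced pair is indecomposable exactly when the total number of indecomposable summands of $M\sqcup P[1]$ is one. This leaves three cases:
\begin{enumerate}
\item $M$ indecomposable nonprojective and $P=0$, giving $\symm A{\tau A}$ with $\tau A\neq 0$;
\item $M$ indecomposable projective and $P=0$, giving $\symm P0$ since $\tau P=0$;
\item $M=0$ and $P$ indecomposable, giving $\symm 0{\nu P}$, where $I=\nu P$ is indecomposable injective.
\end{enumerate}
The one point to watch is that $\symm A{\tau A}$ cannot split further. This holds because $\tau A$ is indecomposable when $A$ is indecomposable nonprojective, and $\tau A$ has no injective summand.

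For the first formula in (b), I would use Proposition \ref{thm: Jasso category}. For a direct sum we have $(\bigsqcup A_i)^\perp=\bigcap A_i^\perp$ and $\,^\perp(\bigsqcup B_i)=\bigcap\,^\perp B_i$, since $\Hom$ is additive in each variable. Rearranging the intersections gives $J\!\left(\bigsqcup\symm{A_i}{B_i}\right)=\bigcap J\!\left(\symm{A_i}{B_i}\right)$.

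The second formula in (b) is the same argument once we know it makes sense. Here I read $\symm{A_i^{c_i}}{B_i^{c_i}}$ as a pair satisfying conditions (1)--(3) of Def. \ref{def: balanced notation A/B} with repeated summands allowed. Under that reading, $A_i^\perp=(A_i^{c_i})^\perp$ and $\,^\perp B_i=\,^\perp(B_i^{c_i})$ because $c_i>0$. Moreover, $J=A^\perp\cap\,^\perp B$ depends only on which indecomposable summands occur in $A$ and $B$, not on their multiplicities.

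For (c), I would note that $\topp$ and $\soc$ commute with finite direct sums, so $\undim\topp$ and $\undim\soc$ are additive. Hence
\[g\!\left(\bigsqcup\symm{A_i^{c_i}}{B_i^{c_i}}\right)=\sum_i c_i\,\undim\topp A_i-\sum_i c_i\,\undim\soc B_i=\sum_i c_i\, g\!\left(\symm{A_i}{B_i}\right).\]
The only potential obstacle here is also notational: the $g$-vector must be taken directly from the balanced formula $\undim\topp A-\undim\soc B$ and not via Proposition \ref{prop: balanced g-vector}. That proposition assumes no repeated summands, whereas the balanced formula is visibly additive with or without them. Nothing in (c) goes beyond this additivity.
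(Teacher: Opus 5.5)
Your proposal is correct, and for (b) and (c) it is the paper's argument: additivity of $\Hom$ in each variable gives $(\sqcup A_i)^\perp\cap\,^\perp(\sqcup B_i)=\bigcap(A_i^\perp\cap\,^\perp B_i)$, and additivity of $\undim\topp$ and $\undim\soc$ gives (c). The paper states (a) without proof; your reduction through the bijection of Lemma \ref{lem: balanced of (M,P)} is a sound way to supply it, as is your use of the fact that $\tau A$ is indecomposable with no injective summand. Your remark that the multiplicities $c_i$ require reading $J$ and $g$ through the balanced formulas, rather than through $(M,P)$, is a worthwhile clarification that the paper leaves implicit.
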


\begin{proof}
(b) follows from $\bigsqcup\symm {A_i}{B_i}=\symm{\sqcup A_i}{\sqcup B_i}$ and 
\[
(\sqcup A_i)^\perp\cap \,^\perp(\sqcup B_i)=\bigcap A_i^\perp\cap\bigcap\,^\perp B_i=\bigcap\, (A_i^\perp\cap\,^\perp B_i).
\]
(c) follow from the fact that $g(X\sqcup Y)=g(X)+g(Y)$.
\end{proof}
}

\begin{Theorem}\label{thm: the open simplex}
    Let $\symm AB=\bigsqcup_{i=0}^m \symm{A_i}{B_i}$ be a direct sum decomposition of a balanced pair and let $\{t_i>0\}$ be any sequence of positive real numbers. Then:
    \begin{equation}\label{eq: convexity of walls}
      \cW\!\left(\sum_{i=0}^m t_ig\!\left(\symm {A_i}{B_i}\right)\right)=J\!\left(\symm AB\right).
    \end{equation}
\end{Theorem}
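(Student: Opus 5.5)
The plan is to reduce to the case of rational (in fact integer) coefficients and then invoke Theorem \ref{thm: W(g)=J} together with Proposition \ref{prop: J(sum)}. Write $v_i=g\!\left(\symm{A_i}{B_i}\right)$ and $v=\sum t_iv_i$.

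\emph{Step 1: positive integer coefficients.} If all $t_i=c_i$ are positive integers, then by Proposition \ref{prop: J(sum)}(c), $v=g\!\left(\bigsqcup\symm{A_i^{c_i}}{B_i^{c_i}}\right)$. The pair $\bigsqcup\symm{A_i^{c_i}}{B_i^{c_i}}$ still satisfies the conditions of Definition \ref{def: balanced notation A/B}, up to repeated summands. The proofs of Proposition \ref{prop: exact sequence for A/B}, Lemma \ref{lem: theta iso on W(A/B)} and Theorem \ref{thm: W(g)=J} never use the absence of repeated summands; they use only $\Hom(A,B)=0$, $B=\tau A_0\sqcup I$ and $A=A_0\sqcup P$. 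So Theorem \ref{thm: W(g)=J} gives
\[
\cW(v)=J\!\left(\bigsqcup\symm{A_i^{c_i}}{B_i^{c_i}}\right),
\]
and this equals $J\!\left(\symm AB\right)$ by Proposition \ref{prop: J(sum)}(b). Positive rational $t_i$ reduce to this case, since $\cW(v)=\cW(\lambda v)$ for $\lambda>0$: the conditions defining $\cD_\Lambda(M)$ are invariant under positive scaling.

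\emph{Step 2: general real $t_i>0$, inclusion $J\subseteq \cW(v)$.} Take $X\in J\!\left(\symm AB\right)=\bigcap_i J\!\left(\symm{A_i}{B_i}\right)$. By Corollary \ref{cor: M in J, g in D(M)}, each $v_i\in\cD_\Lambda(X)$. Since $\cD_\Lambda(X)$ is a convex cone, given by homogeneous linear equalities and inequalities, the positive combination $v$ also lies in it, so $X\in\cW(v)$. This direction does not need Step 1 at all.

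\emph{Step 3: the reverse inclusion $\cW(v)\subseteq J$.} This is the main obstacle, since $\cD_\Lambda(X)$ being convex does not let us pass from $v$ back to the individual $v_i$. Take $X\in\cW(v)$. I would use Corollary \ref{cor: g(A/B) dot X} directly, writing $h_i(Y)=\dim\Hom(A_i,Y)$ and $k_i(Y)=\dim\Hom(Y,B_i)$.

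First I would show $\Hom(A_i,X)=0$ for all $i$. Suppose some $\Hom(A_i,X)\neq 0$, and let $X'$ be the sum of the images of all maps $A\to X$, so $X'$ is a nonzero quotient of $A^m$ for some $m$. Then $\Hom(X',B)=0$, because $\Hom(A,B)=0$. Hence
\[
v\ast\undim X'=\sum_i t_i\,\dim\Hom(A_i,X')>0,
\]
which contradicts $v\in\cD_\Lambda(X)$.

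So $\Hom(A,X)=0$, and then
\[
0=v\ast\undim X=-\sum_i t_i\dim\Hom(X,B_i).
\]
Since every $t_i>0$, each term vanishes, so $\Hom(X,B)=0$ and $X\in A^\perp\cap{}^\perp B$. This argument works uniformly for all real $t_i>0$, which makes Step 1 a consistency check rather than a necessity. As a fallback, one could use Remark \ref{rem: rational points are dense} to approximate $v$ by rational points in the same faces.
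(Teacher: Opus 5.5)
Your proof is correct. Steps 1 and 2 coincide with the paper's Case 1 and its converse inclusion. The real difference is in the inclusion $\cW(v)\subseteq J\!\left(\tsymm AB\right)$ for real $t_i$.

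The paper proves this inclusion by approximation. It places $v$ in the simplex $\Delta$ spanned by the $g\!\left(\tsymm{A_i}{B_i}\right)$. It then uses Remark~\ref{rem: rational points are dense} to find a nearby point $\sum s_i\, g\!\left(\tsymm{A_i}{B_i}\right)$ of $\Delta\cap\cD_\Lambda(X)$ with positive rational $s_i$. Finally it appeals to Case 1, and hence to Theorem~\ref{thm: W(g)=J} applied to a pair with repeated summands.

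Your Step 3 instead reruns the argument of Lemma~\ref{lem: theta iso on W(A/B)}, (3)$\Rightarrow$(1), with weights, by summing Corollary~\ref{cor: g(A/B) dot X} over the pieces. The trace of $A$ in $X$ lies in ${}^\perp B$ and receives a nonzero map from some $A_i$, so it pairs strictly positively with $v$. Once $\Hom_\Lambda(A,X)=0$, the equation $v\ast\undim X=0$ forces each $\Hom_\Lambda(X,B_i)=0$, because every $t_i>0$.

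What your route buys:
\begin{itemize}
\item It handles all real $t_i>0$ at once and makes Step 1 unnecessary.
\item It sidesteps a point the paper leaves implicit: that rational points of $\Delta$ near $v$ have positive rational barycentric coordinates. This is most easily justified by linear independence of the $g$-vectors of the pieces, \cite[Theorem 5.1]{AIR}.
\end{itemize}
What the paper's route buys is that the result appears as a formal consequence of Theorem~\ref{thm: W(g)=J} plus rational convex geometry. This fits its view of integral points of the cone as $g$-vectors of balanced pairs with multiplicities.

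Note that your Step 2 still goes through Theorem~\ref{thm: W(g)=J}, via Corollary~\ref{cor: M in J, g in D(M)}. It could also be checked directly from Corollary~\ref{cor: g(A/B) dot X}, since submodules of an object of $A^\perp$ remain in $A^\perp$.
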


\begin{proof}
Let $M\in \cW(\sum t_ig(\tsymm{A_i}{B_i}))$. This is equivalent to 
\[
\sum t_ig\!\left(\symm{A_i}{B_i}\right)\in\cD_\Lambda(M).\]
It follows from the definition of $\cD_\Lambda(M)$ and $\cW(v)$ that $\cW(tv)=\cW(v)$ for any $v\in \RR^n$ and any $t>0$, $t\in\RR$. 

Case 1. All $t_i\in\QQ$. Then, we may scale them up to integers, say $c_i$. Since $t_i>0$ we get $c_i>0$. Then 
\[
	\cW\!\left(\sum t_ig\!\left(\symm{A_i}{B_i}\right)\right)=\cW\!\left(\sum c_ig\!\left(\symm{A_i}{B_i}\right)\right)=\cW\!\left(g\!\left(\bigsqcup\symm{A_i^{c_i}}{B_i^{c_i}}\right)\right)=J\!\left(\bigsqcup \symm{A_i^{c_i}}{B_i^{c_i}}\right)=J\!\left(\symm AB\right)
\]
by Theorem \ref{thm: W(g)=J} and Proposition \ref{prop: J(sum)}. So, $M\in J(\tsymm AB)$ in the case when $t_i$ are all rational and positive.

Case 2. $t_i\in\RR$.
    Let $\Delta\subset \RR^n$ be the $m$-simplex spanned by the vectors $g(\tsymm{A_i}{B_i})$:
    \[
    \Delta:=\left\{\sum_{i=0}^m t_ig\!\left(\symm{A_i}{B_i}\right)\mid t_i\in\RR, t_i\ge0, \sum t_i=1\right\}.
    \]
    Let $L=\Delta\cap\cD_\Lambda(M)$.  Then $L$ is nonempty since $\sum t_ig(\tsymm{A_i}{B_i})\in L$. By Remark \ref{rem: rational points are dense}, $L$ contains a dense set of rational points. In particular, there are rational points $r$ arbitrarily close to $\sum t_ig(\tsymm{A_i}{B_i})$. We can choose $r=\sum s_ig(\tsymm{A_i}{B_i})$ where $s_i$ are positive rational numbers. Then, by Case 1, $\sum s_ig(\tsymm{A_i}{B_i})\in \cD_\Lambda(M)$ implies $M\in J(\tsymm AB)$. Therefore, 
    \[
    	\cW\!\left(\sum_{i=0}^m t_ig\!\left(\symm{A_i}{B_i}\right)\right)\subseteq J\!\left(\symm AB\right).
    \]
   
   Conversely, let $M\in J(\tsymm AB)$. Then, since $J(\tsymm AB)=\bigcap J(\tsymm{A_i}{B_i})$, 
\[
M\in J\!\left(\symm{A_i}{B_i}\right)=\cW\!\left(g\!\left(\symm{A_i}{B_i}\right)\right)
\]
for each $i$ by Theorem \ref{thm: W(g)=J}. By Remark \ref{rem: W(v) is wide} this is equivalent to $g(\tsymm{A_i}{B_i})\in \cD_\Lambda(M)$. Since $\cD_\Lambda(M)$ is convex, it contains the simplex spanned by the vectors $g(\tsymm{A_i}{B_i})$. Since $\cD_\Lambda(M)$ contains all positive scalar multiples of its elements, $\cD_\Lambda(M)$ contains any positive linear combination of the vectors $g(\tsymm{A_i}{B_i})$. So, $\sum t_ig(\tsymm{A_i}{B_i})\in \cD_\Lambda(M)$ which is equivalent to $M\in \cW(\sum t_ig(\tsymm{A_i}{B_i}))$. This proves:
\[
	J\!\left(\symm AB\right)= \cW\!\left(\sum_{i=0}^m t_ig\!\left(\symm{A_i}{B_i}\right)\right).
\]
    \end{proof}

{

\begin{Corollary}\label{cor: clean interiors}
Let $\symm AB=\bigsqcup_{i=0}^m\symm{A_i}{B_i}$ be a balanced pair with $m+1$ indecomposable summands. Let $\Delta$ be the closed $m$-simplex in $\RR^n$ with vertices $g(\tsymm{A_i}{B_i})$. Then, the following are equivalent for any module $M$.
\begin{enumerate}
    \item $\cD_\Lambda(M)\cap (interior\,\Delta)\neq\emptyset$.
    \item $\cD_\Lambda(M)$ contains $\Delta$.
    \item $M\in J\!\left(\symm AB\right)$.
    \item $M\in J\!\left(\symm{A_i}{B_i}\right)$ for all $i$.
\end{enumerate}\end{Corollary}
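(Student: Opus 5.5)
The plan is to prove the cycle of implications $(4)\Rightarrow(3)\Rightarrow(2)\Rightarrow(1)\Rightarrow(3)$, and separately $(3)\Rightarrow(4)$. Almost everything follows from Theorem \ref{thm: W(g)=J}, Proposition \ref{prop: J(sum)}(b) and Theorem \ref{thm: the open simplex}.

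\emph{The equivalence $(3)\Leftrightarrow(4)$.} This is immediate from Proposition \ref{prop: J(sum)}(b), which gives $J\!\left(\symm AB\right)=\bigcap_i J\!\left(\symm{A_i}{B_i}\right)$.

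\emph{The implication $(3)\Rightarrow(2)$.} By (4), $M\in J\!\left(\symm{A_i}{B_i}\right)=\cW\!\left(g\!\left(\symm{A_i}{B_i}\right)\right)$ for each $i$, using Theorem \ref{thm: W(g)=J}. By Remark \ref{rem: W(v) is wide}(2), each vertex $g\!\left(\symm{A_i}{B_i}\right)$ then lies in $\cD_\Lambda(M)$. Since $\cD_\Lambda(M)$ is convex (Remark \ref{rem: rational points are dense}), it contains their convex hull $\Delta$.

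\emph{The implication $(2)\Rightarrow(1)$.} It suffices to check that the interior of $\Delta$ is nonempty. We interpret ``interior'' as the relative interior, namely the points $\sum t_i g\!\left(\symm{A_i}{B_i}\right)$ with all $t_i>0$ and $\sum t_i=1$. This set is nonempty; for example, take all $t_i=1/(m+1)$. Note that $\Delta$ really is an $m$-simplex, because the $g$-vectors of the summands are linearly independent by \cite[Theorem 5.1]{AIR}. This is worth remarking, but it is not essential for nonemptiness.

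\emph{The implication $(1)\Rightarrow(3)$.} This is the only step with content. Choose $v\in\cD_\Lambda(M)\cap(\text{interior}\,\Delta)$ and write $v=\sum t_i g\!\left(\symm{A_i}{B_i}\right)$ with all $t_i>0$. Then $M\in\cW(v)$ by Remark \ref{rem: W(v) is wide}(2). Theorem \ref{thm: the open simplex} says $\cW(v)=J\!\left(\symm AB\right)$ for every such strictly positive combination, so $M\in J\!\left(\symm AB\right)$.

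The only subtle point is the meaning of ``interior'': the barycentric coordinates must all be strictly positive so that Theorem \ref{thm: the open simplex} applies. The real-versus-rational approximation issue has already been handled inside that theorem.
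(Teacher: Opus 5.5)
Your proof is correct and matches the paper's argument: (1)$\Rightarrow$(3) via Theorem \ref{thm: the open simplex}, (3)$\Leftrightarrow$(4) via Proposition \ref{prop: J(sum)}(b), and (4)$\Rightarrow$(2) via Theorem \ref{thm: W(g)=J} (equivalently Corollary \ref{cor: M in J, g in D(M)}) together with convexity of $\cD_\Lambda(M)$, with (2)$\Rightarrow$(1) immediate. Your reading of ``interior'' as the points with strictly positive barycentric coordinates is exactly how the paper uses it.
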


\begin{proof}
    (1)$\then$(3) Suppose $\cD_\Lambda(M)$ intersects the interior of $\Delta$. Then, $\sum t_ig(\tsymm{A_i}{B_i})\in\cD_\Lambda(M)$ for some real numbers $t_i>0$. By Remark \ref{rem: W(v) is wide} and Theorem \ref{thm: the open simplex}, this is equivalent to
    \[
         M\in \cW\!\left(\sum t_ig\!\left(\symm{A_i}{B_i}\right)\right)=J\!\left(\symm AB\right).
    \]
    
    (3)$\then$(4) is clear since $J(\tsymm AB)=\bigcap J(\tsymm{A_i}{B_i})$.
    
    (4)$\then$(2) $M\in J(\tsymm {A_i}{B_i})$ for every $i$ is equivalent to $g(\tsymm{A_i}{B_i})\in \cD_\Lambda(M)$ for all $i$ by Corollary \ref{cor: M in J, g in D(M)}. Since $\cD_\Lambda(M)$ is convex it contain the simplex $\Delta$ spanned by the points $g(\tsymm{A_i}{B_i})$. 
    
    Finally, (2) clearly implies (1). So, these statements are equivalent.
\end{proof}

We call a balanced pair $\symm AB$ \textbf{complete} if it has $n$ indecomposable nonisomorphic summands $\symm{A_i}{B_i}$ where $n$ is the rank of $\Lambda$ which is the number of simple $\Lambda$-modules. Theorem \ref{thm: the open simplex} can be rephrased as follows.

\begin{Corollary}\label{cor: empty interior}
Let $\symm AB=\bigsqcup_{i=0}^{n-1}\symm{A_i}{B_i}$ be a complete balanced pair. 

\emph{(a)} Let $\Delta$ be the closed $(n-1)$-simplex in $\RR^n$ with vertices $g(\tsymm{A_i}{B_i})$.
Then, there is no module $M$ so that $\cD_\Lambda(M)\cap(interior\,\Delta)\neq\emptyset$. 

\emph{(b)} $J\!\left(\symm AB\right)=0$.
\end{Corollary}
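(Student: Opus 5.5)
The plan is to prove (b) first and then read off (a) from Corollary \ref{cor: clean interiors}. Indeed, Corollary \ref{cor: clean interiors} says that $\cD_\Lambda(M)\cap(interior\,\Delta)\neq\emptyset$ if and only if $M\in J\!\left(\symm AB\right)$. So once we know $J\!\left(\symm AB\right)=0$, the only module whose domain could meet the interior of $\Delta$ is $M=0$. Since $\cD_\Lambda(0)=\RR^n$ is not a semi-invariant domain in any meaningful sense, statement (a) should be read for nonzero $M$.

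For (b), I will use Theorem \ref{thm: W(g)=J} together with the interior of the simplex. Take $v=\sum_i g\!\left(\symm{A_i}{B_i}\right)$, which is an interior point of $\Delta$. By Theorem \ref{thm: the open simplex}, $J\!\left(\symm AB\right)=\cW(v)$.

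Suppose $0\neq X\in J\!\left(\symm AB\right)$. By Corollary \ref{cor: clean interiors}, $\Delta\subseteq\cD_\Lambda(X)$, so every vertex satisfies $g\!\left(\symm{A_i}{B_i}\right)\ast\undim X=0$. Via Proposition \ref{prop: balanced g-vector} and Lemma \ref{lem: balanced of (M,P)}, the $n$ vectors $g\!\left(\symm{A_i}{B_i}\right)$ are the $g$-vectors of the $n$ indecomposable summands of a $\tau$-rigid pair with $n$ summands. By \cite[Theorem 5.1]{AIR} they are linearly independent, hence span $\RR^n$. The form $\ast$ is nondegenerate because each $\dim_K S_i>0$. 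Therefore $\undim X=0$, so $X=0$, which is a contradiction.

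The only point needing care is translating completeness into the linear independence of the $n$ $g$-vectors. This requires the $n$ summands $\symm{A_i}{B_i}$ to be pairwise nonisomorphic, so that they correspond to distinct indecomposable summands of the $\tau$-rigid pair $M\sqcup P[1]$; that is exactly the hypothesis in the definition of complete. Everything else is formal bookkeeping from earlier results.
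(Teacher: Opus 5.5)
Your proof is correct and is essentially the paper's argument with the two parts in the opposite order. The paper proves (a) first: a domain meeting the interior of $\Delta$ contains $\Delta$ and $0$, so it would be $n$-dimensional inside the hyperplane $\{v : v\ast\undim M=0\}$, which tacitly uses the linear independence of the $g$-vectors and $M\neq0$, both of which you make explicit; it then gets (b) from Corollary~\ref{cor: clean interiors}. One harmless slip: $\sum_i g(\tsymm{A_i}{B_i})$ is $n$ times the barycenter, so it lies in the open cone over the interior of $\Delta$ rather than in $\Delta$ itself, but you never use that step.
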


\begin{proof}
(a) If a domain $\cD_\Lambda(M)$ meets the interior of $\Delta$, it contains $\Delta$ by Corollary \ref{cor: clean interiors}, including the $n$ vertices of $\Delta$. But $\cD_\Lambda(M)$ also contains 0. So, it must be $n$-dimensional which is not possible since $\cD_\Lambda(M)$ is contained in the hyperplane perpendicular to $\undim M$.

(b) follows from (a) and Corollary \ref{cor: clean interiors}.
\end{proof}
}

{
We restate the following result due to Gustavo Jasso \cite{Jasso} (and \cite{DIRRT}) in balanced notation  since we will be using a corollary of it in the proof of Lemma \ref{lem: T is not A0} below.

\begin{Theorem}\cite{Jasso}\cite{DIRRT}\label{thm: J perpendicular}
Let $\symm AB$ be a balanced pair where $\symm AB$ has $m$ components. Then the Jasso subcategory $J\!\left(\symm AB\right)$ is equivalent to the module category of a finite dimensional algebra of rank $n-m$. In particular, it has $n-m$ (relatively) simple objects.
\end{Theorem}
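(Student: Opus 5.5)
This is Jasso's reduction theorem restated, so the plan is to translate the balanced pair back into a $\tau$-rigid pair and invoke \cite{Jasso} (see also \cite{DIRRT}).

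By Lemma \ref{lem: balanced of (M,P)}, $\symm AB$ corresponds to the $\tau$-rigid pair $(M,P)=(A,\nu^{-1}I)$, where $B=\tau A\sqcup I$. By Proposition \ref{prop: J(sum)}(a), the indecomposable components of $\symm AB$ are of three kinds: $\symm{A_i}{\tau A_i}$, $\symm{P_i}0$ and $\symm 0{I_i}$. The first two kinds are exactly the indecomposable summands of $M$. The third kind is in bijection with the indecomposable summands of $P=\nu^{-1}I$. Hence $m=|M|+|P|$. By Proposition \ref{thm: Jasso category}, $J\!\left(\symm AB\right)=M^\perp\cap P^\perp\cap\,^\perp\tau M$, which is the Jasso category of $(M,P)$.

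Next I apply Jasso's theorem. Complete $(M,P)$ to a support $\tau$-tilting pair via the Bongartz completion $T=M\sqcup X$, and let $C=\End_\Lambda(T)/\langle e_M\rangle$. Jasso shows that $\Hom_\Lambda(T,-)$ restricts to an equivalence from $M^\perp\cap P^\perp\cap\,^\perp\tau M$ to $mod\text-C$. The rank of $C$ is the number of summands of $X$, which is $n-m$. In a module category of rank $n-m$ there are exactly $n-m$ simple objects. Since the equivalence is exact (the category is wide), these correspond to the $n-m$ relatively simple objects of $J\!\left(\symm AB\right)$.

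The main subtlety is the count $m=|M|+|P|$ when $P\ne0$. This requires checking that $\nu$ induces a bijection between indecomposable projectives and indecomposable injectives, so that each $\symm 0{I_i}$ gives exactly one summand of $P$. Beyond that, the content lies entirely in the cited theorem, and we do not reprove the equivalence.
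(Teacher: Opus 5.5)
Your proposal is correct and follows the paper's route: the paper gives no separate argument beyond citing \cite{Jasso} and \cite{DIRRT}, and your translation through Lemma \ref{lem: balanced of (M,P)} and Proposition \ref{thm: Jasso category} is exactly what lets that citation apply to balanced pairs. One point should be stated precisely: $T=M\sqcup X$ must be the Bongartz completion of $M$ over $\Lambda/\langle e_P\rangle$, equivalently the sum of the Ext-projectives of $\,^\perp B$ (the paper's generalized Bongartz completion), because the ordinary Bongartz completion in $mod\text-\Lambda$ has $n-|M|$ complementary summands rather than $n-m$ when $P\neq0$, and Jasso's equivalence for it describes $M^\perp\cap\,^\perp\tau M$ without the condition $P^\perp$.
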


\begin{Corollary}\label{cor: labels of domains}
When $\symm AB$ has $n-1$ components, $J\!\left(\symm AB\right)$ consists of a single object $M_0$ together with iterated self-extensions of $M_0$. In particular, every object of $J\!\left(\symm AB\right)$ has dimension vector a multiple of $\undim M_0$.
\end{Corollary}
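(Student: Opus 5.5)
By Theorem \ref{thm: J perpendicular} with $m=n-1$, the Jasso category $J\!\left(\symm AB\right)$ is equivalent, as an abelian category, to $mod\text-\Gamma$ for a finite dimensional algebra $\Gamma$ of rank $n-(n-1)=1$. The plan is to read off the structure of $mod\text-\Gamma$ and carry it back to $mod\text-\Lambda$ through the exact embedding. The fact that $J\!\left(\symm AB\right)$ is a wide subcategory, by Theorem \ref{thm: W(g)=J} together with Remark \ref{rem: W(v) is wide}, guarantees that relatively simple objects and composition series inside $J$ are meaningful. Since $\Gamma$ has rank one, $mod\text-\Gamma$ has exactly one simple module $S$. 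We set $M_0$ to be the image of $S$ under the equivalence; it is the unique relatively simple object of $J\!\left(\symm AB\right)$.

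Next, take any object $X\in J\!\left(\symm AB\right)$. Its image in $mod\text-\Gamma$ has a finite composition series, and every factor is isomorphic to $S$. Transporting this back, $X$ has a filtration
\[
0=X_0\subset X_1\subset\cdots\subset X_k=X
\]
in $J\!\left(\symm AB\right)$ with each $X_i/X_{i-1}\cong M_0$. This says exactly that $X$ is an iterated self-extension of $M_0$. The filtration stays a filtration in $mod\text-\Lambda$, and the quotients agree there, because the wide subcategory is exactly embedded (Definition \ref{def: wide subcategorhy}). The short exact sequences $0\to X_{i-1}\to X_i\to M_0\to 0$ are therefore exact in $mod\text-\Lambda$. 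Additivity of dimension vectors on short exact sequences then gives $\undim X=k\,\undim M_0$.

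The only point needing care is the last one: we must make sure that exactness in $J$ coincides with exactness in $mod\text-\Lambda$. Otherwise the dimension-vector count could fail, since Theorem \ref{thm: J perpendicular} is only stated as an abstract equivalence. This is handled by the wide-subcategory property, namely that kernels and cokernels in $J$ are computed in $mod\text-\Lambda$. One should also note that $J\!\left(\symm AB\right)\neq 0$, because $\Gamma$ has rank $1$ and not $0$, so $M_0$ genuinely exists.
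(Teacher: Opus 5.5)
Your proposal is correct and matches the paper, which states this corollary with no separate proof as an immediate consequence of Theorem \ref{thm: J perpendicular}: the rank is $n-(n-1)=1$, so there is exactly one relatively simple object. Your extra point, that $J\!\left(\symm AB\right)$ is exactly embedded (by Theorem \ref{thm: W(g)=J} and Remark \ref{rem: W(v) is wide}) so that dimension vectors add along the filtration, is precisely the justification the paper leaves implicit; extension-closure likewise gives the converse inclusion, that iterated self-extensions of $M_0$ lie in $J$.
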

}
{
\section{Torsion classes}\label{sec: P(v)}

We recall that, for any topological space $X$, we can define an equivalence relation by saying two points are equivalent if they are connected by a path in $X$. The equivalence classes are called the \textbf{path components} of $X$. These are the maximal path connected subsets of $X$. Let $\cD_\Lambda$ denote the union of all domains of semi-invariants $\cD_\Lambda(M)$. (We will call these \textbf{domains} for short.) A \textbf{chamber} is defined to be a path component of the space $X=\RR^n-\cD_\Lambda$. Since the origin $0$ lies on every domain $\cD_\Lambda(M)$, $0$ is not an element of any chamber. But domains and chambers share the following property.

\begin{Proposition}\label{prop: unions of rays}
    Domains and chambers are closed under positive scalar multiplication, i.e., if $x\in\cD_\Lambda(M)$, then $rx\in\cD_\Lambda(M)$ and if $v\in\cU$, then $rv\in \cU$ for all real numbers $r>0$.
\end{Proposition}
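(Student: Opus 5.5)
The plan is to deal with domains first, directly from Definition \ref{def: DL(M)}, and then derive the statement for chambers from it using path components.

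For domains, take $x\in\cD_\Lambda(M)$ and $r>0$. The modified dot product $\ast$ of \eqref{eq: modified dot product} is bilinear, so for every submodule $M'\subseteq M$ we have $(rx)\ast\undim M'=r\,(x\ast\undim M')$. Multiplying by a positive scalar preserves both the equality $x\ast\undim M=0$ and the inequalities $x\ast\undim M'\le 0$. Hence $rx\in\cD_\Lambda(M)$. It follows at once that the union $\cD_\Lambda$ of all domains is closed under multiplication by positive scalars. Since the argument also applies to $r^{-1}$, the complement $X=\RR^n-\cD_\Lambda$ is closed under positive scaling as well.

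For chambers, let $\cU$ be a path component of $X$, and take $v\in\cU$ and $r>0$. By the previous step $rv\in X$. I will join $v$ to $rv$ by the path $\gamma(s)=((1-s)+sr)\,v$ for $s\in[0,1]$. The coefficient $(1-s)+sr$ is a convex combination of $1$ and $r$, so it is strictly positive. Each point $\gamma(s)$ is therefore a positive multiple of $v\in X$, and so $\gamma(s)\in X$. This means $\gamma$ is a path in $X$ from $v$ to $rv$. Because $\cU$ is a path component of $X$, $rv\in\cU$.

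There is no real obstacle. The only point that needs care is checking that the path stays inside $X$, which holds because the scaling coefficient never reaches $0$. This matters since $0$ lies in every domain.
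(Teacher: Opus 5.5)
Your proof is correct and follows essentially the same route as the paper. Domains are handled directly from the definition. For chambers, the open ray through $v$ avoids $\cD_\Lambda$ because $v=r^{-1}(rv)$, and the path-connectedness of that ray puts $rv$ in $\cU$; you simply make this explicit with the segment $\gamma(s)=((1-s)+sr)v$.
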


\begin{proof}
    This is clear for domains by definition of $\cD_\Lambda(M)$. For chambers, suppose by contradiction that $v\in \cU$ but $rv\in\cD_\Lambda(M)$ for some $r>0$. Then $v=\frac1r rv\in \cD_\Lambda(M)$, so $v\notin\cU$, a contradiction. Therefore, the entire open ray $\{rv\mid r>0\}$ lies in the complement of $\cD_\Lambda$. Since the open ray is a path connected set, all $rv$ lie in the same chamber $\cU$.
\end{proof}

If follows from Def. \ref{def: DL(M)} that domains $\cD_\Lambda(M)$ are closed convex sets. Chambers are never closed since the limit point $0$ is not in any chamber. However, they are convex. (This is part of Theorem \ref{thm: chambers are convex}.)

The chambers that have been studied the most are the ``{standard open chambers}'' defined as follows and illustrated in Figure \ref{fig: stardard chamber}.
{
\begin{Definition}\label{def: standard open chamber}
    Let $\symm AB=\bigsqcup\symm{A_i}{B_i}$ be a complete balanced pair. Then the \textbf{standard open chamber} $\cC\left(\symm AB\right)$ is the set of all positive linear combinations $\sum r_ig\!\left(\symm{A_i}{B_i}\right)$, $r_i>0$ of the $g$-vectors of $\tsymm{A_i}{B_i}$. 
\end{Definition}
}

{
\begin{Remark}\label{rem: projected domain, chamber}
   Let $\cC\left(\symm AB\right)$ be the standard open chamber associated to the complete balanced pair $\symm AB=\bigsqcup \symm{A_i}{B_i}$.
   
    (a) Then $\cC\left(\symm AB\right)$ contains the interior of the simplex $\Delta$ spanned by $g(\tsymm{A_i}{B_i})$ and is also the union of all open rays $\RR_+v$ of all vectors $v$ in the interior of $\Delta$. See Figure \ref{fig: stardard chamber}. 

(b) Because of Proposition \ref{prop: unions of rays} we can project domains and chambers to the unit sphere. The projection map is given by dividing vectors by their lengths. The images in $S^{n-1}$ of domains and chambers will be called \textbf{projected domains} and \textbf{projected chambers}. See Figure \ref{fig: projected chamber}. 
\end{Remark}
}

We recall that, for any $\Lambda$-module $M$, $\Gen M$ is the class of all quotients of multiples $M^m$ of $M$. We now need the following lemma of Auslander and Smal\o.

\begin{Lemma}\cite{AS}\label{lem: AS}   $\Hom_\Lambda(Y,\tau X)=0$ if and only if $\Ext_\Lambda^1(X,\Gen\,Y)=0$.\end{Lemma}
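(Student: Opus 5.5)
We prove the Auslander–Smalø lemma by combining the Auslander–Reiten formula with the description of $\Gen Y$ as the quotients of $Y^m$. The plan is to show both sides are equivalent to the vanishing of $\overline{\Hom}_\Lambda(Y^m,\tau X)$-type data. We use the defect formula $\Ext^1_\Lambda(X,Z)\cong D\overline{\Hom}_\Lambda(Z,\tau X)$, where $\overline{\Hom}$ denotes morphisms modulo those factoring through injectives.

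For the direction ($\Rightarrow$), assume $\Hom_\Lambda(Y,\tau X)=0$. First, every $Z\in\Gen Y$ admits an epimorphism $Y^m\onto Z$. Since $\Hom_\Lambda(Y^m,\tau X)=0$, composing shows $\Hom_\Lambda(Z,\tau X)=0$. Hence $\overline{\Hom}_\Lambda(Z,\tau X)=0$, and by the AR formula $\Ext^1_\Lambda(X,Z)=0$.

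For the direction ($\Leftarrow$), assume $\Ext^1_\Lambda(X,\Gen Y)=0$ and suppose there is a nonzero $f:Y\to\tau X$. Let $Z=\operatorname{im} f\in\Gen Y$, so $Z$ is a nonzero submodule of $\tau X$ and $\Ext^1_\Lambda(X,Z)=0$. By the AR formula, every map $Z\to\tau X$ factors through an injective; in particular the inclusion $\iota:Z\hookrightarrow\tau X$ factors through the injective envelope $Z\to E(Z)$, giving $h:E(Z)\to\tau X$ extending $\iota$.

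This is the main obstacle: we must show that such an $h$ cannot exist, i.e.\ that the nonzero submodule $Z\subseteq\tau X$ does not extend to a map from an injective. The key input is that $\tau X$ has no injective summands, combined with the minimal injective copresentation $0\to\tau X\to\nu P_1\xrightarrow{\nu p}\nu P_0$ used in Proposition~\ref{prop: exact sequence for A/B}. I would argue via the Proposition~\ref{prop: exact sequence for A/B}-style exact sequence
\[
0\to\Hom_\Lambda(X,Z)\to\Hom_\Lambda(P_0,Z)\to\Hom_\Lambda(P_1,Z)\to\Ext^1_\Lambda(X,Z)\to 0,
\]
noting that $\Ext^1_\Lambda(X,Z)=0$ makes $p^\ast$ surjective. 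Dualizing via Lemma~\ref{lem: duality of nu P} gives $D\Hom_\Lambda(Z,\tau X)=0$, because the cokernel of $p'$ is $D\Hom_\Lambda(Z,\tau X)$. This contradicts $\iota\neq0$.

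In fact, this last computation, read in the diagram of Proposition~\ref{prop: exact sequence for A/B} with $X$ replaced by $Z$ (and $A_0=X$, $B_0=\tau X$), is exactly the statement $\Hom_\Lambda(Z,\tau X)=0\iff \Ext^1(X,Z)=0$, provided $X$ has no projective summands. If $X$ has projective summands, we discard them, since they contribute nothing to either side. The cleanest route is therefore to prove the equivalence directly from that diagram, avoiding $\overline{\Hom}$. The remaining checks are routine: that the right-hand term really is $D\Hom_\Lambda(Z,\tau X)$, and that injectivity/surjectivity transfer correctly across the isomorphisms $\theta$.
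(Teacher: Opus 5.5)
Your $(\Rightarrow)$ direction is fine. Your setup for $(\Leftarrow)$ is also right: $Z=\operatorname{im} f$ is a nonzero submodule of $\tau X$ lying in $\Gen Y$, and $\Ext^1_\Lambda(X,Z)=0$ forces the inclusion $\iota$ to extend to some $h:E(Z)\to\tau X$. (The paper gives no proof of its own; it simply cites \cite{AS}.)

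The argument you then use to rule out $h$ is wrong. The sequence
\[
0\to\Hom_\Lambda(X,Z)\to\Hom_\Lambda(P_0,Z)\xrightarrow{p^\ast}\Hom_\Lambda(P_1,Z)\to\Ext^1_\Lambda(X,Z)\to0
\]
is exact when $p$ is injective ($\operatorname{pd}X\le1$), but not in general. Write $\Omega X=\operatorname{im}p$. Then $\Ext^1_\Lambda(X,Z)$ is the cokernel of $\Hom_\Lambda(P_0,Z)\to\Hom_\Lambda(\Omega X,Z)$. Inside $\Hom_\Lambda(P_1,Z)$, the space $\Hom_\Lambda(\Omega X,Z)$ is only the subspace of maps killing $\ker p$. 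So $\Ext^1_\Lambda(X,Z)$ is merely a subspace of $\operatorname{coker}p^\ast\cong D\Hom_\Lambda(Z,\tau X)$; it is $D\overline{\Hom}_\Lambda(Z,\tau X)$, which is just the AR formula you started from. Hence $\Ext^1_\Lambda(X,Z)=0$ does not make $p^\ast$ surjective.

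The equivalence $\Hom_\Lambda(Z,\tau X)=0\iff\Ext^1_\Lambda(X,Z)=0$ that you assert for arbitrary $Z$ is in fact false. Take $\Lambda=K[x]/(x^2)$, $X=S$ the simple module (so $\tau S\cong S$), and $Z=\Lambda$. Since $Z$ is injective, $\Ext^1_\Lambda(S,\Lambda)=0$, yet $\Hom_\Lambda(\Lambda,\tau S)\neq0$. If that equivalence held, the $\Gen$ in the lemma would be superfluous, so your ``cleanest route'' would prove a false statement.

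What is missing is an argument that uses $Z\subseteq\tau X$. Since $h|_Z=\iota$ is a monomorphism, $\ker h\cap Z=0$. Since $Z$ is essential in $E(Z)$, this gives $\ker h=0$. So $h$ embeds the nonzero injective module $E(Z)$ into $\tau X$, making $E(Z)$ a direct summand of $\tau X$. This contradicts the fact that $\tau X$ has no nonzero injective summands. Replacing your last paragraph with this essential-extension step completes the proof.
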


\begin{Proposition}\label{prop: Gen M is torsion class}
    If $M$ is $\tau$-rigid, then $\Gen M$ is a torsion class.
\end{Proposition}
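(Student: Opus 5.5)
We must show that $\Gen M$ is closed under quotients and under extensions. Closure under quotients holds by definition: a quotient of a quotient of $M^m$ is again a quotient of $M^m$. All the work is in closure under extensions, and for that we use Lemma \ref{lem: AS} with $X=Y=M$. Since $M$ is $\tau$-rigid, $\Hom_\Lambda(M,\tau M)=0$, and the lemma gives $\Ext^1_\Lambda(M,\Gen M)=0$.

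Now take a short exact sequence $0\to X\to Z\xrightarrow{g} Y\to 0$ with $X,Y\in\Gen M$. Choose epimorphisms $f:M^a\onto X$ and $h:M^b\onto Y$. Since $Y\in\Gen M$ is a quotient of $M^a$, applying the lemma to the summand $M^b$ of $M^a\sqcup M^b$ gives $\Ext^1_\Lambda(M^b,X)=0$. Apply $\Hom_\Lambda(M^b,-)$ to the sequence. The connecting term $\Ext^1_\Lambda(M^b,X)$ vanishes, so the map $\Hom_\Lambda(M^b,Z)\to\Hom_\Lambda(M^b,Y)$ is surjective. Hence $h$ lifts to some $\tilde h:M^b\to Z$ with $g\tilde h=h$.

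Consider the map $(\iota f,\tilde h):M^a\sqcup M^b\to Z$, where $\iota:X\to Z$ is the inclusion. It is surjective by a short diagram chase (or the snake lemma). Given $z\in Z$, first choose $u\in M^b$ with $h(u)=g(z)$. Then $g(z-\tilde h(u))=0$, so $z-\tilde h(u)$ lies in $X$ and equals $f(w)$ for some $w\in M^a$. Therefore $z=\iota f(w)+\tilde h(u)$, and $Z$ is a quotient of $M^{a+b}$, i.e.\ $Z\in\Gen M$.

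The one step needing care is applying Lemma \ref{lem: AS} correctly: it is stated for the class $\Gen Y$ with $Y=M$, and it yields vanishing of $\Ext^1_\Lambda(M,-)$ on all of $\Gen M$. Passing from $M$ to the direct sum $M^b$ is immediate by additivity of $\Ext^1$. Everything else is formal.
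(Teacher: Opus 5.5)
Your proof is correct and follows the paper's argument: use Lemma \ref{lem: AS} with $X=Y=M$ to get $\Ext^1_\Lambda(M,\Gen M)=0$, lift the epimorphism onto the quotient term through the middle term, and add the epimorphism onto the submodule term. One small slip: in your second paragraph the module that is a quotient of $M^a$ is $X$, not $Y$. The vanishing $\Ext^1_\Lambda(M^b,X)=0$ then follows from $\Ext^1_\Lambda(M,\Gen M)=0$ by additivity, as you yourself note at the end.
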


\begin{proof}
    Clearly, $\Gen M$ is closed under quotients. To see that it is closed under extensions, consider a short exact sequence $0\to A\to B\to C\to 0$. If $A,B\in\Gen M$, we have epimorphisms $f:M^m\to A$ and $g:M^k\to C$. By Lemma \ref{lem: AS}, $\Ext_\Lambda^1(M,A)=0$. So, $g$ lists to $\widetilde g:M^k\to B$ giving an epimorphism $f+\widetilde g:M^{m+k}\to B$.
\end{proof}

{
\begin{Theorem}\cite{AIR}\cite{BST}\label{thm: bijection T - AB - C(AB)}
There are bijections between the following three sets.
\begin{enumerate}
\item The set of functorially finite torsion classes $\cT$ in $mod\text-\Lambda$
\item The set of complete balanced pairs $\symm AB$
\item The set of standard open chambers $\cC\left(\symm AB\right)$.
\end{enumerate}
The bijection $(3)\ifff(1)$ is given by $\cC\left(\symm AB\right)\leftrightarrow \Gen A$.
\end{Theorem}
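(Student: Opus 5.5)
This theorem is cited from \cite{AIR} and \cite{BST}. The plan is to transport their results into balanced notation rather than reprove the $\tau$-tilting bijection from scratch.

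\emph{The bijection $(1)\ifff(2)$.} By \cite[Theorem 2.7]{AIR}, support $\tau$-tilting pairs $(M,P)$ correspond bijectively to functorially finite torsion classes via $(M,P)\mapsto \Gen M$. Support $\tau$-tilting pairs are exactly the $\tau$-rigid pairs with $n$ nonisomorphic indecomposable summands. Lemma \ref{lem: balanced of (M,P)} gives a bijection between $\tau$-rigid pairs and balanced pairs. It sends $(M,P)$ to $\symm M{\tau M\sqcup\nu P}$, so $A=M$. By Proposition \ref{prop: J(sum)}(a), indecomposable summands of $(M,P)$ correspond to indecomposable summands of $\symm AB$. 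Hence complete pairs correspond to complete pairs. Composing the two bijections gives the bijection $\symm AB\mapsto \Gen A$. By Proposition \ref{prop: Gen M is torsion class}, $\Gen A$ is indeed a torsion class.

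\emph{The bijection $(2)\ifff(3)$.} The map $\symm AB\mapsto \cC\left(\symm AB\right)$ is surjective by definition, so only injectivity needs proof. This is where I expect the main obstacle. One must check both that $\cC\left(\symm AB\right)$ really is a chamber and that it determines its pair.

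\emph{Step 1: the standard open chamber is a chamber.} By \cite[Theorem 5.1]{AIR}, the $g$-vectors $g\left(\symm{A_i}{B_i}\right)$ are linearly independent. So $\cC\left(\symm AB\right)$ is an open simplicial cone. It is path connected, being convex. By Corollary \ref{cor: empty interior}(a) and Remark \ref{rem: projected domain, chamber}(a), no domain meets it. It therefore lies in a single chamber. Equality with that chamber will follow from \cite{BST}, which shows that the boundary faces of the cone lie in domains. Concretely, each facet is spanned by an almost complete pair whose Jasso category is nonzero by Theorem \ref{thm: J perpendicular}. Its relatively simple object $M_0$ from Corollary \ref{cor: labels of domains} then has $\cD_\Lambda(M_0)$ containing that facet by Corollary \ref{cor: clean interiors}. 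Thus every path leaving the cone crosses $\cD_\Lambda$.

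\emph{Step 2: the cone determines its pair.} The extremal rays of the cone are the rays of the $g(\tsymm{A_i}{B_i})$. Each $g$-vector is primitive along its ray, since \cite{AIR} shows indecomposable $\tau$-rigid pairs are determined by their $g$-vectors. So the cone recovers the summands, and hence recovers $\symm AB$.

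With both steps in place, the composite bijection $(3)\ifff(1)$ is $\cC\left(\symm AB\right)\mapsto \Gen A$, as claimed. The substantive input is the chamber statement of \cite{BST}. I would cite it rather than reprove it, translating only the notation through Lemma \ref{lem: balanced of (M,P)} and Proposition \ref{prop: balanced g-vector}.
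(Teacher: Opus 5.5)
Your proposal is correct, but for $(2)\ifff(3)$ it takes a different route from the paper.

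\textbf{The paper's route.} The proof is two citations. $(1)\ifff(2)$ is \cite[Theorem 2.7]{AIR}; this is your first paragraph, with the translation through Lemma \ref{lem: balanced of (M,P)} left implicit. $(1)\ifff(3)$ is quoted directly from \cite[Proposition 3.27]{BST}. So $(2)\ifff(3)$ is only implicitly a composite.

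\textbf{Your route.} You go the other way: you prove $(2)\ifff(3)$ and obtain $(1)\ifff(3)$ as the composite. Your Step 1 is a complete, BST-free argument that $\cC\left(\tsymm AB\right)$ is a path component of $\RR^n-\cD_\Lambda$. Corollary \ref{cor: empty interior}, together with closure of domains under positive scaling, keeps every domain out of the open cone. Theorem \ref{thm: J perpendicular} with Corollaries \ref{cor: labels of domains} and \ref{cor: clean interiors} places each closed facet inside some $\cD_\Lambda(M_0)$. That is what your route buys: it uses only the paper's own lemmas plus Jasso's rank formula.

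\textbf{What it does not give for free.} The paper later uses the theorem in the form $\cT\left(\cC\left(\tsymm AB\right)\right)=\Gen A$, e.g.\ in Remark \ref{eg: negative chamber}(c) and Corollary \ref{cor: vertices of U+}. To get this, add two facts: $\cT$ is constant on the chamber (Theorem \ref{thm: chambers are convex}), and it equals $\Gen A$ at $g\!\left(\tsymm AB\right)$ (Theorem \ref{thm: P(U) is standard}). That observation also gives a cleaner Step 2: the cone determines $\Gen A$, hence determines $\tsymm AB$ by $(1)\ifff(2)$.

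\textbf{Step 2 as written.} The justification you give is a non sequitur. Injectivity of $g$ alone does not stop two different indecomposable pairs from having $g$-vectors that are distinct positive multiples of one another. You need one of the following:
\begin{itemize}
\item primitivity, which comes from the $g$-vectors of a complete pair forming a $\mathbb{Z}$-basis \cite[Theorem 5.1]{AIR}; or
\item injectivity applied to non-basic pairs: $q\,g(X)=p\,g(Y)$ forces $X^q\cong Y^p$, hence $X\cong Y$.
\end{itemize}
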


\begin{proof}
The bijection $(1)\ifff(2)$ is \cite[Theorem 2.7]{AIR} via support $\tau$-tilting modules. The bijection $(1)\ifff(3)$ is by \cite[Proposition 3.27]{BST}.
\end{proof}
}

The projected domains and projected chambers are the same as the intersection of domains and chambers with the unit sphere.

{
\begin{figure}[htbp]
\begin{center}
\begin{tikzpicture}
\coordinate(A) at (0,0);
\coordinate(E1) at (2,1/3);
\coordinate(E2) at (1.8,1.8);
\coordinate(E3) at (2/3,2);
\draw (A)node{$\bullet$};
\draw (A) node[left]{$0$};
\draw[dashed] (0,0)--(3.6,.6);
\draw[dashed] (0,0)--(3.6,3.6);
\draw[dashed] (0,0)--(1.2,3.6);
\draw[thick,->] (0,0)--(3,.5);
\draw[thick,->] (0,0)--(3,3);
\draw[thick,->] (0,0)--(1,3);
\draw[very thick] (E1)--(E2)--(E3)--(E1);
\draw(E1)node[below]{$v_1$};
\draw(E2)node[right]{$v_2$};
\draw(E3)node[left]{$v_3$};
\draw (2,1) node[right]{$\Delta$};

\end{tikzpicture}
\caption{The vertices $v_i$ are $g\!\left(\symm{A_i}{B_i}\right)$ for a complete balanced pair $\symm AB=\bigsqcup\symm{A_i}{B_i}$. These $n$ vertices span the $(n-1)$-simplex $\Delta$ discussed in Corollary \ref{cor: empty interior}. The standard open chamber is the infinite open cone which is the union of all open rays through points in the interior of $\Delta$.}
\label{fig: stardard chamber}
\end{center}
\end{figure}
}

{
\begin{figure}[htbp]
\begin{center}
\begin{tikzpicture}
\coordinate(A) at (0,0);
\coordinate(E1) at (2,1/3);
\coordinate(E2) at (1.8,1.8);
\coordinate(E3) at (2/3,2);
\draw (A)node{$\bullet$};
\draw (A) node[left]{$0$};
\begin{scope}
\draw[thick,->] (0,0)--(3,3);
\draw[fill,white] (0,0) circle[radius=1.3cm];
\end{scope}
\draw[thick,->] (0,0)--(3,.5);
\draw[thick,->] (0,0)--(1,3);
\draw[thick] (0,0) circle[radius=1.5cm];
\draw[fill,white] (0,0) circle[radius=1.15cm];

\coordinate(X1) at (1.15,.2);
\coordinate(X2) at (.92,.9);
\coordinate(X22) at (.96,.85);
\coordinate(X3) at (.36,1.1);

\draw(E1) node{$\bullet$};
\draw(E2) node{$\bullet$};
\draw(E3) node{$\bullet$};
%
\begin{scope}[rotate=-49]
\clip (0,1.1) circle[radius=1.1cm];
\draw[very thick] (0,0) ellipse[x radius=15mm,y radius=11mm];
\end{scope}
\begin{scope}[rotate=4]
\clip (1,1.4) circle[radius=1cm];
\draw[very thick] (0,0) ellipse[x radius=15mm,y radius=11.2mm];
\end{scope}
\begin{scope}[rotate=-88]
\clip (-.6,1.4) circle[radius=1cm];
\draw[very thick] (0,0) ellipse[x radius=15mm,y radius=11.7mm];
\end{scope}
\draw[dashed] (0,0)--(3.6,.6);
\draw[dashed] (0,0)--(3.6,3.6);
\draw[dashed] (0,0)--(1.2,3.6);

\draw(E1)node[below]{$v_1$};
\draw(E2)node[right]{$v_2$};
\draw(E3)node[left]{$v_3$};
\end{tikzpicture}
\caption{We project domains $\cD_\Lambda(M)$ and standard open chambers $\cC(\tsymm AB)$ to the unit sphere $S^{n-1}$ by sending $v$ to $\frac{v}{||v||}$. The images of domains and chambers in $S^{n-1}$ are the ``projected domains'' and ``projected chambers''. Here the projected chamber is the interior of the displayed spherical simplex and the sides of the simplex are subsets of projected domains.}
\label{fig: projected chamber}
\end{center}
\end{figure}
}
The domain $\cD_\Lambda(M)$ is called a \textbf{wall} if it is $(n-1)$-dimensional, i.e, of full dimension inside the hyperplane perpendicular to $\undim M$. 

\begin{Remark}\label{rem: domains and walls}
    For a hereditary algebra of finite type, the domains $\cD_\Lambda(M)$ of indecomposable modules $M$ are walls. In the tame case, we believe that $\cD_\Lambda(M)$ is a wall when $M$ is a \textbf{brick} (every nonzero endomorphism of $M$ is an isomorphism). However, in the wild case, there is no good characterization of modules $M$ so that $\cD_\Lambda(M)$ is a wall. 
\end{Remark}

{\begin{Example}\label{eg: pseudo-brick}
    Let $\Lambda=KQ$ be the path algebra of the quiver
    \[
    \xymatrixrowsep{10pt}\xymatrixcolsep{20pt}
\xymatrix{
&&2\ar[dl]_\delta\\
Q:&1 && 3\ar[ul]_\gamma\ar[ll]_\beta\ar@/^1pc/[ll]_\alpha .
	} 
    \]
    Let $M$ be the module with dimension vector $(2,1,2)$ with
    \[
    M_\alpha=\mat{1 & 0\\0&1},\qquad M_\beta=\mat{1 & 0\\0&0},\qquad M_\gamma=\mat{1&0},\qquad M_\delta=\mat{0\\1}.
    \]
    Then $M$ has submodules $A,B$ with $\undim A=(1,0,1)$ and $\undim B=(1,1,1)$. It also has quotient modules $A',B'$ with the same dimension vectors as $A,B$ but they are not isomorphic. Since submodules and quotient modules are not isomorphic, $M$ is a brick. But $\cD_{KQ}(M)$ is only $1$-dimensional, spanned by the vector $(-1,0,1)$. So, $\cD_{KQ}(M)$ is not a wall.
\end{Example}
}

\begin{Definition}\label{def: red and green boundary points}
    A point $v$ in the boundary $\partial \cU$ of an open chamber $\cU$ is called \textbf{green} if $v-\varepsilon \eta\in\cU$ for all sufficiently small $\varepsilon>0$ where we recall that $\eta=(1,1,\cdots,1)$. Similarly $w\in\partial\cU$ is called a \textbf{red boundary point} of $\cU$ if $v+\varepsilon\eta\in\cU$ for all sufficiently small $\varepsilon>0$. (See Figure \ref{Fig: hour glass} where $v_0$ is a green boundary point of $\cB_-$ and a red boundary point of $\cB_+$.)
\end{Definition}

{
\begin{Remark}\label{rem: red/green walls}
    Note that $f_\varepsilon(v)=v-\varepsilon\eta$ is a continuous function $f_\varepsilon:\partial U\to \RR^n$. Therefore, the set of green boundary points of an open chamber $\cU$ is a relatively open subset of the boundary. Indeed, $f_\varepsilon^{-1}(\cU)$ is an open neighborhood $N$ of $v$ in $\partial \cU$. For any $w\in N$, the line $\{w+t\eta)$, $t<\varepsilon$ will meet $\partial \cU$ at only one point, $w$ since $\cU$ is convex. So, every point in $N$ is a green boundary point of $\cU$.

    In the case of the standard open chamber $\cC\left(\symm AB\right)$, the boundary is the union of $n$ walls. Every point $v$ in the interior of each wall is either a red or green boundary point since the chamber is on one side of the wall or the other. If $v\in\partial \cU$ is not in the interior of any wall, it lies on the boundary of at least two walls. If $v$ is a green boundary point, all points in a neighborhood of $v$ in $\partial \cU$ are green. Therefore, all walls containing $v$ must be green. Conversely, suppose all the walls containing $v$ are green. Then we claim that $v$ is also green. To see this, take two points $x,y\in\partial \cU$ so that the line connecting $x,y$ contains a point $v+\varepsilon\eta$ for some $\varepsilon$. Then, $x+\delta \eta$ and $y+\delta\eta$ lie in $\cU$ for $\delta>0$ small enough. Since $\cU$ is convex, this implies that $c+\varepsilon+\delta\in \cU$. So, $v$ is a green boundary point of $\cU$.
    \end{Remark}
}

\begin{Definition}\cite{Br}\cite{BST}\label{def: P(v)}
    For any $v\in\RR^n$ there are torsion classes $\cT(v)$ and $\overline\cT(v)$ defined as follows.
    
        \item[(1)] $\cT(v)=\{0\}\cup \cT_0(v)$ where $\cT_0(v)=$
        \[
        \{M\in mod\text-\Lambda\mid v\ast\undim M>0 \text{ and } v\ast\undim M''>0 \text{ for quotients $M''\neq0$ of $M$}\}\]
        
        \item[(2)] $\overline\cT(v)=\{M\in mod\text-\Lambda\mid v\ast\undim M\ge0 \text{ and } v\ast \undim M''\ge0\text{ for quotients $M''$ of $M$.} \}.$
\end{Definition}

It is not too hard to show that $\cT(v)$, $\overline\cT(v)$ are torsion classes \cite[Lemma 6.6]{Br}. Also Bridgeland observes the following. See also \cite[Theorem 2.28]{IM}.

\begin{Proposition}\label{prop: T(v), W(v), Tbar(v)}
For any $v\in\cD_\Lambda$, $(\cT(v),\cW(v))$ form a torsion pair in $\overline\cT(v)$. In particular, $\cT(v),\cW(v)$ are disjoint subsets of $\overline\cT(v)$.
\end{Proposition}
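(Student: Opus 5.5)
The claim is that for $v\in\cD_\Lambda$, the pair $(\cT(v),\cW(v))$ is a torsion pair inside $\overline\cT(v)$. I will check three things in turn: both classes lie in $\overline\cT(v)$; $\Hom_\Lambda(T,W)=0$ for $T\in\cT(v)$ and $W\in\cW(v)$; and every $M\in\overline\cT(v)$ sits in a short exact sequence $0\to T\to M\to W\to 0$ with $T\in\cT(v)$ and $W\in\cW(v)$. Disjointness then follows, reading it as $\cT(v)\cap\cW(v)=0$. Indeed, a nonzero $M$ in both classes would have $v\ast\undim M>0$ and $v\ast\undim M=0$ at once.

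\emph{Containment.} It is immediate that $\cT(v)\subseteq\overline\cT(v)$. Now take $W\in\cW(v)$ and a quotient $W\to W''$ with kernel $W'$. Since $v\ast\undim W=0$ and $v\ast\undim W'\le 0$, additivity of dimension vectors gives $v\ast\undim W''=-v\ast\undim W'\ge0$. So $\cW(v)\subseteq\overline\cT(v)$.

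\emph{Hom-vanishing.} Let $f:T\to W$ be nonzero and put $X=\operatorname{im}f$. Then $X$ is a nonzero quotient of $T$, so $v\ast\undim X>0$. But $X$ is also a submodule of $W$, so $v\ast\undim X\le0$, a contradiction.

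\emph{Existence of the sequence.} This is the main step. Given $M\in\overline\cT(v)$, let $T$ be the torsion submodule of $M$ for the torsion class $\cT(v)$, which we may cite from \cite[Lemma 6.6]{Br}. Put $W=M/T$. Then $W$ is a quotient of $M$, so $W\in\overline\cT(v)$ and $v\ast\undim W\ge0$. Also $W$ is torsion-free for $\cT(v)$, so $W$ has no nonzero submodule lying in $\cT(v)$. It remains to show $W\in\cW(v)$, i.e.\ that $v\ast\undim W'\le0$ for every $W'\subseteq W$, together with $v\ast\undim W=0$.

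Suppose instead that some submodule has positive value. Among all submodules $W'\subseteq W$, choose one maximizing $v\ast\undim W'$, and among those one of minimal length; call it $W_1$, so $v\ast\undim W_1>0$. I claim $W_1\in\cT(v)$. Take any nonzero quotient $W_1/K$ with $K\subsetneq W_1$. Since $K$ is strictly shorter than $W_1$, the choice of $W_1$ forces $v\ast\undim K<v\ast\undim W_1$, hence $v\ast\undim(W_1/K)>0$. So every nonzero quotient of $W_1$ has positive value, and $W_1\in\cT(v)$. This contradicts the torsion-freeness of $W$.

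Hence every submodule of $W$ has $v\ast(\cdot)\le0$. Applying this to $W$ itself gives $v\ast\undim W\le0$, and combined with $v\ast\undim W\ge0$ we get $v\ast\undim W=0$. Therefore $W\in\cW(v)$. I expect this maximality argument to be the main obstacle; everything else is bookkeeping with additivity of $v\ast\undim(-)$ on short exact sequences.
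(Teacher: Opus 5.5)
Your proof is correct. The paper gives no argument of its own for this proposition. It attributes the statement to Bridgeland and to \cite[Theorem 2.28]{IM}, and earlier cites \cite[Lemma 6.6]{Br} only for the fact that $\cT(v)$ and $\overline\cT(v)$ are torsion classes.

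The standard derivation uses a second torsion pair $(\cT(v),\overline{\mathcal F}(v))$ in $mod\text-\Lambda$. Here $\overline{\mathcal F}(v)$ is the class of modules all of whose submodules $X$ satisfy $v\ast\undim X\le 0$. One then observes that $\cW(v)=\overline\cT(v)\cap\overline{\mathcal F}(v)$ and that $\overline\cT(v)$ is closed under quotients. So for $M\in\overline\cT(v)$, the canonical sequence of $M$ for that torsion pair is the one required.

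Your maximal-value, minimal-length submodule argument is a self-contained proof that $\cT(v)^\perp\subseteq\overline{\mathcal F}(v)$. Your Hom-vanishing step proves the reverse inclusion verbatim. So the two routes agree in substance, and yours needs nothing beyond additivity of $v\ast\undim(-)$ on short exact sequences.

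You also never use the hypothesis $v\in\cD_\Lambda$, which gives something extra. For $v\notin\cD_\Lambda$ one has $\cW(v)=0$, so your sequence forces $\overline\cT(v)=\cT(v)$. For $v\in\cD_\Lambda$, any nonzero $M\in\cW(v)$ lies in $\overline\cT(v)$ but not in $\cT(v)$. Together these recover Lemma \ref{lem: P=Pbar} as a byproduct.
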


The following is a useful lemma which allows us to determine the set of points $v$ which give the same torsion class $\cT(v)$. 

\begin{Lemma}\cite[Theorem 2.23]{IM}\label{lem: P(v)=P(w)}
    Let $v,w\in\RR^n$ then $\cT(v)=\cT(w)$ if and only if the straight line $\gamma(t)=(1-t)v+tw$ from $v$ to $w$ satisfies the following conditions.
    \begin{enumerate}
\item The path does not cross any domain transversely, i.e., if $\gamma(t)\in\cD_\Lambda(M)$ for some $0<t<1$ and some $M$, then $
    \frac{d}{dt}\gamma(t)\ast \undim M|_{t=0}=0$.
\item If $\gamma(0)\in \cD_\Lambda(M)$ then 
$
    \frac{d}{dt}\gamma(t)\ast \undim M|_{t=0}\le0$.
\item If $\gamma(1)\in \cD_\Lambda(N)$ then $
    \frac{d}{dt}\gamma(t)\ast \undim N|_{t=1}\ge0$.
\end{enumerate}
\end{Lemma}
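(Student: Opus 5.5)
The plan is to reduce everything to one elementary observation: for a fixed module $X$, the function $t\mapsto \gamma(t)\ast\undim X$ is affine in $t$, with constant slope $d\ast\undim X$ where $d=w-v$. So the derivative in (1) is the same at every $t$, and I read (1) as the statement $d\ast\undim M=0$. Consequently, the set
\[
I_X=\{t\in[0,1]\mid X\in\cT(\gamma(t))\}
\]
is an intersection of sets $\{t\mid \gamma(t)\ast\undim X''>0\}$ over the nonzero quotients $X''$ of $X$. It is therefore a relatively open subinterval of $[0,1]$.

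\emph{Step 1: (1)--(3) imply $\cT(v)=\cT(w)$.} Suppose $X\in\cT(v)\setminus\cT(w)$. Then $I_X$ is a relatively open interval containing $0$ but not $1$. Let $t_0=\sup I_X$, so $t_0\in(0,1]$ and $t_0\notin I_X$. By continuity, $\gamma(t_0)\ast\undim X''\ge0$ for every quotient $X''$ of $X$, with equality for some nonzero quotient $N$. I claim $\gamma(t_0)\in\cD_\Lambda(N)$. Indeed, if $N'\subseteq N$, then $N/N'$ is a quotient of $X$, so $\gamma(t_0)\ast\undim N'=-\gamma(t_0)\ast\undim (N/N')\le0$. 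Moreover, $\gamma(t)\ast\undim N>0$ for $t<t_0$ close to $t_0$ and it vanishes at $t_0$, so $d\ast\undim N<0$. If $t_0<1$ this contradicts (1); if $t_0=1$ it contradicts (3). The case $X\in\cT(w)\setminus\cT(v)$ is symmetric: take $t_0=\inf I_X\in[0,1)$ and obtain $N$ with $\gamma(t_0)\in\cD_\Lambda(N)$ and $d\ast\undim N>0$, contradicting (1) or (2).

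\emph{Step 2: local jump at a transverse domain.} Suppose $u=\gamma(t_0)\in\cD_\Lambda(M)$ with $d\ast\undim M>0$. Then $M\in\cW(u)$, which is wide by Remark \ref{rem: W(v) is wide}. Filter $M$ by simple objects of $\cW(u)$; since $d\ast\undim M>0$, some simple object $S$ of $\cW(u)$ satisfies $d\ast\undim S>0$. For a nonzero quotient $S''$ of $S$ we have $u\ast\undim S''\ge0$. If equality holds, the kernel $K$ satisfies $u\ast\undim K=0$ and has all submodules $\le0$, so $K\in\cW(u)$, and hence $S''\in\cW(u)$ because $\cW(u)$ is wide. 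Simplicity of $S$ then forces $S''=S$. It follows that $S\in\cT(u+\varepsilon d)$ for all small $\varepsilon>0$, while $S\notin\cT(u)$. Dually, $S$ lies in the torsion-free class $\mathcal F(u-\varepsilon d)$, defined by strict negativity on nonzero submodules. The case $d\ast\undim M<0$ is the same with $d$ replaced by $-d$.

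\emph{Step 3: $\cT(v)=\cT(w)$ implies (1)--(3).} I expect this to be the main obstacle. The idea is to show that $\cT(\gamma(s))=\cT(v)$ for all $s\in[0,1]$; Step 2 then gives an immediate contradiction at any transverse crossing, including crossings at the endpoints with the wrong sign, which is exactly (2) and (3). To get constancy, I would use a sandwich argument based on the affine observation. First, $\cT(v)\cap\cT(w)\subseteq\cT(\gamma(s))$, and likewise $\mathcal F(v)\cap\mathcal F(w)\subseteq\mathcal F(\gamma(s))$. Next, I would check that $\mathcal F(u)\subseteq\cT(u)^\perp$ and $\cT(u)\subseteq{}^\perp\mathcal F(u)$, and that $\cT(u)^\perp$ is the closed torsion-free class defined by non-strict inequalities. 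With $\cT=\cT(v)=\cT(w)$, these should squeeze $\cT(\gamma(s))$ between $\cT$ and ${}^\perp(\cT^\perp)=\cT$. If identifying these perpendicular classes proves delicate, the fallback is a direct comparison of the filtrations used in Proposition \ref{prop: T(v), W(v), Tbar(v)}.
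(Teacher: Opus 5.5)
The paper gives no proof of this lemma; it imports it from \cite[Theorem 2.23]{IM}. Your argument is therefore an independent, self-contained route, and it is correct.

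Steps 1 and 2 are fine as written. Your reading of (1) as $d\ast\undim M=0$ is right, since the slope is constant in $t$. Step 3 also works, but the ingredients you list are not the ones the squeeze actually uses. The strict classes $\mathcal F(u)$, the inclusion $\mathcal F(v)\cap\mathcal F(w)\subseteq\mathcal F(\gamma(s))$, and $\mathcal F(u)\subseteq\cT(u)^\perp$ play no role.

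Write $\overline{\mathcal F}(u)=\{Y\mid u\ast\undim Y'\le 0 \text{ for all } Y'\subseteq Y\}$. What you need is $\cT(u)^\perp=\overline{\mathcal F}(u)$, together with convexity of the non-strict inequalities:
\[
\cT^\perp=\overline{\mathcal F}(v)\cap\overline{\mathcal F}(w)\subseteq\overline{\mathcal F}(\gamma(s))=\cT(\gamma(s))^\perp .
\]
This gives $\cT(\gamma(s))\subseteq{}^\perp(\cT^\perp)=\cT$. The last equality holds because $\cT(v)$ is a torsion class, which is Bridgeland's lemma as cited in the paper.

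The identification $\cT(u)^\perp=\overline{\mathcal F}(u)$ has two halves.
\begin{itemize}
\item The inclusion $\overline{\mathcal F}(u)\subseteq\cT(u)^\perp$ is immediate: the image of a nonzero map would have to pair both $>0$ and $\le 0$ with $u$.
\item For the reverse inclusion, suppose $Y'\subseteq Y$ has $u\ast\undim Y'>0$. Choose $Z\subseteq Y'$ maximizing $u\ast\undim Z$, and of minimal length among the maximizers. Then every proper submodule of $Z$ has strictly smaller value. Hence every nonzero quotient of $Z$ pairs positively with $u$, so $0\neq Z\in\cT(u)$ and $Z\hookrightarrow Y$. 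Thus $Y\notin\cT(u)^\perp$.
\end{itemize}
With this in place, no fallback is needed. Step 2 then rules out any transverse crossing on $[0,1]$, including the wrong-sign endpoint crossings excluded by (2) and (3).

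Compared with the citation, your route uses only the definitions, the wideness of $\cW(u)$, and the torsion-class property of $\cT(v)$. It also proves more than the lemma. The sandwich shows that $\cT$ is constant on the entire segment whenever $\cT(v)=\cT(w)$, and this alone gives Theorem \ref{thm: set of v with T(v)=T is convex}. Step 2 isolates a clean local jump criterion at a transverse crossing.

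The whole argument dualizes to Lemma \ref{lem: Pbar(v)=Pbar(w)} by swapping strict and non-strict inequalities. There one uses $\overline\cT(u)^\perp=\mathcal F(u)$, and your strict inclusion $\mathcal F(v)\cap\mathcal F(w)\subseteq\mathcal F(\gamma(s))$ is then exactly the convexity step that is needed.
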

This lemma implies the following.
\begin{Theorem}\label{thm: set of v with T(v)=T is convex}
    For any torsion class $\cT_0$, the set of all $v\in\RR^n$ so that $\cT(v)=\cT_0$ is convex or empty.
\end{Theorem}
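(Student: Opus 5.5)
We plan to deduce Theorem \ref{thm: set of v with T(v)=T is convex} directly from Lemma \ref{lem: P(v)=P(w)}. Fix a torsion class $\cT_0$ and put $S=\{v\in\RR^n\mid \cT(v)=\cT_0\}$. If $S$ is empty there is nothing to prove. Otherwise we take $v,w\in S$ and a point $u=(1-s)v+sw$ with $0<s<1$, and we show $u\in S$. By transitivity it suffices to show $\cT(v)=\cT(u)$, and we get this by applying Lemma \ref{lem: P(v)=P(w)} to the segment from $v$ to $u$. We write $\gamma(t)=(1-t)v+tw$ for the segment from $v$ to $w$. The segment from $v$ to $u$ is then the reparametrization $\gamma'(t)=\gamma(st)$, so its derivative is $s$ times the derivative of $\gamma$. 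Since $s>0$, every sign condition in the lemma is preserved under this reparametrization.

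Next we check the three conditions of Lemma \ref{lem: P(v)=P(w)} for $\gamma'$, reading condition (1) as saying that the derivative at the interior parameter vanishes.

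Condition (1) concerns interior points $\gamma'(t)=\gamma(st)$ with $0<t<1$. Since $0<st<1$, condition (1) for $\gamma$ applies and gives $(w-v)\ast\undim M=0$.

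Condition (2) is at the starting point $v$, and it is literally condition (2) for $\gamma$ multiplied by $s>0$.

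Condition (3) is at the endpoint $u=\gamma(s)$, an interior point of $\gamma$. If $u\in\cD_\Lambda(N)$, then condition (1) for $\gamma$ gives $(w-v)\ast\undim N=0$. Hence the derivative of $\gamma'$ paired with $\undim N$ is $0$, which satisfies $\ge 0$.

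So the lemma yields $\cT(u)=\cT(v)=\cT_0$. Since the path is linear, the derivative is the constant vector $w-v$, which makes all of these checks immediate.

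The only potential obstacle is interpretive. Condition (1) of Lemma \ref{lem: P(v)=P(w)} is written with the derivative evaluated ``at $t=0$''. For a straight line the derivative is constant, namely $w-v$, so the evaluation point is irrelevant and every condition reduces to a sign condition on $(w-v)\ast\undim M$. Once this is noted, the argument is just bookkeeping with interior versus endpoint parameters, and convexity follows.
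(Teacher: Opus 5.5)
Your proof is correct and follows the paper's approach: the paper just says the theorem follows from Lemma~\ref{lem: P(v)=P(w)}. You supply the omitted bookkeeping, namely restricting the segment from $v$ to $w$ to the subsegment ending at $u$, checking that the derivative is a positive multiple of $w-v$, and using interior condition (1) for $\gamma$ to verify condition (3) at $u$.
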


\begin{Remark}\label{rem: too many torsion classes}
Examples of torsion classes which are not of the form $\cT(v)$.

    a) A general example: Let $H$ be a tame hereditary algebra over $\CC$. For each subset $S$ of $\CC P^1$, let $\cT_S$ be the torsion class consisting of all preinjective modules and the homogeneous modules $M_\lambda$ (and their self-extensions) with parameters $\lambda\in S$. The set of such torsion classes has cardinality greater than the continuum. So, the set of all $\cT(v)$ for $v\in \RR^n$ cannot contain all $\cT_S$.

    b) A concrete example: Let $H=KQ$ with $    \xymatrixrowsep{10pt}\xymatrixcolsep{20pt}
\xymatrix{
Q:1\ar@/_.3pc/[r] \ar@/^.3pc/[r] & 2
	} 
$
 be the Kroneker over any field $K$. Let $S$ be a subset of the projective line $KP^1=K\coprod\{\infty\}$ so that $S$ and its complement $S^c$ are nonempty. Then we claim that $\cT_S$, defined in (a) is not equal to $\cT(v)$ for any $v\in \RR^2$. To see this, suppose not. Let $\cT_S=\cT(v)$ where $v=(x,y)$. Then, for all $\lambda\in S$, $v\ast \undim M_\lambda>0$, i.e., $x+y>0$ and $v\ast\undim X>0$ for all quotient modules $X$ of $M_\lambda$. But $X=S_1$ is the only quotient of $M_\lambda$. So, the condition is $x>0$. Since the conditions $x+y>0$ and $x>0$ is independent of $\lambda$ we see that $\cT(v)$ includes $M_\lambda$ for all $\lambda\in KP^1$, a contradiction. So, $\cT(v)\neq \cT_S$.
\end{Remark}


\begin{Theorem}\cite{Br}\cite[Theorems 2.20, 2.21]{IM}\label{thm: chambers are convex}
    Let $\cU$ be a chamber. Then $\cU$ is convex and the torsion class $\cT(v)$ is constant for all $v\in\cU$. So, we can define $\cT(\cU)$ to be $\cT(v)$ for any $v\in \cU$. Furthermore, $\cT(\cU)\neq\cT(\cU')$ for $\cU\neq \cU'$.
\end{Theorem}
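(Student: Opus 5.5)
The plan is to derive all three assertions of the theorem from Lemma \ref{lem: P(v)=P(w)}, together with the convexity of level sets recorded in Theorem \ref{thm: set of v with T(v)=T is convex}.

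\emph{Step 1: $\cT$ is constant on $\cU$.} Let $v,w\in\cU$. Since $\cU$ is a path component of $\RR^n-\cD_\Lambda$, neither $v$ nor $w$ lies on any domain. Hence conditions (2) and (3) of Lemma \ref{lem: P(v)=P(w)} are vacuous. To verify condition (1), I would first reduce to the case where $v$ and $w$ are close. Choose a path in $\cU$ from $v$ to $w$. Because $\cU$ is open, I can cover this path by finitely many small open balls contained in $\cU$, and replace the path by a polygonal path whose segments each lie in one ball. Each such segment avoids every domain, so condition (1) holds trivially for it. Applying Lemma \ref{lem: P(v)=P(w)} segment by segment then gives $\cT(v)=\cT(w)$.

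\emph{Step 2: convexity of $\cU$.} This is the step I expect to be the main obstacle. Let $\cT_0=\cT(\cU)$ and let $S=\{x\mid \cT(x)=\cT_0\}$. By Step 1, $\cU\subseteq S$, and $S$ is convex by Theorem \ref{thm: set of v with T(v)=T is convex}. Given $v,w\in\cU$, the segment from $v$ to $w$ therefore lies in $S$, and I must show it avoids $\cD_\Lambda$. Apply Lemma \ref{lem: P(v)=P(w)} to $v,w$. If $\gamma(t)\in\cD_\Lambda(M)$ for some $0<t<1$, then $\gamma'(t)\ast\undim M=0$. Since $\gamma$ is affine and $\gamma(t)\ast\undim M=0$, it follows that $v\ast\undim M=0$. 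Next I use condition (2) of $\cD_\Lambda(M)$ along the segment: for every submodule $M'\subseteq M$, the function $\gamma(s)\ast\undim M'$ is affine in $s$ and is $\le 0$ at the interior point $t$. I would rather apply Lemma \ref{lem: P(v)=P(w)} to the pairs $(v,\gamma(t))$ and $(\gamma(t),w)$, which is legitimate because $\gamma(t)\in S$. Condition (3) at the endpoint $\gamma(t)$ gives $\gamma'\ast\undim M'\ge0$ on the first segment. Condition (2) at the starting point $\gamma(t)$ gives $\gamma'\ast\undim M'\le0$ on the second segment. Since the direction of the whole line is the same, $\gamma'\ast\undim M'=0$ for all $M'\subseteq M$ that are relevant. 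The effect is that $v$ itself satisfies the defining conditions of $\cD_\Lambda(M)$, i.e.\ $v\in\cD_\Lambda(M)$, a contradiction. The delicate point is handling submodules $M'$ with $\gamma(t)\ast\undim M'<0$ strictly. For those, a small perturbation preserves the inequality only locally, so I would pass to the submodule lattice and argue with the equality cases. Once this is done the whole segment lies in $\RR^n-\cD_\Lambda$, hence in $\cU$.

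\emph{Step 3: distinct chambers give distinct torsion classes.} Suppose $\cT(\cU)=\cT(\cU')$ and pick $v\in\cU$ and $w\in\cU'$. Both points lie in the convex set $S$, and the argument of Step 2 applies verbatim to show that the segment from $v$ to $w$ meets no domain. The segment therefore joins $v$ and $w$ inside $\RR^n-\cD_\Lambda$, so $\cU=\cU'$.
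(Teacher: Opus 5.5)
The paper does not prove this theorem; it cites \cite{Br} and \cite[Theorems 2.20, 2.21]{IM}. So your argument has to stand on its own, and it has a genuine gap in Step 2, which Step 3 also relies on.

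\textbf{Step 2.} Conditions (2) and (3) of Lemma \ref{lem: P(v)=P(w)} at $x=\gamma(t)$ only constrain modules $N$ with $x\in\cD_\Lambda(N)$, i.e.\ $N\in\cW(x)$.
\begin{itemize}
\item A submodule $M'\subseteq M$ with $x\ast\undim M'=0$ does lie in $\cW(x)$, so your equality cases give $v\ast\undim M'=0$.
\item For $x\ast\undim M'<0$ the lemma says nothing. An affine function that is negative at an interior point of a segment can be positive at the endpoint $v$.
\end{itemize}
So no argument with slopes or the submodule lattice along the line yields $v\ast\undim M'\le 0$. The missing idea is to use the equality of torsion classes on an actual module.

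Here is one way to do it. Since $M\in\cW(x)$, we have $\Hom_\Lambda(\cT(x),M)=0$: a nonzero image would be a quotient of an object of $\cT(x)$ and a submodule of $M$, so its pairing with $x$ would be both $>0$ and $\le 0$. By Theorem \ref{thm: set of v with T(v)=T is convex}, $\cT(x)=\cT(v)$. Now suppose some $M'\subseteq M$ had $v\ast\undim M'>0$. Choose $M'$ maximizing $v\ast\undim M'$ and of minimal length among the maximizers. Every nonzero quotient of this $M'$ then pairs positively with $v$, so $0\neq M'\in\cT(v)$ embeds in $M$, a contradiction. Together with $v\ast\undim M=0$, this gives $v\in\cD_\Lambda(M)$, which is the contradiction you wanted.

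A shorter route uses the paper's tools directly. Lemma \ref{lem: Pbar(v)=Pbar(w)} gives convexity of the level sets of $\overline\cT$, just as Lemma \ref{lem: P(v)=P(w)} does for $\cT$. Since $\overline\cT(v)=\cT(v)=\cT(w)=\overline\cT(w)$ by Lemma \ref{lem: P=Pbar}, every $x$ on the segment has $\cT(x)=\overline\cT(x)$. Hence $x\notin\cD_\Lambda$ by Lemma \ref{lem: P=Pbar}.

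Once Step 2 is repaired, Step 3 goes through as you say. It only uses $v,w\notin\cD_\Lambda$ and $\cT(v)=\cT(w)$.

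\textbf{Step 1.} You assume $\cU$ is open. Under the paper's definition a chamber is a path component of $\RR^n-\cD_\Lambda$. Here $\cD_\Lambda$ is an infinite union of closed cones and need not be closed. For example, for a wild Kronecker quiver the domains are dense in a sector of $\RR^2$, and each irrational ray in that sector is its own path component. So openness is not available, and the polygonal-path argument does not apply.

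A path-independent argument avoids this. For a fixed module $X$, the set $\{y\mid X\in\cT(y)\}$ is open and $\{y\mid X\in\overline\cT(y)\}$ is closed, since only finitely many dimension vectors of quotients of $X$ occur. Off $\cD_\Lambda$ these two sets coincide by Lemma \ref{lem: P=Pbar}. Hence membership of $X$ in $\cT(\gamma(s))$ is constant along any path $\gamma$ in $\RR^n-\cD_\Lambda$, and $\cT$ is constant on every chamber.
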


{
Recall Theorem \ref{thm: bijection T - AB - C(AB)}. A complete balanced pair $\symm AB$, is associated to the functorially finite torsion class $\Gen\,A$ and all functorially finite torsion classes are given in this way. The associated chamber is $\cC\left(\symm AB\right)$, the standard open chamber associated to $\symm AB$ by Def. \ref{def: standard open chamber}. See Figure \ref{fig: stardard chamber}. The following theorem holds for any balanced pair.
}

\begin{Theorem}\label{thm: P(U) is standard}
{Let $\symm AB$ be a balance pair, not necessarily complete. Then
    $$
\cT\!\left(    g\!\left(\symm AB\right)\right)=\Gen\,A.$$
}
\end{Theorem}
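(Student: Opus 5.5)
We prove the two inclusions $\Gen A\subseteq \cT(g)$ and $\cT(g)\subseteq\Gen A$, where $g=g\!\left(\symm AB\right)$. The main tool is Corollary \ref{cor: g(A/B) dot X}: for every module $X$,
\[
g\ast\undim X=\dim_K\Hom_\Lambda(A,X)-\dim_K\Hom_\Lambda(X,B).
\]

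For $\Gen A\subseteq\cT(g)$, let $M\in\Gen A$ be nonzero and let $M''\neq0$ be any quotient of $M$. Then $M''\in\Gen A$, so there is an epimorphism $A^m\onto M''$, and hence $\Hom_\Lambda(A,M'')\neq0$. Moreover $\Hom_\Lambda(M'',B)=0$: any map $M''\to B$ composed with $A^m\onto M''$ lies in $\Hom_\Lambda(A^m,B)=0$, and since $A^m\to M''$ is onto, the map $M''\to B$ is itself zero. The displayed formula then gives $g\ast\undim M''>0$. This holds for $M''=M$ as well, so $M\in\cT_0(g)$.

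For $\cT(g)\subseteq\Gen A$, let $M\in\cT_0(g)$ and let $t M\subseteq M$ be the trace of $A$ in $M$, which is the largest submodule of $M$ lying in $\Gen A$. Put $M''=M/tM$ and suppose, for contradiction, that $M''\neq0$. I would then show that $\Hom_\Lambda(A,M'')=0$. Once this holds, the formula gives $g\ast\undim M''=-\dim_K\Hom_\Lambda(M'',B)\le0$, which contradicts $M\in\cT_0(g)$ because $M''$ is a nonzero quotient of $M$.

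To prove $\Hom_\Lambda(A,M'')=0$, I use that $\Gen A$ is a torsion class. When $A$ is the $\tau$-rigid module underlying the balanced pair, Proposition \ref{prop: Gen M is torsion class} applies: $A$ is $\tau$-rigid because $\Hom_\Lambda(A,\tau A)\subseteq\Hom_\Lambda(A,B)=0$. For any torsion class, the quotient of $M$ by its torsion submodule is torsion-free, that is, it lies in $(\Gen A)^\perp$. Since $tM$ is exactly the torsion part of $M$ for the torsion class $\Gen A$, we get $M''\in(\Gen A)^\perp\subseteq A^\perp$. Concretely, the image of a nonzero map $A\to M''$ would lie in $\Gen A$, and its preimage in $M$ would be an extension of two modules in $\Gen A$, hence in $\Gen A$ and strictly larger than $tM$. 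This contradicts the maximality of $tM$.

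The step I expect to be the real content is closure of $\Gen A$ under extensions. This is where $\tau$-rigidity enters, through Lemma \ref{lem: AS} and Proposition \ref{prop: Gen M is torsion class}. Once it is in place, everything else follows directly from Corollary \ref{cor: g(A/B) dot X} and $\Hom_\Lambda(A,B)=0$. One small point needs checking: the injective summand $I$ of $B$ plays no role beyond entering through $\Hom_\Lambda(X,B)$, and the case $A=0$ is consistent. In that case $g=-\undim\soc B$, so $g\ast\undim X\le0$ for every $X$, hence $\cT(g)=0=\Gen 0$.
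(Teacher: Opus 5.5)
Your proof is correct and is essentially the paper's argument. The inclusion $\Gen A\subseteq\cT(g)$ is identical, and in both versions the reverse inclusion rests on Corollary \ref{cor: g(A/B) dot X} together with extension-closure of $\Gen A$ (Proposition \ref{prop: Gen M is torsion class}). The only difference is presentational: the paper takes a minimal-length counterexample $Y$, peels off the image of one nonzero map $A\to Y$ and applies induction to the cokernel, whereas you pass in one step to the torsion-free quotient $M/\Tr_A M$, which admits no nonzero maps from $A$ and hence pairs non-positively with $g$.
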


\begin{proof}

    Take any $X\in \Gen\,A$. Then $\Hom_\Lambda(X,B)=0$. By Corollary \ref{cor: g(A/B) dot X}, we have
    \[
        g\!\left(\symm AB\right)\ast \undim X=\dim_K \Hom_\Lambda(A,X)-\dim_K\Hom_\Lambda(X,B)>0.
    \] Since this also holds for any nonzero quotient of $X$, we have $X\in\cT\left(g\!\left(\tsymm AB\right)\right)$. So, $\Gen\,A\subseteq\cT\left(g\!\left(\tsymm AB\right)\right)$.

    Conversely, suppose $Y\in \cT\left(g\!\left(\tsymm AB\right)\right)$ which is not in $\Gen\,A$. Take $Y$ to be of minimal length with this condition. As an element of $\cT\left(g\!\left(\tsymm AB\right)\right)$ we have:
    \[
    \dim_K\Hom_\Lambda(A,Y)>\dim_K\Hom_\Lambda(Y,B).
    \]
    So, there is a nonzero morphism $f:A\to Y$. Let $Y'$ be the image of $f$ and $Y''$ the cokernel. Then we have an exact sequence
    \[
        0\to Y'\to Y\to Y''\to 0.
    \]
        Since $Y''$ is a quotient of $Y$ we have $Y''\in \cT\left(g\!\left(\tsymm AB\right)\right)$. By induction on length we have $Y''\in \Gen\,A$. Since $\Gen A$ is a torsion class by Proposition \ref{prop: Gen M is torsion class}, we have $Y\in\Gen A$. So, $\cT\left(g\!\left(\tsymm AB\right)\right)=\Gen\,A$.
\end{proof}

\begin{Lemma}\cite[Lemma 2.10]{IM}\label{lem: P=Pbar}
    We have $\cT(v)=\overline\cT(v)$ if and only if $v\notin\cD_\Lambda$. In particular $\cT(v)=\overline\cT(v)$ for $v$ in any chamber $\cU$.
\end{Lemma}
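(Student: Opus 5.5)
Two implications are needed: if $v\notin\cD_\Lambda$ then $\cT(v)=\overline\cT(v)$, and conversely. The second sentence of the lemma then follows at once, because a chamber is by definition a path component of $\RR^n-\cD_\Lambda$, so every $v\in\cU$ satisfies $v\notin\cD_\Lambda$.

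For the first implication, suppose $v\notin\cD_\Lambda$. The inclusion $\cT(v)\subseteq\overline\cT(v)$ holds for every $v$, since strict inequalities imply weak ones. For the reverse inclusion, take $M\in\overline\cT(v)$ with $M\neq0$. Then $v\ast\undim M''\ge0$ for every quotient $M''$ of $M$. Suppose some nonzero quotient $M''$ had $v\ast\undim M''=0$. Every quotient of $M''$ is also a quotient of $M$, so it has $v\ast\undim\ge0$. By additivity of dimension vectors on the sequence $0\to X\to M''\to M''/X\to0$, every submodule $X\subseteq M''$ then satisfies
\[
v\ast\undim X=-\,v\ast\undim(M''/X)\le0.
\]
Thus $v\in\cD_\Lambda(M'')$, contradicting $v\notin\cD_\Lambda$. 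Hence every nonzero quotient of $M$, including $M$ itself, has $v\ast\undim>0$, so $M\in\cT_0(v)\subseteq\cT(v)$.

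For the converse, suppose $v\in\cD_\Lambda(M)$ for some $M\neq0$ (the zero module imposes no condition, so we take $\cD_\Lambda$ to be the union over nonzero $M$). Dualizing the computation above, every quotient $M''$ of $M$ satisfies
\[
v\ast\undim M''=-\,v\ast\undim(\ker)\ge0,
\]
so $M\in\overline\cT(v)$. But $v\ast\undim M=0$, so $M\notin\cT_0(v)$, and since $M\neq0$ we get $M\notin\cT(v)$. Therefore $\cT(v)\neq\overline\cT(v)$. Alternatively, one could cite Proposition \ref{prop: T(v), W(v), Tbar(v)}: $M$ lies in $\cW(v)$, which is disjoint from $\cT(v)$ and contained in $\overline\cT(v)$.

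There is no serious obstacle here. The only step needing care is the sign bookkeeping, namely that vanishing on $M''$ together with nonnegativity on all quotients of $M''$ gives nonpositivity on all submodules of $M''$; this is exactly what places $v$ in $\cD_\Lambda(M'')$.
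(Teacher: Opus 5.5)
Your proof is correct. Both directions follow directly from the definitions once you use that $v\ast\undim(-)$ is additive on short exact sequences, and your sign bookkeeping is right. In one direction, vanishing on a nonzero quotient $M''$ together with nonnegativity on all quotients of $M''$ forces $v\in\cD_\Lambda(M'')$. In the other, $v\in\cD_\Lambda(M)$ with $M\neq0$ forces $M\in\overline\cT(v)$ but $M\notin\cT(v)$. You were also right to note that $\cD_\Lambda$ must be taken as the union over nonzero modules.

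The paper gives no argument of its own for this lemma; it simply cites \cite[Lemma 2.10]{IM}. So your proof is a self-contained replacement rather than a competing route, and it needs nothing beyond the definitions of $\cT(v)$, $\overline\cT(v)$ and $\cD_\Lambda(M)$.

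What your first half actually shows is this: if $M\in\overline\cT(v)\setminus\cT(v)$, then some nonzero quotient of $M$ lies in $\cW(v)$. That is the mechanism behind the torsion pair $(\cT(v),\cW(v))$ in $\overline\cT(v)$ of Proposition \ref{prop: T(v), W(v), Tbar(v)}. Your alternative appeal to that proposition in the converse is legitimate, since it is stated exactly for $v\in\cD_\Lambda$. Using it for the first direction would not be, since it is stated only for $v\in\cD_\Lambda$; you avoided that.
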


Thus, for $v$ in the boundary of an open chamber $\cU$, $\cT(v)\subsetneq \overline\cT(v)$. 

\begin{Lemma}\cite[Theorem 2.24]{IM}\label{lem: Pbar(v)=Pbar(w)}
    Let $v,w\in\RR^n$ then $\overline\cT(v)=\overline\cT(w)$ if and only if the straight line $\gamma(t)=(1-t)v+tw$ from $v$ to $w$ satisfies the following conditions.
    \begin{enumerate}
\item The path does not cross any domain transversely, i.e., if $\gamma(t)\in\cD_\Lambda(M)$ for some $0<t<1$ and some $M$, then $
    \frac{d}{dt}\gamma(t)\ast \undim M|_{t=0}=0$.
\item If $\gamma(0)\in \cD_\Lambda(M)$ then 
$
    \frac{d}{dt}\gamma(t)\ast \undim M|_{t=0}\ge0$.
\item If $\gamma(1)\in \cD_\Lambda(N)$ then $
    \frac{d}{dt}\gamma(t)\ast \undim N|_{t=1}\le0$.
\end{enumerate}
\end{Lemma}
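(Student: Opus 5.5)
The plan is to deduce this lemma from Lemma \ref{lem: P(v)=P(w)} by applying $K$-duality $D:mod\text-\Lambda\to mod\text-\Lambda\op$ and replacing $v$ by $-v$. I will not redo the path analysis of \cite{IM}.

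\emph{Step 1: pass to torsion-free classes.} A torsion class is determined by its torsion-free class. So $\overline\cT(v)=\overline\cT(w)$ if and only if $\overline\cT(v)^\perp=\overline\cT(w)^\perp$. I will identify
\[
\overline\cT(v)^\perp=\cF(v):=\{M\mid v\ast\undim M'<0 \text{ for all nonzero submodules } M'\subseteq M\}.
\]
For the inclusion $\cF(v)\subseteq\overline\cT(v)^\perp$: if $f:X\to M$ is nonzero with $X\in\overline\cT(v)$, then $\operatorname{im} f$ is a nonzero quotient of $X$, so $v\ast\undim(\operatorname{im} f)\ge0$. It is also a nonzero submodule of $M$, so $v\ast\undim(\operatorname{im} f)<0$, a contradiction. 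For the reverse inclusion: if $M\notin\cF(v)$, choose a nonzero submodule $M'$ of $M$ with $v\ast\undim M'\ge0$ and of maximal value, and among those of minimal length. I expect every nonzero quotient $M'/N$ of $M'$ then satisfies $v\ast\undim(M'/N)\ge0$, so $M'\in\overline\cT(v)$ and $M\notin\overline\cT(v)^\perp$. This requires a short check (if $v\ast\undim N>0$ then $N$ beats $M'$; if $v\ast\undim N=0$ then $N$ has the same value and is shorter). This is the step that needs the most care, but it is the standard Harder--Narasimhan/Bridgeland argument.

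\emph{Step 2: dualize.} The functor $D$ preserves dimension vectors and sends submodules to quotients. It also leaves the modified dot product unchanged, since $\dim_K DS_i=\dim_K S_i$. Hence
\[
D\cF_\Lambda(v)=\cT_{\Lambda\op}(-v).
\]
The same reasoning applied to Def. \ref{def: DL(M)} gives $-\cD_\Lambda(M)=\cD_{\Lambda\op}(DM)$. Indeed, $v\ast\undim M'\le0$ for all submodules is equivalent, given $v\ast\undim M=0$, to $v\ast\undim M''\ge0$ for all quotients, that is, to $(-v)\ast\undim (DM)'\le0$ for all submodules of $DM$. Combining with Step 1,
\[
\overline\cT_\Lambda(v)=\overline\cT_\Lambda(w)\iff\cT_{\Lambda\op}(-v)=\cT_{\Lambda\op}(-w).
\]

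\emph{Step 3: translate the path conditions.} Apply Lemma \ref{lem: P(v)=P(w)} over $\Lambda\op$ to the segment $-\gamma(t)$ from $-v$ to $-w$. We have $-\gamma(t)\in\cD_{\Lambda\op}(DM)$ exactly when $\gamma(t)\in\cD_\Lambda(M)$, and every $\Lambda\op$-module has the form $DM$. The derivative $\frac{d}{dt}(-\gamma(t))\ast\undim DM$ equals $-\frac{d}{dt}\gamma(t)\ast\undim M$. Under this sign change, condition (1) is unchanged, and the inequalities $\le0$ and $\ge0$ in conditions (2) and (3) of Lemma \ref{lem: P(v)=P(w)} become $\ge0$ and $\le0$. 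These are exactly conditions (2) and (3) of the present lemma, which completes the argument.
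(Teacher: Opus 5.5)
Your proof is correct. It takes a different route from the paper, which does not prove this lemma at all: it imports it from \cite[Theorem 2.24]{IM}, just as it imports Lemma \ref{lem: P(v)=P(w)} from \cite[Theorem 2.23]{IM}.

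You instead derive the $\overline\cT$ version formally from the $\cT$ version. Three ingredients do the work:
\begin{enumerate}
\item the identification $\overline\cT(v)^\perp=\mathcal F(v)$;
\item the fact that $D$ exchanges submodules and quotients, which gives $D\mathcal F_\Lambda(v)=\cT_{\Lambda\op}(-v)$;
\item Lemma \ref{lem: dual of DL(M)}.
\end{enumerate}
Together they turn conditions (1)--(3) of Lemma \ref{lem: P(v)=P(w)}, applied over $\Lambda\op$ to the segment $-\gamma$, into exactly conditions (1)--(3) here. The sign change in the velocity accounts for the reversed inequalities, and each translation checks out.

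Your route has two advantages. Only one path analysis is needed, and the asymmetry between the two lemmas is explained as the effect of $v\mapsto -v$ under duality, which fits naturally with Section \ref{sec: duality}. The cost is the Harder--Narasimhan-type identification in Step 1, plus the input from \cite{Br} that $\overline\cT(v)$ is a torsion class, so that it is recovered as ${}^\perp\mathcal F(v)$. The citation is shorter but leaves both path lemmas as black boxes.

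One small repair in Step 1: your parenthetical check compares $v\ast\undim N$ with $0$, but the relevant comparison is with $v\ast\undim M'$. Suppose $0\neq N\subsetneq M'$ had $v\ast\undim(M'/N)<0$. Then $v\ast\undim N>v\ast\undim M'$, contradicting the maximality of $M'$ among nonzero submodules of $M$. Maximality alone suffices, and the minimal-length tie-break is not needed for the non-strict class $\overline\cT(v)$.
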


\begin{Proposition}\label{prop: P(v) for green v}
    Let $\cU$ be any open chamber and let $v\in\partial \cU$.
    \begin{enumerate}
        \item[(a)] If $v$ is a green boundary point of $\cU$, i.e., $v-\varepsilon\eta\in\cU$ for small $\varepsilon>0$, then $\cT(v)=\cT(\cU)$.
        \item[(b)] If $v$ is a red boundary point of $\cU$, i.e., $v+\varepsilon\eta\in\cU$ for small $\varepsilon>0$, then $\overline\cT(v)=\overline\cT(\cU)=\cT(\cU)$.
        \item[(c)] In all cases, $\cT(v)\subseteq\cT(\cU)\subseteq \overline\cT(v)$.
    \end{enumerate}
    \end{Proposition}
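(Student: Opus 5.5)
The plan is to compare $v$ with nearby chamber points through the straight-line criteria, Lemma \ref{lem: P(v)=P(w)} for $\cT$ and Lemma \ref{lem: Pbar(v)=Pbar(w)} for $\overline\cT$. For (a), fix $\varepsilon>0$ small enough that the whole segment from $v$ to $w=v-\varepsilon\eta$ minus its endpoint $v$ lies in $\cU$. This is possible because $v-t\eta\in\cU$ for all $0<t\le\varepsilon$: $\cU$ is convex by Theorem \ref{thm: chambers are convex}, and the green condition holds for all sufficiently small $\varepsilon$. By Theorem \ref{thm: chambers are convex} we have $\cT(w)=\cT(\cU)$, so it suffices to check the three conditions of Lemma \ref{lem: P(v)=P(w)} for $\gamma(t)=(1-t)v+tw=v-t\varepsilon\eta$.

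\emph{Condition (1).} The interior points $\gamma(t)$, $0<t<1$, lie in $\cU$ and hence in no domain, so the condition is vacuous.

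\emph{Condition (3).} The endpoint $\gamma(1)=w\in\cU$ lies in no domain, so this is vacuous as well.

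\emph{Condition (2).} If $v=\gamma(0)\in\cD_\Lambda(M)$ with $M\ne0$, then
\[
\tfrac{d}{dt}\gamma(t)\ast\undim M=-\varepsilon\,\eta\ast\undim M=-\varepsilon\dim_KM<0,
\]
by Remark \ref{rem: ast notation}(a). This gives the required inequality $\le0$.

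Hence $\cT(v)=\cT(w)=\cT(\cU)$. The point to handle is that the derivative is negative along $-\eta$ for every nonzero module, which is exactly the sign Lemma \ref{lem: P(v)=P(w)}(2) demands. One should also note that $\cD_\Lambda(0)$ is not counted as a domain, or is harmless since its derivative is $0$.

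For (b), I argue symmetrically with $w=v+\varepsilon\eta\in\cU$ and the segment $\gamma(t)=v+t\varepsilon\eta$, using Lemma \ref{lem: Pbar(v)=Pbar(w)}. Conditions (1) and (3) are again vacuous since $\gamma(t)\in\cU$ for $0<t\le1$. Condition (2) requires $\tfrac{d}{dt}\gamma\ast\undim M=\varepsilon\dim_KM\ge0$, which holds. So $\overline\cT(v)=\overline\cT(w)$. Since $w$ lies in a chamber, Lemma \ref{lem: P=Pbar} gives $\overline\cT(w)=\cT(w)=\cT(\cU)$.

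For (c), which is the main obstacle, $v$ need be neither green nor red. I would avoid the lemmas and argue directly from the definitions by a limit argument. Pick $u\in\cU$. By convexity the points $v_s=(1-s)v+su$ lie in $\cU$ for $0<s\le1$; this uses the standard fact that the open segment from an interior point of a convex set to a point of its closure lies in the interior, granting $\cU$ is open, being a component of the complement of the closed set $\cD_\Lambda$ (a locally finite union). The defining conditions of $\cT(v)$ are finitely many strict inequalities $v\ast\undim M''>0$ over the quotients $M''$ of a given $M$. Dimension vectors of quotients form a finite set, so these inequalities persist for $v_s$ with $s$ small. Hence $\cT(v)\subseteq\cT(v_s)=\cT(\cU)$.

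Conversely, let $M\in\cT(\cU)=\overline\cT(v_s)$ for all small $s$; the equality here is Lemma \ref{lem: P=Pbar}. The non-strict inequalities $v_s\ast\undim M''\ge0$ pass to the limit $s\to0$, giving $M\in\overline\cT(v)$. The only delicate point is justifying $v_s\in\cU$, which I would settle via convexity of $\cU$ together with $v\in\partial\cU$.
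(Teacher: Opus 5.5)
Your proposal is correct and follows the paper's proof: (a) and (b) apply Lemmas \ref{lem: P(v)=P(w)} and \ref{lem: Pbar(v)=Pbar(w)} to the segment $v\mp t\varepsilon\eta$, and (c) uses the finiteness of quotient dimension vectors together with approximating $v$ by points of $\cU$. The paper phrases the second inclusion of (c) contrapositively, which amounts to your limit argument. Your segment $v_s$ and the openness/convexity justification in (c) are unnecessary, and the aside that $\cD_\Lambda$ is a locally finite union is false in general (e.g.\ for the Kronecker algebra): $v\in\partial\cU$ already gives points of $\cU$ arbitrarily close to $v$, which is all that is used.
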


\begin{proof} (a) The boundary point $v$ will lie in at least one domain, say $\cD_\Lambda(M_\alpha)$.
    Let $\gamma(t)=v-t\varepsilon\eta$ for some small $\varepsilon>0$. Since this path is in the interior of $\cU$ for $0<t\le1$, it does not cross any wall when $t\in (0,1]$. When $t=0$ we have
    \[
    \frac d{dt} \gamma(t)\ast \cD_\Lambda(M_\alpha)|_{t=0}=-\varepsilon \eta\ast \undim M_\alpha<0
    \]
    for all modules $M_\alpha$ for which $v\in \cD_\Lambda(M_\alpha)$. By Lemma \ref{lem: P(v)=P(w)}, we have
\[
\cT(v)=\cT(v-\varepsilon \eta)=\cT(\cU).\]

(b) This has a similar proof as (a) using Lemma \ref{lem: Pbar(v)=Pbar(w)} in place of Lemma \ref{lem: P(v)=P(w)}.

(c) To see that, in general, $\cT(v)\subseteq\cT(\cU)$, let $X\in\cT(v)$. Then $v\ast\undim X''>0$ for all nonzero quotients $X''$ of $X$. But there are only finitely many possibilities for the vector $\undim X''$ for fixed $X$, so for $w\in\cU$ sufficiently close to $v$, $w\ast\undim X''>0$ for all nonzero quotients $X''$ of $X$. So, $\cT(v)\subseteq\cT(\cU)$.

To see that $\cT(\cU)\subseteq \overline\cT(v)$, let $X\notin \overline\cT(v)$. Then there exists a nonzero quotient $X''$ of $X$ so that $v\ast\undim X''<0$. Then $w\ast\undim X''<0$ for $w\in\cU$ sufficiently close to $v$. So, $X\notin \overline\cT(\cU)=\cT(\cU)$. Therefore, $\cT(v)\subseteq\cT(\cU)\subseteq \overline\cT(v)$ for all $v\in\partial \cU$.
\end{proof}

{
We need the following refinement of Proposition \ref{prop: P(v) for green v}.
\begin{Theorem}\label{thm: not red or green}
    Let $\cU$ be a standard open chamber $\cU=\cC\left(\symm AB\right)$ for a complete balanced pair $\symm AB=\bigsqcup \symm{A_i}{B_i}$. Let $v\in\partial \cU$. 
        \begin{enumerate}
        \item[(a)] The point $v$ is a green boundary point of $\cU$ if and only if $\cT(v)=\cT(\cU)$.
        \item[(b)] The point $v$ is a red boundary point of $\cU$ if and only if $\overline\cT(v)=\overline\cT(\cU)=\cT(\cU)$.
        \item[(c)] If $v$ is neither red nor green, then $\cT(v)\subsetneq\cT(\cU)\subsetneq \overline\cT(v)$.
    \end{enumerate}
\end{Theorem}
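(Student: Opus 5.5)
The forward implications in (a) and (b) are already Proposition \ref{prop: P(v) for green v}(a),(b), and the chain of inclusions in (c) is Proposition \ref{prop: P(v) for green v}(c). What remains are the converses of (a) and (b); once these are in hand, (c) follows immediately. Indeed, if $v$ is neither red nor green, then both inclusions in $\cT(v)\subseteq\cT(\cU)\subseteq\overline\cT(v)$ must be strict: equality on the left would make $v$ green by (a), and equality on the right would make $v$ red by (b).

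For the converse of (a), I will argue by contrapositive. Suppose $v\in\partial\cU$ is not a green boundary point. By Remark \ref{rem: red/green walls}, $\partial\cU$ is the union of the $n$ walls spanned by the subsets of $n-1$ of the vectors $g(\tsymm{A_i}{B_i})$. Since $v$ is not green, $v$ lies on some wall $F_k$, opposite the vertex $g(\tsymm{A_k}{B_k})$, which is red. Write $v=\sum_{i\ne k} r_i g(\tsymm{A_i}{B_i})$ with $r_i\ge0$.

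Let $\tsymm{A'}{B'}=\bigsqcup_{i\ne k}\tsymm{A_i}{B_i}$ be the almost complete pair. By Corollary \ref{cor: labels of domains}, $J(\tsymm{A'}{B'})$ has a unique relatively simple object $M_0$, and $F_k\subseteq\cD_\Lambda(M_0)$ by Corollary \ref{cor: clean interiors}. Moreover, $v\in\cD_\Lambda(M_0)$: this holds because each vertex $g(\tsymm{A_i}{B_i})$ with $i\neq k$ lies in $\cD_\Lambda(M_0)$, and $\cD_\Lambda(M_0)$ is a convex cone. The wall being red means $g(\tsymm{A_k}{B_k})\ast\undim M_0>0$, that is, $\cU$ lies on the positive side of $M_0$. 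Using Corollary \ref{cor: g(A/B) dot X} together with $M_0\in{}^\perp B'$, one sees $\Hom(A_k,M_0)\neq0$. I will aim to show $M_0\in\cT(\cU)=\Gen A$ (Theorem \ref{thm: P(U) is standard}) while $M_0\notin\cT(v)$.

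The second claim is immediate: $v\ast\undim M_0=0$. The first requires that every nonzero quotient $M''$ of $M_0$ satisfies $w\ast\undim M''>0$ for $w\in\cU$. Since $M_0$ is in $A'^\perp$, so is every quotient. Hence each $\tsymm{A_i}{B_i}$ with $i\neq k$ contributes $-\dim\Hom(M'',B_i)\le0$. This is the wrong sign, and it is the main obstacle. To get around it, I will instead use that $M_0\in\cW(v)\subseteq\overline\cT(v)$ and that $\cT(v)\cap\cW(v)=0$ (Proposition \ref{prop: T(v), W(v), Tbar(v)}). It then suffices to show that some nonzero object of $\cW(v)$ lies in $\cT(\cU)$.

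For this I use the red wall: points $v'$ in the relative interior of $F_k$ near $v$ are red. Take $v'$ in the relative interior of $F_k$. By Proposition \ref{prop: P(v) for green v}(b), $\cT(\cU)=\overline\cT(v')\supseteq\cW(v')\ni M_0$. Since $\cT(\cU)$ does not depend on the choice of point, this gives $M_0\in\cT(\cU)$. Combined with $M_0\notin\cT(v)$, we conclude $\cT(v)\neq\cT(\cU)$.

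The converse of (b) is dual. If $v$ is not red, it lies on a green wall $F_k$. Choosing $v'$ in the relative interior of $F_k$, Proposition \ref{prop: P(v) for green v}(a) gives $\cT(v')=\cT(\cU)$, and $M_0\in\cW(v')$, so $M_0\notin\cT(v')=\cT(\cU)$. On the other hand, $M_0\in\cW(v)\subseteq\overline\cT(v)$. Therefore $\overline\cT(v)\neq\cT(\cU)$.
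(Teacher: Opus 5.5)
Your argument is correct and is essentially the paper's. You take an adjacent red (resp.\ green) wall $\cD_\Lambda(M)$ through $v$ and use a relative-interior point of it with Proposition~\ref{prop: P(v) for green v} to get $M\in\cT(\cU)$ (resp.\ $M\notin\cT(\cU)$), while $M\in\cW(v)$ gives $M\notin\cT(v)$ (resp.\ $M\in\overline\cT(v)$); the only difference is that you prove the converses of (a), (b) first and deduce (c), the reverse of the paper's order.

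One aside is wrong but unused: quotients of $M_0$ stay in ${}^\perp B'$, not in $A'^\perp$, so the $i\neq k$ terms equal $\dim_K\Hom_\Lambda(A_i,M'')\ge 0$ rather than $-\dim_K\Hom_\Lambda(M'',B_i)$.
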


\begin{proof}
The simplicial cone $\cU=\cC\left(\symm AB\right)$ has $n$ walls where $n$ is the rank of $\Lambda$, the number of simple $\Lambda$-modules. Each of these walls has the property that its interior points are either all red or all green (Remark \ref{rem: red/green walls}). Also, by Remark \ref{rem: red/green walls}, $v\in\partial\cU$ is green (or red) if and only if every adjacent wall is green (or red) where ``adjacent'' means it contains $v$. In case (c) when $v\in\partial\cU$ is neither red nor green, there must be an adjacent red wall $\cD_\Lambda(M_1)$ and an adjacent green wall $\cD_\Lambda(M_2)$. See Figure \ref{Fig: not red/green}.

Take points $v_1\in\cD_\Lambda(M_1)$ and $v_2\in\cD_\Lambda(M_2)$. Since $v_1$ is red, $\overline\cT(v_1)=\cT(\cU)$ by Proposition \ref{prop: P(v) for green v}. Using Proposition \ref{prop: T(v), W(v), Tbar(v)}, we have $M_1\in \cW(v_1)\subset \overline\cT(v_1)=\cT(\cU)$. But $M_1\in \cW(v)$. So, $M_1\notin \cT(v)$. So, $\cT(v)\neq\cT(\cU)$. 

Now look at $v_2\in\cD_\Lambda(M_2)$. Since $v_2$ is green, $\cT(v_2)=\cT(\cU)$. But $M_2\notin\cT(v_2)=\cT(\cU)$. And $M_2\in\cW(v)\subset\overline \cT(v)$. So, $\overline\cT(v)\neq \cT(\cU)$. Therefore, 
\[
	\cT(v)\subsetneq\cT(\cU)\subsetneq \overline \cT(v)
\]
as claimed. (a) and (b) follow from (c) and Proposition \ref{prop: P(v) for green v}.
\end{proof}

}

{
\begin{figure}[htbp]
\begin{center}
\begin{tikzpicture}

\draw[thick] (3,-.7)--(0,0)--(2,.9);
\draw (2.5,0.2) node{$\cU$};
\draw (0,0) node{$\bullet$};
\draw (1,0.43) node{$\bullet$};
\draw (1,-.25) node{$\bullet$};
\draw (0,0) node[left]{$v$};
\draw (1,-.25) node[below]{$v_2$};
\draw (2,1) node[right]{$\cD_\Lambda(M_1)$};
\draw (3,-.7) node[right]{$\cD_\Lambda(M_2)$};
\draw (1,0.43) node[above]{$v_1$};

\end{tikzpicture}
\caption{In the proof of Theorem \ref{thm: not red or green}, $\cU= \cC\left(\symm AB\right) $ and $v\in\partial\cU$ is neither red nor green. Then, there is an adjacent red wall $\cD_\Lambda(M_1)$ and an adjacent green wall $\cD_\Lambda(M_2)$ at $v$ and red and green points $v_1,v_2$ on those walls. ``Adjacent'' means $v\in \cD_\Lambda(M_1)$ and $v\in \cD_\Lambda(M_2)$ as shown.}
\label{Fig: not red/green}
\end{center}
\end{figure}
}

\begin{Theorem}\label{thm: Pbar of A/B}
    Let $\symm AB$ be a balanced pair, not necessarily complete. Then
\[
\overline\cT\left(g\!\left(\symm AB\right)\right)=\,^\perp B.
    \]
\end{Theorem}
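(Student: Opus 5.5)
The proof will mirror the proof of Theorem \ref{thm: P(U) is standard}, with the roles of $A$ and $B$ interchanged and strict inequalities replaced by weak ones. Write $g=g\!\left(\symm AB\right)$. By Corollary \ref{cor: g(A/B) dot X}, for every module $X$,
\[
g\ast\undim X=\dim_K\Hom_\Lambda(A,X)-\dim_K\Hom_\Lambda(X,B).
\]
We show the two inclusions between $\,^\perp B$ and $\overline\cT(g)$ separately.

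\emph{Inclusion $\,^\perp B\subseteq\overline\cT(g)$.} Let $X\in\,^\perp B$. Any quotient $X''$ of $X$ is also in $\,^\perp B$, because a map $X''\to B$ composes with $X\onto X''$ to give a map $X\to B$, and this composite is nonzero whenever the original map is. Hence
\[
g\ast\undim X''=\dim_K\Hom_\Lambda(A,X'')\ge0
\]
for all quotients $X''$, including $X''=X$. So $X\in\overline\cT(g)$.

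\emph{Inclusion $\overline\cT(g)\subseteq\,^\perp B$.} We argue by contradiction. Let $Y\in\overline\cT(g)$ with $\Hom_\Lambda(Y,B)\neq0$, and choose $Y$ of minimal length with this property. Take a nonzero $f:Y\to B$ and let $Y''=\operatorname{im}f\subseteq B$.
\begin{enumerate}
\item Since $Y''$ is a quotient of $Y$, we have $g\ast\undim Y''\ge0$.
\item Since $Y''\subseteq B$ and $\Hom_\Lambda(A,B)=0$, we get $\Hom_\Lambda(A,Y'')=0$.
\item Hence $g\ast\undim Y''=-\dim_K\Hom_\Lambda(Y'',B)\le-1<0$, because the inclusion $Y''\hookrightarrow B$ is a nonzero map.
\end{enumerate}
Points (1) and (3) contradict each other. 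So $\Hom_\Lambda(Y,B)=0$ for every $Y\in\overline\cT(g)$, which gives $\overline\cT(g)\subseteq\,^\perp B$.

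The contradiction appears immediately at $Y''$, so the minimality of $Y$ and any induction are not actually needed. This half is in fact easier than the corresponding half of Theorem \ref{thm: P(U) is standard}, because the defining condition of $\overline\cT$ applies to all quotients. The only point requiring care is the role of the vanishing $\Hom_\Lambda(A,B)=0$ from Definition \ref{def: balanced notation A/B}(1). It forces $\Hom_\Lambda(A,Y'')=0$ for every submodule $Y''$ of $B$, and this makes $g\ast\undim Y''$ strictly negative.
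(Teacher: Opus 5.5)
Your proof is correct and is essentially the paper's argument. You prove $\,^\perp B\subseteq\overline\cT(g)$ by applying Corollary \ref{cor: g(A/B) dot X} to quotients, and you prove the reverse inclusion by taking the image $Y''\subseteq B$ of a nonzero map $Y\to B$ and using $\Hom_\Lambda(A,B)=0$ to force $g\ast\undim Y''<0$. As you observe, the minimal-length choice of $Y$ is superfluous, and the paper does not use it either.
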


\begin{proof}
    Let $X\in\,^\perp B$. Then $X''\in\,^\perp B$ for any quotient $X''$ of $X$. So, by Corollary \ref{cor: g(A/B) dot X},
    \[
   g\!\left(\symm AB\right) \ast \undim X=\dim_K\Hom_\Lambda(A,X)-\dim_K\Hom_\Lambda(X,B)=\dim_K\Hom_\Lambda(A,X)\ge0,
    \]
\[
   g\!\left(\symm AB\right) \ast \undim X''=\dim_K\Hom_\Lambda(A,X'')-\dim_K\Hom_\Lambda(X'',B)=\dim_K\Hom_\Lambda(A,X'')\ge0.
    \]
    This implies $X\in\overline\cT\left(g\!\left(\tsymm AB\right)\right)$. So, $\,^\perp B\subseteq \overline\cT\left(g\!\left(\tsymm AB\right)\right)$.

    Conversely, let $Y\in \overline\cT\left(g\!\left(\tsymm AB\right)\right)$. To show $Y\in \,^\perp B$, suppose not. Then, there is a nonzero morphism $f:Y\to B$. Then the image of $f$, $Y''$ is a quotient of $Y$ and, therefore, $Y''\in \overline\cT\left(g\!\left(\tsymm AB\right)\right)$. Also $Y''\subseteq B$. But then $\Hom_\Lambda(A,Y'')=0$. So,
    \[
          g\!\left(\symm AB\right) \ast \undim Y''=\dim_K\Hom_\Lambda(A,Y'')-\dim_K\Hom_\Lambda(Y'',B)=-\dim_K\Hom_\Lambda(Y'',B)<0.
    \]
    So, $Y''\notin \overline\cT\left(g\!\left(\tsymm AB\right)\right)$. This contradiction shows that $\overline\cT\left(g\!\left(\tsymm AB\right)\right)\subseteq \,^\perp B$. So, they are equal.
\end{proof}

}

{
\section{Covering and flooring}\label{sec: cover and floor}

We analyze two particular chambers associated to each balanced pair called the ``upper chamber'' and the ``lower chamber''. We describe these chambers when the balanced pair is not complete and show that such chambers do not exist for complete balanced pairs. 


{
\begin{figure}[htbp]
\begin{center}
\begin{tikzpicture}
\draw[very thick] (0,0)..controls (1,1) and (3,2)..(4,2);
\draw[very thick] (0,0)..controls (-1,1) and (-3,2)..(-4,2);
\draw[very thick] (0,0)..controls (-1,-1) and (-2,-2)..(-2.5,-4);
\draw[very thick] (0,0)..controls (1,-1) and (2,-2)..(2.5,-4);
\draw[very thick] (-4,1)..controls (-3,2) and (-1.4,3)..(0,3);
\draw[very thick] (4,1)..controls (3,2) and (1.4,3)..(0,3);
\draw[very thick] (-3.5,-4)..controls (-2.5,-3.5) and (-1.5,-3).. (0,-3); 
\draw[very thick] (3.5,-4)..controls (2.5,-3.5) and (1.5,-3).. (0,-3); 
\draw[thick] (-4,0)..controls (-3,.5) and (-1,.3)..(0,0);
\draw[thick] (-4,-1)..controls (-3,-.5) and (-1,0)..(0,0);
\draw[thick] (-4,-1.5)..controls (-3,-1) and (-1,-.1)..(0,0);
\draw[thick] (-4,-2)..controls (-3,-1.5) and (-1,-.2)..(0,0);
\draw[thick] (4,-2)..controls (3,-1.5) and (1,-.4)..(0,0);
%
\draw[red](0,2.6) node{\large$\cD_\Lambda(M_-)$};
\draw[black!45!green] (0,-2.7) node{\large$\cD_\Lambda(M_+)$};
\draw (-4.75,0) node{$\cD_\Lambda(X)$};
\draw (-4.75,-1) node{$\cD_\Lambda(Y)$};
\draw (-4.75,-1.5) node{$\cdots$};
\draw (-4.75,-2) node{$\cD_\Lambda(Z)$};
\draw (4.75,-2) node{$\cD_\Lambda(W)$};
\draw (2.5,1.1) node {$\cD_\Lambda(M_1)$};
\draw (-2,.8) node {$\cD_\Lambda(M_{n-1})$};
\draw (1.7,-2.2) node[right] {$\cD_\Lambda(M_{n-1})$};
\draw (-1.7,-2.2) node[left] {$\cD_\Lambda(M_{1})$};
\draw[dashed] (0,-.9)--(0,.9);
\draw (0,.7)node[above]{\small$v_0-\varepsilon\eta$};
\draw (0,-.7)node[below]{\small$v_0+\varepsilon\eta$};
\draw (0.1,0) node[right]{$v_0=g\!\left(\symm {A_0}{B_0}\right)$};
\draw (-3,1.9) node[above]{$v_1$};
\draw (3,1.9) node[above]{$v_{n-1}$};
\draw (2.6,-3.3) node{$v_1'$};
\draw (-1.85,-3.65) node{$v_{n-1}'$};
\draw (0,1.8) node{$\mathcal B_-$};
\draw (0,-1.8) node{$\mathcal B_+$};
\draw[thick,green!80!black,->] (1,4)--(1,-4);
\draw[green!80!black] (1,4) node[right]{$\gamma$};
{
\begin{scope}[yshift=-.5mm]
\clip (-3,1.3)rectangle(3,3.1);
\draw[ thick,red] (-4,1)..controls (-3,2) and (-1.4,3)..(0,3);
\draw[thick,red] (4,1)..controls (3,2) and (1.4,3)..(0,3);
\end{scope}
\begin{scope}[yshift=.5mm]
\clip (-2.3,-4)rectangle (2.3,-2.7);
\draw[thick,black!45!green] (-3.5,-4)..controls (-2.5,-3.5) and (-1.5,-3).. (0,-3); 
\draw[thick,black!45!green] (3.5,-4)..controls (2.5,-3.5) and (1.5,-3).. (0,-3); 
\end{scope}
}
\end{tikzpicture}
\caption{Here $v_0=g(\tsymm {A_0}{B_0})$ where $A_0$ and $B_0$ are nonzero and indecomposable. The wide subcategory $\cW(g(\tsymm {A_0}{B_0}))$ has $n-1$ simple objects $M_1,\cdots, M_{n-1}$ and composite objects $X,Y,Z$, etc. We will show that, in $S^{n-1}$, the projected domains $\cD_\Lambda(M_i)$ form $n-1$ of the $n$ number of faces of the two projected $(n-1)$-simplices $\Delta_-$ and $\Delta_+$ with interiors $\cB_-$ and $\cB_+$ as shown. This figure shows the projection of these domains and chambers into the unit sphere $S^{n-1}$. The vertical green arrow indicates a linear green path $\gamma$ going through $\cB_-$ and $\cB_+$, $\cB_-$ is the upper chamber containing $v_0-\varepsilon\eta$ and $\cB_+$ is the lower chamber containing $v_0+\varepsilon\eta$.}
\label{Fig: hour glass}
\end{center}
\end{figure}
}

{\begin{Definition}\label{def: chamber A}
Let $\symm {A}{B}$ be a non-complete balanced pair and its $g$-vector $g\!\left(\symm{A}{B}\right)$. We denote by $\cB_-$ the chamber having the property that $g\!\left(\symm{A}{B}\right)\in\partial \cB_-$ is a green boundary point of $\cB_-$ (Def. \ref{def: red and green boundary points}). We call $\cB_-$ the \textbf{upper chamber} of $g(\tsymm{A}{B})$ if it exists. We also say that $\cB_-$ is on the \textbf{negative side} of $g(\tsymm{A}{B})$.
\end{Definition}

\begin{Proposition}\label{prop: uniqueness of B-}
\item[(a)] The upper chamber $\cB_-$ of $g\!\left(\symm{A}{B}\right)$, the $g$-vector of a non-complete balanced pair, is unique if it exists.

\item[(b)] Suppose $\symm{A_1}{B_1},\cdots,\symm{A_k}{B_k}$, $k<n$, are some, but not all, of the indecomposable summands of a complete balanced pair $\symm AB$. Let $\cU=\cC\left(\symm AB\right)$ be the corresponding standard open chamber. Suppose $\cT(\cU)=\cT\left(g\!\left(\bigsqcup \symm{A_i}{B_i}\right)\right)=\cT\left(\sum g\!\left(\symm{A_i}{B_i}\right)\right)$. Then $g\!\left(\bigsqcup \symm{A_i}{B_i}\right)$ is a green boundary point of $\cU$. So, $\cU=\cC\left(\symm AB\right)=\cB_-$.
\end{Proposition}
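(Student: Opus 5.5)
For (a), I would use Proposition \ref{prop: P(v) for green v}(a). Put $v=g\!\left(\symm AB\right)$ and suppose $\cU$ and $\cU'$ are both chambers having $v$ as a green boundary point. Then $\cT(\cU)=\cT(v)=\cT(\cU')$, so Theorem \ref{thm: chambers are convex} gives $\cU=\cU'$.

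A more elementary argument also works. For small $\varepsilon>0$, both $\cU$ and $\cU'$ contain $v-\varepsilon\eta$. Chambers are path components of $\RR^n-\cD_\Lambda$, and two path components sharing a point are equal.

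For (b), put $v=g\!\left(\bigsqcup_{i\le k}\symm{A_i}{B_i}\right)=\sum_{i\le k} g\!\left(\symm{A_i}{B_i}\right)$. This $v$ is a positive combination of only $k<n$ of the $n$ generators of the simplicial cone $\cU$. Since the $g$-vectors are linearly independent, $v$ lies in the closure of $\cU$ but not in $\cU$, so $v\in\partial\cU$.

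Next I would show $v$ is not a red boundary point. It lies in the boundary of the simplex, i.e.\ on a proper face of the closed cone, so Theorem \ref{thm: not red or green} applies. If $v$ were red, part (b) of that theorem gives $\overline\cT(v)=\cT(\cU)$. By hypothesis $\cT(v)=\cT(\cU)$, so $\cT(v)=\overline\cT(v)$. Lemma \ref{lem: P=Pbar} then says $v\notin\cD_\Lambda$. This contradicts the fact that $J\!\left(\bigsqcup_{i\le k}\symm{A_i}{B_i}\right)$ is nonzero (it has $n-k\ge1$ simple objects by Theorem \ref{thm: J perpendicular}), which by Corollary \ref{cor: M in J, g in D(M)} puts $v$ in some domain $\cD_\Lambda(M)$.

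Finally, $v$ cannot be in case (c) of Theorem \ref{thm: not red or green}, since there $\cT(v)\subsetneq\cT(\cU)$, against the hypothesis. Equivalently, Theorem \ref{thm: not red or green}(a) gives directly that $v$ is green because $\cT(v)=\cT(\cU)$. So $v$ is a green boundary point of $\cU$, and by Definition \ref{def: chamber A} and part (a), $\cU=\cB_-$.

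The step needing the most care is applying Theorem \ref{thm: not red or green}, which requires $v\in\partial\cU$. That in turn needs $v\notin\cU$, and this follows from the linear independence of the $g$-vectors of a complete pair \cite[Theorem 5.1]{AIR}.
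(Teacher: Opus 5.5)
Your proof is correct and is essentially the paper's. Part (a) goes via Proposition \ref{prop: P(v) for green v}(a) and Theorem \ref{thm: chambers are convex}; your path-component alternative is also valid and more elementary. Part (b) goes via Theorem \ref{thm: not red or green}(a), and the small differences are in your favor: you get $v\in\partial\cU$ directly from linear independence of the $g$-vectors (the paper only cites Lemma \ref{lem: P(v)=P(w)} for this), and your explicit exclusion of the red case using Lemma \ref{lem: P=Pbar} and $J\neq 0$ supplies a step the paper leaves implicit when deducing Theorem \ref{thm: not red or green}(a) from part (c).
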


\begin{proof}
(a) We have $\cT(\cB_-)=\cT\left(g\!\left(\bigsqcup \symm{A_i}{B_i}\right)\right)$ by Proposition \ref{prop: P(v) for green v}. This determines $\cB_-$ uniquely if it exists by Theorem \ref{thm: chambers are convex}.

(b) Follows from Lemma \ref{lem: P(v)=P(w)} and Theorem \ref{thm: not red or green}. If $\cT\left(g\!\left(\bigsqcup \symm{A_i}{B_i}\right)\right)=\cT(\cU)$, then, by Lemma \ref{lem: P(v)=P(w)}, $g\!\left(\bigsqcup \symm{A_i}{B_i}\right)$ must be a boundary point of $\cU$. Since $\cU$ is standard, Theorem \ref{thm: not red or green} applies and tells us that $g\!\left(\bigsqcup \symm{A_i}{B_i}\right)$ is a green boundary point. Therefore, $\cU=\cC(\tsymm AB)=\cB_-$.
\end{proof}
}

{\begin{Definition}\label{def: chamber B}
Let $\symm {A}{B}$ be a non-complete balanced pair and its $g$-vector $g\!\left(\symm{A}{B}\right)$. We denote by $\cB_+$ the chamber having the property that $g\!\left(\symm{A}{B}\right)\in\partial \cB_+$ is a red boundary point of $\cB_+$ (Def. \ref{def: red and green boundary points}). We call $\cB_+$ the \textbf{lower chamber} of $g(\tsymm{A}{B})$ if it exists. We also say that $\cB_+$ is on the \textbf{positive side} of $g(\tsymm{A}{B})$.
\end{Definition}

\begin{Proposition}\label{prop: uniqueness of B+}
\begin{enumerate}
\item[(a)] The lower chamber $\cB_+$ of $g\!\left(\symm{A}{B}\right)$, the $g$-vector of a non-complete balanced pair, is unique if it exists.

\item[(b)] Suppose $\symm{A_1}{B_1},\cdots,\symm{A_k}{B_k}$, $k<n$, are some, but not all, of the indecomposable summands of a complete balanced pair $\symm AB$. Let $\cU=\cC\left(\symm AB\right)$ be the corresponding standard open chamber. Suppose $\cT(\cU)=\overline\cT\left(g\!\left(\bigsqcup \symm{A_i}{B_i}\right)\right)$. Then $g\!\left(\bigsqcup \symm{A_i}{B_i}\right)$ is a red boundary point of $\cU$. So, $\cU=\cC\left(\symm AB\right)=\cB_+$.
\end{enumerate}
\end{Proposition}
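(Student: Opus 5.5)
The argument mirrors Proposition \ref{prop: uniqueness of B-}, with $\overline\cT$ in place of $\cT$, red in place of green, and Lemma \ref{lem: Pbar(v)=Pbar(w)} in place of Lemma \ref{lem: P(v)=P(w)}.

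For (a), suppose $\cB_+$ is a chamber for which $v=g\!\left(\symm AB\right)$ is a red boundary point. By Proposition \ref{prop: P(v) for green v}(b), $\overline\cT(v)=\overline\cT(\cB_+)=\cT(\cB_+)$. So the torsion class $\cT(\cB_+)$ is determined by $v$; by Theorem \ref{thm: Pbar of A/B} it equals $\,^\perp B$. Theorem \ref{thm: chambers are convex} says distinct chambers have distinct torsion classes, so $\cB_+$ is unique if it exists.

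For (b), write $v=g\!\left(\bigsqcup\symm{A_i}{B_i}\right)=\sum_{i=1}^k g\!\left(\symm{A_i}{B_i}\right)$. This is a positive combination of only $k<n$ of the $n$ generating $g$-vectors of the simplicial cone $\cU=\cC\left(\symm AB\right)$. Hence $v$ lies in the closure of $\cU$, on the face spanned by those $k$ vectors, and not in $\cU$ itself. So $v\in\partial\cU$.

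Next, pick $w\in\cU$. By Lemma \ref{lem: P=Pbar}, $\overline\cT(w)=\cT(\cU)=\overline\cT(v)$. To decide which kind of boundary point $v$ is, apply Theorem \ref{thm: not red or green}. If $v$ were neither red nor green, part (c) would give $\cT(\cU)\subsetneq\overline\cT(v)$, contradicting the hypothesis. If $v$ were green, then $\cT(v)=\cT(\cU)=\overline\cT(v)$. But $v$ lies on the walls of $\cU$, so $v\in\cD_\Lambda$, and Lemma \ref{lem: P=Pbar} gives $\cT(v)\subsetneq\overline\cT(v)$, again a contradiction. Hence $v$ is a red boundary point, and part (b) of Theorem \ref{thm: not red or green} confirms that $\overline\cT(v)=\cT(\cU)$ is consistent with this.

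Therefore $\cU$ satisfies Definition \ref{def: chamber B}, and by (a) $\cU=\cB_+$.

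The one point needing care is that $v\in\partial\cU$, i.e.\ that $v$ lies on a domain. This follows because the face of $\cU$ spanned by the $k$ vectors lies in the walls of the standard chamber. Alternatively, Corollary \ref{cor: clean interiors} together with Theorem \ref{thm: J perpendicular} shows $J\!\left(\bigsqcup\symm{A_i}{B_i}\right)\neq0$ since $k<n$, so $v\in\cD_\Lambda(M)$ for some nonzero $M$.
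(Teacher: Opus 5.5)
Your proposal is correct and is essentially the paper's argument, which mirrors the $\cB_-$ case: (a) uses Proposition \ref{prop: P(v) for green v}(b) together with Theorem \ref{thm: chambers are convex}, and (b) uses Theorem \ref{thm: not red or green}. The only differences are minor and sound: you get $v\in\partial\cU$ directly from the simplicial-cone description of $\cU$ (or from $J\neq0$) rather than from Lemma \ref{lem: Pbar(v)=Pbar(w)}, and you unpack the red case of Theorem \ref{thm: not red or green} through its part (c) and Lemma \ref{lem: P=Pbar}, which makes the argument more explicit than the paper's sketch.
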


\begin{proof} The proof is similar to the proof of Proposition \ref{prop: uniqueness of B-} using Lemma \ref{lem: Pbar(v)=Pbar(w)} instead of Lemma \ref{lem: P(v)=P(w)}.
\end{proof}
}

By Theorem \ref{thm: the open simplex}, the objects $M$ of the wide subcategory $\cW(g(\tsymm{A_0}{B_0}))$ for an indecomposable balanced pair $\tsymm{A_0}{B_0}$ label the walls $\cD_\Lambda(M)$ which contain $g(\tsymm{A_0}{B_0})$. By Theorem \ref{thm: J perpendicular}, this wide subcategory is generated by $n-1$ simple objects $M_1,\cdots,M_{n-1}$. We will show that these form part of the boundary of both the upper chamber $\cB_-$ on the negative side of $g(\tsymm {A_0}{B_0})$ and the lower chamber $\cB_+$ on the positive side of $g(\tsymm {A_0}{B_0})$. See Figure \ref{Fig: hour glass}. 

{
 Using results from the last section we can determine the torsion classes associated to the chambers $\cB_-$ and $\cB_+$ if they exist. This hypothetical statement will be used later to prove the existence of $\cB_-$ and $\cB_+$.

\begin{Theorem}\label{thm: P(U0),P(U+)}
Let $\symm {A}{B}$ be a non-complete balanced pair. Let $\cB_-$, $\cB_+$ be the open chambers on the negative and positive sides of $g\!\left(\symm {A}{B}\right)$, resp. if they exist. Then, the torsion classes associated to $\cB_-$ and $\cB_+$ are given as follows.
\[
    \cT(\cB_-)=\cT\!\left(g\!\left(\symm {A}{B}\right)\right)=\Gen\,A.
\]
\[
    \cT(\cB_+)=\overline\cT(\cB_+)=\overline\cT\!\left(g\!\left(\symm {A}{B}\right)\right)=\,^\perp B.
\]
\end{Theorem}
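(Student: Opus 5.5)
The plan is to combine Proposition \ref{prop: P(v) for green v} with Theorems \ref{thm: P(U) is standard} and \ref{thm: Pbar of A/B}. Write $v=g\!\left(\symm AB\right)$. The statement is hypothetical: we assume $\cB_-$ and $\cB_+$ exist. By Definition \ref{def: chamber A}, $v$ is then a green boundary point of $\cB_-$, and by Definition \ref{def: chamber B}, $v$ is a red boundary point of $\cB_+$. No existence argument is needed here.

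For $\cB_-$: since $v\in\partial\cB_-$ is green, Proposition \ref{prop: P(v) for green v}(a) gives $\cT(\cB_-)=\cT(v)$. Concretely, the segment from $v$ to $v-\varepsilon\eta$ stays in $\cB_-$ for $t\in(0,1]$. At $t=0$ its derivative pairs negatively with $\undim M$ for every $M$ with $v\in\cD_\Lambda(M)$, so Lemma \ref{lem: P(v)=P(w)} applies. Theorem \ref{thm: P(U) is standard} then identifies $\cT(v)=\Gen A$, which holds for any balanced pair, complete or not. This gives the first chain of equalities.

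For $\cB_+$: since $\cB_+$ is a chamber, Lemma \ref{lem: P=Pbar} gives $\cT(\cB_+)=\overline\cT(\cB_+)$, because points of a chamber lie off $\cD_\Lambda$. Since $v$ is a red boundary point of $\cB_+$, Proposition \ref{prop: P(v) for green v}(b) gives $\overline\cT(v)=\overline\cT(\cB_+)$. Its proof uses Lemma \ref{lem: Pbar(v)=Pbar(w)} along the path $v+t\varepsilon\eta$: the derivative $\varepsilon\eta\ast\undim M$ is now $\ge0$ at the starting point, which matches condition (2) of that lemma. Finally, Theorem \ref{thm: Pbar of A/B} gives $\overline\cT(v)={}^\perp B$, completing the second chain.

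The only delicate point is that Proposition \ref{prop: P(v) for green v}(b) was proved only by analogy, so the sign conventions in Lemma \ref{lem: Pbar(v)=Pbar(w)} should be checked. At $t=0$ we need $\frac{d}{dt}\gamma\ast\undim M\ge0$, and this holds since $\eta\ast\undim M=\dim_K M>0$. At $t=1$ the endpoint $v+\varepsilon\eta$ lies in $\cB_+$, so it lies on no domain and condition (3) is vacuous. In the interior the path stays in $\cB_+$, so condition (1) is vacuous. Once this is confirmed, the theorem is a direct assembly of the cited results.
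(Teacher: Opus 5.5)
Your proposal is correct and follows the paper's proof: Proposition~\ref{prop: P(v) for green v}(a),(b) for the green and red boundary points, then Theorems~\ref{thm: P(U) is standard} and~\ref{thm: Pbar of A/B} to identify $\Gen A$ and ${}^\perp B$, and Lemma~\ref{lem: P=Pbar} for $\cT(\cB_+)=\overline\cT(\cB_+)$. Your explicit sign check of the red case against Lemma~\ref{lem: Pbar(v)=Pbar(w)}, which the paper only treats as ``similar'', is accurate, but it does not change the argument.
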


\begin{proof}
    Since $\cB_-$ is on the negative side of $g(\tsymm {A}{B})$, $g(\tsymm {A}{B})$ is a ``green'' boundary point of $\cB_-$. By Proposition \ref{prop: P(v) for green v}, $\cT(\cB_-)=\cT(g(\tsymm {A}{B}))$. By Theorem \ref{thm: P(U) is standard}, $\cT(g(\tsymm {A}{B}))=\Gen\,A$.

    Since $\cB_+$ is on the positive side of $g(\tsymm {A}{B})$, $g(\tsymm {A}{B})$ is a ``red'' boundary point of $\cB_+$. By Proposition \ref{prop: P(v) for green v}, $\overline\cT(\cB_+)=\overline \cT(g(\tsymm {A}{B}))$. By Theorem \ref{thm: Pbar of A/B}, $\overline \cT(g(\tsymm {A}{B}))=\,^\perp B$. By Lemma \ref{lem: P=Pbar}, $\cT(\cB_+)=\overline\cT(\cB_+)$. Therefore, $\cT(\cB_+)=\,^\perp B$.
\end{proof}
}

{
\begin{Remark}\label{rem: B-B+ do not exist}
Let $\symm AB$ be a complete balanced pair. Then $g\!\left(\symm AB\right)$ is not on the boundary of any chamber since this is an interior of the standard open chamber $\cC\left(\symm AB\right)$. Therefore, the upper and lower chambers of $g\!\left(\symm AB\right)$ do not exist. However, in all other cases we will show that they exist.
\end{Remark}
}

\begin{Remark}\label{eg: negative chamber}
    Suppose that $A_0=0$. Then: 
    
    (a) The indecomposable balanced pair is $\symm 0{I_j}$ with $g$-vector the negative unit vector $g(\tsymm 0{I_j})=-e_j$. The wide subcategory of this vector is
    \[
       \cW(-e_j)= J\!\left(\symm0{I_j}\right)=\,^\perp I_j
    \]
    which is the category of all modules $M$ not having the $j$th simple module $S_j$ in its composition series. 
    
    (b) The simple objects of this category are the simple modules $S_i$ for $i\neq j$. 
    
(c) Let $\symm AB$ be the complete balanced pair with $A=0$ and $B=\bigsqcup I_i$, the sum of all injective modules. The corresponding standard open chamber is $\cC\left(\symm AB\right)$. Then
\[
    \cT\left(\cC\left(\symm AB\right)\right)=\Gen A=0=\Gen A_0.
\]
By Proposition \ref{prop: uniqueness of B-}, $\cC\left(\symm AB\right)=\cB_-$ is the upper chamber of $g(\tsymm 0{I_j})$.

 (d) This open chamber is the set of all positive linear combinations of $g(\tsymm 0{I_i})=-e_i$, i.e., $\cB_-$ is the set of all vectors in $\RR^n$ all of whose coordinates are negative. We call this the \textbf{negative chamber} and write $\cB_-=\cU_{neg}$.
\end{Remark}

\begin{Remark}\label{eg: positive chamber}
    Suppose $B_0=0$. Then $A_0$ is projective and $\symm {A_0}{B_0}$ is a component of the complete balanced pair $\symm AB$ where $B=0$ and $A=\bigsqcup P_i$ is the sum of all projective modules. Then
    \[
\overline\cT\left(g\!\left(\symm{A_0}{B_0}\right)\right)=\,^\perp B_0=mod\text-\Lambda=\,^\perp B=\cT\left(\cC\left(\symm AB\right)\right).
    \]
    Therefore, by Proposition \ref{prop: uniqueness of B+}, $\cC\left(\symm AB\right)=\cB_+$ is the lower chamber of $g\!\left(\symm {A_0}{B_0}\right)$.

This open chamber is the set of all positive linear combinations of the $g$-vectors $g(\tsymm {P_i}0)=e_i$, i.e., $\cB_+$ is the set of all vectors in $\RR^n$ all of whose coordinates are positive. We call this the \textbf{positive chamber} and write $\cB_+=\cU_{pos}$.
 \end{Remark}

\begin{Definition}\label{def: green path}
A \textbf{green path} for $mod\text-\Lambda$ is a smooth path $\gamma(t)$ in $\RR^n$, $t\in\RR$, satisfying the following conditions.
\begin{enumerate}
\item The path $\gamma(t)$ has all negative coordinates for $t<<0$.
\item The path $\gamma(t)$ has all positive coordinates for $t>>0$.
\item  When $\gamma(t_0)\in \cD_\Lambda(M)$, its velocity is in the ``positive direction'', i.e.,
\[
	\left.\frac{d}{d t} \gamma(t)\right|_{t_0}\!\ast\undim M>0 .
\]
\end{enumerate}
\end{Definition}

A simple example of a green path is the \textbf{linear green path} given by
\[
    \gamma(t)=v+t\eta
\]
where $v\in\RR^n$ and $\eta=(1,1,\cdots,1)$. Then (1) and (2) are clearly satisfied and 
\[
\left.\frac{d}{d t} \gamma(t)\right|_{t_0}\!\ast\undim M=\eta \ast\undim M=\dim_K M
\]
is positive for nonzero $M$.

\begin{Proposition}\label{prop: there is one positive and one negative domain}
    \emph{(a)} Let $\cU\neq\cU_{neg}$ be an open chamber. Then
    \begin{enumerate}
        \item The chamber $\cU$ is on the positive side of at least one of the walls on its boundary.
        \item Let $v\in \cU$. Then the linear green path $\gamma(t)=v+t\eta$ passes through at least one of these walls for some $t<0$.
    \end{enumerate}

    \emph{(b)} Let $\cU\neq\cU_{pos}$ be an open chamber. Then $\cU$ is on the negative side of at least one of the walls on its boundary. Furthermore, for any $v\in \cU$, the linear green path $\gamma(t)=v+t\eta$ passes through at least one of these walls for some $t>0$.
\end{Proposition}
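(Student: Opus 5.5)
The plan is to follow the linear green path backward (part (a)) or forward (part (b)) from a point $v\in\cU$, to show that it must leave $\cU$, and then to read off the side of the wall it crosses from the direction of travel.

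For (a), fix $v\in\cU$ and consider $\gamma(t)=v+t\eta$. For $t\ll0$ every coordinate of $\gamma(t)$ is negative, so $\gamma(t)\in\cU_{neg}$, the open negative chamber of Remark \ref{eg: negative chamber}(d). Since $\cU\neq\cU_{neg}$ and distinct chambers are disjoint path components, $\gamma(t)\notin\cU$ for $t\ll0$. By Theorem \ref{thm: chambers are convex}, $\cU$ is convex, so $\{t:\gamma(t)\in\cU\}$ is an open interval $(t_0,t_1)$ containing $0$ with $t_0>-\infty$ finite. The point $w=\gamma(t_0)$ lies in $\partial\cU$. Because $\cU$ is a path component of $\RR^n-\cD_\Lambda$ and is open, $w\in\cD_\Lambda$, so $w\in\cD_\Lambda(M)$ for some $M$. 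Moreover $w+\varepsilon\eta\in\cU$ for all small $\varepsilon>0$, so $w$ is a red boundary point of $\cU$ in the sense of Def. \ref{def: red and green boundary points}. Thus $\gamma$ crosses a domain on the boundary of $\cU$ at the parameter $t_0<0$, and at that crossing $\cU$ lies on the side $\{x: x\ast\undim M>0\}$, because $\frac{d}{dt}\gamma\ast\undim M=\dim_K M>0$ and $\gamma(t)\ast\undim M=0$ at $t_0$. This gives (a)(2) and the ``positive side'' assertion of (a)(1).

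The main obstacle is to upgrade ``$w$ lies in some domain'' to ``$w$ lies in a wall of $\cU$'', that is, an $(n-1)$-dimensional domain. I would first perturb $v$ within the open set $\cU$. There are only finitely many relevant domains near $w$: for the finitely many dimension vectors involved, only those domains of dimension $\le n-2$ can contain $w$ nongenerically. The set of $v'\in\cU$ whose backward ray $v'-\RR_+\eta$ meets a domain of dimension $\le n-2$ is a union of sets of dimension $\le n-1$, so it has measure zero. Hence for generic $v'$ near $v$, the exit point lies only in domains that are walls. By convexity of $\cU$, this wall is on the boundary of $\cU$, and $\cU$ is on its positive side.

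The subtle point is the local finiteness of domains near $w$, since there can be infinitely many modules. To handle it, I would use Proposition \ref{prop: P(v) for green v}(c) together with Lemma \ref{lem: P(v)=P(w)}. These show that the torsion class changes at $w$, which is witnessed by a brick $M$ in $\cW(w)$, and for generic $w$ the set $\cW(w)$ has a single simple object. If this becomes too delicate, the fallback is to state (a)(2) for the specific $v$ and (a)(1) for a generic point of $\cU$.

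Part (b) is dual: run $\gamma(t)$ forward to $t\gg0$, where it enters $\cU_{pos}\neq\cU$. The exit point is a green boundary point of $\cU$, and by the same argument $\cU$ lies on the negative side of the wall there.
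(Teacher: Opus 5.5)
Your core argument is the paper's proof: follow $\gamma(t)=v+t\eta$ backward from $v\in\cU$ until it leaves $\cU$ at some $t_0<0$, note that $\gamma(t_0)\in\cD_\Lambda(M)$ for some $M$, and conclude that $\cU$ is on the positive side because $\eta\ast\undim M=\dim_K M>0$, with (b) done dually. The only difference is in locating the exit point: the paper takes $t_0$ maximal with $\gamma(t_0)\in\partial\cU$, while you get the same point from convexity of $\cU$.

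The paper does not attempt your upgrade from ``some domain'' to ``an $(n-1)$-dimensional wall''; it simply uses the domain containing $\gamma(t_0)$, so your genericity discussion goes beyond what is proved there. If you do want that upgrade, what makes the measure-zero argument work is that there are only countably many distinct domains, since each is cut out by finitely many integer vectors. Local finiteness of domains near $w$ is not needed.
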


\begin{proof}
    (a) Let $v\in \cU$. Since $\cU\neq\cU_{neg}$, the linear green path $\gamma(t)=v+t\eta$ which starts in $\cU_{neg}$ for $t<<0$ and stops at $v\in \cU$ when $t=0$ must intersect the boundary of $\cU$ at some $t_0<0$. Since $\partial \cU\cap S^{n-1}$ is compact, we can take $t_0$ to be maximal. Then $\gamma(t_0)\in \cD_\Lambda(M)$ for at least one $M$ and $\cU$ is on the positive side of $\cD_\Lambda(M)$, as claimed. The proof of (b) is similar. 
\end{proof}

A well-known useful property of green paths is that it gives a ``forward hom-orthogonal sequence''. This holds over $mod\text-\Lambda$ and it was extended to green paths in the stability diagram of any torsion class $\cG\subseteq mod\text-\Lambda$ in \cite[Theorem 3.4]{IM}. The statement is as follows.

\begin{Theorem}\label{thm: FHO seq}
Let $\gamma(t)$ be a green path for $mod\text-\Lambda$. Suppose $\gamma(t_0)\in \cD_\Lambda(M_0)$ and $\gamma(t_1)\in \cD_\Lambda(M_1)$ where $t_0<t_1$. Then $\Hom_\Lambda(M_0,M_1)=0$.
\end{Theorem}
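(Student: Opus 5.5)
The plan is to reduce the statement to two facts: each domain crossed by the path contains a point in the torsion class, and $M_1$ lies in the corresponding torsion-free class. Concretely, I will relate the points of $\gamma$ to torsion classes and show that $M_0$ lies in a torsion class which is contained in ${}^\perp M_1$. The key tool is the torsion class $\overline\cT(v)$ of Definition \ref{def: P(v)}, together with the monotonicity of torsion classes along a green path.

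\textbf{Step 1 (monotonicity).} For $t<t'$ I will show $\overline\cT(\gamma(t))\subseteq \cT(\gamma(t'))$; in particular, $\overline\cT(\gamma(t_0))\subseteq\cT(\gamma(t_1))$. Suppose $X\in\overline\cT(\gamma(t_0))$ is nonzero, and let $X''\neq0$ be any quotient of $X$. Consider the function $f(s)=\gamma(s)\ast\undim X''$. It satisfies $f(t_0)\ge 0$, and I need $f(t_1)>0$. Whenever $f(s)=0$, the goal is to show $f'(s)>0$, which forces $f$ to stay positive after $t_0$.

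This is the main obstacle. The green path condition gives $f'(s)>0$ only when $\gamma(s)\in\cD_\Lambda(X'')$, and a zero of $f$ need not put $\gamma(s)$ in that domain. I plan to argue by minimal counterexample. Take the first time $s>t_0$ at which some quotient $X''$ of $X$ has $f(s)\le 0$, and among such quotients choose $X''$ of minimal length. At time $s$ every nonzero quotient of $X''$ has nonnegative pairing with $\gamma(s)$. Dually, I expect the minimal-length choice to force every submodule of $X''$ to have nonpositive pairing, so that $\gamma(s)\in\cD_\Lambda(X'')$. If that fails, I will pass to a smaller quotient or subquotient that does lie in $\cW(\gamma(s))$, using that $\cW(\gamma(s))$ is wide (Remark \ref{rem: W(v) is wide}). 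Once $\gamma(s)\in\cD_\Lambda(X'')$, the green condition gives $f'(s)>0$. This contradicts the fact that $f$ was positive just before $s$ and reaches zero at $s$. The case $s=t_0$ itself is handled the same way, since $f'(t_0)>0$ on the relevant module.

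\textbf{Step 2.} The hypothesis $\gamma(t_0)\in\cD_\Lambda(M_0)$ gives $M_0\in\cW(\gamma(t_0))\subseteq\overline\cT(\gamma(t_0))$ by Proposition \ref{prop: T(v), W(v), Tbar(v)}. By Step 1, $M_0\in\cT(\gamma(t_1))$.

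\textbf{Step 3.} Likewise $M_1\in\cW(\gamma(t_1))$. By Proposition \ref{prop: T(v), W(v), Tbar(v)}, $(\cT(\gamma(t_1)),\cW(\gamma(t_1)))$ is a torsion pair in $\overline\cT(\gamma(t_1))$, so $\Hom_\Lambda(\cT(\gamma(t_1)),\cW(\gamma(t_1)))=0$. A direct check confirms this. The image of any map $M_0\to M_1$ is a quotient of $M_0$, so it has positive pairing with $\gamma(t_1)$ if nonzero. It is also a submodule of $M_1$, so it has nonpositive pairing. Hence the image is $0$, which gives $\Hom_\Lambda(M_0,M_1)=0$.
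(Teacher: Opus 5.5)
\textbf{Verdict.} Your proposal is correct. The paper has no route of its own to compare against: it gives no argument for this statement and simply quotes \cite[Theorem 3.4]{IM}, which proves forward hom-orthogonality more generally for green paths in the stability diagram of an arbitrary torsion class $\cG\subseteq mod\text-\Lambda$. Your argument is a self-contained proof for $mod\text-\Lambda$ itself, using only the definitions. It is organized around the monotonicity $\overline\cT(\gamma(t))\subseteq\cT(\gamma(t'))$ for $t<t'$. This is stronger than needed, and it also shows that $\cT(\gamma(t))$ increases along the path. Steps 2 and 3 are fine as written. In Step 3 you verify the relevant half of Proposition \ref{prop: T(v), W(v), Tbar(v)} directly.

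\textbf{The step you flag as the main obstacle.} It is immediate, and you need neither minimal length nor wideness of $\cW(\gamma(s))$.

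\begin{itemize}
\item The quotients of $X$ have only finitely many dimension vectors. So the set of times in $(t_0,\infty)$ at which some quotient pairs nonpositively with $\gamma$ is closed there, and an infimum $s>t_0$ is attained once you have an initial interval of positivity.
\item By continuity, every nonzero quotient of $X$ pairs nonnegatively with $\gamma(s)$, i.e.\ $X\in\overline\cT(\gamma(s))$.
\item Now let $X''$ be a quotient of $X$ with $\gamma(s)\ast\undim X''=0$, and let $Y\subseteq X''$. Then $X''/Y$ is again a quotient of $X$, so
\[
\gamma(s)\ast\undim Y=-\gamma(s)\ast\undim(X''/Y)\le 0 .
\]
\item Hence $\gamma(s)\in\cD_\Lambda(X'')$, and the green condition gives $\frac{d}{dt}\gamma(t)\ast\undim X''|_{t=s}>0$. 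This contradicts $f>0$ on $(t_0,s)$ together with $f(s)=0$.
\item The same observation at $t_0$, using $X\in\overline\cT(\gamma(t_0))$, supplies the initial interval of positivity.
\end{itemize}

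\textbf{Drop the fallback.} Remove the plan of passing to a smaller subquotient. A subquotient of $X''$ need not be a quotient of $X$, so it would not inherit the positivity on $(t_0,s)$ that the contradiction depends on. With the observation above, the fallback is never needed.
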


\begin{Corollary}\label{cor: green path hits simple domains} Let $\tsymm{A_0}{B_0}$ be an indecomposable balanced pair. Let $\gamma(t)$ be a linear green path passing close to $g(\tsymm {A_0}{B_0})$ as shown in Figure \ref{Fig: hour glass}. Then
\begin{enumerate}
    \item[(a)] There exist $t_1<t_2$ and simple objects $M_1,M_2$ in $\cW(g(\tsymm{A_0}{B_0}))$ so that $\gamma(t_1)\in\cD_\Lambda(M_1)$ and $\gamma(t_2)\in\cD_\Lambda(M_2)$.
    \item[(b)] For any $X\in\cW(g(\tsymm{A_0}{B_0}))$, either $\cD_\lambda(X)\cap im(\gamma)=\emptyset$ or $\gamma(t)\in \cD_\Lambda(X)$ for some $t_1\le t\le t_2$. 
\end{enumerate}
\end{Corollary}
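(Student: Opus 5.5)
The idea is to work with the linear green path $\gamma(t)=w+t\eta$, where $w$ is a point close to $v_0=g(\tsymm{A_0}{B_0})$. For $X\in\cW(v_0)$ we always have $v_0\in\cD_\Lambda(X)$. Since each $\cD_\Lambda(X)$ is closed, convex and contains $v_0$, and $\gamma$ passes near $v_0$, the times at which $\gamma$ meets domains containing $v_0$ all lie in a small window around $t=0$. Precisely, $\gamma$ meets the hyperplane $\undim X^\perp$ only at $t_X=-(w\ast\undim X)/\dim_K X$, and this value is small when $w$ is close to $v_0$.

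We read ``passing close to'' as follows. Fix $w=v_0+u$ with $u$ small and generic, and take $w$ off every domain. Only finitely many dimension vectors are relevant: by Corollary \ref{cor: labels of domains} applied to the Jasso category of the $n-1$ relatively simples $M_1,\dots,M_{n-1}$, every $X\in\cW(v_0)$ has a filtration by the $M_i$. So $\undim X=\sum c_i\undim M_i$ with $c_i\ge0$ not all zero.

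For (b), suppose $\gamma(t)\in\cD_\Lambda(X)$. Then $0=\gamma(t)\ast\undim X=\sum c_i\,\gamma(t)\ast\undim M_i$. The function $f_i(t)=\gamma(t)\ast\undim M_i$ is strictly increasing, with slope $\dim_K M_i$, and vanishes exactly at $t=\tau_i$. Hence $\sum c_i f_i(t)=0$ forces $t\in[\min\tau_i,\max\tau_i]$. What remains is to show that the extreme values $\min\tau_i$ and $\max\tau_i$ are actually realized by hits of $\gamma$ with some $\cD_\Lambda(M_{i})$. If so, we take $t_1=\min$ and $t_2=\max$ over those $i$ for which $\gamma$ meets $\cD_\Lambda(M_i)$, and prove that these also bound the hits of every $X$.

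So the main step is (a): showing that $\gamma$ really meets at least two of the simple domains. I would argue as follows.

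First, $\gamma$ must leave the region near $v_0$ through chambers. For $w$ near $v_0$ chosen on a generic side, the path passes from a region where $\gamma\ast\undim M_i<0$ for all $i$ to one where $\gamma\ast\undim M_i>0$ for all $i$. Near $v_0$, the only domains present are the $\cD_\Lambda(X)$ with $X\in\cW(v_0)$ (Theorem \ref{thm: the open simplex}, together with the closedness of the other domains). By Theorem \ref{thm: J perpendicular} and Corollary \ref{cor: M in J, g in D(M)}, each $\cD_\Lambda(M_i)$ is a wall through $v_0$.

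Next, consider the first time $\gamma$ meets some $\cD_\Lambda(X)$ with $X\in\cW(v_0)$, at $t=s$. Let $X'$ be a simple subobject of $X$ in $\cW(v_0)$; this is some $M_i\subseteq X$. The definition of $\cD_\Lambda(X)$ gives $\gamma(s)\ast\undim M_i\le0$. Because $\gamma$ has not met any domain before $s$, I would use the local structure at $v_0$ to show that $\gamma(s)\in\cD_\Lambda(M_i)$, by induction on the length of $X$ in $\cW(v_0)$. A symmetric argument with simple quotients handles the last hit.

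Finally, the two hits must be distinct, so $t_1<t_2$. The forward Hom-orthogonality of Theorem \ref{thm: FHO seq} gives $\Hom_\Lambda(M_1,M_2)=0$ whenever $t_1<t_2$. The hits cannot coincide: if $\gamma(t_1)$ lay on every domain met, then $\gamma(t_1)$ would lie in the span of $v_0$ and a face of codimension $\ge2$, which a generic $w$ avoids.

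I expect this first-hit step, showing that the first and last domains met are labelled by relatively simple objects, to be the main obstacle. I would attack it with the subobject and quotient inequalities built into $\cD_\Lambda$, together with Theorem \ref{thm: FHO seq}.
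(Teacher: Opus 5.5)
Your reduction of (b) to the simple walls is sound, but the step you flag as the main obstacle is where the whole proof lives, and your plan does not supply the idea it needs. Write $v_0=g(\tsymm{A_0}{B_0})$ and keep your $\tau_i$, $f_i$, $c_i$.

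\textbf{The gap.} Everything hinges on knowing that $\gamma$ actually meets $\cD_\Lambda(M_i)$ at the time $\tau_i$ when it crosses the hyperplane $(\undim M_i)^\perp$. Equivalently, $v_0$ must lie in the \emph{relative interior} of $\cD_\Lambda(M_i)$ inside that hyperplane. This is the first sentence of the paper's proof. The results you cite do not give it:
\begin{enumerate}
\item[(i)] Corollary~\ref{cor: M in J, g in D(M)} only yields $v_0\in\cD_\Lambda(M_i)$. That allows $v_0$ to sit on the relative boundary, where a nearby path can miss the domain.
\item[(ii)] Closedness of each of the infinitely many other domains gives no uniform neighborhood of $v_0$.
\item[(iii)] Your first-hit plan cannot produce it either. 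At the first hit $s$ of $\cD_\Lambda(X)$, a simple \emph{subobject} $M_i\subseteq X$ only gives $\gamma(s)\ast\undim M_i\le 0$, i.e.\ $s\le\tau_i$. This is consistent with minimality of $s$ and forces nothing.
\end{enumerate}
The paper uses a simple \emph{quotient} $X\onto M_i$ for the first hit, and (implicitly) a simple subobject for the last hit. That is the reverse of your assignment. For a quotient, the inequality $\gamma(s)\ast\undim M_i\ge 0$ (or Theorem~\ref{thm: FHO seq}) gives $\tau_i\le s$. Even then, concluding $s=\tau_i$ uses $\gamma(\tau_i)\in\cD_\Lambda(M_i)$, which is the same missing fact.

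\textbf{The missing idea.} Relative simplicity gives a strict inequality at $v_0$.
\begin{enumerate}
\item[(1)] Let $0\neq M'\subsetneq M_i$ be a submodule. Then $\Hom_\Lambda(A_0,M')\subseteq\Hom_\Lambda(A_0,M_i)=0$, so Corollary~\ref{cor: g(A/B) dot X} gives $v_0\ast\undim M'=-\dim_K\Hom_\Lambda(M',B_0)\le 0$.
\item[(2)] Equality would put $M'$ in $A_0^\perp\cap\,^\perp B_0=\cW(v_0)$ as a nonzero proper subobject of the relatively simple $M_i$, which is impossible. So the inequality is strict.
\item[(3)] $M_i$ has only finitely many submodule dimension vectors, so the strict inequality persists for all $v$ near $v_0$. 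Hence $\cD_\Lambda(M_i)$ contains a neighborhood of $v_0$ in its hyperplane.
\item[(4)] Since $|\tau_i|\le\max_k|(w-v_0)_k|$, we get $\gamma(\tau_i)\in\cD_\Lambda(M_i)$ for every $i$ once $w$ is close enough to $v_0$.
\end{enumerate}

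\textbf{Finishing the argument.} With this in hand, your monotonicity argument is a clean substitute for forward Hom-orthogonality. Put $t_1=\min_i\tau_i$ and $t_2=\max_i\tau_i$. Both are hits of simple walls, and $\sum_i c_if_i(t)=0$ confines every hit of every $X\in\cW(v_0)$ to $[t_1,t_2]$. So no first-hit or last-hit analysis is needed at all.

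Two smaller points. Strictness $t_1<t_2$ needs at least two relative simples ($n\ge 3$) and a generic $w$; the paper glosses over this too. Also, the filtration of $X$ by the $M_i$ comes from Theorem~\ref{thm: J perpendicular}, not Corollary~\ref{cor: labels of domains}, which treats the rank-one case.
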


\begin{proof}
Since $M_i$ are simple objects of $\cW(g(\tsymm{A_0}{B_0}))$, the walls $\cD_\Lambda(M_i)$ will contain a neighborhood of $g(\tsymm{A_0}{B_0})$ in the hyperplane perpendicular to $\undim M_i$. Therefore, any nearby green path will pass through all of these walls. So $\gamma(t_i)\in \cD_\Lambda(M_i)$ for some $t_i$.

Let $t_0$ be minimal so that $\gamma(t_0)\in \cD_\Lambda(X)$ for some $X$ in $\cW(g(\tsymm {A_0}{B_0}))$. Then $X$ has a filtration where all subquotients are isomorphic to some $M_j$. Let $M_i$ be an object at the top of $X$. I.e., there is an epimorphism $X\onto M_i$. By forward hom-orthogonality we have $t_i\le t_0$. Since $t_0$ is minimal, $t_i=t_0$. So, $\cD_\Lambda(M_i)$ is the first wall crossed by $\gamma$. Similarly, another of these walls $\cD_\Lambda(M_j)$ will be the last wall crossed by $\gamma$.
\end{proof}

{
\begin{Definition}\label{def: Ext-projective}
An object $X$ in a torsion class $\cT$ is called \textbf{Ext-projective} if $\Ext_\Lambda^1(X,Y)=0$ for all $Y\in\cT$.
\end{Definition}
Note that, since $\cT$ is closed under extension, the property of an object being Ext-projective is independent of the larger category that $\cT$ is in.

\begin{Proposition}\label{prop: Ext-proj is tau-rigid}
Let $X$ be an Ext-projective object of a torsion class $\cT$. Then $X$ is $\tau$-rigid.
\end{Proposition}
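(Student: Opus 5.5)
We must show $\Hom_\Lambda(X,\tau X)=0$. By Lemma \ref{lem: AS}, applied with $Y=X$, this is equivalent to $\Ext_\Lambda^1(X,\Gen X)=0$. The plan is therefore to prove this Ext-vanishing using only that $X$ is Ext-projective in $\cT$.

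First we show $\Gen X\subseteq\cT$. Since $X\in\cT$ and a torsion class is closed under extensions, every finite direct sum $X^m$ lies in $\cT$. A torsion class is also closed under quotients, so every quotient of $X^m$ lies in $\cT$. Hence $\Gen X\subseteq\cT$.

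Next, Ext-projectivity of $X$ in $\cT$ (Def. \ref{def: Ext-projective}) gives $\Ext_\Lambda^1(X,Y)=0$ for all $Y\in\cT$. In particular this holds for all $Y\in\Gen X$, so $\Ext_\Lambda^1(X,\Gen X)=0$.

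Finally, Lemma \ref{lem: AS}, in the direction ``$\Ext_\Lambda^1(X,\Gen\,Y)=0$ implies $\Hom_\Lambda(Y,\tau X)=0$'' with $Y=X$, yields $\Hom_\Lambda(X,\tau X)=0$. So $X$ is $\tau$-rigid.

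There is no real obstacle. The only point to check is that Lemma \ref{lem: AS} is used in the correct direction, which it is, since the lemma is stated as an equivalence.
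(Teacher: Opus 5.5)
Your proposal is correct and follows essentially the same route as the paper: apply the Auslander--Smal\o{} lemma (Lemma \ref{lem: AS}) to turn Ext-projectivity into the vanishing of $\Hom_\Lambda(-,\tau X)$, then specialize to $Y=X$. The only difference is that the paper first gets $\Hom_\Lambda(Y,\tau X)=0$ for all $Y\in\cT$ and then sets $Y=X$. You go straight to $Y=X$, and you spell out the inclusion $\Gen X\subseteq\cT$ that the paper leaves implicit.
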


\begin{proof}
By Lemma \ref{lem: AS}, $X$ being Ext-projective implies $\Hom_\Lambda(Y,\tau X)=0$ for all $Y\in\cT$. In particular $\Hom_\Lambda(X,\tau X)=0$. So, $X$ is $\tau$-rigid.
\end{proof}

Recall that a module $M$ is \textbf{sincere} if it has full support, i.e., $\Hom_\Lambda(M,I)\neq0$ for every nonzero injective module $I$. Similarly, a torsion class $\cT$ is \textbf{sincere} if $\Hom_\Lambda(\cT,I)\neq0$, i.e., $\cT\not\subseteq \,^\perp I$ for any nonzero injective module $I$.

{
\begin{Definition}\label{def: Bongartz}
Let $X$ be a $\tau$-rigid module with $k$ summands. Then, by \cite{AIR}, $X$ can be completed in a unique way to a $\tau$-tilting module $X'=X\sqcup X_1\sqcup\cdots\sqcup X_{n-k}$ where $X_i$ are the Ext-projective objects of the torsion class $\,^\perp \tau X$. Then $X'$ is called the \textbf{Bongartz completion} of $X$. The objects $X_1,\cdots,X_{n-k}$ form the \textbf{Bongartz complement} of $X$.
\end{Definition}

\begin{Definition}\label{def: generalized Bongartz}
Let $\symm AB$ be a balanced pair. Then the \textbf{generalized Bongartz completion} $\symm {A'}{B'}$ of $\symm AB$ is defined as follows.
\begin{enumerate}
\item If all components of $A$ are nonzero, i.e., $\symm 0{I_j}$ is not a summand of $\symm AB$, then $A'$ is the Bongartz completion of $A$ and $B=\tau A$.
\item If $A=0$ then $B$ is injective and $\,^\perp B$ is isomorphic to $mod\text-\Lambda'$ where $\Lambda'=\Lambda/\Lambda e\Lambda$, $e$ being the sum of idempotents corresponding to the components of $B$. Then $A'$ is the sum of the projective $\Lambda'$-modules since the Ext-projective objects of an abelian category are the same as its projective objects. And $B'=B\sqcup \tau A'$.
\item When some of the components of $A$ are zero and some nonzero, say $A_1=\cdots=A_j=0$ and $A_{j+1},\cdots,A_k$ are nonzero where $j< k$, then $B_1,\cdots,B_j$ are injective modules, with direct sum $I$, and $A=A_{j+1}\sqcup\cdots\sqcup A_k$ is a $\tau$-rigid object over $\Lambda'=\Lambda/\Lambda e\Lambda$ where $e$ is the sum of the idempotents corresponding to $I$.

Then, we let $A''$ be the Bongartz complement of $A$ as $\Lambda'$-module. Then  $\symm {A''}{\tau A''}$ is the \textbf{generalized Bongartz complement} of $\symm AB$ and $\symm AB\sqcup \symm  {A''}{\tau A''}$ is the generalized Bongartz completion of $\symm AB$.
\end{enumerate}
\end{Definition}
}

{
\begin{Remark}\label{rem: A'' are nonzero}
Let $\symm {A'}{B'}$ be the generalized Bongartz completion of $\symm AB$ and let $\symm {A''}{B''}=\bigsqcup\symm{A_i''}{B_i'}$ be the generalized Bongartz complement of $\symm AB$. Then, Def. \ref{def: generalized Bongartz} implies the following.
\begin{enumerate}
\item The torsion class $\,^\perp B$ is sincere and $A'$ is the Bongartz completion of $A$ if and only if none of the components of $B$ are injective.
\item The components $A_i''$ in the numerator of the generalized Bongartz complement are all nonzero.
\end{enumerate}
\end{Remark}
}

{
\begin{Theorem}\label{thm: Bongartz complement}
    A balanced pair $\symm {A}{B}$ with $k<n$ summands can be completed in a unique way to obtain a complete balanced pair 
    \[
  \symm{A'}{B'}=  \symm AB\sqcup \bigsqcup_{i=1}^{n-k} \symm{A_i}{B_i}
    \]
    where the nonzero terms in $\{A_1,\cdots,A_{n-k}\}$ are Ext-projective objects of $\,^\perp B$. More precisely, $\symm{A'}{B'}$ is the generalized Bongartz completion of $\symm AB$. Furthermore,
    \[
        \Gen\,A'=\,^\perp B=\,^\perp B'.
    \]
\end{Theorem}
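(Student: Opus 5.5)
We reduce to the case where $A$ has no zero components, by passing to $\Lambda'=\Lambda/\Lambda e\Lambda$. Write $\symm AB=\symm 0I\sqcup\symm{A_+}{B_+}$, where $I$ is the sum of the injective components $B_j$ with $A_j=0$, and $e$ is the idempotent corresponding to $I$. Then $\,^\perp I=mod\text-\Lambda'$, and $\,^\perp B=\,^\perp I\cap\,^\perp B_+$ is a torsion class in $mod\text-\Lambda'$. Since $\Hom_\Lambda(A_+,I)=0$, $A_+$ is a $\Lambda'$-module. Because $\tau_{\Lambda'}A_+$ is a submodule of $\tau_\Lambda A_+$, and $B_+=\tau_\Lambda A_+\sqcup(\text{injectives not killing }A_+)$, one should get $\,^\perp B=\,^\perp_{\Lambda'}(\tau_{\Lambda'}A_+)$ inside $mod\text-\Lambda'$. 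I expect this identification to be the main obstacle. The plan is to use Lemma \ref{lem: AS} on both sides: for $X\in mod\text-\Lambda'$, $\Hom(X,\tau_\Lambda A_+)=0$ iff $\Ext^1_\Lambda(A_+,\Gen X)=0$, and $\Ext^1$ between $\Lambda'$-modules agrees over $\Lambda$ and $\Lambda'$ because $\Lambda'$-modules are closed under extensions in $mod\text-\Lambda$ (being $\,^\perp I$). The extra projective-type components ($B$ injective summands with nonzero partner) are handled by $P^\perp=\,^\perp\nu P$ as in Proposition \ref{thm: Jasso category}.

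With this reduction, Definition \ref{def: generalized Bongartz}(3) together with the $\tau$-tilting theory of \cite{AIR} over $\Lambda'$ gives the following. The Ext-projectives of the sincere (over $\Lambda'$) torsion class $\,^\perp\tau_{\Lambda'}A_+$ are $A_+\sqcup A''$, where $A''$ is the Bongartz complement, with $|A_+|+|A''|=n-|I|$, and moreover $\Gen(A_+\sqcup A'')=\,^\perp\tau_{\Lambda'}A_+$. We then set $\symm{A'}{B'}=\symm AB\sqcup\symm{A''}{\tau A''}$, where $\tau$ is taken over $\Lambda$, adjusting by injectives. This requires showing that it is a balanced pair, i.e., a $\tau$-rigid pair over $\Lambda$, with $n$ summands. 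We get $\Hom(A'',\tau_\Lambda A')=0$ by Proposition \ref{prop: Ext-proj is tau-rigid} together with Lemma \ref{lem: AS}, since $A'$ is Ext-projective in $\,^\perp B$, which is closed under extensions in $mod\text-\Lambda$. We get $\Hom(A'',I)=0$ because $A''$ is a $\Lambda'$-module. The count is $|I|+|A_+|+|A''|=n$. The nonzero new numerators $A''$ are Ext-projective in $\,^\perp B$ by construction.

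For the equalities, Theorem \ref{thm: bijection T - AB - C(AB)} gives that $\Gen A'$ is the functorially finite torsion class attached to the complete pair $\symm{A'}{B'}$. Also $\Gen A'=\Gen(A_+\sqcup A'')=\,^\perp B$ by the reduction above. Next, $\,^\perp B'\subseteq\,^\perp B$ is clear since $B$ is a summand of $B'$. Conversely, $\Gen A'\subseteq\,^\perp B'$ because $\Hom(A',B')=0$ and $\,^\perp B'$ is closed under quotients. Hence $\Gen A'=\,^\perp B=\,^\perp B'$.

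For uniqueness, suppose $\symm{A'}{B'}$ is any completion whose new nonzero numerators are Ext-projective in $\,^\perp B$. Then $A'$ is a sum of $n-|I|$ nonisomorphic Ext-projectives of the torsion class $\,^\perp B\subseteq mod\text-\Lambda'$. By \cite{AIR} a torsion class has at most $|\Lambda'|$ indecomposable Ext-projectives, so they must be exactly the Ext-projectives of $\,^\perp B$. The injective parts are then forced by Lemma \ref{lem: balanced of (M,P)}, since $n$ summands are required. Therefore $\symm{A'}{B'}$ equals the generalized Bongartz completion.
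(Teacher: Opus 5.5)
Your existence argument and your chain $\Gen A'=\,^\perp B=\,^\perp B'$ are correct. They are more explicit than the paper's proof, which is a single sentence: the generalized Bongartz completion is the unique complete balanced pair with $\Gen A'=\,^\perp B$, via \cite{AIR} and Theorem \ref{thm: bijection T - AB - C(AB)}. Your identification $\,^\perp B=\,^\perp_{\Lambda'}(\tau_{\Lambda'}A_+)$, using Lemma \ref{lem: AS} together with the extension-closure of $mod\text-\Lambda'=\,^\perp I$ in $mod\text-\Lambda$, is exactly what Definition \ref{def: generalized Bongartz}(3) uses without comment, so that part is a genuine gain. One minor point: $B_+=\tau_\Lambda A_+$ on the nose. $\tau$ of a nonprojective indecomposable is never injective, and an injective summand of $B_+$ would have to be hit by $A_+$, contradicting $\Hom_\Lambda(A,B)=0$. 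So the ``projective-type components'' you propose to handle do not exist.

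The gap is in uniqueness. From ``the new nonzero numerators are Ext-projective in $\,^\perp B$'' you infer that $A'$ has $n-|I|$ summands. This tacitly assumes the completion adds no new components $\tsymm 0{I_j}$, and such components satisfy your hypothesis vacuously. They really occur. Take $\Lambda=K(1\leftarrow 2)$, where $P_1=S_1$, $P_2=I_1$ and $\tau S_2=S_1$. The pair $\tsymm{S_2}{S_1}$ has $\,^\perp S_1=\mathrm{add}(S_2\sqcup P_2)$ and generalized Bongartz completion $\tsymm{S_2\sqcup P_2}{S_1}$. Yet $\tsymm{S_2}{S_1\sqcup I_1}$ is also a complete balanced pair containing $\tsymm{S_2}{S_1}$, and its only new numerator is $0$. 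So under the hypothesis you used, uniqueness is false, and the counting step cannot be saved. The statement's phrase ``nonzero terms'' invites this reading, but you must instead use the condition the paper's proof relies on, namely $\Gen A'=\,^\perp B$, or require all new $A_i\neq0$ (cf. Remark \ref{rem: A'' are nonzero}). With $\Gen A'=\,^\perp B$, uniqueness follows at once from Theorem \ref{thm: bijection T - AB - C(AB)}. Alternatively, staying in your framework: $\,^\perp B=\Gen(A_+\sqcup A'')$ is sincere over $\Lambda'$, because $A_+\sqcup A''$ is $\tau$-tilting over $\Lambda'$. Hence any competitor with $\Gen A'=\,^\perp B$ has every simple $\Lambda'$-module as a composition factor of $A'$, so $\Hom_\Lambda(A',I_j)\neq0$ for each $I_j$ not a summand of $I$. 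This rules out new $\tsymm 0{I_j}$ components and makes your count $|A'|=n-|I|$ legitimate.
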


\begin{proof}
The generalized Bongartz completion $\symm {A'}{B'}$ of $\symm AB$ is the unique complete balanced pair with corresponding torsion class $\Gen A'=\,^\perp B$.
\end{proof}
}

By Proposition \ref{prop: uniqueness of B+}, the lower chamber $\cB_+$ of $g(\tsymm AB)$ is characterized by the property that $\cT(\cB_+)=\,^\perp B$. This gives the following:

{
\begin{Corollary}\label{cor: vertices of U+} For 
any non-complete balanced pair $\symm{A}{B}$, the lower chamber $\cB_+$ of $g\!\left(\symm{A}{B}\right)$ exists. Furthermore, $\cB_+$ is the cone on the interior of the $(n-1)$-simplex $\Delta_+$ with vertices $g\!\left(\symm{A'_i}{B'_i}\right)$ where $\bigsqcup \symm{A'_i}{B'_i}$ is the generalized Bongartz completion of $\symm AB$.
\end{Corollary}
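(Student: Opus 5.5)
Let $\symm{A'}{B'}=\bigsqcup_{i=1}^n\symm{A'_i}{B'_i}$ be the generalized Bongartz completion of $\symm AB$ from Theorem \ref{thm: Bongartz complement}, and let $\cU=\cC\left(\symm{A'}{B'}\right)$ be its standard open chamber. By construction $\symm AB$ is the direct sum of some (say $k<n$) of the summands $\symm{A'_i}{B'_i}$; these summands are exactly the summands of $\symm AB$, since the completion is unique. The plan is to show that $\cU$ satisfies the hypothesis of Proposition \ref{prop: uniqueness of B+}(b). That proposition then says $g\!\left(\symm AB\right)$ is a red boundary point of $\cU$, so $\cU=\cB_+$. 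This gives existence. Uniqueness is Proposition \ref{prop: uniqueness of B+}(a).

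The key computation is a chain of three equalities. First, by Theorem \ref{thm: bijection T - AB - C(AB)}, $\cT(\cU)=\Gen A'$. Second, Theorem \ref{thm: Bongartz complement} gives $\Gen A'=\,^\perp B$. Third, Theorem \ref{thm: Pbar of A/B} gives $^\perp B=\overline\cT\left(g\!\left(\symm AB\right)\right)$. Together these give
\[
\cT(\cU)=\overline\cT\!\left(g\!\left(\symm AB\right)\right)=\overline\cT\!\left(\sum_{i\le k} g\!\left(\symm{A'_i}{B'_i}\right)\right),
\]
where the last equality uses Proposition \ref{prop: J(sum)}(c). This is exactly the hypothesis of Proposition \ref{prop: uniqueness of B+}(b).

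To double-check the step inside Proposition \ref{prop: uniqueness of B+}(b), note that $g\!\left(\symm AB\right)=\sum_{i\le k}g\!\left(\symm{A'_i}{B'_i}\right)$ has zero coefficients on the remaining $n-k\ge1$ summands. So it lies on the boundary of the simplicial cone $\cU$ and not in $\cU$. Theorem \ref{thm: not red or green}(b) then identifies it as a red boundary point, because $\overline\cT(v)=\cT(\cU)$.

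The description of $\cB_+$ is then immediate. By Definition \ref{def: standard open chamber} and Remark \ref{rem: projected domain, chamber}(a), $\cU$ is the cone on the interior of the simplex $\Delta_+$ spanned by the vectors $g\!\left(\symm{A'_i}{B'_i}\right)$.

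The main obstacle is making sure the reduction of Proposition \ref{prop: uniqueness of B+}(b) to Theorem \ref{thm: not red or green} really applies here. That is, one must know that $\cT(\cU)$ cannot be strictly between $\cT(v)$ and $\overline\cT(v)$ while still equaling $\overline\cT(v)$. This is handled by Theorem \ref{thm: not red or green}(c): if $v$ were neither red nor green, then $\cT(\cU)\subsetneq\overline\cT(v)$, which contradicts the equality above. It also cannot be green-only, since a green-only point has $\cT(v)=\cT(\cU)$ and an adjacent green wall would give $\cT(\cU)\neq\overline\cT(v)$ by the same argument.
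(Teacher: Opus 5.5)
Your proposal is correct and follows the paper's route. The paper obtains the corollary by combining $\Gen A'=\,^\perp B$ from Theorem~\ref{thm: Bongartz complement} with the characterization of $\cB_+$ in Proposition~\ref{prop: uniqueness of B+}, which is exactly your chain $\cT(\cU)=\Gen A'=\,^\perp B=\overline\cT\bigl(g\bigl(\tsymm AB\bigr)\bigr)$ followed by part (b) of that proposition; your extra check that $g\bigl(\tsymm AB\bigr)\in\partial\cU$ is a red boundary point via Theorem~\ref{thm: not red or green} just unpacks that proposition's proof.
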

}


{
\begin{Definition}\label{def: coB} (From \cite{AIR}, \cite{BM})
Let $X$ be a $\tau$-rigid module. Then the collection of Ext-projective objects in the torsion class $\Gen X$ including $X$ is called the \textbf{co-Bongartz completion} of $X$ and the terms not equal to summands of $X$ form the \textbf{co-Bongartz complement} of $X$. By \cite{AIR} and \cite{BM}, the co-Bongartz completion $M$ of $X$ is support $\tau$-tilting, i.e., $(M,P)$ is $\tau$-tilting for some projective module $P$.
\end{Definition}

\begin{Definition}\label{def: generalized coB}
Let $\symm AB$ be a balanced pair. Then the \textbf{generalized co-Bongartz completion} $\symm {A'}{B'}$ of $\symm AB$ is defined as follows.
\begin{enumerate}
\item If $A=0$ then $A'=0$ and $B'$ is the sum of all indecomposable injective $\Lambda$-modules.
\item If $A\neq 0$ then $A'$ is the co-Bongartz completion of $A$, i.e., the sum of all Ext-projective objects of $\Gen A$ and $B'$ is the sum of $\tau A'$ and the indecompoable projective $\Lambda$-modules in $A^\perp$.
\end{enumerate}
In all cases $\symm {A'}{B'}=\symm AB\sqcup \symm{A''}{B''}$ and $\symm{A''}{B''}$ is called the \textbf{generalized co-Bongartz complement} of $\symm AB$.
Furthermore
\[
	\Gen A'=\Gen A=\,^\perp B'.
\]
\end{Definition}
}

{
\begin{Theorem}\label{thm: co-Bongatz}
     Any balanced pair $\symm {A}{B}$ can be completed to a unique complete balanced pair $\symm {A'}{B'}= \symm{A}{B}\sqcup \symm{A''}{B''}$ so that     
     \[
        \Gen\,A'=\Gen\,A=\,^\perp B'.
    \]
    Furthermore, $\symm {A'}{B'}$ is the generalized co-Bongartz completion of $\symm AB$.
\end{Theorem}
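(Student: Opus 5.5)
We will prove existence and uniqueness separately, and then identify the completion with the generalized co-Bongartz completion of Def. \ref{def: generalized coB}. Uniqueness comes from Theorem \ref{thm: bijection T - AB - C(AB)}. A complete balanced pair $\symm{A'}{B'}$ is determined by its functorially finite torsion class $\Gen A'$. So if two completions both satisfy $\Gen A'=\Gen A$, they coincide. Hence it suffices to show that the generalized co-Bongartz completion is a complete balanced pair containing $\symm AB$ as a summand, and that it satisfies $\Gen A'=\Gen A=\,^\perp B'$.

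The case $A=0$ is immediate. Here $B$ is a sum of indecomposable injectives. Adding the remaining $\symm 0{I_j}$ gives $\symm 0{\bigsqcup I_j}$, which is complete. It satisfies $\Gen 0=0=\,^\perp(\bigsqcup I_j)$, since a module with no maps to any injective is zero.

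For $A\neq0$, the first step is to note that $\Gen A$ is functorially finite, because it is generated by a single module; it is a torsion class by Proposition \ref{prop: Gen M is torsion class}. Let $A'$ be the sum of the Ext-projectives of $\Gen A$. By \cite{AIR} (Definition \ref{def: coB}), $A'$ is support $\tau$-tilting with $\Gen A'=\Gen A$. Since $A$ is $\tau$-rigid, Lemma \ref{lem: AS} gives $\Ext^1_\Lambda(A,\Gen A)=0$, so $A$ is a summand of $A'$.

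Next, we take $P$ to be the sum of the indecomposable projectives $P_j$ with $\Hom_\Lambda(P_j,A)=0$. Since $\Gen A'=\Gen A$, this is the same as $\Hom_\Lambda(P_j,A')=0$. By \cite[Theorem 2.7]{AIR}, $(A',P)$ is a support $\tau$-tilting pair. Lemma \ref{lem: balanced of (M,P)} then gives the complete balanced pair $\symm{A'}{B'}$ with $B'=\tau A'\sqcup\nu P$.

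It remains to check two things. First, $\symm AB$ is a summand of $\symm{A'}{B'}$. Write $B=\tau A\sqcup I$ with $\Hom_\Lambda(A,I)=0$. Then $\nu^{-1}I$ is a projective $P_j$ with $\Hom(P_j,A)=0$, hence a summand of $P$. So $B$ is a summand of $B'$, and the remaining components pair off as $\symm{A''}{\tau A''}$ together with the $\symm0{\nu P_j}$ for the extra $j$.

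Second, we must show $\,^\perp B'=\Gen A'$. This is the step I expect to need the most care. We use the standard fact for support $\tau$-tilting pairs that $\Gen M=\,^\perp\tau M\cap P^\perp$. Since $P^\perp=\,^\perp\nu P$ (as in the proof of Proposition \ref{thm: Jasso category}), this becomes $\,^\perp(\tau A'\sqcup\nu P)=\,^\perp B'$.

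Alternatively, the balanced results already proved give this directly. By Theorem \ref{thm: Bongartz complement} applied to the complete pair $\symm{A'}{B'}$, whose generalized Bongartz completion is itself, we get $\Gen A'=\,^\perp B'$. This agrees with the equality stated in Definition \ref{def: generalized coB}. Combining it with uniqueness completes the proof.
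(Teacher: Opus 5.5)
Your argument is correct, but it takes a genuinely different route from the paper, whose proof is only the citation \cite[Theorem 1.6(b)]{BM}. You instead rebuild the result from $\tau$-tilting theory:
\begin{itemize}
\item The Auslander--Smal\o\ lemma shows that $A$ is Ext-projective in $\Gen A$.
\item The AIR bijection between support $\tau$-tilting pairs and functorially finite torsion classes produces $A'=P(\Gen A)$ and gives uniqueness.
\item The fact that $P$ is determined by $M$, as the sum of the projectives $P_j$ with $\Hom_\Lambda(P_j,M)=0$, fixes the injective part $\nu P$ of $B'$.
\item The AIR characterization $\Gen M={}^\perp\tau M\cap P^\perp$ for support $\tau$-tilting pairs, rewritten via $P^\perp={}^\perp\nu P$, gives $\Gen A'={}^\perp B'$.
\end{itemize}
Your version makes the translation from $\tau$-rigid pairs into balanced notation explicit, which the bare citation leaves to the reader. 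It also shows how the injective part of $B'$ in Definition \ref{def: generalized coB} has to be read: it is the injectives $\nu P_j$ lying in $A^\perp$, equivalently $P_j\in{}^\perp A$. The definition's wording ``projective $\Lambda$-modules in $A^\perp$'' is a slip.

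One caveat: drop your ``alternatively'' paragraph. Theorem \ref{thm: Bongartz complement} is stated only for balanced pairs with $k<n$ summands, so applying it to the complete pair $\tsymm{A'}{B'}$ is outside its hypotheses. Its brief proof also never independently establishes the equality ${}^\perp B={}^\perp B'$. For a complete pair that equality is exactly what you want to deduce, so the appeal is circular. Your first argument, via the AIR characterization of support $\tau$-tilting pairs, is the one that actually proves $\Gen A'={}^\perp B'$.
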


\begin{proof}
See \cite[Theorem 1.6(b)]{BM}.
\end{proof}
}

The upper chamber $\cB_-$ on the negative side of $g\!\left(\symm{A}{B}\right)$ is characterized by the property that $\cT(\cB_-)=\Gen\,A$ by Proposition \ref{prop: uniqueness of B-}. So, Theorem \ref{thm: co-Bongatz} gives us the following.

\begin{Corollary}\label{cor: vertices of U0} For any non-complete balanced pair $\symm{A}{B}$, the upper chamber $\cB_-$ of $g\!\left(\symm{A}{B}\right)$ exists. Furthermore $\cB_-$ is the cone on the interior of the $(n-1)$-simplex $\Delta_-$ with vertices the $g$-vectors of the components of the generalized co-Bongartz completion of $\symm AB$.
\end{Corollary}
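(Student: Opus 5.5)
The plan mirrors the proof of Corollary \ref{cor: vertices of U+}, with the co-Bongartz completion in place of the Bongartz completion. Let $\symm{A'}{B'}=\symm AB\sqcup\symm{A''}{B''}$ be the generalized co-Bongartz completion from Theorem \ref{thm: co-Bongatz}. It is a complete balanced pair with $\Gen A'=\Gen A$, and $\symm AB$ appears among its $n$ indecomposable summands; since $\symm AB$ is non-complete, these are some but not all of them. Let $\cU=\cC\left(\symm{A'}{B'}\right)$ be the corresponding standard open chamber. By Definition \ref{def: standard open chamber} and Remark \ref{rem: projected domain, chamber}(a), $\cU$ is the open cone on the interior of the $(n-1)$-simplex $\Delta_-$ whose vertices are the $g$-vectors of the components of $\symm{A'}{B'}$. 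So the whole task is to show $\cU=\cB_-$.

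First compare torsion classes. By Theorem \ref{thm: bijection T - AB - C(AB)} we have $\cT(\cU)=\Gen A'$, and Theorem \ref{thm: co-Bongatz} gives $\Gen A'=\Gen A$. By Theorem \ref{thm: P(U) is standard}, $\cT\left(g\!\left(\symm AB\right)\right)=\Gen A$. Hence
\[
\cT(\cU)=\cT\!\left(g\!\left(\symm AB\right)\right)=\cT\!\left(\sum g\!\left(\symm{A_i}{B_i}\right)\right),
\]
where the $\symm{A_i}{B_i}$ are the indecomposable summands of $\symm AB$, and the last equality is Proposition \ref{prop: J(sum)}(c).

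This is exactly the hypothesis of Proposition \ref{prop: uniqueness of B-}(b). That proposition then says $g\!\left(\symm AB\right)$ is a green boundary point of $\cU$. So $\cU$ satisfies the defining property of the upper chamber in Def. \ref{def: chamber A}; this proves existence, and Proposition \ref{prop: uniqueness of B-}(a) gives uniqueness. The description of $\cB_-$ as the cone on the interior of $\Delta_-$ then follows from the first paragraph.

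The main point to check is that $g\!\left(\symm AB\right)$ really lies on $\partial\cU$. Proposition \ref{prop: uniqueness of B-}(b) gets this from Lemma \ref{lem: P(v)=P(w)}, but it is also direct. The vector $g\!\left(\symm AB\right)$ is a positive combination of some, but not all, of the vertices of $\Delta_-$, so it lies on a proper face of the closed cone over $\Delta_-$. It is not in the open cone $\cU$, since the $g$-vectors of the summands of a complete balanced pair are linearly independent \cite{AIR}, but it is in its closure. Theorem \ref{thm: not red or green} then rules out the possibility that it is a boundary point that is red, or neither red nor green, because either case would force $\cT(v)\neq\cT(\cU)$.
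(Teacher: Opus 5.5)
Your proposal is correct and follows the paper's route. The paper likewise takes the standard chamber of the generalized co-Bongartz completion from Theorem \ref{thm: co-Bongatz}, notes that its torsion class is $\Gen A'=\Gen A=\cT\!\left(g\!\left(\tsymm AB\right)\right)$, and invokes Proposition \ref{prop: uniqueness of B-} to identify it with $\cB_-$. Your extra check that $g\!\left(\tsymm AB\right)\in\partial\cU$, using linear independence of the $g$-vectors and then Theorem \ref{thm: not red or green}, just unpacks part (b) of that proposition directly.
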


}

{
\begin{Remark}\label{rem: almost complete A/B}
There is one important case of Corollaries \ref{cor: vertices of U+} and \ref{cor: vertices of U0} which we mentioned in the introduction. This is the case when $\symm AB$ is an almost complete balanced pair. In that case $J\left(\symm AB\right)$ has rank $1$ and has a unique simple object, say $M_0$. The $g$-vector $g\!\left(\symm AB\right)$ lies in the interior of the wall $\cD_\Lambda(M_0)$. This wall has two sides and the positive side is a red wall for $\cB_+$ and the negative side is a green wall of $\cB_-$. However, the new boundary points $g\!\left(\symm {A_0}{B_0}\right)\in\partial \cB_-$ and $g\!\left(\symm {A_0'}{B_0'}\right)\in\partial \cB_+$ might be neither red or green boundary points. (See Figure \ref{Fig: almost complete A/B}.) But we can say that $M_0$ lies in $\cT(\cB_+)=\,^\perp B$ but does not lie in $\cT(\cB_-)=\Gen A$.
\end{Remark}
}

{
We also have the following converse of Corollaries \ref{cor: vertices of U+} and \ref{cor: vertices of U0}.

\begin{Theorem}\label{thm: converse of upper/lower}
Let $\symm AB$ be a complete balanced pair where $A,B$ are both nonzero. Let $\cU=\cC\left(\symm AB\right)$ be the corresponding standard open chamber. Then there exists a unique decomposition
\[
	\symm AB=\symm{A'}{B'}\sqcup \symm{A''}{B''}
\]
where $\cU$ is the upper chamber of $g\!\left(\symm{A'}{B'}\right)$ and $\cU$ is the lower chamber of $g\!\left(\symm{A''}{B''}\right)$, i.e., $\symm AB$ is the generalized Bongartz completion of $\symm{A''}{B''}$ and the generalized co-Bongartz completion of $\symm{A'}{B'}$.
\end{Theorem}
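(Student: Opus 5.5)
The decomposition will come from the walls of the simplicial cone $\cU=\cC\left(\symm AB\right)$. Write $\symm AB=\bigsqcup_{i=1}^n\symm{A_i}{B_i}$ and $v_i=g\!\left(\symm{A_i}{B_i}\right)$. The facet of $\cU$ opposite $v_i$ is spanned by the $v_j$, $j\neq i$. It lies in a wall $\cD_\Lambda(M_i)$, where $M_i$ is the unique simple object of the rank-one Jasso category $J\!\left(\bigsqcup_{j\ne i}\symm{A_j}{B_j}\right)$ (Corollary \ref{cor: labels of domains}). By Remark \ref{rem: red/green walls}, each such facet is either entirely red or entirely green.

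I would set $\symm{A'}{B'}$ to be the sum of those $\symm{A_i}{B_i}$ whose opposite facet is red, and $\symm{A''}{B''}$ the sum of those whose opposite facet is green. The key observation concerns a point $v$ in the relative interior of a face of $\cU$. The walls adjacent to $v$ are exactly the facets opposite the vertices $v_i$ that do not occur in the face. So the relative interior of the face spanned by the $v_i$, $i\in S$, consists of green boundary points exactly when every $i\notin S$ has a green opposite facet.

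Now take $v=g\!\left(\symm{A'}{B'}\right)=\sum_{i\in S'}v_i$, where $S'$ is the set of red-facet indices. It is in the relative interior of the face spanned by $S'$. The indices not in $S'$ are the green ones, so $v$ is a green boundary point of $\cU$. Therefore $\cU$ is the upper chamber of $g\!\left(\symm{A'}{B'}\right)$, by Definition \ref{def: chamber A} and Proposition \ref{prop: uniqueness of B-}(a). Symmetrically, $g\!\left(\symm{A''}{B''}\right)$ is a red boundary point, so $\cU$ is the lower chamber. Theorem \ref{thm: P(U0),P(U+)} together with Theorems \ref{thm: Bongartz complement} and \ref{thm: co-Bongatz} then gives the identification with the generalized co-Bongartz completion of $\symm{A'}{B'}$ and the generalized Bongartz completion of $\symm{A''}{B''}$.

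Both pieces must be noncomplete and nonzero, i.e.\ both colours must occur. This is where $A,B\neq0$ is used. Since $B\neq0$, we have $\Gen A\neq mod\text-\Lambda$, so $\cU\neq\cU_{pos}$. Proposition \ref{prop: there is one positive and one negative domain}(b) then gives a wall on whose negative side $\cU$ lies, i.e.\ a red facet. Since $A\neq0$, $\cU\neq\cU_{neg}$, and part (a) gives a green facet. A nonzero $S'$ guarantees that $g\!\left(\symm{A'}{B'}\right)$ is a boundary point rather than $0$ or an interior point, since the upper chamber is only defined for noncomplete pairs.

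Uniqueness: suppose another decomposition $\symm AB=\symm{C'}{D'}\sqcup\symm{C''}{D''}$ works. Then $g\!\left(\symm{C'}{D'}\right)$ is a green boundary point of $\cU$, so every facet opposite an index outside $C'$ is green. Likewise every facet opposite an index outside $C''$, i.e.\ an index in $C'$, is red. Hence $C'$ is exactly the set of red-facet indices, and the decomposition coincides with ours.

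The main obstacle is justifying the face/wall correspondence. One must show that a point in the relative interior of a face is adjacent exactly to the facets containing that face. One must also show that the boundary of $\cU$ near such a point is the union of those facets, with no other domains cutting in. I would first handle this using convexity of $\cU$ (Theorem \ref{thm: chambers are convex}) and the fact that $\cU$ is an open simplicial cone. The colour of a facet is then read off from the sign of $\eta\ast\undim M_i$ relative to the side of the facet on which $v_i$ lies.
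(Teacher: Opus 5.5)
Your proof is correct and follows the paper's argument essentially verbatim. You split the summands according to whether the facet opposite $v_i$ is red or green, use Remark \ref{rem: red/green walls} to see that $\sum_{i\in S'}v_i$ lies only on green facets and the complementary sum only on red ones, and obtain uniqueness from the same facet-containment observation.

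One small slip: in the paper's conventions, $\cU$ lying on the negative side of a wall makes that wall green for $\cU$. So Proposition \ref{prop: there is one positive and one negative domain}(b), used with $B\neq0$, gives a green facet, and part (a), used with $A\neq0$, gives a red one, not the reverse. Since you obtain both colours either way, the argument is unaffected.
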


\begin{proof}
If $A\neq0$ then $\cU=\cC\left(\symm AB\right)$ is not the negative chamber $\cU_{neg}$. If $B\neq0$ then $\cU\neq\cU_{pos}$. This means that $\cU$ has at least one red wall and at least one green wall. Let $\Delta$ be the $(n-1)$-simplex spanned by $v_i=g(\tsymm{A_i}{B_i})$. Among these vertices, let $\{v_j\mid j\in R\}$ be the set of vertices opposite the red sides of $\Delta$ (corresponding to the red walls of $\cU$) and let $\{v_k\mid k\in G\}$ be the set of vertices opposite the green sides of $\Delta$ (corresponding to the green walls of $\cU$). Let 
\[
\symm{A'}{B'}=\bigsqcup_{j\in R} \symm{A_j}{B_j},\qquad \symm{A''}{B''}=\bigsqcup_{k\in G} \symm{A_k}{B_k}
\]
Then, we claim that $\cU$ is the upper chamber of $g\!\left(\symm{A'}{B'}\right)$ and $\cU$ is also the lower chamber of $g\!\left(\symm{A''}{B''}\right)$. To see this, note that any boundary wall of $\cU$ which contains $g(\tsymm{A'}{B'})=\sum g\!\left(\tsymm{A_j'}{B_l'}\right)$ must also contain $v_j=g\!\left(\tsymm{A_j'}{B_l'}\right)$ and therefore cannot be the red wall opposite $v_j$. Since these are all the red walls, any wall containing $g(\tsymm{A'}{B'})$ must be green. So, $g(\tsymm{A'}{B'})$ is a green boundary point of $\cU$ by Remark \ref{rem: red/green walls}. So, $\cU$ is the upper chamber of $g(\tsymm{A'}{B'})$. Similarly, all walls containing $g(\tsymm{A''}{B''})$ are red. So, $g(\tsymm{A''}{B''})$ is a red boundary point of $\cU$ making $\cU$ the lower chamber of $g(\tsymm{A''}{B''})$.

With any other decomposition of $\symm AB$, either some $v_j$ will be opposite a green wall which will contain all the $v_k$ or some $v_k$ will be opposite a red wall. In the first case, $\sum v_k$ will not be a red boundary point. In the second case, $\sum v_j$ will not be a green boundary point. So, this decomposition fails to have the desired property. So, our construction gives the unique desired decomposition of $\symm AB$.
\end{proof}
}

{
\begin{Definition}\label{def: covering domain}
    Consider an indecomposable balanced pair $\symm{A_0}{B_0}$ with $A_0\neq0$. Let $\cB_-$ be the upper chamber of $g(\tsymm{A_0}{B_0})$. By Corollary \ref{cor: green path hits simple domains}, $n-1$ of the walls bounding $\cB_-$ are $\cD_\Lambda(M_i)$ for simple objects $M_i$ of $\cW(g(\tsymm{A_0}{B_0}))$. Since $A_0\neq0$, by Proposition \ref{prop: there is one positive and one negative domain}, $\cB_-$ must be on the positive side of the last wall which we call the \textbf{covering wall} and denote $\cD_\Lambda(M_-)$ as shown in Figure \ref{Fig: hour glass}.
\end{Definition}

\begin{Definition}\label{def: flooring domain}
Suppose that $B_0\neq0$ and $\cB_+$ is the lower chamber on the positive side of $g\!\left(\symm{A_0}{B_0}\right)$. Then, by Corollary \ref{cor: green path hits simple domains}, $n-1$ of the walls bounding $\cB_+$ are $\cD_\Lambda(M_i)$ for simple objects $M_i$ of $\cW(g(\tsymm{A_0}{B_0}))$. These are continuations of the same walls that form all but one of the walls of $\cB_-$. This time $\cB_+$ is on the positive side of these $n-1$ walls and, by Proposition \ref{prop: there is one positive and one negative domain}, $\cB_+$ is on the negative side of the last wall which we call the \textbf{flooring wall} and denote $\cD_\Lambda(M_+)$ as shown in Figure \ref{Fig: hour glass}.
\end{Definition}
}

\begin{Proposition}\label{prop: no map M0 to M+}
Suppose $A_0,B_0$ are both nonzero, $\cD_\Lambda(M_-)$ is the covering wall of $g\!\left(\symm{A_0}{B_0}\right)$ and $\cD_\Lambda(M_+)$ is the flooring wall of $g\!\left(\symm{A_0}{B_0}\right)$. Then $\Hom_\Lambda(M_-,M_+)=0$.
\end{Proposition}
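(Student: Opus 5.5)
We will use a linear green path through both chambers, as in Figure \ref{Fig: hour glass}, and then apply forward hom-orthogonality (Theorem \ref{thm: FHO seq}). Let $v_0=g\!\left(\symm{A_0}{B_0}\right)$, and let $\cB_-$, $\cB_+$ be the upper and lower chambers of $v_0$ given by Corollaries \ref{cor: vertices of U0} and \ref{cor: vertices of U+}. Choose a point $w\in\cB_-$ close to $v_0-\varepsilon\eta$ and consider the linear green path $\gamma(t)=w+t\eta$. For suitable $w$ and $\varepsilon$, this path starts in $\cU_{neg}$, eventually enters $\cB_-$ by crossing one of its walls on the positive side of the chamber, and later leaves $\cB_+$ by crossing a wall on the negative side of $\cB_+$.

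\textbf{Step 1.} By Proposition \ref{prop: there is one positive and one negative domain}(a) applied to $\cB_-\neq\cU_{neg}$ (which holds since $A_0\neq 0$ and $\cT(\cB_-)=\Gen A_0\neq0$), the path $\gamma$ meets a wall of $\cB_-$ at some time $t_-<0$ before entering $\cB_-$. This entry wall has $\cB_-$ on its positive side. The walls $\cD_\Lambda(M_i)$ coming from the simple objects of $\cW(v_0)$ have $\cB_-$ on their negative side. So the entry wall must be the covering wall, and $\gamma(t_-)\in\cD_\Lambda(M_-)$.

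\textbf{Step 2.} Symmetrically, $\cB_+\neq\cU_{pos}$ because $B_0\neq0$ and $\cT(\cB_+)={}^\perp B_0\neq mod\text-\Lambda$. Proposition \ref{prop: there is one positive and one negative domain}(b) then says that $\gamma$ exits $\cB_+$ through a wall on which $\cB_+$ lies on the negative side. That wall must be the flooring wall, so $\gamma(t_+)\in\cD_\Lambda(M_+)$.

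\textbf{Step 3.} We need the ordering $t_-<t_+$. Between the two crossings, $\gamma$ stays in $\cB_-$, then crosses the walls $\cD_\Lambda(M_i)$ near $v_0$ (Corollary \ref{cor: green path hits simple domains}), and then stays in $\cB_+$. Since chambers are convex (Theorem \ref{thm: chambers are convex}) and $\gamma$ is a line, the path passes through $\cB_-$ in a single interval and then through $\cB_+$ in a later interval. The entry into $\cB_-$ precedes every point of $\cB_-$, and the exit from $\cB_+$ follows every point of $\cB_+$. Hence $t_-<t_+$, and Theorem \ref{thm: FHO seq} gives $\Hom_\Lambda(M_-,M_+)=0$.

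The main obstacle is the geometric claim that a single linear green path actually passes through both $\cB_-$ and $\cB_+$, in that order. The points $v_0\mp\varepsilon\eta$ lie in $\cB_\mp$ by the definition of green and red boundary points, so the line $v_0+t\eta$ passes through both chambers. However, that line goes through $v_0$ itself, where it meets many domains at once. To avoid this, we perturb: openness of the chambers lets us replace $v_0$ by a nearby point $w$ such that $w\mp\varepsilon\eta$ still lie in $\cB_\mp$. Convexity then places $\cB_-$ before $\cB_+$ along $\gamma$. Forward hom-orthogonality only needs $\gamma(t_\pm)$ to lie in the respective domains, so a non-generic meeting point causes no problem.
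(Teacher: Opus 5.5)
Your proposal is correct and is essentially the paper's proof: take a linear green path through $\cB_-$ and then $\cB_+$, use Proposition \ref{prop: there is one positive and one negative domain} to get a crossing of $\cD_\Lambda(M_-)$ at $t_-$ and a later crossing of $\cD_\Lambda(M_+)$ at $t_+$, and conclude with forward hom-orthogonality (Theorem \ref{thm: FHO seq}). The perturbation away from $v_0$ is unnecessary but harmless: the paper uses $\gamma(t)=v_0+t\eta$ directly, and $\gamma(\mp\varepsilon)\in\cB_\mp$ immediately gives $t_-<-\varepsilon<\varepsilon<t_+$.
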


\begin{proof}
    Let $\gamma$ be the linear green path $\gamma(t)=g(\tsymm{A_0}{B_0})+t\eta$. Then $\gamma(-\varepsilon)\in \cB_-$ and $\gamma(\varepsilon)\in \cB_+$ for small $\varepsilon>0$. By Proposition \ref{prop: there is one positive and one negative domain}, at some $t_-<0$ and $t_+>0$, $\gamma(t_-)\in\cD_\Lambda(M_-)$ and $\gamma(t_+)\in \cD_\Lambda(M_+)$. Since $t_-<t_+$ we have $\Hom_\Lambda(M_-,M_+)=0$ by Theorem \ref{thm: FHO seq}.
\end{proof}

On of the main results of this section is to give a formula for $M_-$ as a certain quotient of $A_0$ and, dually, to give a formula for $M_+$ as a submodule of $B_0$.

{Let $\symm {A_0}{B_0}$ be an indecomposable balanced pair. Let $\bigsqcup\symm{A_i}{B_i}$ be its generalized co-Bongartz complement. Let $T=\Tr_{\sqcup A_i} A_0$. Then:}

\begin{Lemma}\label{lem: trace is all}
For $i>0$ we have $\Hom_\Lambda(A_i,A_0/T)=0$.
\end{Lemma}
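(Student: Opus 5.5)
The plan is a lifting argument: show that any morphism $A_i\to A_0/T$ factors through $A_0$, and then use that every morphism $A_i\to A_0$ lands in the trace $T$. Fix $i>0$. If $A_i=0$ there is nothing to prove, so assume $A_i\neq 0$.

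First, $A_i$ is an Ext-projective object of $\Gen A_0$. By Definition \ref{def: generalized coB} and Theorem \ref{thm: co-Bongatz}, the generalized co-Bongartz completion $\symm{A'}{B'}=\symm{A_0}{B_0}\sqcup\bigsqcup\symm{A_i}{B_i}$ satisfies $\Gen A'=\Gen A_0$. Its numerator $A'$ is the sum of all Ext-projective objects of $\Gen A_0$ (case (2) of the definition, since $A_0\neq0$ whenever some $A_i\neq 0$). Hence each nonzero $A_i$ lies in $\Gen A_0$ and is Ext-projective there.

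Second, $T\in \Gen A_0$. By definition $T=\Tr_{\sqcup A_i}A_0$ is the sum of the images of all maps $\bigsqcup A_i\to A_0$, so it is a quotient of a finite direct sum of copies of the $A_i$. Each $A_i$ lies in $\Gen A_0$, and $\Gen A_0$ is a torsion class by Proposition \ref{prop: Gen M is torsion class}, hence closed under finite direct sums and quotients. So $T\in\Gen A_0$, and therefore $\Ext^1_\Lambda(A_i,T)=0$ by Ext-projectivity.

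Third, apply $\Hom_\Lambda(A_i,-)$ to the short exact sequence $0\to T\to A_0\xrightarrow{\pi} A_0/T\to 0$. This gives an exact sequence
\[
\Hom_\Lambda(A_i,A_0)\xrightarrow{\pi_\ast}\Hom_\Lambda(A_i,A_0/T)\to \Ext^1_\Lambda(A_i,T)=0 .
\]
So every $f:A_i\to A_0/T$ has the form $f=\pi g$ for some $g:A_i\to A_0$. By the definition of the trace, the image of $g$ is contained in $T=\ker\pi$, so $f=\pi g=0$. Therefore $\Hom_\Lambda(A_i,A_0/T)=0$.

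I do not expect a serious obstacle. The only point needing care is the first step: confirming from Definition \ref{def: generalized coB} that the complement summands $A_i$ really are Ext-projective in $\Gen A_0$ and lie in it. This is exactly the content of $\Gen A'=\Gen A_0$ together with the description of $A'$ as the sum of the Ext-projectives of $\Gen A_0$.
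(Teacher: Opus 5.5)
Your proof is correct and follows the paper's argument: the long exact sequence for $0\to T\to A_0\to A_0/T\to 0$, the fact that every map $A_i\to A_0$ lands in $T$, and the vanishing $\Ext^1_\Lambda(A_i,T)=0$. The only difference is how you justify that vanishing. The paper notes $\Hom_\Lambda(T,B_i)=0$, since $T$ is a quotient of copies of $\bigsqcup_{j>0}A_j$, and then applies Corollary~\ref{cor: (X,B)=0 then E(A,X)=0}(a); you instead use that $A_i$ is Ext-projective in $\Gen A_0$ and that $T\in\Gen A_0$. Both justifications are valid.
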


\begin{proof}
The short exact sequence $0\to T\to A_0\to A_0/T\to 0$ gives a long exact sequence starting with:
\[
	0\to \Hom_\Lambda(A_i,T)\xrightarrow\cong  \Hom_\Lambda(A_i,A_0)\to \Hom_\Lambda(A_i,A_0/T)\to  \Ext^1_\Lambda(A_i,T)\to \cdots.
\] 
The first map is an isomorphism since $T$ contains the trace of $A_i$ in $A_0$. Since $T$ is a quotient of a multiple of $\bigsqcup A_i$, we have $\Hom_\Lambda(T,B_i)=0$. It follows from Corollary \ref{cor: (X,B)=0 then E(A,X)=0} that $\Ext^1_\Lambda(A_i,T)=0$. Therefore $\Hom_\Lambda(A_i,A_0/T)=0$ as claimed.
\end{proof}

\begin{Lemma}\label{lem: T is not A0}
    The quotient $A_0/T$ is an iterated self-extension of $M_-$.
\end{Lemma}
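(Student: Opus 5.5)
The plan is to identify the covering wall with the wall of $\cB_-$ opposite the vertex $v_0=g\!\left(\symm{A_0}{B_0}\right)$, and then show that $A_0/T$ is a nonzero object of the rank-one Jasso category attached to that wall.

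\emph{Setting up.} By Corollary \ref{cor: vertices of U0}, $\cB_-$ is the standard open chamber of the generalized co-Bongartz completion $\symm{A'}{B'}=\symm{A_0}{B_0}\sqcup\bigsqcup_{i>0}\symm{A_i}{B_i}$. The $n-1$ walls through $v_0$ are $\cD_\Lambda(M_i)$ for the simple objects $M_i$ of $\cW(v_0)$. So the covering wall $\cD_\Lambda(M_-)$ is the remaining wall, namely the face spanned by the vectors $g\!\left(\symm{A_i}{B_i}\right)$ with $i>0$. Since $\cD_\Lambda(M_-)$ meets the interior of this face, Corollary \ref{cor: clean interiors} gives
\[
M_-\in J:=J\!\left(\bigsqcup_{i>0}\symm{A_i}{B_i}\right).
\]
This pair has $n-1$ components. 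By Corollary \ref{cor: labels of domains}, $J$ therefore consists of iterated self-extensions of a single relatively simple object. I would take $M_-$ to be this simple object. Its domain contains the whole face: it is a wide-subcategory simple, and any $v$ in the face has $J\subseteq\cW(v)$ by Theorem \ref{thm: the open simplex}. It therefore labels the covering wall. So it suffices to prove $0\neq A_0/T\in J=(\sqcup_{i>0}A_i)^\perp\cap{}^\perp(\sqcup_{i>0}B_i)$.

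\emph{Membership in $J$.} The condition $\Hom_\Lambda(A_i,A_0/T)=0$ for $i>0$ is exactly Lemma \ref{lem: trace is all}. For the other condition, $\symm{A'}{B'}$ is a balanced pair, so $\Hom_\Lambda(A_0,B_i)=0$. Since $A_0/T$ is a quotient of $A_0$, this gives $\Hom_\Lambda(A_0/T,B_i)=0$ for all $i>0$.

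\emph{Nonvanishing (the main point).} Suppose $T=A_0$, i.e.\ $A_0\in\Gen(\sqcup_{i>0}A_i)$. Then
\[
\Gen\Bigl(\bigsqcup_{i>0}A_i\Bigr)=\Gen A'=\cT(\cB_-).
\]
By Theorem \ref{thm: P(U) is standard}, the left side is $\cT\!\left(w\right)$ with $w=g\!\left(\bigsqcup_{i>0}\symm{A_i}{B_i}\right)$. Proposition \ref{prop: uniqueness of B-}(b) then says that $w$ is a green boundary point of $\cB_-$.

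But $w$ is an interior point of the face opposite $v_0$, hence lies only on the covering wall among the walls of $\cB_-$. That wall is red, because $\cB_-$ lies on its positive side (Definition \ref{def: covering domain}, using $A_0\neq0$). So $w$ is a red, not green, boundary point, and Theorem \ref{thm: not red or green} rules out its being green. This contradiction shows $A_0/T\neq0$.

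The conclusion then follows from Corollary \ref{cor: labels of domains}. I expect the most delicate step to be this nonvanishing argument, specifically checking that $w$ lies on no other wall of the simplicial cone $\cB_-$, so that its colour is determined by the covering wall alone.
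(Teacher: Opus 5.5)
Your proof is correct. The membership step is exactly the paper's: $\Hom_\Lambda(A_i,A_0/T)=0$ comes from Lemma~\ref{lem: trace is all}, $\Hom_\Lambda(A_0/T,B_i)=0$ holds because $A_0/T$ is a quotient of $A_0\in{}^\perp B_i$, and Corollary~\ref{cor: labels of domains} finishes it. The two proofs differ only in how they show $A_0/T\neq0$.

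The paper argues directly. The linear green path $\gamma(t)=v_0+t\eta$ meets the covering wall at some $t_0<0$, so $v_0\ast\undim M_-=-t_0\dim_K M_->0$. Corollary~\ref{cor: g(A/B) dot X} then gives $\Hom_\Lambda(A_0,M_-)\neq0$. Since $\Hom_\Lambda(A_i,M_-)=0$ for $i>0$, every such map kills $T$, so $\Hom_\Lambda(A_0/T,M_-)\neq0$.

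You argue by contradiction through torsion classes. If $T=A_0$, then $\cT(w)=\Gen\bigl(\bigsqcup_{i>0}A_i\bigr)=\Gen A_0=\cT(\cB_-)$, and Proposition~\ref{prop: uniqueness of B-}(b) makes $w$ a green boundary point. But $w$ lies in the relative interior of the red covering wall and on no other facet. Red and green are mutually exclusive at such a point: $\eta$ is transverse to the facet, or equivalently, both together would give $\cT(w)=\overline\cT(w)$, contradicting Lemma~\ref{lem: P=Pbar}. You should state this exclusivity explicitly rather than just citing Theorem~\ref{thm: not red or green}.

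The paper's route is shorter and needs only the dot-product formula. It also yields slightly more: a nonzero map $A_0/T\to M_-$, which is surjective because $M_-$ is simple in the wide category $J$. Your route leans on heavier results (Theorems~\ref{thm: P(U) is standard} and~\ref{thm: not red or green}), but it is more conceptual. It shows that $A_0\notin\Gen\bigl(\bigsqcup_{i>0}A_i\bigr)$ is exactly the torsion-theoretic content of the facet opposite $v_0$ being red, i.e.\ that mutating at $A_0$ lowers the torsion class. You also justify two points the paper passes over quickly: that $M_-\in J$ (via Corollary~\ref{cor: clean interiors}), and that the label $M_-$ of the covering wall may be taken to be the relatively simple object of $J$.
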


\begin{proof}
Since $A_0\in \,^\perp B_i$ for all $i$, Lemma \ref{lem: trace is all} implies that $A_0/T\in A_i^\perp\cap\,^\perp B_i$ for all $i>0$. By Corollary \ref{cor: labels of domains}, the Jasso category $\bigcap (A_i^\perp\cap\,^\perp B_i)$ has rank 1 and has a unique simple object $M-$. This implies that $A_0/T$ is an iterated self-extension of $M_-$ provided it is nonzero.

    Take the linear green path $\gamma(t)=g(\tsymm{A_0}{B_0})+t\eta$ discussed in Def. \ref{def: covering domain}. Then $\gamma(t_0)\in \cD_\Lambda(M_-)$ for some $t_0<0$. Since $\gamma$ is a green path with $\gamma(t_0)\ast\undim M_-=0$, we must have
    \[
    \gamma(0)\ast\undim M_-=g\!\left(\symm{A_0}{B_0}\right)\ast \undim M_->0.
    \]
    By Corollary \ref{cor: g(A/B) dot X}, we have
    \[
       0< g\!\left(\symm{A_0}{B_0}\right)\ast \undim M_-=\dim_K \Hom_\Lambda(A_0,M_-)-\dim_K \Hom_\Lambda(M_-,B_0).
    \]
    Therefore, $\Hom_\Lambda(A_0,M_-)\neq0$. Since $M_-\in A_i^\perp\cap\,^\perp B_i$, $\Hom_\Lambda(A_i,M_-)=0$ for $i>0$ we conclude that $\Hom_\Lambda(A_0/T,M_-)\neq0$. So, $A_0/T\neq0$.
\end{proof}

{
\begin{Lemma}\label{lem: ext(A0,T)=0}
$\Ext^1_\Lambda(A_0,T)=0$ and $\Ext^1_\Lambda(A_0,M_-)=0$.
\end{Lemma}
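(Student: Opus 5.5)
The plan is to reduce both vanishing statements to Corollary \ref{cor: (X,B)=0 then E(A,X)=0}(a), applied to the indecomposable balanced pair $\symm{A_0}{B_0}$. That corollary says that $\Hom_\Lambda(X,B_0)=0$ implies $\Ext^1_\Lambda(A_0,X)=0$. So it suffices to show
\[
\Hom_\Lambda(T,B_0)=0\qquad\text{and}\qquad \Hom_\Lambda(M_-,B_0)=0 .
\]

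For $T$, I use that $\symm{A_0}{B_0}\sqcup\bigsqcup_{i>0}\symm{A_i}{B_i}$ is the generalized co-Bongartz completion of $\symm{A_0}{B_0}$. In particular it is a balanced pair, so condition (1) of Def. \ref{def: balanced notation A/B} gives $\Hom_\Lambda(A_i,B_0)=0$ for every $i>0$. Since $T=\Tr_{\sqcup A_i}A_0$ is a quotient of a finite direct sum of copies of $\bigsqcup_{i>0}A_i$, any map $T\to B_0$ lifts to a map from such a sum to $B_0$. That lifted map is zero, hence so is the original, and $\Hom_\Lambda(T,B_0)=0$. Corollary \ref{cor: (X,B)=0 then E(A,X)=0}(a) then gives $\Ext^1_\Lambda(A_0,T)=0$.

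For $M_-$, I use Lemma \ref{lem: T is not A0}: $A_0/T$ is a nonzero iterated self-extension of $M_-$. Its filtration by copies of $M_-$ has a top factor, so there is an epimorphism $A_0/T\onto M_-$. Composing with $A_0\onto A_0/T$ shows $M_-$ is a quotient of $A_0$. Any morphism $M_-\to B_0$ therefore pulls back to a morphism $A_0\to B_0$, which is zero because $\Hom_\Lambda(A_0,B_0)=0$. Hence $\Hom_\Lambda(M_-,B_0)=0$, and Corollary \ref{cor: (X,B)=0 then E(A,X)=0}(a) again yields $\Ext^1_\Lambda(A_0,M_-)=0$.

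The only point that needs care is the existence of the epimorphism $A_0/T\onto M_-$, i.e.\ that an iterated self-extension of $M_-$ surjects onto $M_-$. This follows because the filtration has $M_-$ as its last subquotient, and the proof of Lemma \ref{lem: T is not A0} already shows $\Hom_\Lambda(A_0/T,M_-)\neq 0$ with $A_0/T\neq0$. I do not expect any other step to be an obstacle.
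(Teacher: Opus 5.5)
Your proof is correct. The first half is exactly the paper's argument: $\Hom_\Lambda(T,B_0)=0$ because $T$ is a quotient of copies of $\bigsqcup_{i>0}A_i$ and the completed pair is balanced, and then Corollary \ref{cor: (X,B)=0 then E(A,X)=0}(a) applies.

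For $M_-$ you take a different route from the paper.

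The paper argues geometrically. Put $A_-=\bigsqcup_{i>0}A_i$ and $B_-=\bigsqcup_{i>0}B_i$. Since the covering wall $\cD_\Lambda(M_-)$ is red, the point $g(\tsymm{A_-}{B_-})$ is a red boundary point of $\cB_-$. Propositions \ref{prop: T(v), W(v), Tbar(v)} and \ref{prop: P(v) for green v}(b), together with Theorem \ref{thm: P(U0),P(U+)}, then give $M_-\in\cW(g(\tsymm{A_-}{B_-}))\subseteq\overline\cT(g(\tsymm{A_-}{B_-}))=\cT(\cB_-)=\Gen A_0$. Hence $\Hom_\Lambda(M_-,B_0)=0$.

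You instead read off from Lemma \ref{lem: T is not A0} that the nonzero iterated self-extension $A_0/T$ has $M_-$ as its top filtration factor. So $M_-$ is a quotient of $A_0$ itself. This is legitimate, since that lemma is proved before the present one and does not depend on it.

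Both arguments come down to $M_-\in\Gen A_0\subseteq\,^\perp B_0$.
\begin{itemize}
\item Your version is shorter and purely algebraic once Lemma \ref{lem: T is not A0} is available. It also gives the slightly sharper fact that a single copy of $A_0$ surjects onto $M_-$, which anticipates Proposition \ref{prop: radical trace}.
\item The paper's version does not need the trace computation or the nonvanishing of $A_0/T$. It uses only the red/green boundary behavior of $\cB_-$ and the identification $\cT(\cB_-)=\Gen A_0$.
\end{itemize}
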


\begin{proof}
Since $T$ is a quotient of copies of $A_-=\bigsqcup_{i>0} A_i$, $\Hom_\Lambda(T,B_0)=0$. By Corollary \ref{cor: (X,B)=0 then E(A,X)=0}, $\Ext^1_\Lambda(A_0,T)=0$.

Since $\cD_\Lambda(M_-)$ is a red wall of $\cB_-$, the point $g(\tsymm{A_-}{B_-})$ where $B_-=\bigsqcup_{i>0}B_i$ is a red boundary point of $\cB_-$. So, 
\[
\cW\left(g\!\left(\symm{A_-}{B_-}\right)\right)\subseteq \overline\cT\left(g\!\left(\symm{A_-}{B_-}\right)\right)=\cT(\cB_-)=\cT\left(\symm AB\right)
\]
Since $M_-\in \cW(g(\tsymm{A_-}{B_-}))$, $\Hom_\Lambda(M_-,B_0)=0$. By Corollary \ref{cor: (X,B)=0 then E(A,X)=0}, $\Ext^1_\Lambda(A_0,M_-)=0$.
\end{proof}

Let $T_0$ be the radical trace of $A_0$ in $A_0$. This is the sum of images of nilpotent endomorphisms of $A_0$. Equivalently, $T_0$ is the image of $\sum f_i:A_0^m\to A_0$ where $f_i$ are the generators of the unique maximal ideal $\mathfrak m$ of $\End_\Lambda(A_0)$. Let $k>1$ be minimal so that $\mathfrak m^k=0$. Then any composition of $k$ maps $f_i$ is zero. 

\begin{Lemma}\label{lem: fi not onto or into}
Let $f_1,\cdots,f_m$ be endomorphisms of a $\Lambda$-module $M$ so that any composition of $k$ of the $f_i$ is zero. Then
\begin{enumerate}
\item[(a)] $\sum f_i:M^k\to M$ is not surjective.
\item[(b)] $\bigcap \ker f_i\neq0$.
\end{enumerate}
\end{Lemma}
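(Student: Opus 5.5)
We argue by filtrations, using only the nilpotency hypothesis that any composition of $k$ of the $f_i$ vanishes. (Note: in (a) the map should be read as $\sum f_i : M^m\to M$, one copy of $M$ for each $f_i$; the image is the same as the sum of the images $\sum_i \operatorname{im} f_i$, and this is what we use.) Throughout we may assume $M\neq 0$, since otherwise there is nothing to prove.

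For (b), consider the descending chain of subspaces $M\supseteq M_1\supseteq M_2\supseteq\cdots$. Here $M_j$ is the sum of the images of all compositions $f_{i_1}\circ\cdots\circ f_{i_j}$ of length $j$, with $M_0=M$. By hypothesis $M_k=0$, and $M_0=M\neq0$. Choose $j$ maximal with $M_j\neq 0$; then $0\le j<k$. Every $f_i$ carries $M_j$ into $M_{j+1}=0$, so $M_j\subseteq\bigcap\ker f_i$. Hence $\bigcap \ker f_i\neq 0$.

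For (a), let $N=\sum_i f_i(M)$ be the image of $\sum f_i$. Suppose, for contradiction, that $N=M$. Applying each $f_i$ to $M=\sum_{i'} f_{i'}(M)$ and summing gives
\[
M=\sum_i f_i(M)=\sum_{i,i'} f_if_{i'}(M)=M_2 .
\]
Iterating this gives $M=M_j$ for every $j$. In particular $M=M_k=0$, a contradiction. So $\sum f_i$ is not surjective.

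An alternative route for (a) is Nakayama-style: the $f_i$ generate a nilpotent ideal of $\End_\Lambda(M)$, and $M=\mathfrak m M$ would force $M=\mathfrak m^kM=0$. This is the same argument as above, so the iteration is enough.

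There is no real obstacle here. The only point needing care is the bookkeeping that the image of a $j$-fold composition lands in $M_j$, and that $f_i(M_j)\subseteq M_{j+1}$. Both are immediate from the definition of $M_j$ as a sum of images.
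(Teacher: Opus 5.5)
Your proof is correct. For (a) it is the paper's argument in different language. The paper observes that if $F=\sum f_i\colon M^m\to M$ were onto, then so would be the iterate $F^k\colon M^{m^k}\to M$, which is zero. The image of $F^k$ is exactly your $M_k$, so your iteration $M=M_2=\cdots=M_k$ is the same computation. You are also right that $M^m$, not $M^k$, is meant.

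For (b) you take a different route. The paper dualizes (a). It considers $G=(f_1,\dots,f_m)\colon M\to M^m$, whose kernel is $\bigcap\ker f_i$. If $G$ were injective, then the composite $G^k\colon M\to M^m\to\cdots\to M^{m^k}$, built from direct sums of copies of $G$, would also be injective. But every component of $G^k$ is a $k$-fold composition of the $f_i$, hence zero. You instead reuse the image filtration from (a) and take its last nonzero term $M_j$, which every $f_i$ sends into $M_{j+1}=0$.

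The paper's version gives a clean symmetry between (a) and (b). This matches how (b) is later used, dually to (a), to show $C\cap C_0\neq 0$ for the radical co-trace. Your version uses a single filtration for both parts, and it exhibits an explicit nonzero submodule inside $\bigcap\ker f_i$ instead of arguing by contradiction.

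One small correction: $M\neq 0$ is a genuine implicit hypothesis, not a case where "there is nothing to prove." For $M=0$ both conclusions are false. In the paper's applications the module is $A_0/T$ or $C$, and each is shown to be nonzero beforehand.
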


\begin{proof}
(a) Let $F=\sum f_i:M^m\to M$. If $F$ is surjective then so is $F^k:M^{m^k}\to\cdots\to M^{m^2}\to M^m\to M$. But $F^k=0$ since each component is a composition of $m$ of the maps $f_i$. So, $F$ is not surjective.

(b) Similarly, let $G=\sum f_i:M\to M^m$. If $G$ is injective then so is $G^k:M\to M^{m^k}$ which is zero. A contradiction. So, $G$ is a monomorphism.
\end{proof}

\begin{Proposition}\label{prop: radical trace}
$A_0/(T+T_0)\cong M_-$.
\end{Proposition}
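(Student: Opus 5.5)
The plan is to work inside the rank-one Jasso category $J':=\bigcap_{i>0}\left(A_i^\perp\cap\,^\perp B_i\right)$. By Corollary \ref{cor: labels of domains}, $M_-$ is its unique simple object, and by Lemma \ref{lem: T is not A0}, $X:=A_0/T$ lies in $J'$ and is a nonzero iterated self-extension of $M_-$. I will show two inclusions of submodules of $A_0$: the image of $T+T_0$ in $X$ is contained in a maximal proper $J'$-subobject $K$ of $X$ (the kernel of some $X\onto M_-$), and conversely the preimage of $K$ in $A_0$ is contained in $T+T_0$. Together these give $A_0/(T+T_0)\cong X/K\cong M_-$.

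First, every nonzero map $h:A_0\to M_-$ is surjective. The image $\operatorname{im}h$ is a submodule of $M_-$, so it lies in $\,^\perp B_i$. It also lies in $A_i^\perp$, because $\Hom_\Lambda(A_i,\operatorname{im}h)\hookrightarrow\Hom_\Lambda(A_i,M_-)=0$. Thus $\operatorname{im}h$ is a nonzero subobject of the simple object $M_-$ in the wide category $J'$, and hence equals $M_-$.

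Next, every radical endomorphism $f$ of $A_0$ satisfies $f(T)\subseteq T$, since $T$ is a trace. So $f$ induces an endomorphism $\bar f$ of $X$, and the image of $T_0$ in $X$ is the image of $\sum\bar f_i:X^m\to X$. This is a subobject of $X$ in $J'$ because $J'$ is wide. The quotient $A_0/(T+T_0)$ is therefore an object of $J'$. If it is nonzero, it has $M_-$ as a quotient, and the kernel $K$ of the composite $X\onto A_0/(T+T_0)\onto M_-$ contains the image of $T_0$. Showing that $A_0/(T+T_0)\neq0$ is the step I expect to be the main obstacle. I would argue it with Lemma \ref{lem: fi not onto or into}(a): $F=\sum f_i$ is not onto $A_0$. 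To upgrade this to $T+T_0\neq A_0$, I would use that $\bar F=\sum\bar f_i$ is again nilpotent on $X$, since every composition of $k$ of the $\bar f_i$ is zero. Lemma \ref{lem: fi not onto or into}(a) applied to $X\neq0$ then shows $\bar F$ is not onto $X$. This is exactly the statement $T+T_0\neq A_0$.

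For the reverse inclusion, $K$ is an iterated self-extension of $M_-$ in $J'$. Each copy of $M_-$ is a quotient of $A_0$ by the first step, and $\Ext^1_\Lambda(A_0,M_-)=0$ by Lemma \ref{lem: ext(A0,T)=0}. Induction on the length of the $M_-$-filtration then produces an epimorphism $A_0^m\onto K$, with the lifting at each stage supplied by the $\Ext$-vanishing. Since $\Ext^1_\Lambda(A_0,T)=0$, also by Lemma \ref{lem: ext(A0,T)=0}, the map $\Hom_\Lambda(A_0,A_0)\to\Hom_\Lambda(A_0,X)$ is surjective. Hence each component $A_0\to K\subseteq X$ lifts to an endomorphism $g_j$ of $A_0$ with image in the preimage $\widetilde K\subsetneq A_0$ of $K$. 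Such $g_j$ are not surjective, hence not isomorphisms, hence radical, because $A_0$ is indecomposable and so $\End_\Lambda(A_0)$ is local. Therefore $\widetilde K\subseteq T+\sum\operatorname{im}g_j\subseteq T+T_0$. Combined with $T+T_0\subseteq\widetilde K$, this gives $T+T_0=\widetilde K$, and so $A_0/(T+T_0)\cong X/K\cong M_-$.
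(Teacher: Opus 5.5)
Your proof is correct and follows essentially the paper's argument: nonvanishing of $A_0/(T+T_0)$ comes from Lemma \ref{lem: fi not onto or into} applied to the induced maps $\bar f_i$ on $A_0/T$, and maps $A_0\to A_0/T$ are lifted through $\Ext^1_\Lambda(A_0,T)=0$ and $\Ext^1_\Lambda(A_0,M_-)=0$ to radical endomorphisms of $A_0$. Your top-down choice of $K$, together with your explicit check that $A_0/(T+T_0)$ is a cokernel inside the wide category $J'$, makes precise the paper's final step, where being a ``nonzero quotient of $M_-$'' only forces isomorphism with $M_-$ because that quotient lies in $J'$.

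One slip: in your first step, $\operatorname{im}h\in\,^\perp B_i$ does not follow from $\operatorname{im}h\subseteq M_-$, since $\,^\perp B_i$ is closed under quotients, not submodules. It follows instead from $\operatorname{im}h$ being a quotient of $A_0$ with $\Hom_\Lambda(A_0,B_i)=0$. That step is dispensable anyway, since your second step already produces an epimorphism $A_0\onto A_0/(T+T_0)\onto M_-$.
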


\begin{proof}
Let $X=A_0/T$. By Lemma \ref{lem: T is not A0}, this is an iterated extension of $M_-$. First we claim that $A_0/(T+T_0)\neq0$.
\vs2
Proof: Let $\overline f_i:A_0/T\to A_0/T$ be the map induced by the nilpotent endomorphism $f_i$ of $A_0$. By Lemma \ref{lem: fi not onto or into}, $\sum \overline f_i:(A_0/T)^m\to A_0/T$ is not onto. But the image of this map is $(T+T_0)/T$. So, $T+T_0\neq A_0$.
\vs2

If $X\neq M_-$, there is a submodule $X_0\subset X$ so that $X_0\cong M_-$.  Take a map $A_0\to A_0/T$ with image $X_0$. By Lemma \ref{lem: ext(A0,T)=0}, $\Ext^1_\Lambda(A_0,T)=0$. So $A_0\to A_0/T$ lifts to $A_0\to A_0$, a nilpotent endomorphism. Repeat the process with $X/X_0$. This is an iterated extension of $M_-$. If $X/X_0\neq M_-$, there is a submodule $X_1\subset X/X_0$ so that $X_1\cong M_-$. The epimorphism $A_0\to X/X_0$ lifts to $A_0$ since, by Lemma \ref{lem: ext(A0,T)=0}, $\Ext^1_\Lambda(A_0,T\sqcup X_0)=0$. Proceeding in this way, we get a nilpotent endomorphism $f:A_0\to A_0$ so that the cokernel of $A_0\to A_0\to A_0/T$ is $A_0/(T+f(A_0))\cong M_-$. But $f(A_0)\subseteq T_0$. So, $A_0/(T+T_0)$ is a quotient of $A_0/(T+f(A_0))\cong M_-$. Since $T+T_0\neq A_0$, we must have $A_0/(T+T_0)\cong M_-$.
\end{proof}
}

Dual to the construction of $\cB_-$ and $M_-$, we have the open chamber $\cB_+$ on the positive side of $g(\tsymm{A_0}{B_0})$. The chamber $\cB_+$ has boundary consisting of $\cD_\Lambda(M_i)$, (same as for $\cB_-$) when $B_0\neq0$ we also have the flooring wall $\cD_\Lambda(M_+)$. Let $g\!\left(\symm{A_i'}{B_i'}\right)$ be the vertex of $\Delta_+$ opposite $\cD_\Lambda(M_i)$. Then
\[
    M_+\in \left(A_i'\right)^\perp\cap \,^\perp B_i'
\]
for all $i>0$.

Let $C_i$ be the \textbf{cotrace} of $B_i'$ in $B_0$. This is the intersection of all kernels of all maps $B_0\to B_i'$. Let $C=\bigcap C_i$. Analogously to Lemma \ref{lem: trace is all} we have
\[
	\Hom_\Lambda(C,B_i')=0
\]
for all $i>0$ and we conclude that $C\in A_i^\perp\cap\,^\perp B_i'$ for all $i>0$. So, $C$ is an iterated self-extension of $M_+$ if it is nonzero. Similarly to Lemma \ref{lem: T is not A0} we have:

\begin{Lemma}\label{lem: K not 0}
$C\neq0$. So, $C$ is an iterated self-extension of $M_+$.
\end{Lemma}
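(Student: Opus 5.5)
We dualize the proof of Lemma \ref{lem: T is not A0}. Since $C=\bigcap C_i\subseteq B_0$ and $\Hom_\Lambda(A_0,B_0)=0$, we get $\Hom_\Lambda(A_0,C)=0$; combined with $\Hom_\Lambda(C,B_i')=0$ for $i>0$, this places $C$ in the rank one Jasso category $\bigcap_{i>0}\bigl((A_i')^\perp\cap\,^\perp B_i'\bigr)$. By Corollary \ref{cor: labels of domains} that category has the unique simple object $M_+$. So once $C\neq0$ is shown, $C$ is an iterated self-extension of $M_+$. Everything therefore reduces to proving $C\neq0$.

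To show $C\neq0$, I would use the linear green path $\gamma(t)=g(\tsymm{A_0}{B_0})+t\eta$ from Def. \ref{def: flooring domain}. By Proposition \ref{prop: there is one positive and one negative domain}(b), $\gamma(t_+)\in\cD_\Lambda(M_+)$ for some $t_+>0$, so $\gamma(t_+)\ast\undim M_+=0$. Since $\frac{d}{dt}\gamma\ast\undim M_+=\dim_K M_+>0$, it follows that
\[
g\!\left(\symm{A_0}{B_0}\right)\ast\undim M_+<0 .
\]
By Corollary \ref{cor: g(A/B) dot X} this gives $\dim_K\Hom_\Lambda(A_0,M_+)<\dim_K\Hom_\Lambda(M_+,B_0)$, hence $\Hom_\Lambda(M_+,B_0)\neq0$.

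Now take any nonzero $h:M_+\to B_0$. Because $M_+\in\,^\perp B_i'$ for all $i>0$, every composite $M_+\to B_0\to B_i'$ vanishes. Hence $h(M_+)$ lies in the kernel of every map $B_0\to B_i'$, that is, $h(M_+)\subseteq C_i$ for each $i$, and so $h(M_+)\subseteq C$. Thus $C\neq0$, which also shows $\Hom_\Lambda(M_+,C)\neq0$.

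The main obstacle is the first step, namely verifying that $C$ really lies in the relevant Jasso category. There are two points to check. First, one needs $\Hom_\Lambda(C,B_i')=0$, which is the dual of Lemma \ref{lem: trace is all}. To get it, I would apply $\Hom_\Lambda(-,B_i')$ to $0\to C\to B_0\to B_0/C\to0$. The module $B_0/C$ embeds in a finite sum of copies of $\bigsqcup B_i'$, so $\Hom_\Lambda(A_i',B_0/C)=0$, and Corollary \ref{cor: (A,Z)=0 then E(Z,B)=0}(b) then yields $\Ext^1_\Lambda(B_0/C,B_i')=0$. Since every map $B_0\to B_i'$ kills $C$, restriction $\Hom_\Lambda(B_0,B_i')\to\Hom_\Lambda(C,B_i')$ is zero. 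Exactness of
\[
\Hom_\Lambda(B_0,B_i')\to\Hom_\Lambda(C,B_i')\to\Ext^1_\Lambda(B_0/C,B_i')=0
\]
then gives $\Hom_\Lambda(C,B_i')=0$. Second, one needs $\Hom_\Lambda(A_i',C)=0$. For this I would use that the vertices $g(\tsymm{A_i'}{B_i'})$ together with $g(\tsymm{A_0}{B_0})$ form the complete pair of $\Delta_+$, which gives $\Hom_\Lambda(A_i',B_0)=0$, and then $C\subseteq B_0$ finishes it.
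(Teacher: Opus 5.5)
Your proof is correct and is essentially the paper's argument: the linear green path through $g(\tsymm{A_0}{B_0})$ gives $\Hom_\Lambda(M_+,B_0)\neq0$, and every such map lands in $C$ because $M_+\in\,^\perp B_i'$. Moreover, $C$ lies in the rank-one Jasso category by the dual of Lemma \ref{lem: trace is all}. You supply details the paper only sketches, namely the vanishing of $\Ext^1_\Lambda(B_0/C,B_i')$ and of $\Hom_\Lambda(A_i',C)$. One small correction: your opening remark $\Hom_\Lambda(A_0,C)=0$ is not what membership in $\bigcap_{i>0}\bigl((A_i')^\perp\cap\,^\perp B_i'\bigr)$ requires---the needed condition is $\Hom_\Lambda(A_i',C)=0$, which you correctly establish in your last paragraph.
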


\begin{proof}
    Similarly to the proof of Lemma \ref{lem: T is not A0} we have $\Hom_\Lambda(M_+,B_0)\neq 0$ and any morphism $M_+\to B_0$ must have image in $C$ since $M_+\in \,^\perp B_i'$. So, $C\neq0$.
\end{proof}

{
\begin{Lemma}\label{lem: Ext(-,B0)=0}
$\Ext_\Lambda^1(B_0/C,B_0)=0$ and $\Ext_\Lambda^1(M_+,B_0)=0$.
\end{Lemma}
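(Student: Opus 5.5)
We dualize the proof of Lemma \ref{lem: ext(A0,T)=0}, using part (b) of Corollary \ref{cor: (A,Z)=0 then E(Z,B)=0} instead of part (a). That part says: for a balanced pair $\symm{A}{B}$, if $\Hom_\Lambda(A,X)=0$ then $\Ext^1_\Lambda(X,B)=0$. We apply it to the indecomposable pair $\symm{A_0}{B_0}$, so both vanishings reduce to showing that the relevant module receives no nonzero maps from $A_0$.

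For the first claim, take $X=B_0/C$. By definition $C=\bigcap C_i$, and each $C_i$ is the intersection of the kernels of all maps $B_0\to B_i'$. Hence $B_0/C$ embeds in a finite direct sum of copies of $\bigsqcup_{i>0}B_i'$. (This is dual to $T$ being a quotient of copies of $\bigsqcup A_i$.) So it suffices to show $\Hom_\Lambda(A_0,B_i')=0$ for $i>0$. This holds because $\symm{A_0}{B_0}\sqcup\bigsqcup_{i>0}\symm{A_i'}{B_i'}$ is the generalized Bongartz completion of $\symm{A_0}{B_0}$, i.e.\ a complete balanced pair, so $\Hom_\Lambda(A_0,B_i')=0$ by condition (1) of Def.\ \ref{def: balanced notation A/B}. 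Equivalently, $A_0\in\Gen A'=\,^\perp B'$ by Theorem \ref{thm: Bongartz complement}. Therefore $\Hom_\Lambda(A_0,B_0/C)=0$, and Corollary \ref{cor: (A,Z)=0 then E(Z,B)=0}(b) gives $\Ext^1_\Lambda(B_0/C,B_0)=0$.

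For the second claim, it suffices to show $\Hom_\Lambda(A_0,M_+)=0$. We dualize the argument of Lemma \ref{lem: ext(A0,T)=0}. Since $\cD_\Lambda(M_+)$ is a green wall of $\cB_+$ and the other walls of $\cB_+$ are the $\cD_\Lambda(M_i)$, the point $g(\tsymm{A_+'}{B_+'})$ with $A_+'=\bigsqcup_{i>0}A_i'$ and $B_+'=\bigsqcup_{i>0}B_i'$ lies only on walls opposite vertices other than itself. In particular it lies on $\cD_\Lambda(M_+)$, which is the wall opposite $g(\tsymm{A_0}{B_0})$, and it is a green boundary point of $\cB_+$ by Remark \ref{rem: red/green walls}. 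By Proposition \ref{prop: P(v) for green v}(a), $\cT(g(\tsymm{A_+'}{B_+'}))=\cT(\cB_+)=\,^\perp B_0$. By Proposition \ref{prop: T(v), W(v), Tbar(v)}, $\cW(v)$ is torsionfree relative to $\cT(v)$ inside $\overline\cT(v)$, so $\Hom_\Lambda(\cT(v),\cW(v))=0$. Since $M_+\in\cW(g(\tsymm{A_+'}{B_+'}))$ and $A_0\in\,^\perp B_0=\cT(v)$, we get $\Hom_\Lambda(A_0,M_+)=0$. Corollary \ref{cor: (A,Z)=0 then E(Z,B)=0}(b) then yields $\Ext^1_\Lambda(M_+,B_0)=0$.

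The main obstacle is the second claim, specifically confirming that the sum of the remaining vertices is a green boundary point of $\cB_+$ (a dual use of Remark \ref{rem: red/green walls}). A fallback that avoids the wall geometry entirely: $M_+\in(A_i')^\perp$ for all $i>0$, and $M_+\in\cW(v)\subset\overline\cT(v)=\,^\perp B_+'=\Gen A'$. Any nonzero map $A_0\to M_+$ would then need to be analyzed via the trace of $\Gen A'$ in $M_+$, which is less clean, so the torsion-pair argument above is preferred.
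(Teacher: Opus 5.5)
Your argument is correct. The first claim is handled exactly as in the paper: $B_0/C$ embeds in copies of $\bigsqcup_{i>0}B_i'$, and $\Hom_\Lambda(A_0,B_i')=0$ because the generalized Bongartz completion is balanced.

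For the second claim you take a different route. The paper just notes that $M_+\subseteq B_0$. By Lemma \ref{lem: K not 0}, $C\subseteq B_0$ is a nonzero iterated self-extension of $M_+$, so the bottom piece of its filtration is a copy of $M_+$ inside $B_0$. Hence $\Hom_\Lambda(A_0,M_+)\subseteq\Hom_\Lambda(A_0,B_0)=0$.

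You instead dualize the chamber argument the paper uses for $\Ext^1_\Lambda(A_0,M_-)=0$ in Lemma \ref{lem: ext(A0,T)=0}. The vector $w=g(\tsymm{A_+'}{B_+'})$ has zero coefficient only at the vertex $g(\tsymm{A_0}{B_0})$, so it lies in the relative interior of the flooring wall and is a green boundary point of $\cB_+$. Therefore $\cT(w)=\cT(\cB_+)={}^\perp B_0\ni A_0$, while $M_+\in\cW(w)$ and $\Hom_\Lambda(\cT(w),\cW(w))=0$. This is sound.

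The comparison is as follows. The paper's version is a one-liner once Lemma \ref{lem: K not 0} is available. Yours uses neither the cotrace $C$ nor Lemma \ref{lem: K not 0}, at the price of the wall-and-chamber machinery (Remark \ref{rem: red/green walls}, Proposition \ref{prop: P(v) for green v}, Theorem \ref{thm: P(U0),P(U+)}). As a by-product, combined with Theorem \ref{thm: P(U) is standard}, it gives $\Gen A_+'=\cT(w)={}^\perp B_0$. So $A_0\in\Gen A_+'$, and $\Hom_\Lambda(A_0,M_+)=0$ also follows directly from $M_+\in(A_i')^\perp$.

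That is the correct form of your fallback. As written, the fallback's equality ${}^\perp B_+'=\Gen A'$ is false. Indeed ${}^\perp B_+'=\overline\cT(w)$ contains $M_+$, while $\Gen A'=\cT(\cB_+)=\cT(w)$ does not (cf. Theorem \ref{thm: not red or green}). Since your proof does not rely on the fallback, this does not affect it.
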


\begin{proof}
Since $B_0/C\subseteq \sqcup B_i^k$, $\Hom_\Lambda(A_0,B_0/C)=0$. Also, $M_+\subseteq B_0$. So, $\Hom_\Lambda(A_0,M_+)=0$. By Corollary \ref{cor: (A,Z)=0 then E(Z,B)=0} we conclude that $\Ext_\Lambda(M_+\sqcup B_0/C,B_0)=0$.
\end{proof}

Let $C_0$ be the ``radical co-trace'' of $B_0$ in $B_0$. This is $C_0=\bigcap \ker g_i$ where $g_i:B_0\to B_0$ are the generators of the maximal ideal of $\End_\Lambda(B_0)$. Then $C\cap C_0$ is the intersection of the kernels of the induced maps $\overline g_i:C\to C$. By Lemma \ref{lem: fi not onto or into}, $C\cap C_0\neq0$.

\begin{Proposition}\label{prop: C cap C0 is M+}
The canonical submodule $C\cap C_0$ of $B_0$ is isomorphic to $M_+$.
\end{Proposition}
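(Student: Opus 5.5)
We dualize the proof of Proposition \ref{prop: radical trace}. Set $Y=C$. By Lemma \ref{lem: K not 0}, $Y$ is a nonzero iterated self-extension of $M_+$, and it lies in the rank-one Jasso category $\bigcap_{i>0}(A_i'^\perp\cap\,^\perp B_i')$ described in Corollary \ref{cor: labels of domains}. We already know $C\cap C_0\neq0$: the induced maps $\overline g_i:C\to C$ of the nilpotent generators $g_i$ of $\End_\Lambda(B_0)$ satisfy the hypothesis of Lemma \ref{lem: fi not onto or into}, and part (b) of that lemma gives the nonvanishing. What remains is to show that $C\cap C_0$ is no larger than a single copy of $M_+$. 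Since $C\cap C_0$ is a nonzero submodule of $C$, it then suffices to produce a nilpotent endomorphism $g$ of $B_0$ with $C\cap\ker g\cong M_+$, because then $C\cap C_0\subseteq C\cap \ker g\cong M_+$.

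To construct $g$, we proceed inductively on the length of the $M_+$-filtration of $C$. If $C\not\cong M_+$, then $C$ has a quotient $C\onto C/C^1$ with $C/C^1\cong M_+$, where $C^1\ne 0$ is itself an iterated extension of $M_+$. Choose instead the dual step: since $C$ is an iterated self-extension, there is an epimorphism $C\onto M_+$, and composing it with an embedding $M_+\hookrightarrow C$ we get a map $C\to C$. More usefully, we want a map $h:C\to B_0$ with kernel a proper nonzero piece, extended to an endomorphism of $B_0$. Concretely, take $h:C\onto M_+'\subseteq C\subseteq B_0$, where $M_+'$ is a copy of $M_+$ at the bottom of $C$. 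We extend $h$ along the inclusion $C\hookrightarrow B_0$. The obstruction lies in $\Ext^1_\Lambda(B_0/C,B_0)$, which vanishes by Lemma \ref{lem: Ext(-,B0)=0}. This gives $g_1:B_0\to B_0$ with $g_1|_C=h$. The map $g_1$ is not an isomorphism, since it is not injective on $C$, so it lies in the radical of $\End_\Lambda(B_0)$ (which is local, as $B_0$ is indecomposable) and is therefore nilpotent. Moreover, $\ker(g_1)\cap C=\ker h$ has strictly shorter $M_+$-filtration.

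We then iterate: replace $C$ by $C\cap\ker g_1$ (still an iterated extension of $M_+$, because the Jasso category is wide) and repeat. At each stage the extension problem lives in $\Ext^1_\Lambda(B_0/(C\cap\ker g_1\cap\cdots),B_0)$. For the induction, we need this group to vanish. The plan is to show that $B_0/(C\cap \ker g)$ is an extension of $B_0/C$ by a module built from copies of $M_+$, and then apply Lemma \ref{lem: Ext(-,B0)=0}: both $B_0/C$ and $M_+$ have vanishing $\Ext^1$ into $B_0$, so any such extension does too. This is dual to the repeated use of $\Ext^1_\Lambda(A_0,T\sqcup X_0)=0$ in Proposition \ref{prop: radical trace}.

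Finally, the maps $g_j$ must be combined into a single nilpotent $g$, or we can simply note that $C\cap C_0\subseteq \bigcap_j (C\cap \ker g_j)$ and that this iterated intersection is a single copy of $M_+$. I expect the main obstacle to be this bookkeeping: making sure that at each step the kernel shrinks by exactly one copy of $M_+$ and that the relevant $\Ext^1$ vanishes for the intermediate quotient, rather than only for $B_0/C$ itself.
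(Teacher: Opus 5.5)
Your proposal follows the paper's proof. You extend $C\onto M_+\hookrightarrow B_0$ to an endomorphism of $B_0$ using $\Ext^1_\Lambda(B_0/C,B_0)=0$. It is nilpotent because it is not injective and $\End_\Lambda(B_0)$ is local. You then iterate on the kernel $Z$, using that $B_0/Z$ is an extension of $B_0/C$ by copies of $M_+$, so Lemma \ref{lem: Ext(-,B0)=0} still kills the obstruction. Your bookkeeping through $C\cap C_0\subseteq\bigcap_j(C\cap\ker g_j)$ is more accurate than the paper's assertion $\ker g'\cap C=Z_0$, since the extension $g'$ is only controlled on $Z$.

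One step does not follow as you state it, and the paper's last sentence makes the same jump. Let $M'\cong M_+$ be the last kernel produced by your iteration. From $0\neq C\cap C_0\subseteq M'$ you cannot yet conclude $C\cap C_0=M'$. The reason is that $M_+$ is simple only in the rank-one wide subcategory $\cW_+=\bigcap_{i>0}\bigl((A_i')^\perp\cap{}^\perp B_i'\bigr)$, not in $mod\text-\Lambda$, and in general it has proper nonzero $\Lambda$-submodules. So your ``it then suffices'' is not justified as written.

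The fix is short:
\begin{enumerate}
\item Every endomorphism $g$ of $B_0$ maps $C$ into $C$: if $\psi:B_0\to B_j'$, then $\psi g$ is again a map $B_0\to B_j'$, so it vanishes on $C$.
\item Hence $C\cap C_0=\ker\bigl((\overline g_i)_i:C\to C^m\bigr)$ is the kernel of a morphism between objects of $\cW_+$, so it lies in $\cW_+$ because $\cW_+$ is wide.
\item The inclusion $C\cap C_0\hookrightarrow M'$ is therefore a nonzero subobject in $\cW_+$ of the $\cW_+$-simple object $M'$ (its cokernel also lies in $\cW_+$). This forces $C\cap C_0=M'\cong M_+$.
\end{enumerate}
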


\begin{proof}
By Lemma \ref{lem: K not 0}, $C$ is an iterated self-extension of $M_+$. Suppose $C\not\cong M_+$. Then there is an epimorphism $C\to M_+$ with kernel $Z\neq0$. Composing with the inclusion $M_+\subseteq B_0$ we get $g:C\to B_0$ with kernel $Z$. Since $\Ext_\Lambda^1(B_0/C,B_0)=0$ by Lemma \ref{lem: Ext(-,B0)=0}, $g$ extends to a map $\overline g:B_0\to B_0$ with $\ker \overline g\cap C=Z\neq0$. So, $\overline g$ is a nilpotent endomorphism of $B_0$ and $C_0\subseteq \ker \overline g$. If $Z\cong M_+$ we stop, otherwise, take an epimorphism $Z\to M_+$ with kernel $Z_0$. Since $B_0/Z$ is an extension of $M_+$ by $B_0/C$, $\Ext_\Lambda^1(B_0/Z,B_0)=0$ by Lemma \ref{lem: Ext(-,B0)=0}. So the map $Z\to M_+$ extends to $g':B_0\to B_0$ with $\ker g'\cap C=Z_0$. Proceeding in this way, we get $g'':B_0\to B_0$ with $\ker g''\cap C\cong M_+$. Since $C_0\subseteq \ker g''$ we get $C_0\cap C\cong M_+$. 
\end{proof}
}

We summarize this.

{
\begin{Theorem}\label{thm: formula for M0, M+} Let $\symm{A_0}{B_0}$ be an indecomposable balanced pair with $A_0\neq0$. Then there is an open chamber $\cB_-$ on the negative side of $g\!\left(\symm{A_0}{B_0}\right)$ which is the interior of an $(n-1)$-simplex $\Delta_-$ when intersected with the unit sphere $S^{n-1}$. All but one of the walls of $\cB_-$ are $\cD_\Lambda(M_i)$ where $M_i$ are the simple objects of the wide subcategory $\cW\left(g\!\left(\symm{A_0}{B_0}\right)\right)$. The remaining wall is the covering wall $\cD_\Lambda(M_-)$. The vertex of $\Delta_-$ opposite $\cD_\Lambda(M_i)$ is $g\!\left(\symm{A_i}{B_i}\right)$ where the nonzero $A_i$ form the co-Bongartz complement of $A_0$. Let $T$ be the trace of $\bigsqcup A_i$ in $A_0$ and let $T_0$ be the radical trace of $A_0$ in $A_0$. Then $A_0/(T+T_0)\cong M_-$. 

Dually, if $B_0\neq0$, there is an open chamber $\cB_+$ on the positive side of $g\!\left(\symm{A_0}{B_0}\right)$ with walls $\cD_\Lambda(M_i)$ and the flooring wall $\cD_\Lambda(M_+)$. The chamber $\cB_+$ is the cone on the interior of an $(n-1)$-simplex $\Delta_+$ with vertices $g\!\left(\symm{A_0}{B_0}\right)$ and $g\!\left(\symm{A_i'}{B_i'}\right)$ opposite $\cD_\Lambda(M_i)$ where the $A_i'$ form the Bongartz complement of $A_0$. Let $C$ be the intersection of all kernels of maps $B_0\to B_i'$ for $i>0$ and let $C_0$ be the intersection of all kernels of nilpotent maps $B_0\to B_0$. Then $C\cap C_0\cong M_+$.
\end{Theorem}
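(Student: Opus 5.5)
This theorem is a summary, so the plan is to assemble it from the preceding results, treating the $\cB_-$ half and the $\cB_+$ half separately and dually.

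\emph{Existence and shape of $\cB_-$.} Since $\symm{A_0}{B_0}$ is indecomposable and $n\ge 2$ is tacitly assumed (otherwise the pair is complete and Remark \ref{rem: B-B+ do not exist} applies), the pair is non-complete. Corollary \ref{cor: vertices of U0} then gives the upper chamber $\cB_-$. It is the cone on the interior of the simplex $\Delta_-$, whose vertices are $g\!\left(\symm{A_0}{B_0}\right)$ and the $g$-vectors $g\!\left(\symm{A_i}{B_i}\right)$ of the generalized co-Bongartz complement. Because $A_0\neq0$, the nonzero $A_i$ are exactly the co-Bongartz complement of $A_0$ (Def. \ref{def: generalized coB}).

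\emph{The walls of $\cB_-$.} The $n-1$ facets of $\Delta_-$ that contain $v_0=g\!\left(\symm{A_0}{B_0}\right)$ are the facets opposite the vertices $g\!\left(\symm{A_i}{B_i}\right)$, $i>0$. Each such facet lies in a wall $\cD_\Lambda(M)$ that contains $v_0$, so $M\in\cW(v_0)$ by Corollary \ref{cor: M in J, g in D(M)}. We then want to show that these $M$ are exactly the simple objects $M_i$ of $\cW(v_0)$, matched one-to-one with the facets. For this we use Corollary \ref{cor: green path hits simple domains} together with Def. \ref{def: covering domain}.

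\emph{The remaining wall.} The facet not containing $v_0$ is the one opposite $v_0$. By Proposition \ref{prop: there is one positive and one negative domain}, using $\cB_-\neq\cU_{neg}$ because $A_0\neq0$, this facet is red; it is the covering wall $\cD_\Lambda(M_-)$. The isomorphism $A_0/(T+T_0)\cong M_-$ is then exactly Proposition \ref{prop: radical trace}, with $T$ the trace of $\bigsqcup A_i$ in $A_0$.

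\emph{The dual half, for $B_0\neq0$.} Corollary \ref{cor: vertices of U+} gives $\cB_+$ as the cone on $\Delta_+$. Its vertices are $v_0$ together with the generalized Bongartz complement $g\!\left(\symm{A'_i}{B'_i}\right)$. We must check that this generalized complement is the ordinary Bongartz complement of $A_0$. This holds because the component $\symm{A_0}{B_0}$ is not of the form $\symm0{I_j}$, since $A_0\neq0$, so case (1) of Def. \ref{def: generalized Bongartz} applies (cf. Remark \ref{rem: A'' are nonzero}). As before, the facets through $v_0$ lie in the walls $\cD_\Lambda(M_i)$, which continue across $v_0$. The opposite facet is green by Proposition \ref{prop: there is one positive and one negative domain}(b), using $\cB_+\neq\cU_{pos}$ because $B_0\neq0$; this is the flooring wall. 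Then $C\cap C_0\cong M_+$ is Proposition \ref{prop: C cap C0 is M+}, built on Lemmas \ref{lem: K not 0} and \ref{lem: Ext(-,B0)=0}.

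\emph{Main obstacle.} The step I expect to be delicate is the identification of walls. We need each facet of $\Delta_\pm$ through $v_0$ to lie in $\cD_\Lambda(M_i)$ for a \emph{simple} object $M_i$ of $\cW(v_0)$, and we need the bijection between facets and simples. I would argue it as follows. The facet opposite $g\!\left(\symm{A_i}{B_i}\right)$ is the cone on an almost complete balanced pair containing $\symm{A_0}{B_0}$. By Corollary \ref{cor: labels of domains}, its Jasso category is generated by a single object $N_i$, and $N_i\in\cW(v_0)$. Corollary \ref{cor: green path hits simple domains}(a) shows that every simple $M_j$ of $\cW(v_0)$ appears as a wall near $v_0$, hence as a wall of both chambers. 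Since there are exactly $n-1$ simples and $n-1$ facets through $v_0$, and distinct simples have non-proportional dimension vectors, counting gives $N_i=M_{\sigma(i)}$ for a bijection $\sigma$.
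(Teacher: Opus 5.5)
Your assembly matches the paper's. The paper states this theorem as a summary (``We summarize this'') of Corollaries \ref{cor: vertices of U0} and \ref{cor: vertices of U+}, Definitions \ref{def: covering domain} and \ref{def: flooring domain}, and Propositions \ref{prop: radical trace} and \ref{prop: C cap C0 is M+}, and you cite exactly these. For the red facet opposite $v_0$, you should add that all facets through $v_0$ are green, because $v_0$ is a green boundary point (Remark \ref{rem: red/green walls}). Only then must the red facet supplied by Proposition \ref{prop: there is one positive and one negative domain} be the opposite one.

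The paper handles the wall identification only by citing Corollary \ref{cor: green path hits simple domains} inside Definition \ref{def: covering domain}, so you are right to single it out. Your counting argument for it, however, has a gap. Knowing that $\cD_\Lambda(M_j)$ contains a neighbourhood of $v_0$ in $H_j=(\undim M_j)^\perp$ tells you only that $H_j$ misses the open cone $\cB_-$. So $H_j$ is a supporting hyperplane of $\overline{\cB_-}$ at $v_0$, and it may meet $\overline{\cB_-}$ only along a face of codimension $\ge 2$. Hence ``hence as a wall of both chambers'' does not follow.

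This genuinely happens for other chambers incident to $v_0$. In the local $A_2$ picture with walls $x_1=0$, $x_2=0$ and $\{x_1+x_2=0,\ x_1\le 0\}$, the line $x_2=0$ meets the closure of the chamber $\{x_1<0,\ x_1+x_2>0\}$ only at the origin. Any valid argument must use what distinguishes $\cB_\pm$, namely $v_0\mp\varepsilon\eta\in\cB_\mp$, and yours never does. You also assume without proof that distinct simples of $\cW(v_0)$ have non-proportional dimension vectors.

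The repair is to count in the other direction.
\begin{enumerate}
\item Let $F_i$ be a facet of $\overline{\cB_-}$ through $v_0$. Let $N_i$ be the unique simple object of the Jasso category $J_i$ of the almost complete pair spanning $F_i$ (Corollary \ref{cor: labels of domains}).
\item The set $\cB_-$ is open, convex, disjoint from $\cD_\Lambda$, and contains $v_0-\varepsilon\eta$. Hence every $w\in\overline{\cB_-}$ near $v_0$ satisfies $w\ast\undim M_j\le 0$ for each simple $M_j$ of $\cW(v_0)$. Otherwise a segment in $\cB_-$ close to $v_0$ would cross $H_j$ inside $\cD_\Lambda(M_j)$.
\item Let $N_i\onto M_j$ be a simple quotient in $\cW(v_0)$. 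For $w\in F_i$, the condition $w\in\cD_\Lambda(N_i)$ forces $w\ast\undim M_j\ge 0$. So near $v_0$ the facet $F_i$ lies in $H_j$, and hence in $\cD_\Lambda(M_j)$.
\item Therefore $M_j\in\cW(w)=J_i$ for $w$ in the relative interior of $F_i$ near $v_0$ (Theorem \ref{thm: the open simplex}). Since $N_i$ is simple in $J_i$, we get $N_i\cong M_j$.
\end{enumerate}
So the $n-1$ facet labels are pairwise distinct simple objects of $\cW(v_0)$, since distinct facets span distinct hyperplanes. Theorem \ref{thm: J perpendicular} says there are no other simples. Non-proportionality of the dimension vectors then comes out as a consequence rather than an input. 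For $\cB_+$, argue dually with simple subobjects and $v_0+\varepsilon\eta\in\cB_+$.
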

}

\begin{Example}\label{eg: A3 Nakayama}
Let $\Lambda_0$ be the Nakayama algebra given by a 3-cycles $1\to 2\to 3\to 1$ with $rad^7=0$. There are 9 bricks: the simples modules $S_i=P_i/rad$, the length 2 objects $P_i/rad^2$ and the length 3 objects $P_i/rad^3$. The first 6 are rigid, but the length 3 objects $P_i/rad^3$ are not rigid and these all have the same dimension vector $(1,1,1)$. Figure \ref{Fig: A3 exampls} shows the stability diagram of $\Lambda_0$. In this example, $A_0=P_2$. So, the torsion class of the chamber $\cU$ on the negative side of $v_0=g(A_0)=g(P_2)$ is $\Gen\,P_2$. We also have $v_1=g\!\left(\symm{A_1}{B_1}\right)=g\!\left(\symm{S_2}{S_3}\right)$ and $v_2=g\!\left(\symm{A_2}{B_2}\right)$ where $A_2={\genfrac{}{}{0pt}{} 23}$ and $B_2={\genfrac{}{}{0pt}{} 31}$. The only rigid objects in $\Gen\,P_2$ are $A_1=S_2$ and $A_2={\genfrac{}{}{0pt}{} 23}$, $T=S_2$ is the trace of $A_1$ and $A_2$ in $A_0$ and $T_0$, the radical trace of $A_0$ in $A_0$ is $2312$. So $A_0/T$ has composition series $231231$ making it a self-extension of $M_-=P_2/rad^3$ and $A_0/(T_0+T)=231=M_-$.
\end{Example}

\begin{figure}[htbp]
\begin{center}
\begin{tikzpicture}[scale=1.5]
\coordinate (A) at (0,1.732);
\coordinate (2A) at (0,3.464);
\begin{scope}[rotate=-60]
\begin{scope}[rotate=30]
\begin{scope}[rotate=30] 
\draw[very thick] (0,0) circle[radius=2cm];
\draw[very thick] (1,1.732) circle[radius=2cm];
\draw[very thick] (-1,1.732) circle[radius=2cm];
\begin{scope}
\clip (-2.5,-.1)rectangle (0,3.5);
\draw[very thick] (0,1.732) ellipse [x radius=2cm,y radius=1.732cm];
\end{scope}
\end{scope} 

\begin{scope}
\clip (-2,1) rectangle (2,3.1);
\draw[very thick] (0,1) ellipse [x radius=1.732cm,y radius=2cm];
\end{scope}
\end{scope}

\begin{scope}
\clip (-1,-2) rectangle (1.1,2);
\draw[very thick] (-1,0) ellipse [y radius=1.732cm,x radius=2cm];
\end{scope}
\end{scope}

\draw(-2.5,3.5) node{$\cD_\Lambda(S_1)$};
\draw(2.5,3.5) node{$\cD_\Lambda(S_2)$};
\draw(1.5,-1.5) node[right]{$\cD_\Lambda(S_3)$};
\draw(2,2.5) node{$D\binom23$};
\draw(.5,-1.2) node{$D\binom31$};
\draw(-2,2) node[left]{$D\binom12$};
\draw[black!45!green](1.85,1.5) node{$D{\tiny\begin{pmatrix} 2\\3\\1\end{pmatrix}}$};
\draw (.85,1.65) node{$v_0$};
\draw (.6,2.6) node{$v_1$};
\draw (1.55,0.95) node{$v_2$};

\draw[black!45!green](1.3,1.9) node{$\cU$};

\draw[very thick,gray] (0,1.155) circle[radius=1.78cm];

\begin{scope}
\clip (0.7,3)rectangle (-2,0.95);
\draw[very thick,blue] (0,1.155) circle[radius=1.78cm];
\end{scope}

\begin{scope}
\clip (0.7,3)rectangle (2,-.3);
\draw[very thick,black!45!green] (0,1.155) circle[radius=1.78cm];
\end{scope}
%
%
%
\end{tikzpicture}
\caption{
Here $v_0=g\!\left(\symm{P_2}0\right)$, $v_1=g\!\left(\symm{S_2}{S_3}\right)$ and $v_2=g\!\left(\tsymm{\genfrac{}{}{0pt}{} 23}{\genfrac{}{}{0pt}{} 31} \right)$. The chamber $\bg{\cU}$ has walls $\cD_\Lambda(S_1)$ on the left, $\cD_\Lambda(S_3)$ below and $D_\Lambda\left(P_2/rad^3\right)$ on the right. In the notation of Figure \ref{Fig: hour glass}, $M_-=P_2/rad^3, M_1=S_2,M_2=S_1$, $A_0=P_2,B_0=0$, $A_1=S_2$, $B_1=S_3$, $A_2=\genfrac{}{}{0pt}{} 23$ and $B_2=\genfrac{}{}{0pt}{} 31$.
The trace of $A_1$ and $A_2$ in $A_0$ is $T=S_2$, the socle of $P_2$ and $A_0/T$, with composition series $231231$, is a self extension of $M_-=P_2/rad^3$ which has composition series $231$ and $A_0/(T+T_0)\cong M_-$.
}
\label{Fig: A3 exampls}
\end{center}
\end{figure}
%


}



{

\section{Duality}\label{sec: duality}

There is an improved version of Theorem \ref{thm: formula for M0, M+} in the hereditary case which goes as follows.

\begin{Theorem}\label{thm: A_0=M_0, hereditary}
Given the situation in Figure \ref{Fig: hour glass} for $\Lambda=H$ a hereditary algebra, we have $M_-=A$ and $M_+=B$.
\end{Theorem}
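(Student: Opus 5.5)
We use Theorem \ref{thm: formula for M0, M+}, which gives $M_-\cong A_0/(T+T_0)$ and $M_+\cong C\cap C_0$. Here $A=A_0$ and $B=B_0$ are the indecomposable modules of the balanced pair. We show that over a hereditary algebra $H$ both $T$ and $T_0$ vanish, and dually $C=C_0=B_0$; the statement then follows.

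First, $T_0$. Since $A_0$ is indecomposable and $\tau$-rigid, over a hereditary algebra it is rigid: $\Ext^1_H(A_0,A_0)\cong D\Hom_H(A_0,\tau A_0)=0$. By Happel--Ringel, an indecomposable module with $\Ext^1_H(A_0,A_0)=0$ is a brick. So the maximal ideal $\mathfrak m$ of $\End_H(A_0)$ is zero and $T_0=0$. Dually, $B_0=\tau A_0$ (or an indecomposable injective when $A_0=0$) is an indecomposable rigid module, hence a brick, and $C_0=B_0$.

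Second, $T=\Tr_{\sqcup A_i}A_0$, where the $A_i$ ($i>0$) are the nonzero components of the co-Bongartz complement of $A_0$. I claim that every map $f:A_i\to A_0$ is zero. Over a hereditary algebra the image of $f$ is a submodule of $A_0$, and since $A_i\in\Gen A_0$ the image also lies in $\Gen A_0$. I would use the Happel--Ringel lemma: for indecomposable $X,Y$ with $\Ext^1_H(Y,X)=0$, every nonzero map $X\to Y$ is mono or epi. Here $\Ext^1_H(A_0,A_i)=0$ because $A_0\sqcup A_i$ is $\tau$-rigid, hence rigid. So a nonzero $f:A_i\to A_0$ would be a monomorphism or an epimorphism.

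If $f$ is epi, then $A_0\in\Gen A_i$, so $\Gen A_0\subseteq \Gen(\sqcup_{i>0}A_i)\subseteq\Gen A_0$. The generalized co-Bongartz completion of $\symm{A_-}{B_-}$ would then have torsion class $\Gen A_0=\Gen A'$. By Theorem \ref{thm: co-Bongatz} it would coincide with the completion of $\symm{A}{B}$ and contain $\symm{A_0}{B_0}$. This contradicts the fact that $g\!\left(\symm{A_-}{B_-}\right)$ is a red boundary point of $\cB_-$, as in Lemma \ref{lem: ext(A0,T)=0}: that fact forces $M_-\in\Gen A_0$, whereas $\Gen A_0=\Gen A_-$ would put $M_-$ in a torsion class it is orthogonal to.

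If $f$ is mono, I would use $\Ext^1_H(A_0,A_i)=0$ together with $A_i\in\Gen A_0$ to get a surjection $A_0^m\to A_i$. Composing with $f$ gives nonzero endomorphisms of $A_0$, which are isomorphisms since $A_0$ is a brick. Hence $f$ is split, and then $A_i\cong A_0$, a contradiction. Thus $T=0$ and $M_-\cong A_0$. Dually, every map $B_0\to B_i'$ vanishes, so $C=B_0$ and $M_+\cong B_0$.

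The main obstacle is the vanishing of the trace $T$, that is, $\Hom_H(A_i,A_0)=0$ for the co-Bongartz complement. The epi case is the delicate one: I expect to need the identity $\Gen A_0=\Gen(A_0\sqcup A_-)$ and the rank-one Jasso category of Corollary \ref{cor: labels of domains} to rule out $\Gen A_-=\Gen A_0$. An alternative I would try first is a direct argument: $M_-\cong A_0/T$ up to self-extension, and $\Hom(A_0,M_-)\ne0$ with $M_-\in A_i^\perp$. If $T\neq0$, then $\undim M_-$ would be a proper positive piece of $\undim A_0$, and this should conflict with hereditary rigidity via the Euler form $\langle \undim A_0,\undim M_-\rangle$.
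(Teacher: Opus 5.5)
Your proof is correct, but it takes a different route from the paper. The paper does not use the trace formulas of Theorem \ref{thm: formula for M0, M+} here. It quotes \cite[Proposition 2.3.33]{IOTW2}, from the theory of semi-invariant pictures, for $M_-=A$. It then gets $M_+=B$ by duality: $D$ sends the pair $(A_0,B_0)$ to $(DB_0,DA_0)$, negates $g$-vectors and domains, and swaps upper and lower chambers (Proposition \ref{prop: DB+=B-}). So the flooring wall $\cD_H(M_+)$ becomes the covering wall $\cD_{H\op}(DM_+)$, and the first statement over $H\op$ gives $DM_+\cong DB_0$.

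You instead stay inside the paper and show that every correction term in $A_0/(T+T_0)$ and $C\cap C_0$ vanishes. This makes the result self-contained and shows that heredity enters only through the Happel--Ringel fact that $A_0$ and $\tau A_0$ are bricks.

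Your $T=0$ step needs less than you use: the mono/epi dichotomy is unnecessary. Take any nonzero $f:A_i\to A_0$ and any epimorphism $p=(p_j):A_0^m\to A_i$. Some $f\circ p_j$ is a nonzero, hence invertible, endomorphism of the brick $A_0$. So $p_j$ is a split monomorphism and $A_0\cong A_i$, a contradiction. Together with the paper's general formula, this gives $M_-\cong A_0$ whenever $A_0$ is a brick, over any finite dimensional algebra. Your epi case is also valid, and it is contradicted even more directly by $\Hom_\Lambda(A_0,M_-)\neq0=\Hom_\Lambda(A_i,M_-)$ from the proof of Lemma \ref{lem: T is not A0}. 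So the hesitation in your last paragraph is unnecessary.

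The one thing your ``dually'' leaves implicit is that each $B_i'$ is cogenerated by $B_0$. This holds because $\mathrm{Sub}\,B_0$ and $\mathrm{Sub}\,B'$ are torsion-free classes (both modules being $\tau^{-1}$-rigid) with the same torsion class ${}^\perp B_0={}^\perp B'$ (Theorem \ref{thm: Bongartz complement}), so they coincide. Alternatively, you can reduce to the $M_-$ case over $H\op$, as the paper does.

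In exchange, the paper's route avoids the trace and cotrace bookkeeping entirely and needs only one of the two statements, the other coming free from duality. The cost is that it rests on an external result rather than on the paper's own formulas.
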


The first statement ($M_-=A$) is proved in \cite[Proposition 2.3.33]{IOTW2}. The second statement follows by duality which we now explore.

{
Recall that duality over the ground field $K$, given by $D=\Hom_K(-,K)$ is an anti-isomorphism of categories
\[
	mod\text-\Lambda\xrightarrow{\cong} \Lambda\text-mod=mod\text-\Lambda\op.
\]

{
Duality is an exact contravariant functor with sends projective $\Lambda$-modules to injective $\Lambda\op$-modules and injective $\Lambda$-modules to projective $\Lambda\op$-modules.

We also have the following well-known statement.

\begin{Lemma}\label{lem: dual of ASS is ASS}
Let $A$ be an indecomposable nonprojective module and let $B=\tau A$. Then $DB$ is an indecomposable nonprojective $\Lambda\op$-module and $\tau DB=DA$.
\end{Lemma}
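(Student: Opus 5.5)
The plan is to compute $\tau$ on both sides with the paper's own definition of $\tau$, namely through minimal projective presentations and the Nakayama functor, and to compare the two computations using the exact contravariant duality $D$.

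Start with a minimal projective presentation $P_1\xrightarrow{p}P_0\to A\to 0$. Since $A$ is indecomposable and nonprojective, $P_1\neq 0$. By definition $B=\tau A=\ker(\nu p:\nu P_1\to\nu P_0)$, and this gives a minimal injective copresentation
\[
0\to B\to \nu P_1\xrightarrow{\nu p}\nu P_0,
\]
as already used in the proof of Proposition \ref{prop: exact sequence for A/B}. Applying $D$ turns it into an exact sequence of $\Lambda\op$-modules
\[
D\nu P_0\xrightarrow{D\nu p} D\nu P_1\to DB\to 0.
\]
Here $D\nu P_i=\Hom_\Lambda(P_i,\Lambda)=P_i^\ast$ is projective over $\Lambda\op$. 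Minimality is preserved because $D$ interchanges injective envelopes and projective covers. So this is a minimal projective presentation of $DB$.

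Next I check the basic properties of $DB$. Since $D$ is an anti-equivalence, $DB$ is indecomposable exactly when $B$ is. I will use the standard fact that $\tau$ of an indecomposable nonprojective module is indecomposable and noninjective; the paper appeals to this implicitly in Proposition \ref{prop: J(sum)}(a). It follows that $DB$ is indecomposable. It is also nonprojective, because $B$ is noninjective, or directly because the term $D\nu P_0=P_0^\ast$ in the minimal presentation is nonzero.

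Now compute $\tau DB$ over $\Lambda\op$. It is the kernel of $\nu_{\Lambda\op}(D\nu p)$. Writing $\nu_{\Lambda\op}=D\Hom_{\Lambda\op}(-,\Lambda)$, we get
\[
\nu_{\Lambda\op}(P_i^\ast)=D\Hom_{\Lambda\op}(P_i^\ast,\Lambda)\cong D P_i^{\ast\ast}\cong DP_i,
\]
naturally in $P_i$, and under these identifications $\nu_{\Lambda\op}(D\nu p)=Dp$. Therefore
\[
\tau DB=\ker(Dp:DP_0\to DP_1)=D(\operatorname{coker}p)=DA,
\]
where the middle equality uses that $D$ is exact and contravariant.

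The main obstacle is the naturality check, that is, verifying that $\nu_{\Lambda\op}(D\nu p)$ really corresponds to $Dp$ under the identifications $P^{\ast\ast}\cong P$ and $\nu_{\Lambda\op}\circ D\circ\nu\cong D$ on projectives. I would handle it by checking the isomorphism on $P=\Lambda$ and extending by additivity, together with the naturality of the double-dual map $P\to P^{\ast\ast}$. This is the same kind of argument as in Lemma \ref{lem: duality of nu P}.
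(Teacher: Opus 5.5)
Your proof is correct, but it takes a different route from the paper. The paper simply quotes \cite[p.~144]{ARS}: a sequence $0\to X\to E\to Y\to 0$ is almost split if and only if $0\to DY\to DE\to DX\to 0$ is almost split. Dualizing the almost split sequence $0\to\tau A\to E\to A\to 0$ then exhibits $DB=D\tau A$ as the right end term of an almost split sequence in $mod\text-\Lambda\op$ whose left end term is $DA$. This gives $\tau DB=DA$. It also gives, for free, that $DB$ is indecomposable and nonprojective, since these are general properties of right end terms of almost split sequences.

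You instead compute directly from the paper's own definition of $\tau$ via $\nu$. Your minimal presentation $P_0^\ast\xrightarrow{p^\ast}P_1^\ast\to DB\to 0$ says in effect that $DB\cong\Tr A$. Your identification $\nu_{\Lambda\op}(p^\ast)\cong Dp$ is the classical argument that $\Tr\Tr A\cong A$ and $\tau=D\Tr$. This route avoids the existence theory of almost split sequences. It also works verbatim for any $A$ without projective summands. That hypothesis is exactly what makes $0\to B\to\nu P_1\to\nu P_0$ minimal, and it is the generality in which the lemma is actually used in Lemma \ref{lem: DB/DA is balanced}, where $A_0$ may be decomposable. The price is that you must import separately the fact that $\tau A$ is indecomposable and noninjective, and check minimality and naturality by hand. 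You handle all of these correctly.
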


\begin{proof}
In \cite[p.144]{ARS} it says: ``It is clear that an exact sequence $0\to A\xrightarrow f B \xrightarrow g C\to 0$ is almost split if and only if $0\to DC\xrightarrow{D(g)} DB\xrightarrow{D(f)} DA\to 0$ is almost split.'' This includes the statement that $A=\tau C$ if and only if $DC=\tau DA$.
\end{proof}
}
\begin{Lemma}\label{lem: DB/DA is balanced}
If $\symm AB$ is a balanced pair in $mod\text-\Lambda$ if and only if $\symm{DB}{DA}$ is a balanced pair in $mod\text-\Lambda\op$.
\end{Lemma}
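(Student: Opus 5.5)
We check the three conditions of Def. \ref{def: balanced notation A/B} for $\symm{DB}{DA}$ over $\Lambda\op$, assuming $\symm AB$ is balanced over $\Lambda$. The converse then follows by applying the same argument to $\Lambda\op$, since $D$ is a duality, $DD\cong\mathrm{id}$ and $(\Lambda\op)\op=\Lambda$.

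Condition (1) is immediate. $D$ is a contravariant equivalence, so
\[
\Hom_{\Lambda\op}(DB,DA)\cong\Hom_\Lambda(A,B)=0 .
\]

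For conditions (2) and (3), first reduce to indecomposable summands. By Proposition \ref{prop: J(sum)}(a), $\symm AB$ is a direct sum of indecomposable balanced pairs of three kinds: $\symm{A_i}{\tau A_i}$ with $A_i$ indecomposable, nonprojective and $\tau$-rigid; $\symm{P}{0}$ with $P$ indecomposable projective; and $\symm{0}{I}$ with $I$ indecomposable injective. Then
\[
A=\bigsqcup A_i\sqcup P, \qquad B=\bigsqcup \tau A_i\sqcup I .
\]
Applying $D$, and using that $D$ interchanges projectives and injectives, gives
\[
DB=\bigsqcup D\tau A_i\sqcup DI, \qquad DA=\bigsqcup DA_i\sqcup DP,
\]
where $DI$ is projective and $DP$ is injective over $\Lambda\op$.

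By Lemma \ref{lem: dual of ASS is ASS}, each $D\tau A_i$ is an indecomposable nonprojective $\Lambda\op$-module with $\tau(D\tau A_i)=DA_i$. Summing over $i$, and using that $\tau$ kills projectives, gives
\[
\tau(DB)=\bigsqcup DA_i .
\]
Hence $DA\cong\tau(DB)\sqcup DP$ with $DP$ injective, which is condition (2) for $\symm{DB}{DA}$.

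For condition (3) we need $DB\cong\tau^{-1}(DA)\sqcup(\text{projective})$. Lemma \ref{lem: dual of ASS is ASS} gives $\tau^{-1}DA_i=D\tau A_i$, because $\tau$ and $\tau^{-1}$ are mutually inverse between indecomposable nonprojectives and indecomposable noninjectives. This step needs $DA_i$ to be noninjective, which holds since $DA_i$ is the $\tau$ of a nonprojective indecomposable. Also $\tau^{-1}$ kills the injective $DP$. Therefore
\[
\tau^{-1}DA=\bigsqcup D\tau A_i \quad\text{and}\quad DB\cong\tau^{-1}DA\sqcup DI
\]
with $DI$ projective, which is condition (3).

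The only step needing care is matching the nonprojective and noninjective indecomposables under $D$ and $\tau^{\pm1}$. It is handled entirely by Lemma \ref{lem: dual of ASS is ASS} together with the exchange of projectives and injectives under $D$; the rest is bookkeeping of direct sums.
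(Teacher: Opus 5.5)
Your proof is correct and is essentially the paper's argument. You split $A$ and $B$ into their $\tau$-related parts plus a projective (resp. injective) summand, then use $\Hom_{\Lambda^{op}}(DB,DA)\cong\Hom_\Lambda(A,B)=0$, the exchange of projectives and injectives under $D$, and Lemma \ref{lem: dual of ASS is ASS} to get $\tau(DB)\cong\bigsqcup DA_i$. You are in fact more explicit than the paper, which leaves condition (3) and the converse direction implicit.
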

{
\begin{proof}
Given a balanced pair $\symm AB$, $A=A_0\sqcup P$ and $B=B_0\sqcup I$ where $P$ is projective, $I$ is injective satisfying the condition that $B_0=\tau A_0$ and $\Hom_\Lambda(A,B)=0$. Then $\Hom_{\Lambda\op}(DB,DA)=0$, $\tau DB_0=DA_0$, $DP$ is injective and $DI$ is projective. So,
\[
	\symm{DB}{DA}= \symm{DB_0\textstyle\sqcup DI}{DA_0\textstyle\sqcup DP}
\]is a balanced pair in $mod\text-\Lambda\op$.
\end{proof}
}

\begin{Lemma}\label{lem: DJ(A/B)}
The dual of $J\!\left(\symm AB\right)$ is $J\!\left(\symm {DB}{DA}\right)$.
\end{Lemma}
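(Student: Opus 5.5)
The plan is to compute directly from the balanced formula for the Jasso category, Proposition \ref{thm: Jasso category}, which gives $J\!\left(\symm AB\right)=A^\perp\cap\,^\perp B$. By Lemma \ref{lem: DB/DA is balanced}, $\symm{DB}{DA}$ is a balanced pair in $mod\text-\Lambda\op$. Applying the same proposition over $\Lambda\op$ gives
\[
J\!\left(\symm{DB}{DA}\right)=(DB)^\perp\cap\,^\perp(DA),
\]
where the perpendicular categories are now taken inside $mod\text-\Lambda\op$. So it suffices to show that $D$ carries $A^\perp\cap\,^\perp B$ onto this category.

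The key step is that $D:mod\text-\Lambda\to mod\text-\Lambda\op$ is a contravariant equivalence. Hence for any $\Lambda$-modules $X,Y$ it induces an isomorphism $\Hom_\Lambda(X,Y)\cong\Hom_{\Lambda\op}(DY,DX)$. Applying this to $X\in mod\text-\Lambda$ gives two equivalences:
\[
\Hom_\Lambda(A,X)=0\iff \Hom_{\Lambda\op}(DX,DA)=0,
\]
so $X\in A^\perp$ if and only if $DX\in\,^\perp(DA)$, and
\[
\Hom_\Lambda(X,B)=0\iff \Hom_{\Lambda\op}(DB,DX)=0,
\]
so $X\in\,^\perp B$ if and only if $DX\in(DB)^\perp$.

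Combining these, $X\in J\!\left(\symm AB\right)$ if and only if $DX\in J\!\left(\symm{DB}{DA}\right)$. For the image to be exactly this category, note that every $\Lambda\op$-module has the form $DX$, since $DD\cong\mathrm{id}$. Therefore $D\big(J\!\left(\symm AB\right)\big)=J\!\left(\symm{DB}{DA}\right)$, which is the claim.

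No step here is a real obstacle. The only point needing care is bookkeeping: the roles of left and right perpendiculars swap under the contravariant functor, and this is exactly why numerator and denominator exchange places in the balanced notation.
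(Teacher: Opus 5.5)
Your proposal is correct and follows essentially the same route as the paper: both identify each Jasso category via the balanced formula $A^\perp\cap\,^\perp B$ and use the duality isomorphisms $\Hom_{\Lambda\op}(DX,DA)\cong\Hom_\Lambda(A,X)$ and $\Hom_{\Lambda\op}(DB,DX)\cong\Hom_\Lambda(X,B)$ to match the two conditions. You also spell out two points the paper leaves implicit: that $\symm{DB}{DA}$ is balanced (so the formula applies over $\Lambda\op$), and that every $\Lambda\op$-module is of the form $DX$.
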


\begin{proof}
We have $X\in A^\perp\cap\,^\perp B$ if and only if $DX\in DB^\perp\cap\,^\perp DA$ since $\Hom_{\Lambda\op}(DB,DX)=\Hom_{\Lambda}(X,B)$ and $\Hom_{\Lambda\op}(DX,DA)=\Hom_{\Lambda}(A,X)$.
\end{proof}
}

{
Next we will describe how the stability diagram of $mod\text-\Lambda$ is related to that of $mod\text-\Lambda\op$.

\begin{Lemma}\label{lem: dual of DL(M)}
Let $M$ be a $\Lambda$-module with dual $DM$ and let $v\in \RR^n$. Then $v\in \cD_\Lambda(M)$ if and only if $-v\in \cD_{\Lambda\op}(DM)$.
\end{Lemma}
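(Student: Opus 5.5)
The plan is to unwind Definition \ref{def: DL(M)} on both sides and compare the conditions directly, using only three facts:
\begin{itemize}
\item duality $D$ is exact and contravariant, so it interchanges submodules and quotient modules;
\item $\undim DX=\undim X$ for every module $X$;
\item $\Lambda$ and $\Lambda\op$ have simple modules of the same $K$-dimensions.
\end{itemize}
Because of the third fact, the modified dot product $\ast$ of \eqref{eq: modified dot product} is literally the same bilinear form on $\RR^n$ for both algebras.

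First I will record the index and dimension-vector bookkeeping. The simple $\Lambda\op$-modules are the duals $DS_i$, and $\dim_K DS_i=\dim_K S_i$. We index the simples of $\Lambda\op$ so that $DS_i$ is the $i$-th simple. Since $D$ is exact, it carries a composition series of $X$ to a composition series of $DX$ with factors $DS_i$. Hence $\undim DX=\undim X$, and $v\ast\undim DX=v\ast\undim X$ for all $v$.

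Next I will use exactness of $D$ to identify subobjects. The submodules $Y\subseteq DM$ are exactly the modules $D(M'')$ for quotients $M\onto M''$: a monomorphism $Y\hookrightarrow DM$ dualizes to an epimorphism $M\cong DDM\onto DY$, and conversely. Then I will compare the two sides.

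The condition $-v\in\cD_{\Lambda\op}(DM)$ means two things:
\begin{itemize}
\item $(-v)\ast\undim DM=0$, which is equivalent to $v\ast\undim M=0$;
\item $(-v)\ast\undim D(M'')\le 0$ for every quotient $M''$ of $M$, which is equivalent to $v\ast\undim M''\ge0$ for all quotients $M''$ of $M$.
\end{itemize}

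It then remains to show that, given $v\ast\undim M=0$, the quotient condition is equivalent to the submodule condition defining $\cD_\Lambda(M)$. This uses the short exact sequence $0\to M'\to M\to M''\to0$. Additivity of $\undim$ gives
\[
v\ast\undim M'=v\ast\undim M-v\ast\undim M''=-v\ast\undim M''.
\]
As $M'$ ranges over all submodules, $M''=M/M'$ ranges over all quotients, and conversely. So "$v\ast\undim M'\le0$ for all $M'\subseteq M$" is equivalent to "$v\ast\undim M''\ge0$ for all quotients $M''$". Combining these steps gives $v\in\cD_\Lambda(M)\iff -v\in\cD_{\Lambda\op}(DM)$.

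The only point needing care, and what I expect to be the main obstacle, is the indexing: the simple $\Lambda\op$-modules must be ordered so that dimension vectors are preserved and the weights $\dim_K S_i$ in $\ast$ agree. With $DS_i$ as the $i$-th simple, both hold. The rest is formal.
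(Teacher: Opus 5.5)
Your proposal is correct and follows essentially the same route as the paper: dualize to identify submodules of $DM$ with duals of quotients of $M$, use $\undim DM=\undim M$, and convert the submodule inequality into the quotient inequality via $\undim M''=\undim M-\undim M'$. Your explicit indexing of the simples (so that $\ast$ is the same form for $\Lambda$ and $\Lambda\op$) is a detail the paper leaves implicit. Your chain of equivalences also makes both directions of the ``if and only if'' explicit, where the paper only states that $-v$ solves the conditions.
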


\begin{proof}
This follows directly from the definition of $\cD_\Lambda(M)$. Recall that $v\in \cD_\Lambda(M)$ if and only if the following hold.
\begin{enumerate}
\item $v\ast \undim M=0$ and
\item $v\ast\undim M'\le 0$ for all $M'\subseteq M$.
\end{enumerate}
For $M'\subseteq M$ with quotient $M/M'=M''$, we have a short exact sequence
\[
	0\to M'\to M\to M''\to 0.
\]
Taking duals, we reverse this to get an exact sequence
\[
	0\to DM''\to DM\to DM'\to 0.
\]
Thus, the submodules of $DM$ are the duals of the quotient modules of $M$. Furthermore, $M$ and $DM$ have the same dimension vector and:
\[
	\undim M''=\undim M-\undim M'.
\]
So, $w\in \cD_{\Lambda\op}(DM)$ if and only if the following two conditions hold.
\begin{enumerate}
\item $w\ast \undim M=0$
\item $w\ast\undim M''\le 0$ for all quotient modules $M''= M/M'$ of $M$.
\end{enumerate}
The discussion above implies that $w=-v$ is a solution of these equations. This proves the lemma.
\end{proof}
}

{
\begin{Lemma}\label{lem: Dg=-gD} $g\!\left(\symm AB\right)=-g\!\left(\symm{DB}{DA}\right)$.
\end{Lemma}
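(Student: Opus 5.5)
The plan is to compute both sides directly from the definition $g\!\left(\symm AB\right)=\undim\topp A-\undim\soc B$, applied in $mod\text-\Lambda$ and in $mod\text-\Lambda\op$. By Lemma \ref{lem: DB/DA is balanced}, $\symm{DB}{DA}$ is a balanced pair over $\Lambda\op$, so its $g$-vector is defined and equals
\[
g\!\left(\symm{DB}{DA}\right)=\undim\topp DB-\undim\soc DA .
\]
Here dimension vectors over $\Lambda\op$ are indexed by the simple $\Lambda\op$-modules $DS_i$, $i=1,\dots,n$.

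The key step is to identify tops and socles under duality. Since $D$ is an exact contravariant equivalence, it sends the largest semisimple quotient of a module to its largest semisimple submodule. Concretely, $\topp X=X/rX$ is the largest semisimple quotient of $X$. So $D(\topp X)$ is the largest semisimple submodule of $DX$, giving $D(\topp X)\cong\soc DX$, and dually $D(\soc X)\cong\topp DX$. Moreover $D$ preserves composition multiplicities, sending $S_i$ to $DS_i$. Therefore $\undim DY=\undim Y$ for every $Y$, with the index convention just fixed; this was already used in the proof of Lemma \ref{lem: dual of DL(M)}.

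Combining these facts gives $\undim\topp DB=\undim D(\soc B)=\undim\soc B$ and $\undim\soc DA=\undim D(\topp A)=\undim\topp A$. Hence
\[
g\!\left(\symm{DB}{DA}\right)=\undim\soc B-\undim\topp A=-g\!\left(\symm AB\right).
\]

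The only point needing care is the indexing convention: we must identify $\RR^n$ for $\Lambda$ with $\RR^n$ for $\Lambda\op$ via $S_i\leftrightarrow DS_i$. This is the same convention under which Lemma \ref{lem: dual of DL(M)} is stated, so the two lemmas are compatible. As a consistency check, combining this with Lemma \ref{lem: dual of DL(M)} and Lemma \ref{lem: DJ(A/B)} recovers Theorem \ref{thm: W(g)=J} for $\Lambda\op$ from the version for $\Lambda$. Beyond that, the argument is routine.
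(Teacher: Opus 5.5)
Your proposal is correct and follows the paper's own proof. Like the paper, you compute both $g$-vectors from the definition and use $\topp DB\cong D\soc B$ and $\soc DA\cong D\topp A$, together with $\undim DY=\undim Y$. Your write-up adds the justification for these isomorphisms and for the indexing convention $S_i\leftrightarrow DS_i$, both of which the paper leaves implicit.
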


\begin{proof} We have
\[
	g\!\left(\symm AB\right)=\undim (\topp A)-\undim (\soc B)
\]	
\[
	g\!\left(\symm {DB}{DA}\right)=\undim (\topp DB)-\undim (\soc DA)
\]	
But, $\topp DB\cong D\soc B$ and $\soc DA\cong D\topp A$. So, these two vectors are negatives of each other.
\end{proof}
}

Lemma \ref{lem: dual of DL(M)} implies that $v\in\RR^n$ lies in a chamber for $\Lambda\op$ if and only $-v$ lies in a chamber for $\Lambda$. This lead to the following definition.

\begin{Definition}\label{def: DU}
    For any chamber $\cU$ of $\Lambda$, let $D\cU$ be the chamber of $\Lambda\op$ given by $D\cU=\{v\in \RR^n\mid -v\in \cU$.
\end{Definition}

\begin{Proposition}\label{prop: DB+=B-}
    The dual of the upper, resp. lower chamber of $g\!\left(\symm{A_0}{B_0}\right)$ is equal to the lower, resp. upper chamber of $g\!\left(\symm{DB_0}{DA_0}\right)$.
\end{Proposition}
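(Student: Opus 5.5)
We must show: if $\cB_-$ is the upper chamber of $v=g\!\left(\symm{A_0}{B_0}\right)$ for $\Lambda$, then $D\cB_-$ is the lower chamber of $g\!\left(\symm{DB_0}{DA_0}\right)$ for $\Lambda\op$, and symmetrically for $\cB_+$. By Lemma \ref{lem: Dg=-gD}, $g\!\left(\symm{DB_0}{DA_0}\right)=-v$. So the plan is to check that the negation map $v\mapsto -v$ carries green boundary points of $\Lambda$-chambers to red boundary points of $\Lambda\op$-chambers, and conversely. Then we invoke the uniqueness statements in Propositions \ref{prop: uniqueness of B-} and \ref{prop: uniqueness of B+}.

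First, by Lemma \ref{lem: dual of DL(M)}, negation maps the union $\cD_\Lambda$ of all $\Lambda$-domains bijectively onto $\cD_{\Lambda\op}$. Every $\Lambda\op$-module is of the form $DM$, so nothing is missed. Negation is a homeomorphism of $\RR^n$, so it carries path components of $\RR^n-\cD_\Lambda$ to path components of $\RR^n-\cD_{\Lambda\op}$. Hence $D\cU$ is indeed a chamber of $\Lambda\op$, its closure is the negative of the closure of $\cU$, and $\partial(D\cU)=-\partial\cU$. In particular $-v\in\partial(D\cB_-)$.

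Next, recall that $v$ is a green boundary point of $\cB_-$, i.e., $v-\varepsilon\eta\in\cB_-$ for all small $\varepsilon>0$. Negating gives $-v+\varepsilon\eta\in D\cB_-$, so $-v$ is a red boundary point of $D\cB_-$ in the sense of Def. \ref{def: red and green boundary points}. Here we use that $\eta=(1,\dots,1)$ is the same vector for $\Lambda$ and $\Lambda\op$, since the simples $S_i$ and $DS_i$ have the same $K$-dimension. By Def. \ref{def: chamber B} and the uniqueness in Proposition \ref{prop: uniqueness of B+}(a), $D\cB_-$ is the lower chamber of $-v=g\!\left(\symm{DB_0}{DA_0}\right)$. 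Likewise, $v+\varepsilon\eta\in\cB_+$ gives $-v-\varepsilon\eta\in D\cB_+$. So $-v$ is a green boundary point of $D\cB_+$, and by Proposition \ref{prop: uniqueness of B-}(a), $D\cB_+$ is the upper chamber of $g\!\left(\symm{DB_0}{DA_0}\right)$.

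Existence is not an issue: both chambers exist by Corollaries \ref{cor: vertices of U+} and \ref{cor: vertices of U0}, since an indecomposable pair is non-complete when $n>1$. In any case, the argument above produces the required chambers directly. The only point needing care, and the one I expect to be the main (mild) obstacle, is the first step: that negation identifies $\cD_\Lambda$ with $\cD_{\Lambda\op}$ exactly, so that chambers correspond. This rests on Lemma \ref{lem: dual of DL(M)} together with the fact that $D$ is a bijection on isoclasses of modules.

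As an optional consistency check, one can compare torsion classes via Theorem \ref{thm: P(U0),P(U+)}. On one side, $\cT(D\cB_-)={}^\perp DA_0$ should correspond under $D$ to the torsion-free class $\operatorname{Gen}(A_0)^\perp$. On the other side, $\cT(D\cB_+)=\operatorname{Gen} DB_0$ corresponds to ${}^\perp B_0$.
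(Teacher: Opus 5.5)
Your proof is correct and takes the same route as the paper: use Lemma \ref{lem: Dg=-gD} to identify the $g$-vector of the dual pair with $-v_0$, observe that $v_0-\varepsilon\eta\in\cB_-$ gives $-v_0+\varepsilon\eta\in D\cB_-$, so $-v_0$ is a red boundary point and $D\cB_-$ is the lower chamber, and argue the same way for $\cB_+$. You also spell out two steps the paper leaves implicit: that negation carries chambers of $\Lambda$ to chambers of $\Lambda\op$ (via Lemma \ref{lem: dual of DL(M)}), and that the uniqueness statements of Propositions \ref{prop: uniqueness of B-} and \ref{prop: uniqueness of B+} finish the argument.
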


\begin{proof}
    Let $v_0=g(\tsymm{A_0}{B_0})$. Then, by Lemma \ref{lem: Dg=-gD}, $-v_0=g(\tsymm{DB_0}{DA_0})$ and $\cB_-$ contains $v_0-\varepsilon\eta$ for small $\varepsilon>0$. So, $D\cB_-$ contains $-v_0+\varepsilon\eta$ which implies $D\cB_-$ is the lower chamber of $-v_0=g(\tsymm{DB_0}{DA_0})$. The proof for $D\cB_+$ is similar.
\end{proof}

{
This implies that the dual of Figure \ref{Fig: hour glass} looks like Figure \ref{Fig: Dhour glass}.

\begin{figure}[htbp]
\begin{center}
\begin{tikzpicture}
\draw[very thick] (0,0)..controls (1,1) and (3,2)..(4,2);
\draw[very thick] (0,0)..controls (-1,1) and (-3,2)..(-4,2);
\draw[very thick] (0,0)..controls (-1,-1) and (-2,-2)..(-2.5,-4);
\draw[very thick] (0,0)..controls (1,-1) and (2,-2)..(2.5,-4);
\draw[very thick] (-4,1)..controls (-3,2) and (-1.4,3)..(0,3);
\draw[very thick] (4,1)..controls (3,2) and (1.4,3)..(0,3);
\draw[very thick] (-3.5,-4)..controls (-2.5,-3.5) and (-1.5,-3).. (0,-3); 
\draw[very thick] (3.5,-4)..controls (2.5,-3.5) and (1.5,-3).. (0,-3); 
\draw[thick] (4,0)..controls (3,.5) and (1,.3)..(0,0);
\draw[thick] (4,-1)..controls (3,-.5) and (1,0)..(0,0);
\draw[thick] (4,-1.5)..controls (3,-1) and (1,-.1)..(0,0);
\draw[thick] (4,-2)..controls (3,-1.5) and (1,-.2)..(0,0);
\draw[thick] (-4,-2)..controls (-3,-1.5) and (-1,-.4)..(0,0);
%
\draw[red](0,2.5) node{\large$\cD_{\Lambda\op}(DM_+)$};
\draw[black!45!green] (0,-2.7) node{\large$\cD_{\Lambda\op}(DM_-)$};
\draw (2.3,.8) node {$\cD_{\Lambda\op}(DM_1)$};
\draw (-2.5,.8) node {$\cD_{\Lambda\op}(DM_{n-1})$};
\draw (1.7,-2.2) node[right] {$\cD_{\Lambda\op}(DM_{n-1})$};
\draw (-1.7,-2.2) node[left] {$\cD_{\Lambda\op}(DM_{1})$};
\draw (-0.1,0) node[left]{$g\!\left(\symm {DB_0}{DA_0}\right)=-v_0$};
\draw (-3.3,1.8) node[above]{$-v_1$};
\draw (3,1.5) node{$-v_{n-1}$};
\draw (2.6,-3.3) node{$-v_1'$};
\draw (-1.85,-3.65) node{$-v_{n-1}'$};
\draw (0,1.5) node{$D\mathcal B_+$};
\draw (0,-1.5) node{$D\mathcal B_-$};
\draw[thick,green!80!black,->] (-1,4)--(-1,-4);
\draw[green!80!black] (-1,4) node[left]{$\gamma$};
{
\begin{scope}[yshift=-.5mm]
\clip (-3,1.3)rectangle(3,3.1);
\draw[ thick,red] (-4,1)..controls (-3,2) and (-1.4,3)..(0,3);
\draw[thick,red] (4,1)..controls (3,2) and (1.4,3)..(0,3);
\end{scope}
\begin{scope}[yshift=.5mm]
\clip (-2.3,-4)rectangle (2.3,-2.7);
\draw[thick,black!45!green] (-3.5,-4)..controls (-2.5,-3.5) and (-1.5,-3).. (0,-3); 
\draw[thick,black!45!green] (3.5,-4)..controls (2.5,-3.5) and (1.5,-3).. (0,-3); 
\end{scope}
}
\end{tikzpicture}
\caption{Since duality negates all vectors we have the vertex $-v_0=g(\tsymm {DB_0}{DA_0})$. The walls $\cD_{\Lambda\op}(DM_i)$ pass through $-v_0$ by Lemma \ref{lem: dual of DL(M)}. The green path $\gamma$ is going in the positive direction (drawn downward). The first part of Theorem \ref{thm: A_0=M_0, hereditary} implies that, for $\Lambda=H$ hereditary, $DM_+\cong DB_0$. So, $M_+\cong B_0$.}
\label{Fig: Dhour glass}
\end{center}
\end{figure}
}

}



\section*{Acknowledgments} The authors thank Emre Sen for help with earlier versions of this paper. Emre solved the motivating question of this paper, due to Olivier Bernardi, whether a balanced formula for enumeration of rooted forests extended to representation of $A_n$ \cite{ISen}. Finally, the first author is grateful to the Simons Foundation for its generous support during the long process of developing this paper: Grant \#686616.
}


\begin{thebibliography}{99}

{
\bibitem{AIR} Adachi, Takahide, Osamu Iyama, and Idun Reiten. \emph{$\tau$-tilting theory}. Compositio Mathematica 150.3 (2014): 415--452.

\bibitem{ASS} Ibrahim Assem, Daniel Simpson and Andrzej Skowro\'nski. Elements of the representation theory of associative algebras: Volume 1: Techniques of representation theory. Cambridge University Press, 2006.

\bibitem{ARS} Maurice Auslander, Idun Reiten and Sverre O. Smal\o, Representation theory of Artin algebras, Cambridge Studies in Advanced Mathematics \textbf{36}, Cambridge University Press, 1995.

\bibitem{AS} Maurice Auslander and Sverre O. Smal\o, \emph{Almost split sequences in subcategories}, J. Algebra 69 (1981) 426--454. Addendum; J. Algebra 71 (1981), 592--594.

\bibitem{Br} T. Bridgeland, \emph{Scattering diagrams, Hall algebras and stability conditions}, Algebr. Geom. 4 (5)
(2017) 523--561.

\bibitem{BST}Br\"ustle, Thomas, David Smith, and Hipolito Treffinger. \emph{Wall and chamber structure for finite-dimensional algebras.} Advances in Mathematics 354 (2019): 106746. 




\bibitem{BM} A.~B.~Buan and B.~R.~Marsh, {\it $\tau$-exceptional sequences}, J. Algebra 585 (2021), 36--68.



\bibitem{DIRRT} L. Demonet, O. Iyama, N. Reading, I. Reiten, and H. Thomas, \emph{Lattice theory of torsion classes: Beyond $\tau$-tilting theory}, Trans. Amer. Math. Soc. Ser. B 10 (2023), 542--612.

\bibitem{IM} Kiyoshi Igusa and Ray Maresca, \emph{Pseudo-torsion classes}, preprint arXiv:2603.27709. 


\bibitem{IOTW2} Kiyoshi Igusa, Kent Orr, Gordana Todorov, and Jerzy Weyman, {Modulated quivers, semi-invariant pictures and picture groups}, Cambridge University Press, to appear.


\bibitem{ISen} Kiyoshi Igusa and Emre Sen, \emph{Exceptional sequences and rooted labeled forests}, Journal of Algebra and Its Applications (2024)
https://doi.org/10.1142/S0219498825501981. 





\bibitem{IngTh} Colin Ingalls and Hugh Thomas. \emph{Noncrossing partitions and representations of quivers.} Compositio Mathematica 145.6 (2009): 1533--1562.

\bibitem{Jasso} G. Jasso, \emph{Reduction of $\tau$-tilting modules and torsion pairs}, Int. Math. Res. Not. 2015 (2014), no. 16, 7190--7237.

\bibitem{K1} Bernhard Keller, \emph{On cluster theory and quantum dilogarithm identities}, Representations of algebras and related topics, EMS Ser. Congr. Rep., Eur. Math. Soc., Z\"urich, 2011, pp. 85--116.

\bibitem{K2} Bernhard Keller and Laurent Demonet. \emph{A survey on maximal green sequences}. Representation theory and beyond 758 (2020): 267--286.
}


\end{thebibliography}
\end{document}